\newtheorem{theorem}{Theorem}[section]
\newtheorem{proposition}[theorem]{Proposition}
\theoremstyle{definition}
\newtheorem{definition}[theorem]{Definition}
\theoremstyle{plain}
\newtheorem{lemma}[theorem]{Lemma}
\newtheorem{corollary}{Corollary}[theorem]
\theoremstyle{remark}
\newtheorem{remark}[theorem]{Remark}
\title{Analysis of the Zero Relaxation Limit of Systems of Hyperbolic Conservation Laws with Random Initial Data}
\author{James M. Scott, M. Paul Laiu, Cory D. Hauck}
\def\namedlabel#1#2{\begingroup
    #2%
    \def\@currentlabel{#2}%
    \phantomsection\label{#1}\endgroup
}
\newcommand{\e}{\mathrm{e}}
\newcommand{\vphi}{\varphi}
\DeclareMathOperator*{\esssup}{ess\,sup}
\DeclareMathOperator*{\supp}{supp}
\DeclareMathOperator*{\sgn}{sgn}
\DeclareMathOperator*{\rightharpoonupOp}{\rightharpoonup}
\DeclareMathOperator*{\meas}{meas}
\newcommand{\intdm}[3]{\displaystyle \int_{#1} #2 \, \mathrm{d}#3}
\newcommand{\iintdm}[5]{\int_{#1} \int_{#2} #3 \, \mathrm{d}#4 \, \mathrm{d}#5}
\newcommand{\iiintdm}[7]{\displaystyle \int_{#1} \int_{#2} \int_{#3}  #4 \, \mathrm{d}#5 \, \mathrm{d}#6 \, \mathrm{d}#7}
\newcommand{\intdmt}[4]{\displaystyle \int_{#1}^{#2} #3 \, \mathrm{d}#4}
\newcommand{\iintdmt}[6]{\displaystyle \int_{#1}^{#2} \int_{#3}^{~} #4 \, \mathrm{d}#5 \, \mathrm{d}#6}
\newcommand{\iiintdmt}[8]{\int_{#1}^{#2} \int_{#3}^{~} \int_{#4}^{~}  #5 \, \mathrm{d}#6 \, \mathrm{d}#7 \, \mathrm{d}#8}
\newcommand{\iintdmlim}[5]{\int\limits_{#1} \int\limits_{#2} #3 \, \mathrm{d}#4 \, \mathrm{d}#5}
\newcommand{\intdmtlim}[4]{\displaystyle \int\limits_{#1}^{#2} #3 \, \mathrm{d}#4}
\newcommand{\iiintdmtlim}[8]{\int\limits_{#1}^{#2} \int\limits_{#3}^{~} \int\limits_{#4}^{~}  #5 \, \mathrm{d}#6 \, \mathrm{d}#7 \, \mathrm{d}#8}
\newcommand{\dt}{\partial_t}
\newcommand{\dx}{\partial_x}
\newcommand{\dxx}{\partial_{xx}}
\newcommand{\du}{\partial_u}
\newcommand{\dv}{\partial_v}
\newcommand{\duu}{\partial_{uu}}
\newcommand{\duv}{\partial_{uv}}
\newcommand{\dvv}{\partial_{vv}}
\newcommand{\dU}{\partial_U}
\newcommand{\dUU}{\partial_{UU}}
\newcommand{\ds}{\partial_{s}}
\newcommand{\ue}{u^{\veps}}
\newcommand{\ve}{v^{\veps}}
\newcommand{\Ue}{U^{\veps}}
\newcommand{\we}{w^{\veps}}
\newcommand{\ze}{z^{\veps}}
\newcommand{\We}{W^{\veps}}
\newcommand{\une}{u^{\veps}_0}
\newcommand{\vne}{v^{\veps}_0}
\newcommand{\Une}{U^{\veps}_0}
\newcommand{\wne}{w^{\veps}_0}
\newcommand{\zne}{z^{\veps}_0}
\newcommand{\Wne}{W^{\veps}_0}
\newcommand{\uen}{u^{\veps,\nu}}
\newcommand{\ven}{v^{\veps,\nu}}
\newcommand{\Uen}{U^{\veps,\nu}}
\newcommand{\wen}{w^{\veps,\nu}}
\newcommand{\zen}{z^{\veps,\nu}}
\newcommand{\Wen}{W^{\veps,\nu}}
\newcommand{\wenhk}{w^{\veps,\nu,h,k}}
\newcommand{\zenhk}{z^{\veps,\nu,h,k}}
\newcommand{\Wenhk}{W^{\veps,\nu,h,k}}
\newcommand{\went}{w^{\veps,\nu,\tau}}
\newcommand{\unen}{u^{\veps,\nu}_0}
\newcommand{\vnen}{v^{\veps,\nu}_0}
\newcommand{\Unen}{U^{\veps,\nu}_0}
\newcommand{\wnen}{w^{\veps, \nu}_0}
\newcommand{\znen}{z^{\veps, \nu}_0}
\newcommand{\Wnen}{W^{\veps,\nu}_0}
\newcommand{\Ubar}{\overline{U}}
\newcommand{\ubar}{\overline{u}}
\newcommand{\vbar}{\overline{v}}
\begin{document}

\maketitle

\tableofcontents

\numberwithin{equation}{section}

\begin{abstract}
    We show the convergence of the zero relaxation limit in systems of $2 \times 2$ hyperbolic conservation laws with stochastic initial data. Precisely, solutions converge to a solution of the local equilibrium approximation as the relaxation time tends to zero.
	The initial data are assumed to depend on finitely many random variables, and the convergence is then proved via the appropriate analogues of the compensated compactness methods used in treating the deterministic case. We also demonstrate the validity of this limit in the case of the semi-linear \textit{p}-system; the well-posedness of both the system and its equilibrium approximation are proved, and the convergence is shown with no \textit{a priori} conditions on solutions. This model serves as a prototype for understanding how asymptotic approximations can be used as control variates for hyperbolic balance laws with uncertainty.
\end{abstract}

\section{Introduction}\label{sec-Introduction}
Consider the following $2 \times 2$ system of hyperbolic conservation laws
\begin{equation}\label{eq-FullSystemIntroduction}
    \begin{cases}
    \dt u + \dx f_1(u,v) = 0 \\
    \dt v + \dx f_2(u,v) + \frac{1}{\veps} r(u,v) = 0
    \end{cases}
\end{equation}
for $(x,t) \in \bbR \times (0,\infty)$ and where $f_1$, $f_2$, $r$ are some given functions. The function $r$ is called the \textit{relaxation term}. Examples of relaxation in $2 \times 2$ systems of hyperbolic conservation laws include models in elasticity \cite{tzavaras1999viscosity}, chromatography \cite{Collet1996Chromatography}, river flows \cite{whitham2011linear}, gas dynamics \cite{levermore1996moment}, kinetic theory \cite{cercignani2012boltzmann}, traffic flows \cite{MarcatiLattanzioTrafficModel}, and the numerical approximation of scalar conservation laws\cite{jin1995relaxation}.
The parameter $\veps$ is called the relaxation time, which is positive and represents a physical quantity in many situations. In kinetic theory it is the mean free path, and in elasticity it is the duration of memory.
This motivates examination of the system in the \textit{zero relaxation limit}; namely, the behavior of the system as $\veps \to 0$. In \cite{CLL} it was shown that under some stability conditions, the solutions of \eqref{eq-FullSystemIntroduction} converge as $\veps \to 0$ to a solution of a reduced (or equilibrium) equation
\begin{equation}\label{eq-ReducedSystemIntroduction}
\dt u + \dx f(u) = 0\,,
\end{equation}
where $f$ is a reduced flux. (See Section \ref{sec-Preliminaries} for details.)

The relaxation limit for various $2 \times 2$ systems of hyperbolic conservation laws has been thoroughly explored in \cite{CLL,ChenLiu, LattanzioMarcati, MarcatiLattanzioTrafficModel, Collet1996Chromatography, KlingenbergConferenceReport, YunguangLuBook} using the methods of compensated compactness \cite{tartar1979compensated}. The first convergence result was obtained in \cite{CLL}, which gives the convergence in measure of the relaxing sequence to a weak solution of \eqref{eq-ReducedSystemIntroduction}. It is well-known (see \cite[Section 3.4.1]{Evans}) that weak solutions to \eqref{eq-ReducedSystemIntroduction} are not in general unique, and that the ``physical" solution satisfies infinitely many entropy inequalities. However, the limit obtained from the relaxing sequence verifies at most finitely many entropy inequalities.
This problem of the uniqueness of the limiting solution of \eqref{eq-ReducedSystemIntroduction} was addressed in \cite{LattanzioMarcati}. There it was shown that with some additional technical assumptions on the solutions, the limiting solution of \eqref{eq-ReducedSystemIntroduction} is indeed unique. The convergence results for general $2 \times 2$ systems of the form \eqref{eq-FullSystemIntroduction} also rely on an \textit{a priori} $L^{\infty}$ bound on the relaxing sequence. This assumption can be verified in the particular case of the $2 \times 2$ semi-linear \textit{p}-system; see \cite{NataliniPSystem}. See the survey \cite{NataliniSurvey} for a thorough discussion on relaxation in systems of hyperbolic conservation laws.

The main goal of this paper is to investigate the zero relaxation limit for the Cauchy problem \eqref{eq-FullSystemIntroduction} in the case that the initial data is uncertain. This is of practical interest, since the data may only be known up to certain statistical quantities like mean, variance, or higher moments due to inherent uncertainty in measurements. To this end, we construct an appropriate mathematical formulation for \eqref{eq-FullSystemIntroduction} which allows for \textit{random initial data}. 
Specifically, we assume that $y$ is a finite-dimensional random variable taking values in $\bbR^N$ (this assumption will be motivated later.) For any $\veps > 0$ we write \eqref{eq-FullSystemIntroduction} with initial data $(\une,\vne)$ as
\begin{equation}\label{eq-FullSystemIntroduction-Random}
\begin{cases}
    \dt \ue + \dx f_1(\ue,\ve) = 0 \\
    \dt \ve + \dx f_2(\ue,\ve) + \frac{1}{\veps}r(\ue,\ve) = 0 \\
    (\ue,\ve)(x,0,y) = (\une,\vne)(x,y)\,.
\end{cases}
\end{equation}
The primary goal is to prove the following: if $(\ue, \ve)$ is a sequence of (suitably defined) solutions of \eqref{eq-FullSystemIntroduction-Random}
$(x,t,y)$-almost everywhere, then as $\veps \to 0$ the sequence $\ue$ converges to $u$ almost everywhere with respect to $(x,t,y)$, where $u(x,t,y)$ is a (suitably defined) solution of the Cauchy problem
\begin{equation}\label{eq-ReducedSystemIntroduction-Random}
\begin{cases}
    \dt u + \dx f(u) = 0 \\
    u(x,0,y) = u_0(x,y)\,,
\end{cases}
\end{equation}
with $u_0$ the limit of $\une$ as $\veps \to 0$ in a suitable sense.

In other words, we wish to ``repeat" the deterministic arguments and generalize the results to the case of parametrized initial data.
However, the appropriate generalization is not straightforward. There are two strategies considered here:
\begin{enumerate}
    \item[1)] For a fixed $y \in \bbR^N$, apply the deterministic results directly. 
    \item[2)] Consider solutions belonging to the spaces $L^p(\bbR^d \times (0,T) \times \bbR^N)$ and repeat the proofs of the deterministic results in order to verify that the results hold in the case of the new function spaces.
\end{enumerate}

If we consider the main result using approach 1), the conclusion is that for each fixed $y$
\begin{equation}
u(x,t,y) := \lim\limits_{\veps_y \to 0} u^{\veps_y}(x,t,y)\:,
\end{equation}
where the convergence is in $(x,t)$-measure and $\veps_y$ is a subsequence of $\veps$ that depends on $y$.  However, this approach presents several challenges.  First, the fact that $\veps_y$  depends on $y$ means that the solution $u$ may not be a measurable function in $y$.  Moreover, solutions $\ue$ may not be continuous in $y$.  Indeed even if $\une(\cdot, y_1) - \une(\cdot, y_2)$ is small (in some suitable norm) for two given values $y_1$ and $y_2$, there is no general stability result that ensures $\ue(t, \cdot, y_1) - \ue(t,\cdot, y_2)$ to remain small at later times \cite{Dafermos}.  If one can establish {\it a priori} that the point-wise limit in $y$ is unique, then some additional structure may be available.  For example, in \cite{MishraSchwab,MishraSchwabSukys} a stochastic version of the deterministic theory of conservation laws is developed.   However, it is not known whether the relaxation limit of general $2 \times 2$ systems is the unique entropy solution of the scalar equation \eqref{eq-ReducedSystemIntroduction-Random}.  In simple cases, such as the semi-linear $p$-system, uniqueness can be established.  We consider this system as a special case of the general results studied here.
The direct application of the point-wise limit may be possible for the semi-linear \textit{p}-system (see \cite{NataliniPSystem}); this will be the subject of a future work.
In the current work, we do appeal to approach 1) so far as the entropy structure for the system \eqref{eq-FullSystemIntroduction-Random} is concerned.  We assume the same $L^{\infty}$ bounds on the solutions as in \cite{CLL}, and thus the domains of the entropies considered here are identical to those in \cite{CLL}.

Our motivation for approach 2) is the theoretical paradigm developed recently for the numerical analysis of stochastic partial differential equations.
We formulate definitions of \textit{random} entropy solutions to \eqref{eq-FullSystemIntroduction-Random} analogous to the definitions of weak solutions to random second-order linear elliptic \cite{Webster2008Sparce, babuvska2007collocation, babuvska2005finite-element} and hyperbolic \cite{motamed2013stochastic} equations.
In those works, an assumption is made on the random components of the PDE that makes the analysis more manageable. We make the same type of assumption; namely, we assume that the randomness in the initial data is determined by finitely many random variables via the introduction of the parameter $y \in \bbR^N$ (to be made precise in Section \ref{sec-Preliminaries}). This assumption allows us to make use of the techniques used in the analysis of the zero relaxation limit.

The theory for second-order linear elliptic and hyperbolic SPDE is a straightforward generalization of the deterministic case, and well-posedness is the application of appropriate representation or fixed-point theorems. As a result the relevant theory can be addressed as a part of the numerical analysis. The problem of examining the limit of solutions of a system of first-order nonlinear hyperbolic PDE is not as straightforward.  Scalar conservation laws have a complete and well-known theory of well-posedness \cite{Kruzkov}, but for $2 \times 2$ systems like \eqref{eq-FullSystemIntroduction} only partial results are known.  We quote \cite[page 6]{Bressan}: 

\begin{quotation}
The strong nonlinearity of the equations and the lack of regularity of solutions, also due to the absence of second order terms that could provide a smoothing effect, account for most of the difficulties encountered in a rigorous mathematical analysis of the system [of conservation laws]. It is well known that the main techniques of abstract functional analysis do not apply in this context. Solutions cannot be represented as fixed points of continuous transformations, or in variational form, as critical points of suitable functionals. Dealing with vector valued functions, comparison principles based on upper or lower solutions cannot be used. Moreover, the theory of accretive operators and contractive nonlinear semigroups works well in the scalar case, but does not apply to systems. For the above reasons, the theory of hyperbolic conservation laws has largely developed by \textit{ad hoc} methods...
\end{quotation}

This work is a self-contained discussion designed to provide a rigorous basis for later numerical analysis and other applications.   In addition to verifying the program of zero relaxation in the uncertain initial data paradigm, this work serves as a reference to many significant results treated in different contexts throughout the literature. The analysis here relies heavily on known results in the theory of systems of hyperbolic conservation laws \cite{Dafermos, Serre, Kruzkov, Evans, CLL} adapted to fit the paradigm of parametrized PDE that we consider here.   Indeed, the major analytical work has been done in the deterministic case, with results ranging from the well-known and widely-restated proof of uniqueness of solutions to scalar conservation laws \cite{Kruzkov} to the rather technical application of compensated compactness methods for treating the zero relaxation limit \cite{CLL, LattanzioMarcati, YunguangLuBook}.   The main concern in the adaptation of these results is the presence of a PDF $\rho$ in the relevant integral definitions.  
The ``doubling of variables" argument in the uniqueness proof leads us to double the parameter variable $y$ as well, leading to an $L^1_{loc}$ stability estimate with weight $\rho^2$.
 In developing the necessary convergence framework via compensated compactness, one can begin by either considering weak continuity of determinants (as in \cite{YunguangLuBook}) or by considering the Young measure approach (as in \cite{Dafermos}). In the case of uncertain initial data, the presence of $\rho$ lends to the use of the former.

We follow the techniques developed in \cite{CLL} for obtaining the analogous convergence result for $2 \times 2$ systems of hyperbolic conservation laws in the deterministic case. 
The discussion above regarding the uniqueness of the zero relaxation limit treated in \cite{LattanzioMarcati} is just as relevant in the case of uncertain initial data. We formulate sufficient conditions on solutions of the system \eqref{eq-FullSystemIntroduction-Random} following those found in \cite{LattanzioMarcati} in order to guarantee that the relaxation limit solving \eqref{eq-ReducedSystemIntroduction-Random} is unique.
Motivated by applications in numerical analysis, we consider the example of the semi-linear \textit{p}-system (see Section \ref{sec-SemiLinearPSystem} below for the definition). We adapt and combine the analysis from \cite{Dafermos, Serre, NataliniQuasilinearSystems, NataliniPSystem, Kruzkov} to prove global existence and uniqueness of suitably-defined weak solutions to the semi-linear $p$-system, verify the sufficient conditions required for the uniqueness in the zero relaxation limit, and then prove convergence to a unique entropy solution of the scalar equilibrium conservation law.

This manuscript is an extended version of a paper submitted for publication under a similar title \cite{ScottLaiuHauck:Analysis}.  The purpose of this extended version is to provide additional details for technical results that may be found in the literature.
The details for such results are organized here for the convenience of the reader.  Original results and strategies for proving them are cited appropriately.  Often the direct application of deterministic results (such as the compactness results in Section \ref{subsec-compactness}) will have no proof, but in some places additional details are provided for the purpose of completeness and to supplement the original proofs. Results with technical justifications that would otherwise interrupt the flow of ideas in the paper (such as the well-posedness of classical solutions to the viscosity approximation of the semi-linear $p$-system) are stated and proven in appendices.

This paper is organized as follows.  In  Section \ref{sec-Preliminaries}, we introduce the relaxation system, formulate structural properties and assumptions, and establish notations.
Section \ref{sec-Justification} contains our main result: convergence of solutions to \eqref{eq-FullSystemIntroduction-Random} to a solution of \eqref{eq-ReducedSystemIntroduction-Random}.
In Section \ref{sec-Uniqueness} we formulate and prove sufficient conditions for the uniqueness of the relaxation limit.
In Section \ref{sec-SemiLinearPSystem} we consider the example of the semi-linear \textit{p}-system.
Appendices \ref{apdx-QLParabolicSystems}, \ref{apdx-SLParabolicSystems} and \ref{apdx-SLHyperbolicSystems} all contribute to the well-posedness theory of diagonal weakly-coupled hyperbolic systems related to the semi-linear $p$-system via a coordinate change. Appendices \ref{apdx-Step1} and \ref{apdx-Uniqueness} contain longer, more detailed technical proofs.

\section{Set-Up and Preliminaries}\label{sec-Preliminaries}
Let $y := (y_1, y_2, \ldots, y_N) \in \bbR^N$ which takes values in a bounded subset $\Gamma \subset \bbR^N$. (Without loss of generality let $\Gamma = [-1,1]^N$.) The joint probability density function $\rho(y) := \prod_{j=1}^N \rho_j(y_j) : \Gamma \to [0,\infty)$ is assumed to be in $L^{\infty}(\Gamma)$. We also assume without loss of generality that $\supp \rho := \overline{ \{ y \in \Gamma \, | \, \rho(y) = 0 \}} = \overline{\Gamma}$.
Then the Cauchy problem \eqref{eq-FullSystemIntroduction-Random} with random initial data can be written as
\begin{equation}\label{eq-FullSystemRandom}
    \begin{cases}
    \dt \Ue + \dx F(\Ue) + \frac{1}{\veps} 
        \cR(\Ue)
     = 0 \\
    \Ue(x,0,y) = \Une(x,y)\:,
    \end{cases}
\end{equation}
where $\Ue(x,t,y) = (\ue(x,t,y),\ve(x,t,y)) : \bbR \times (0,\infty) \times \Gamma \to \bbR^2$, the flux $F : \bbR^2 \to \bbR^2$, the relaxation term $\cR : \bbR^2 \to \bbR^2$, and the initial data $\Une = (\une,\vne): \bbR \times \Gamma \to \bbR^2$. 
Let $D \subseteq \bbR^2$ be an open convex set. We assume that $\Ue$ takes values in the set $\overline{D}$ for each $\veps$. 
We also assume that $F := (f_1,f_2): \overline{D} \to \bbR^2$ is a $C^2$ function such that the system \eqref{eq-FullSystemRandom} is hyperbolic. Precisely, the $2 \times 2$ matrix $\dU F(U)$ has real eigenvalues and is diagonalizable. Finally, we assume that the $C^2$ relaxation term $\cR$ takes the form $\textstyle \cR(U) = \big( 0, r(U) \big)^{\intercal}$, where $r : \overline{D} \to \bbR$ possesses a set of local equilibria in the following sense: for $(u,v) \in \bbR^2$, denote projection onto the first coordinate by $\pi_1 : \bbR^2 \to \bbR$, i.e., $\pi_1(u,v) = u$; for each $u \in \pi_1(\overline{D})$ there exists a unique $(u,e(u)) \in \overline{D}$ satisfying
\begin{equation}
    r(u,e(u))=0 \,, \qquad \qquad \dv r(u,e(u)) > 0 \,.
\end{equation}
We also assume that $\dvv r(U) \equiv 0$ for all $U \in \overline{D}$. We define the curve of equilibria
\begin{equation}
K := \{ (u,v) \in \overline{D} \, | \, u \in \pi_1(\overline{D})\,, \,  v=e(u) \}.
\end{equation}
Note that $K$ need not contain $(0,0)$. In addition, we assume that the function $e$ is $C^2$.
We also assume the condition
\begin{equation}
    \dv f_1(u,e(u)) \neq 0\,, \qquad \qquad \text{for all } u \in \pi_1(\overline{D})\,.
\end{equation}

Applying the local equilibrium approximation -- namely $\Ue \approx (\ue,e(\ue))$ -- to \eqref{eq-FullSystemRandom} and omitting the index $\veps$, we obtain the conservation law
\begin{equation}\label{eq-ReducedSystemRandom}
    \begin{cases}
    \dt u + \dx f(u) = 0 \\
    u(x,0,y) = u_0(x,y)\,,
    \end{cases}
\end{equation}
where the reduced flux $f$ is defined by
\begin{equation}
f(u) = f_1(u,e(u))\,.
\end{equation}

We expect this approximation to be good when $\veps \ll 1$ and the initial data are reasonable.

\subsection{Stability and the Effect of Entropy}
The stability criterion of the system \eqref{eq-FullSystemRandom} with deterministic initial data is given in \cite{CLL}. Since we are assuming that only the initial data are random, there is no uncertainty incorporated into the flux $F$ or the relaxation term $r$. Thus the stability criterion from \cite{CLL} applies directly; we summarize it here.

The full system \eqref{eq-FullSystemRandom} is assumed to have real and distinct characteristic speeds, i.e., the eigenvalues of $\dU F(U)$ are real and distinct. Specifically, they are given by
\begin{equation}
    \Lambda_{\pm}(U) = \frac{1}{2} \left( \du f_1 + \dv f_2 \pm \sqrt{(\du f_1 - \dv f_2)^2 + 4 \dv f_1 \du f_2} \right)\,,
\end{equation}
and the characteristic speed of the reduced system \eqref{eq-ReducedSystemRandom} is 
\begin{equation}
\lambda(u) = f'(u) = \du f_1(u,e(u)) + \dv f_1(u,e(u))e'(u)\,.
\end{equation}
The aforementioned stability criterion for the system \eqref{eq-FullSystemRandom} can be derived via a Chapman-Enskog expansion, described in detail in \cite{CLL}. Specifically, the first-order $\veps$ correction of \eqref{eq-ReducedSystemRandom} is dissipative provided that the stability condition
\begin{equation}\label{eq-SubcharacteristicCondition-Nonstrict}
        \Lambda_- (u,e(u)) \leq \lambda(u) \leq \Lambda_+ (u,e(u))\,, \quad u \in \pi_1(\overline{D})\,,
\end{equation}
holds.
This condition, called the \textit{subcharacteristic condition}, is guaranteed by the existence of a \textit{convex entropy} for the system \eqref{eq-FullSystemRandom} \cite{CLL}. We restate the definition of entropy first given in \cite{CLL} below.

\begin{definition}[Convex Entropy]\label{def-EntropyForFullSystem}
A $C^2$ function $\eta : \overline{D} \to \bbR$ is called a \textit{convex entropy} for the system \eqref{eq-FullSystemRandom} if
\begin{enumerate}
    \item[i)] $\dUU \eta \, \dU F$ is symmetric on $\overline{D}$, i.e. $\du f_2 \, \dvv \eta - (\dv f_2 - \du f_1) \duv \eta - \dv f_1 \, \duu \eta = 0$ on $\overline{D}$\,,
    \item[ii)] $\dU \eta(U) \cdot \cR(U) \geq 0$ on $\overline{D}$\,, with equality if and only if $U \in K$ iff $\cR(U) = 0$.
    \item[iii)] $\dUU \eta(U) \geq 0$ on $\overline{D}$ in the sense of quadratic forms.
\end{enumerate}
The function $\eta$ is called a \textit{strictly convex entropy} if $\dUU \eta > 0$ on $\overline{D}$ in the sense of quadratic forms, and a \textit{strongly convex entropy} if there exists c constant $c$ such that $\dUU \eta \geq c > 0$ on $\overline{D}$ in the sense of quadratic forms.
\end{definition}

\noindent
Condition i) says that $\dUU \eta(U)$ symmetrizes the system \eqref{eq-FullSystemRandom} via multiplication on the left. 
It ensures the existence of an \textit{entropy flux} $Q: \overline{D} \to \bbR$ such that
\begin{equation}\label{eq-Preliminaries-EntropyFluxDef}
\dU \eta(U) \, \dU F(U) = \dU Q(U)\,, \qquad \text{ for all } U \in \overline{D}\,.
\end{equation}
We refer to the pair of functions $(\eta,Q)$ as an \textit{entropy pair}.
If $U$ is a smooth solution of \eqref{eq-FullSystemRandom} then, using \eqref{eq-Preliminaries-EntropyFluxDef}, multiplication of \eqref{eq-FullSystemRandom} by $\dU \eta(U)$ on the left gives
\begin{equation}
\dt \eta(U) + \dx Q(U) + \frac{1}{\veps} \dv \eta(U) \, r(U) = 0\,.
\end{equation}
Condition ii) states that, in effect, $\cR$ dissipates $\eta$, and characterizes the dynamics of the curve of equilibria subject to the entropy.

As in \cite{CLL}, we assume the (strict) subcharacteristic condition, namely
\begin{equation}\label{eq-SubcharacteristicCondition}
    \Lambda_- (u,e(u)) < \lambda(u) < \Lambda_+ (u,e(u))\,, \quad u \in \pi_1(D)\,.
\end{equation}
Whereas the existence of a convex entropy guarantees the subcharacteristic condition \eqref{eq-SubcharacteristicCondition-Nonstrict},
this stronger assumption \eqref{eq-SubcharacteristicCondition} allows the converse statement to be proved; that is, the existence of a convex entropy pair for the full system \eqref{eq-FullSystemRandom} is guaranteed by \eqref{eq-SubcharacteristicCondition}. This fact is stated precisely in Theorem \ref{thm-PartialConverseForEntropy} in Section \ref{sec-Justification}.
In short, the existence of entropy for the equations \eqref{eq-FullSystemRandom} and \eqref{eq-ReducedSystemRandom} leads to the stability of solutions just as it does in the case of deterministic initial data,
as we will see in Sections \ref{sec-Uniqueness} and \ref{sec-SemiLinearPSystem}.

\subsection{Function Spaces}
Let $d$, $n \in \bbN$, and let $G \subset \bbR^d$. Denote the set of $C^{\infty}$ functions compactly supported on $G$ taking values in $\bbR^n$ by $\big[ C^{\infty}_c(G) \big]^n$. Denote the set of distributions acting on $\big[ C^{\infty}_c(G) \big]^n$ by $\big[ \cD'(G) \big]^n$.
For $1 \leq p < \infty$ and $G \subseteq \bbR^d$ denote the $L^p$ spaces
\begin{equation}
    \big[ L^p(G) \big]^n := \left\lbrace f : G \to \bbR^n \, \Bigg| \, \Vnorm{f}_{[L^p(G)]^n}^p := \intdm{G}{|f(x)|^p}{x} < \infty \right\rbrace\,.
\end{equation}
For $p = \infty$, define
\begin{equation}
    \big[ L^{\infty}(G) \big]^n := \left\lbrace f : G \to \bbR^n \, \Bigg| \, \Vnorm{f}_{[L^{\infty}(G)]^n} := \esssup_{x \in G} |f(x)| < \infty \right\rbrace\,.
\end{equation}
When $G = \bbR^d$, we use the abbreviated notation
$
\Vnorm{f}_p := \Vnorm{f}_{\left[ L^{p}(\bbR^d) \right]^n}\,.
$
For $k \in \bbN$, define the Sobolev spaces
\begin{equation}
    \left[ W^{k,p}(G) \right]^n := \left\lbrace f : G \to \bbR^n \, \Bigg| \, \Vnorm{f}_{[W^{k,p}(G)]^n}^p := \sum_{|\alpha| \leq k}\Vnorm{\p_{\alpha} f}_{[L^p(G)]^n}^p < \infty \right\rbrace\,.
\end{equation}
We define the homogeneous Sobolev spaces $W^{k,p}_0(G) := \overline{C^{\infty}_c(G)}^{W^{k,p}(G)}$, and denote the dual Sobolev spaces
\begin{equation}
    W^{-k,q}(G) := \left( W^{k,q'}_0(G) \right)^*\,, \quad G \subset \bbR^d\,, \quad 1 \leq q \leq \infty\,, \quad q' = \frac{q}{q-1}\,.
\end{equation}
We define the weighted $L^p$ spaces as follows: for $d$, $n \in \bbN$, $1 \leq p < \infty$ and $G \subseteq \bbR^d \times \Gamma$ we write
\begin{equation}
    \left[L^p_{\rho}(G) \right]^n := \left\lbrace f : G \to \bbR^n \, \Bigg| \, f \text{ is } \mathrm{d}x \times \rho \mathrm{d}y - \text{measurable and }\iint\limits_G |f(x,y)|^p \rho(y) \, \mathrm{d}y \, \mathrm{d}x < \infty \right\rbrace
\end{equation}
with norm
\begin{equation}
    \Vnorm{f}_{\left[ L^p_{\rho}(G) \right]^n}^p := \iint\limits_G |f(x,y)|^p \rho(y) \, \mathrm{d}y \, \mathrm{d}x\,.
\end{equation}
Note that the interchange of integrals is allowed in evaluating $\Vnorm{f}_{ \left[ L^p_{\rho}(G) \right]^n}$. When $G = \bbR^d \times \Gamma$ we use the abbreviated notation
$
    \Vnorm{f}_{p;\rho} := \Vnorm{f}_{\left[ L^p_{\rho}(\bbR^d \times \Gamma) \right]^n}\,.
$

Finally, we introduce the weighted Bochner spaces for $1 \leq q < \infty$, $G_1 \subseteq \bbR^d$ and $G_2 \subseteq \Gamma$ as
\begin{equation}
    L^q \left([0,T] ; \left[L^p_{\rho}(G_1 \times G_2) \right]^n \right) := \left\lbrace f = f(x,t,y): G_1 \times [0,T] \times G_2 \to \bbR^n \, \Bigg| \, \intdmt{0}{T}{\Vnorm{f(\cdot,t)}_{[L^p_{\rho}(G_1 \times G_2)]^n}^q}{t} < \infty \right\rbrace,
\end{equation}
with norm
\begin{equation}
    \Vnorm{f}_{L^q([0,T] ; (L^p_{\rho}(G_1 \times G_2))^n)}^q := \intdmt{0}{T}{\Vnorm{f(\cdot,t)}_{[L^p_{\rho}(G_1 \times G_2)]}^q}{t}\,.
\end{equation}
For $q = \infty$ we use the natural definition. When $G_1 = \bbR^d$ and $G_2 = \Gamma$, we use the abbreviated notation
\begin{equation}
    \Vnorm{f}_{q,p;\rho} := \Vnorm{f}_{L^q([0,T] ; [L^p_{\rho}(\bbR^d \times \Gamma)]^n)}\,.
\end{equation}
When $n = 1$, we omit the superscript. Many times throughout the paper we will take a norm in some of the variables while leaving the others fixed. In these cases we write the remaining variables in the arguments of the function. For example, if $U=U(x,t,y) : \bbR \times (0,T) \times \Gamma \to \bbR^2$, then
\begin{equation}
\begin{split}
    \Vnorm{U(x,t)}_{2;\rho} &= \intdm{\Gamma}{|U(x,t,y)|^2 \rho(y)}{y}\,; \\
    \Vnorm{U(t)}_{2;\rho} &= \iintdm{\bbR}{\Gamma}{|U(x,t,y)|^2 \rho(y)}{y}{x}\,.
\end{split}
\end{equation}

\subsection{Solutions}

We can now state precisely the definitions of solutions for the $2 \times 2$ system and the reduced scalar system.

\begin{definition}[Entropy Solutions of \eqref{eq-FullSystemRandom}]\label{def-2x2Solution}
Let $\veps > 0$, $0 < T \leq \infty$. Suppose $\Une \in \big[ L^{\infty}(\bbR \times \Gamma
) \big]^2$ takes values in $\overline{D}$. A function \\ $\Ue \in \big[ L^{\infty}(\bbR \times (0,T) \times \Gamma) \big]^2$ taking values in $\overline{D}$ is a \textit{weak solution} of the full system \eqref{eq-FullSystemRandom} if
\begin{multline}
    \iiintdmt{0}{T}{\bbR}{\Gamma}{\left( \Ue(x,t,y) \cdot \dt \vphi(x,t,y) + F(\Ue(x,t,y)) \cdot \dx \vphi(x,t,y) \right) \rho(y)}{y}{x}{t} \\
    \qquad \qquad - \iiintdmt{0}{T}{\bbR}{\Gamma}{\frac{1}{\veps} r(\Ue(x,t,y)) \vphi_2(x,t,y) \rho(y)}{y}{x}{t} + \iintdm{\bbR}{\Gamma}{\Une(x,y) \cdot \vphi(x,0,y)\rho(y)}{y}{x} = 0
\end{multline}
for every $\vphi = (\vphi_1, \vphi_2) \in \left[C^{\infty}_c(\bbR \times [0,T) \times \Gamma) \right]^2$. Here, $``\cdot"$ denotes the usual Euclidean inner product on $\bbR^2$.
If in addition the solution $\Ue$ satisfies
\begin{multline}\label{eq-DefinitionOfEntropySolution-EntropyInequality}
    \iiintdmt{0}{T}{\bbR}{\Gamma}{\left( \eta(\Ue(x,t,y)) \dt \vphi(x,t,y) + Q(\Ue(x,t,y)) \dx \vphi(x,t,y) \right) \rho(y)}{y}{x}{t} \\
    \qquad - \iiintdmt{0}{T}{\bbR}{\Gamma}{\frac{1}{\veps} \dv \eta(\Ue(x,t,y)) r(\Ue(x,t,y)) \vphi(x,t,y) \rho(y)}{y}{x}{t} \geq 0
\end{multline}
for every convex entropy pair $(\eta,Q)$ over $\overline{D}$ and for every $\vphi \in C^{\infty}_c(\bbR \times (0,T) \times \Gamma)$ with $\vphi \geq 0$, and
\begin{equation}
\lim\limits_{T \to 0^+} \frac{1}{T} \iiintdmt{0}{T}{V}{\Gamma}{|\Ue(x,t,y)-\Une(x,y)| \rho(y)}{y}{x}{t} = 0
\end{equation}
for every $V \Subset \bbR$, then we say that $\Ue$ is a \textit{weak entropy solution} of the full system \eqref{eq-FullSystemRandom}.
\end{definition}

\begin{definition}[Entropy Solutions of \eqref{eq-ReducedSystemRandom}]\label{def-EntropySolution-ReducedSystem}
Let $0 < T \leq \infty$. Suppose $u_0 \in L^{\infty}(\bbR \times \Gamma)$ takes values in $\pi_1(\overline{D})$. A function $u \in L^{\infty}(\bbR \times (0,T) \times \Gamma)$ taking values in $\pi_1(\overline{D})$ is a \textit{weak solution} of the limiting equation \eqref{eq-ReducedSystemRandom} if
\begin{multline}
    \iiintdmt{0}{T}{\bbR}{\Gamma}{\left( u(x,t,y) \, \dt \vphi(x,t,y) + f(u(x,t,y)) \, \dx \vphi(x,t,y) \right) \rho(y)}{y}{x}{t} \\
    \qquad + \iintdm{\bbR}{\Gamma}{u_0(x,y)\vphi(x,0,y)\rho(y)}{y}{x} = 0
\end{multline}
for every real-valued $\vphi \in C^{\infty}_c(\bbR \times [0,T) \times \Gamma)$.
If in addition $u$ satisfies
\begin{equation}
\begin{split}
    &\iiintdmt{0}{T}{\bbR}{\Gamma}{\left( \ell(u(x,t,y)) \dt \vphi(x,t,y) + q(u(x,t,y)) \dx \vphi(x,t,y) \right) \rho(y)}{y}{x}{t} \geq 0
\end{split}
\end{equation}
for every convex function $\ell : \bbR \to \bbR$ that is of class $C^2(\pi_1(\overline{D}))$ with $q : \bbR \to \bbR$ defined as \\ $q(u) = \int^u \ell'(\theta) f'(\theta) \, \mathrm{d} \theta$ and for every $\vphi \in C^{\infty}_c(\bbR \times (0,T) \times \Gamma)$ with $\vphi \geq 0$, and if $u$ satisfies
\begin{equation}
\lim\limits_{T \to 0^+} \frac{1}{T} \iiintdmt{0}{T}{V}{\Gamma}{|u(x,t,y)-u_0(x,y)| \rho(y)}{y}{x}{t} = 0
\end{equation}
for every $V \Subset \bbR$, then we say that $u$ is a \textit{weak entropy solution} of the limiting equation \eqref{eq-ReducedSystemRandom}.
\end{definition}

We call the pair $(\ell,q)$ an \textit{entropy pair} for the equation \eqref{eq-ReducedSystemRandom}. This definition of entropy solution of \eqref{eq-ReducedSystemRandom} guarantees the uniqueness of such a solution:

\begin{theorem}[Uniqueness of Entropy Solutions]\label{thm-Appendix-UniquenessOfEntropySolution}
Suppose $u$ and $\tilde{u}$ are weak entropy solutions of \eqref{eq-ReducedSystemRandom} corresponding to initial data $u_0$, $\tilde{u}_0$ respectively. Then there exists $\alpha > 0$ depending only on $f$ and its derivatives such that for every $m > 0$, $\Gamma_1 \Subset \Gamma$, and $t>0$
\begin{equation}\label{eq-ReducedSystem-L1locStabilityEstimate}
\iintdm{|x|\leq m}{\Gamma_1}{|u(x,t,y)-\tilde{u}(x,t,y)| \rho^2(y)}{y}{x} \leq \iintdm{|x| \leq m + \alpha t}{\Gamma_1}{|u_0(x,y)-\tilde{u}_0(x,y)| \rho^2(y)}{y}{x}\,.
\end{equation}
In particular, for any $u_0 \in L^{\infty}(\bbR \times \Gamma)$ there exists at most one entropy solution to \eqref{eq-ReducedSystemRandom} with initial data $u_0$.
\end{theorem}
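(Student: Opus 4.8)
The plan is to prove \eqref{eq-ReducedSystem-L1locStabilityEstimate} by Kru\v{z}kov's doubling-of-variables method, with the parameter $y$ doubled alongside $x$ and $t$; the uniqueness assertion is then immediate. First I would record two preliminary reductions. The quantity $\alpha$, an upper bound for $|f'|$ on the range of $u$ and $\tilde u$, is finite (by the $L^{\infty}$ hypotheses together with the maximum principle contained in the entropy inequalities, which confines $u,\tilde u$ to a fixed bounded subset of $\pi_1(\overline D)$) and is the finite propagation speed appearing in the estimate. Next, approximating $s\mapsto|s-k|$ by $C^{2}$ convex functions and using the corresponding fluxes, the entropy inequality in Definition~\ref{def-EntropySolution-ReducedSystem} yields the Kru\v{z}kov form: for every $k\in\bbR$ and every $0\le\varphi\in C^{\infty}_c(\bbR\times(0,T)\times\Gamma)$,
\[
\iiintdmt{0}{T}{\bbR}{\Gamma}{\big(|u-k|\,\dt\varphi+\sgn(u-k)\,(f(u)-f(k))\,\dx\varphi\big)\rho(y)}{y}{x}{t}\ge 0,
\]
and the analogous inequality for $\tilde u$.

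Then I would double the variables. Fix $0\le\Phi\in C^{\infty}_c(\bbR\times(0,T)\times\Gamma)$, mollifiers $\omega_\delta$ on $\bbR^{2}$ and $\omega_\epsilon$ on $\bbR^{N}$, and set
\[
\varphi_{\delta,\epsilon}(x,t,y;x',t',y'):=\Phi\!\Big(\tfrac{x+x'}{2},\tfrac{t+t'}{2},\tfrac{y+y'}{2}\Big)\,\omega_\delta(x-x')\,\omega_\delta(t-t')\,\omega_\epsilon(y-y').
\]
I would apply the Kru\v{z}kov inequality for $u=u(x,t,y)$ to this function (in the variables $(x,t,y)$) at level $k=\tilde u(x',t',y')$, integrate in $(x',t',y')$ against $\rho(y')\,\mathrm{d}x'\,\mathrm{d}t'\,\mathrm{d}y'$, and add the symmetric inequality obtained by exchanging the roles of $(u;x,t,y)$ and $(\tilde u;x',t',y')$. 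The $\omega_\delta$ and $\omega_\epsilon$ factors cancel between the two inequalities, so the surviving $x$- and $t$-derivatives act on the slowly varying factor $\Phi$ only.

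Passing $\delta\to 0$ collapses $x'=x,\ t'=t$ by the classical Lebesgue-differentiation arguments for $L^{\infty}$ functions, and passing $\epsilon\to 0$ collapses $y'=y$ — this last step is the new one, and it is where the product $\rho(y)\rho(y')$ becomes $\rho^{2}(y)$. The result is
\[
\iiintdmt{0}{T}{\bbR}{\Gamma}{\big(|u-\tilde u|\,\dt\Phi+\sgn(u-\tilde u)\,(f(u)-f(\tilde u))\,\dx\Phi\big)\rho^{2}(y)}{y}{x}{t}\ge 0
\]
for all $0\le\Phi\in C^{\infty}_c(\bbR\times(0,T)\times\Gamma)$, with $u=u(x,t,y),\ \tilde u=\tilde u(x,t,y)$. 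Choosing $\Phi(x,t,y)=\psi(y)\,\zeta(x,t)$ with $\psi\in C^{\infty}_c(\Gamma)$ approximating $\mathbf{1}_{\Gamma_1}$ (this is why $\Gamma_1\Subset\Gamma$ is needed) and $\zeta$ the standard truncated-cone test function of slope $\alpha$, the bound $|\sgn(u-\tilde u)(f(u)-f(\tilde u))|\le\alpha|u-\tilde u|$ together with $\dt\zeta+\alpha|\dx\zeta|\le 0$ inside the cone absorbs the flux term and leaves a differential inequality for $t\mapsto\iintdm{|x|\le m+\alpha(t^{*}-t)}{\Gamma_1}{|u-\tilde u|\rho^{2}(y)}{y}{x}$; the $t=0$ contribution is controlled by the averaged initial condition in Definition~\ref{def-EntropySolution-ReducedSystem}, which produces exactly the right-hand side of \eqref{eq-ReducedSystem-L1locStabilityEstimate}. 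Taking $u_0=\tilde u_0$ makes that right-hand side vanish, and since $m,\Gamma_1,t$ are arbitrary and $\{\rho^{2}=0\}=\{\rho=0\}$, we obtain $u=\tilde u$ $\rho\,\mathrm{d}x\,\mathrm{d}y\,\mathrm{d}t$-almost everywhere, i.e.\ uniqueness.

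I expect the main obstacle to be the rigorous limit passage in the doubled (indeed tripled) variables for merely bounded solutions: keeping the classical space-time mollifier estimates under control while simultaneously performing the $y$-mollification, verifying that precisely the weight $\rho^{2}$ survives in the limit, and replacing the (nonexistent) trace at $t=0$ by the averaged $L^{1}$ condition. As a sanity check and possible alternative route, since \eqref{eq-ReducedSystemRandom} carries no $y$-derivatives one may instead test with $\varphi(x,t,y)=\phi(x,t)\psi(y)$ to conclude that $u(\cdot,\cdot,y)$ and $\tilde u(\cdot,\cdot,y)$ are deterministic Kru\v{z}kov entropy solutions for $\rho\,\mathrm{d}y$-almost every $y$, apply the classical scalar $L^{1}_{\mathrm{loc}}$ stability estimate pointwise in $y$, and integrate it against $\rho^{2}(y)\,\mathrm{d}y$ over $\Gamma_1$.
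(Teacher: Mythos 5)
Your proposal is correct and follows essentially the same route as the paper's proof in Appendix~\ref{apdx-Uniqueness}: Kru\v{z}kov doubling of variables including the parameter $y$, leading to the $\rho^2$ weight; truncated-cone test functions with slope $\alpha = \sup|f'|$ on the solution range; and the averaged $L^1$ initial-time condition (combined with a.e.-time $L^1_{\mathrm{loc}}$ continuity \`a la Dafermos) to supply the $t=0$ contribution. The only cosmetic differences are that you split the mollifier into separate $(x,t)$- and $y$-scales where the paper uses a single scale $\delta$, and you smooth $\chi_{\Gamma_1}$ where the paper writes it directly; neither changes the substance. One small caution about your ``sanity check'' alternative: deducing that $u(\cdot,\cdot,y)$ is a deterministic Kru\v{z}kov solution for $\rho\,dy$-a.e.\ $y$ from the entropy inequalities is fine via a countable-family argument, but the averaged initial condition in Definition~\ref{def-EntropySolution-ReducedSystem} is an integral statement jointly in $(x,y)$, and extracting a pointwise-in-$y$ initial trace from it is not immediate (Fatou only gives a $\liminf$); the doubling route you actually carry out sidesteps this.
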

\noindent See Appendix \ref{apdx-Uniqueness} for the proof. Note the density $\rho^2$ rather than $\rho$; the proof follows the ``doubling of variables" argument in \cite{Kruzkov}, of which the presence of $\rho^2$ is a direct consequence.

These definitions are an appropriate generalization of entropy solutions for systems of conservation laws;  note that if the initial data is deterministic (i.e. independent of $y$), then Definitions \ref{def-2x2Solution} and \ref{def-EntropySolution-ReducedSystem} coincide with solution definitions found throughout the literature.

\section{Justification of Equilibrium Limit}\label{sec-Justification}
The main goal of this section is to prove the stochastic analogue of \cite[Theorem 4.1]{CLL}.
There are two tasks to be undertaken before the result of convergence is stated and proven. The first is the application of the program of compensated compactness to our parametrized system of hyperbolic conservation laws. See \cite{Dafermos, YunguangLuBook} for a complete picture of this theory in the nonparametrized case. The second is to extend convex functions on $\pi_1(D)$ to convex entropy for the system \eqref{eq-FullSystemRandom}.
Care must be taken with the density $\rho$; we generally treat it as part of the sequence of functions, rather than as a weight in function spaces, but its role will vary from theorem to theorem.

\subsection{Compactness Tools}
\label{subsec-compactness}

\begin{theorem}[Continuity of a $2 \times 2$ Determinant]
\label{thm-ContinuityOfDeterminant}
Let $G \subset \bbR \times (0,\infty) \times \Gamma$ be a bounded open set and suppose $\Phi^{\veps} = (\Phi_1^{\veps}, \Phi_2^{\veps}, \Phi_3^{\veps}, \Phi_4^{\veps}) : G \to \bbR^4$ and $\Phi = (\Phi_1, \Phi_2, \Phi_3, \Phi_4) : G \to \bbR^4$ are vector-valued functions satisfying
$
\Phi^{\veps} \rightharpoonup \Phi\,, \quad \text{ in } \big[ L^2(G) \big]^4
$
and both
\begin{equation} 
\dt \Phi^{\veps}_1 + \dx \Phi^{\veps}_2 \quad \text{and} \quad \dt \Phi^{\veps}_3 + \dx \Phi^{\veps}_4 \quad \text{ are compact in the strong topology of } W^{-1,2}_{loc}(G)\,.
\end{equation} 
Then there exists a subsequence (still denoted $\Phi^{\veps}$) such that
\begin{equation}\det
    \begin{bmatrix}
    \Phi^{\veps}_1 & \Phi^{\veps}_2 \\
    \Phi^{\veps}_3 & \Phi^{\veps}_4 \\
    \end{bmatrix}
\rightharpoonup \det
    \begin{bmatrix}
    \Phi_1 & \Phi_2 \\
    \Phi_3 & \Phi_4 \\
    \end{bmatrix}
\end{equation}
in the sense of distributions.
\end{theorem}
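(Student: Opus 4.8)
\emph{Proof strategy.} The plan is to reduce the claim to the compensated compactness (Div--Curl) lemma of Murat and Tartar by recognising the $2\times2$ determinant as a scalar product. Set $A^\veps:=(\Phi_1^\veps,\Phi_2^\veps)$ and $B^\veps:=(-\Phi_4^\veps,\Phi_3^\veps)$, viewed as $\bbR^2$-valued functions of $(t,x)$ with $y$ a passive parameter; then
\[
\det\begin{bmatrix}\Phi_1^\veps & \Phi_2^\veps\\ \Phi_3^\veps & \Phi_4^\veps\end{bmatrix}=\Phi_1^\veps\Phi_4^\veps-\Phi_2^\veps\Phi_3^\veps=-A^\veps\cdot B^\veps ,
\]
while the two hypotheses say precisely that $\mathrm{div}_{t,x}A^\veps=\dt\Phi_1^\veps+\dx\Phi_2^\veps$ and $\mathrm{curl}_{t,x}B^\veps=\dt\Phi_3^\veps+\dx\Phi_4^\veps$ are precompact in $W^{-1,2}_{loc}(G)$. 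So it suffices to prove, along a subsequence, that $A^\veps\cdot B^\veps\rightharpoonup A\cdot B$ in $\cD'(G)$, with $A=(\Phi_1,\Phi_2)$ and $B=(-\Phi_4,\Phi_3)$.

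For the Div--Curl statement I would run the Hodge/Fourier argument, carrying the parameter along. First I would localise by a cutoff $\zeta\in C_c^\infty$ equal to $1$ near the support of a given test function; since this changes $\mathrm{div}_{t,x}A^\veps$ and $\mathrm{curl}_{t,x}B^\veps$ only by $L^2$-bounded terms, which are precompact in $W^{-1,2}_{loc}$ by Rellich's theorem, one may assume the fields globally defined and compactly supported. Subtracting the (fixed $L^2$) weak limits, and using that a weakly $L^2$-convergent sequence times a fixed $L^2$ function converges in $\cD'$, I would reduce to $A=B=0$: so $A^\veps\rightharpoonup0$, $B^\veps\rightharpoonup0$ in $L^2$ with $\mathrm{div}_{t,x}A^\veps\to0$ and $\mathrm{curl}_{t,x}B^\veps\to0$ in $W^{-1,2}_{loc}$, the goal being $\int\zeta\,A^\veps\cdot B^\veps\to0$. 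Then, after a Fourier transform in $(t,x,y)$ and writing $\omega$ for the frequency vector dual to $(t,x)$, I would decompose $A^\veps=\nabla_{t,x}p^\veps+R_A^\veps$, where $\widehat{\nabla_{t,x}p^\veps}$ is the projection of $\widehat{A^\veps}$ onto $\mathrm{span}(\omega)$; here $R_A^\veps$ is $(t,x)$-divergence-free and bounded in $L^2$, while $\nabla_{t,x}p^\veps\to0$ in $L^2$ because this projection composed with the inverse $(t,x)$-Laplacian gains one $(t,x)$-derivative on $\mathrm{div}_{t,x}A^\veps$. Decomposing $B^\veps=\nabla_{t,x}s^\veps+R_B^\veps$ analogously, with $\nabla_{t,x}s^\veps$ curl-free and bounded and $R_B^\veps\to0$ in $L^2$, the three terms of the expansion of $A^\veps\cdot B^\veps$ containing $\nabla_{t,x}p^\veps$ or $R_B^\veps$ vanish (a strongly null factor against a bounded one), and the surviving term $\int\zeta\,R_A^\veps\cdot\nabla_{t,x}s^\veps$, after an integration by parts using $\mathrm{div}_{t,x}R_A^\veps=0$, equals $-\int(\nabla_{t,x}\zeta\cdot R_A^\veps)\,s^\veps$, which tends to $0$ since $s^\veps\to0$ in $L^2_{loc}$ (bounded in $H^1_{loc}$ modulo constants, hence locally precompact, with $\nabla_{t,x}s^\veps\rightharpoonup0$) while $R_A^\veps$ stays bounded in $L^2$.

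The part I expect to be the main obstacle is the bookkeeping of the parameter $y$. The divergence/curl conditions and all the Fourier multipliers above involve only the $(t,x)$-frequencies: the projection onto $\mathrm{span}(\omega)$ is bounded by $1$ off the null set $\{\omega=0\}$ and so extends to a bounded $L^2(\bbR^{2+N})$ multiplier, but the derivative gained in the reduction is a $(t,x)$-derivative only. Consequently the implication ``$\mathrm{div}_{t,x}A^\veps\to0$ in $W^{-1,2}$'' $\Rightarrow$ ``$\nabla_{t,x}p^\veps\to0$ in $L^2$'' requires the $W^{-1,2}_{loc}$-smallness in $(t,x)$ to hold uniformly in $y$, which is the form in which it arises from the entropy estimates used later. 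Read this way, an equivalent and perhaps more transparent route is to freeze $y$, apply the classical two-variable Div--Curl lemma for a.e.\ $y$, and then verify joint measurability of the pointwise-in-$y$ limit by extracting, through a diagonal argument over a countable dense family of test functions, a single subsequence along which convergence holds for a.e.\ $y$.
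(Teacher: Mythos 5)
Your reduction to the Murat--Tartar div--curl lemma via $A^\veps:=(\Phi_1^\veps,\Phi_2^\veps)$, $B^\veps:=(-\Phi_4^\veps,\Phi_3^\veps)$ is the standard route, and it is the route the paper implicitly takes: the paper gives no proof here, only a reference to \cite[Theorem~2.1.4]{YunguangLuBook}. But the difficulty you flag with the parameter $y$ is not bookkeeping --- it is a genuine gap, and in fact the conclusion cannot be obtained from the stated hypotheses alone. In Fourier variables with $\omega$ dual to $(t,x)$ and $\kappa$ dual to $y$, the Hodge step needs $\widehat{g^\veps}/|\omega|\to 0$ in $L^2$ for $g^\veps:=\dt\Phi_1^\veps+\dx\Phi_2^\veps$, whereas ``precompact in $W^{-1,2}_{\mathrm{loc}}(G)$'' only delivers $\widehat{g^\veps}\,(1+|\omega|^2+|\kappa|^2)^{-1/2}\to 0$; these are not comparable on the cone $|\omega|\ll|\kappa|$, i.e.\ for oscillations purely in $y$. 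Concretely, on $G=(0,1)^2\times(0,1)^N$ set $\Phi_1^\veps=\Phi_4^\veps=\sin(y_1/\veps)$ and $\Phi_2^\veps=\Phi_3^\veps=0$: then $\Phi^\veps\rightharpoonup 0$ in $[L^2(G)]^4$, both $(t,x)$-divergences vanish identically (so the compactness hypothesis is trivially satisfied), yet the determinant equals $\sin^2(y_1/\veps)\rightharpoonup\tfrac12\ne 0$.

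Neither of your proposed repairs closes this. ``$W^{-1,2}_{\mathrm{loc}}$-smallness in $(t,x)$ uniformly in $y$'' is exactly the missing anisotropic hypothesis, but it is not part of the statement, so invoking it proves a different theorem; and freezing $y$ fails one step earlier, because $\Phi^\veps\rightharpoonup\Phi$ in $L^2(G)$ does not imply $\Phi^\veps(\cdot,\cdot,y)\rightharpoonup\Phi(\cdot,\cdot,y)$ in $L^2$ of the $(t,x)$-slice for a.e.\ $y$, nor does $W^{-1,2}_{\mathrm{loc}}(G)$-precompactness of $g^\veps$ give slice-wise precompactness (the example above defeats both). The version that is actually provable --- and the one the estimates in Step~2 of the proof of Theorem~\ref{thm-Theorem4.1} in fact establish, since they pair the residuals only against $\dt\vphi$ and $\dx\vphi$, never against $\partial_y\vphi$ --- is the anisotropic one, with $\dt\Phi_i^\veps+\dx\Phi_{i+1}^\veps$ precompact in $L^2_{\mathrm{loc}}\left(\Gamma;\ W^{-1,2}_{\mathrm{loc}}(\bbR\times(0,\infty))\right)$. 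Under that hypothesis your Fourier argument goes through as written; under the hypothesis as stated in the theorem it does not, and no argument can.
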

The proof follows from a special case of \cite[Theorem~2.1.4]{YunguangLuBook}.

\begin{theorem}[Interpolation Result]\label{thm-CompactnessInterpolation}
Let $G  \subset \bbR \times (0,\infty) \times \Gamma$ be an open bounded set, and let $q_1$, $q_2$, $q_3$ be constants satisfying $1<q_1 \leq q_2 < q_3 < \infty$. Let $\cA \subset C^{\infty}_c(G)$ such that
\begin{equation} 
\cA \subset \big( \text{compact subset of } W^{-1,q_1}_{loc}(G) \big) \cap \big( \text{bounded subset of } W^{-1,q_3}_{loc}(G) \big)\,.
\end{equation}
Then there exists $\cB$ a compact set of $W^{-1,q_2}_{loc}(G)$ such that $\cA \subset \cB$.
\end{theorem}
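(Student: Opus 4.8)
The plan is to deduce the statement from a single quantitative interpolation inequality for negative Sobolev norms, after reducing everything to the full space $\bbR^{n}$ (with $n := N+2$, the ambient dimension of $\bbR\times(0,\infty)\times\Gamma$) by means of cutoff functions. First I would record the structural reduction: $W^{-1,q_2}_{loc}(G)$ is a Fréchet (hence metrizable) space, its topology generated by the countable family of seminorms $\Vnorm{\cdot}_{W^{-1,q_2}(V_j)}$ associated with an exhaustion $V_1 \Subset V_2 \Subset \cdots$ of $G$ by relatively compact open sets. Therefore it is enough to show that every sequence $\{f_k\} \subset \cA$ has a subsequence converging in $W^{-1,q_2}_{loc}(G)$; one then takes $\cB$ to be the closure of $\cA$ in this topology, which is sequentially compact, hence compact, and contains $\cA$. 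Since $\cA$ lies in a compact subset of $W^{-1,q_1}_{loc}(G)$, I first extract a subsequence (still denoted $f_k$) that is Cauchy in $W^{-1,q_1}_{loc}(G)$; it remains only to upgrade this to being Cauchy in $W^{-1,q_2}_{loc}(G)$.

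The heart of the argument is the estimate: for $1<q_1\le q_2<q_3<\infty$ and $\theta\in(0,1]$ determined by $\tfrac{1}{q_2}=\tfrac{\theta}{q_1}+\tfrac{1-\theta}{q_3}$, there is a constant $C=C(n,q_1,q_2,q_3)$ such that
\[
\Vnorm{g}_{W^{-1,q_2}(\bbR^n)} \;\le\; C\,\Vnorm{g}_{W^{-1,q_1}(\bbR^n)}^{\theta}\,\Vnorm{g}_{W^{-1,q_3}(\bbR^n)}^{1-\theta}, \qquad g \in W^{-1,q_1}(\bbR^n)\cap W^{-1,q_3}(\bbR^n).
\]
Granting this, I fix $V\Subset G$ together with a cutoff $\psi\in C^{\infty}_c(G)$ with $\psi\equiv 1$ on $V$ and $\supp\psi\subset V'\Subset G$, and apply the inequality to $g=\psi(f_k-f_\ell)$, extended by zero to $\bbR^n$. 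Three routine facts are used: multiplication by the fixed function $\psi$ is bounded on $W^{-1,q}(\bbR^n)$ for every $q\in(1,\infty)$ (write $g=g_0+\sum_i\partial_i g_i$ with $g_j\in L^q$ and redistribute the derivatives off $\psi$); the dual norm on $W^{-1,q}(V)$ is dominated by the full-space norm of $\psi(\cdot)$, since a test function supported in $V$ is not seen by $\psi-1$; and $\cA$ bounded in $W^{-1,q_3}_{loc}(G)$ means $\sup_{f\in\cA}\Vnorm{f}_{W^{-1,q_3}(V')}<\infty$. Hence $\Vnorm{f_k-f_\ell}_{W^{-1,q_2}(V)}\le\Vnorm{g}_{W^{-1,q_2}(\bbR^n)}$, with $\Vnorm{g}_{W^{-1,q_1}(\bbR^n)}\le C(\psi)\Vnorm{f_k-f_\ell}_{W^{-1,q_1}(V')}\to 0$ and $\Vnorm{g}_{W^{-1,q_3}(\bbR^n)}\le C(\psi)\sup_{f\in\cA}\Vnorm{f}_{W^{-1,q_3}(V')}<\infty$. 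Thus $\{f_k\}$ is Cauchy in $W^{-1,q_2}(V)$, and since $V$ was an arbitrary element of the exhaustion, $\{f_k\}$ converges in $W^{-1,q_2}_{loc}(G)$.

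It remains to prove the displayed interpolation inequality, which is where the genuine obstacle lies: $W^{-1,q}(\bbR^n)$ has no direct Fourier description for $q\ne 2$, so a naive Plancherel interpolation is unavailable. I would route through the Bessel potential space: for $1<q<\infty$, the Calderón–Zygmund / Mikhlin multiplier theorem gives $W^{-1,q}(\bbR^n)=(I-\Delta)^{1/2}L^q(\bbR^n)$ with $\Vnorm{g}_{W^{-1,q}(\bbR^n)}\simeq \Vnorm{(I-\Delta)^{-1/2}g}_{L^q(\bbR^n)}$. Setting $h:=(I-\Delta)^{-1/2}g$, the hypothesis gives $h\in L^{q_1}(\bbR^n)\cap L^{q_3}(\bbR^n)$, and the elementary log-convexity of $L^p$-norms (Hölder's inequality) yields $\Vnorm{h}_{L^{q_2}}\le\Vnorm{h}_{L^{q_1}}^{\theta}\Vnorm{h}_{L^{q_3}}^{1-\theta}$; chaining the norm equivalences back through $g=(I-\Delta)^{1/2}h$ gives the claim. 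Note that the two multiplier identifications require exactly $q_1>1$ and $q_3<\infty$, which are among the standing hypotheses on the exponents.

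I expect the main obstacle to be precisely this identification $W^{-1,q}=\mathcal{H}^{-1,q}$ and the singular-integral estimates behind it (for $q=2$ the whole argument collapses to Plancherel and is trivial); the cutoff and exhaustion bookkeeping in the second paragraph is routine but must be carried out carefully because $G$ is an arbitrary bounded open set with no regularity assumed on $\partial G$.
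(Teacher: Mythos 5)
Your argument is correct and, as far as can be judged, matches the standard route: the paper does not give a proof of this lemma, deferring to \cite[Theorem~2.3.2]{YunguangLuBook}, and the proof there (like essentially all proofs of Tartar's interpolation lemma in the compensated-compactness literature) rests on exactly the quantitative inequality $\Vnorm{g}_{W^{-1,q_2}} \le C \Vnorm{g}_{W^{-1,q_1}}^{\theta} \Vnorm{g}_{W^{-1,q_3}}^{1-\theta}$ together with a cutoff/exhaustion reduction, which is precisely your scaffolding. Your way of obtaining the interpolation inequality, via $W^{-1,q}(\bbR^n) \cong (I-\Delta)^{1/2}L^q(\bbR^n)$ (Mikhlin) and log-convexity of $L^p$ norms, is one of the two standard derivations (the other being abstract complex interpolation between $W^{-1,q_1}$ and $W^{-1,q_3}$), and it correctly isolates the role of the endpoint restrictions $q_1>1$, $q_3<\infty$. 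The localization bookkeeping is handled correctly: the inequality $\Vnorm{f_k-f_\ell}_{W^{-1,q_2}(V)} \le \Vnorm{\psi(f_k-f_\ell)}_{W^{-1,q_2}(\bbR^n)}$ uses only $\psi\equiv 1$ on $V$ and $\supp\vphi\subset V$ for test functions, the bound $\Vnorm{\psi h}_{W^{-1,q}(\bbR^n)}\le C(\psi)\Vnorm{h}_{W^{-1,q}(V')}$ follows by duality from $\Vnorm{\psi\vphi}_{W^{1,q'}_0(V')}\le C(\psi)\Vnorm{\vphi}_{W^{1,q'}(\bbR^n)}$, and the degenerate case $q_1=q_2$ (so $\theta=1$) is absorbed harmlessly. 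One cosmetic remark: the hypothesis $\cA\subset C^{\infty}_c(G)$ makes the cutoff products literal smooth functions and is therefore a convenience rather than a necessity in your write-up, which is consistent with how the lemma is typically stated in this literature.
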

The proof is a special case of \cite[Theorem~2.3.2]{YunguangLuBook}.

\begin{theorem}[Murat Lemma]\label{thm-MuratTheorem}
Suppose $G \subset \bbR \times (0,\infty) \times \Gamma$ is an open set, $1<q_1<\infty$, $q_1' = \frac{q_1}{q_1 -1}$. Suppose a sequence $\{ \Phi^{\veps} \} \subset W^{-1,q_1}(G)$ satisfies
$
\Phi^{\veps} \rightharpoonup \Phi^0 \text{ weakly in } W^{-1,q_1}(G) \text{ as } \veps \to 0\,,
$
and
$\Phi^{\veps} \geq 0$
in the sense of distributions, i.e., for all $\vphi \in C^{\infty}_c(G)$, $\vphi \geq 0$ we have
\begin{equation} 
\Vint{\Phi^{\veps},\vphi}_{W^{-1,q_1},W^{1,q_1'}_0} \leq 0\,.
\end{equation}
Then
$
\Phi^{\veps} \to \Phi^0 \text{ strongly in } W^{-1,q_2}_{loc}(G) \text{ as } \veps \to 0, \quad \forall \, q_2 < q_1\,.
$
\end{theorem}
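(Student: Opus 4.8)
The plan is to exploit the sign condition through three classical facts: a nonnegative distribution is a nonnegative Radon measure; such a measure, when of uniformly bounded mass on compact sets, lies in a compact subset of the negative Sobolev spaces $W^{-1,s}_{loc}$ for subcritical $s$; and one may then interpolate up to any exponent below $q_1$ via Theorem~\ref{thm-CompactnessInterpolation}. Throughout, write $d := N+2$ for the dimension of the Euclidean space containing $G$, so that $d \geq 3$ and the critical Sobolev exponent satisfies $1 < \tfrac{d}{d-1} \leq \tfrac32$. First, since $\Phi^{\veps} \rightharpoonup \Phi^0$ weakly in the Banach space $W^{-1,q_1}(G)$, the uniform boundedness principle produces $M<\infty$ with $\|\Phi^{\veps}\|_{W^{-1,q_1}(G)} \leq M$ for all $\veps$; in particular $\{\Phi^{\veps}\}$ is bounded in $W^{-1,q_1}_{loc}(G)$.

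Next, by hypothesis each $\Phi^{\veps}$ is a nonnegative distribution, hence (Riesz representation) a nonnegative Radon measure on $G$. Fix $\Omega \Subset G$ and $\psi \in C^{\infty}_c(G)$ with $0 \leq \psi \leq 1$ and $\psi \equiv 1$ on $\Omega$; then $\psi \in W^{1,q_1'}_0(G)$ and $\psi \geq 0$, so
\begin{equation*}
\Phi^{\veps}(\Omega) \;\leq\; \int_G \psi \, \mathrm{d}\Phi^{\veps} \;=\; \langle \Phi^{\veps},\psi \rangle_{W^{-1,q_1},\, W^{1,q_1'}_0} \;\leq\; M \, \|\psi\|_{W^{1,q_1'}_0(G)}\,,
\end{equation*}
uniformly in $\veps$. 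Hence on every $\Omega \Subset G$ the family $\{\Phi^{\veps}\}$ is bounded in the space of finite Radon measures $\mathcal{M}(\overline{\Omega}) = C(\overline{\Omega})^{*}$.

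Now fix $s$ with $1 < s < \tfrac{d}{d-1}$, equivalently $s' > d$. Morrey's inequality gives a bounded embedding $W^{1,s'}_0(\Omega) \hookrightarrow C^{0,1-d/s'}(\overline{\Omega})$, and $C^{0,\alpha}(\overline{\Omega})$ embeds compactly into $C(\overline{\Omega})$ by the Arzel\`a--Ascoli theorem; composing, $W^{1,s'}_0(\Omega) \hookrightarrow C(\overline{\Omega})$ is compact, so its adjoint, the natural inclusion $\mathcal{M}(\overline{\Omega}) \hookrightarrow W^{-1,s}(\Omega)$, is compact. Together with the uniform mass bound, $\{\Phi^{\veps}\}$ lies in a compact subset of $W^{-1,s}_{loc}(G)$. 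To reach an arbitrary $q_2 < q_1$: if $q_2 \leq \tfrac{d}{d-1}$, take $s = q_2$ and this is already the claim; otherwise choose $s$ as above with $s \leq q_2 < q_1$, so that $\{\Phi^{\veps}\}$ is contained in a compact subset of $W^{-1,s}_{loc}(G)$ and (by the first paragraph) in a bounded subset of $W^{-1,q_1}_{loc}(G)$, and apply Theorem~\ref{thm-CompactnessInterpolation} with its three exponents taken to be $(s,q_2,q_1)$ to obtain a compact subset of $W^{-1,q_2}_{loc}(G)$. Finally, both strong convergence in $W^{-1,q_2}_{loc}(G)$ and weak convergence in $W^{-1,q_1}(G)$ imply convergence in $\mathcal{D}'(G)$, so any $W^{-1,q_2}_{loc}$-limit of a subsequence of $\{\Phi^{\veps}\}$ equals $\Phi^0$; by the usual subsequence argument the whole sequence converges, $\Phi^{\veps} \to \Phi^0$ strongly in $W^{-1,q_2}_{loc}(G)$.

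The step I expect to be the main obstacle is the compactness assertion for measures: one must arrange the duality so that the uniformly bounded family of nonnegative measures from the second paragraph lands in a \emph{compact} rather than merely bounded subset of $W^{-1,s}_{loc}$, which is precisely where the constraint $s < d/(d-1)$ is used and where $d \geq 3$ is needed to keep this range nonempty. A lesser technical point is that Theorem~\ref{thm-CompactnessInterpolation} is stated for families inside $C^{\infty}_c(G)$, whereas the $\Phi^{\veps}$ are only distributions; this is remedied by mollifying, which is harmless because the one structural property used --- nonnegativity --- is stable under convolution against a nonnegative kernel (alternatively, invoke the general form of \cite[Theorem~2.3.2]{YunguangLuBook} directly).
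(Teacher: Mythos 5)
Your proof is correct and follows the classical Murat argument: the sign condition forces each $\Phi^{\veps}$ to be a Radon measure whose total mass on compacts is controlled by the $W^{-1,q_1}$ bound; by duality of Morrey's compact embedding $W^{1,s'}_0(\Omega)\hookrightarrow C(\overline\Omega)$ for $s'>d$, the family is precompact in $W^{-1,s}_{loc}(G)$ for every $s<d/(d-1)$; interpolation (Theorem~\ref{thm-CompactnessInterpolation}) then lifts precompactness to any $q_2<q_1$, and the distributional identification of limits upgrades subsequential to full convergence. The paper supplies no proof of this lemma, deferring to \cite[Theorem~2.3.4]{YunguangLuBook}, which is the same chain of ideas, so your argument matches the intended one; the only spot to tighten is the boundary case $q_2=d/(d-1)$, where $s=q_2$ sits at the critical exponent and the compact embedding fails, but one simply takes $s<q_2$ and interpolates, exactly as you do for $q_2>d/(d-1)$ (and, as you already note, the $C^{\infty}_c$ hypothesis in Theorem~\ref{thm-CompactnessInterpolation} is an artifact of the paper's statement, removed either by mollification or by citing the general form of the interpolation lemma).
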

The proof is a special case of \cite[Theorem~2.3.4]{YunguangLuBook}.

\begin{theorem}[An Application of Compensated Compactness]\label{thm-CompensatedCompactnessFramework}
Let $G \subset \bbR \times (0,\infty) \times \Gamma$ be a bounded open set and let $\ue : G \to \bbR$ be a sequence of functions uniformly bounded in $L^{\infty}(G)$ such that
\begin{equation} 
\ue \rightharpoonupOp^* u\,, \qquad f(\ue) \rightharpoonupOp^* \overline{f}\,,
\end{equation}
where $f \in C^2([-M,M])$, $M = \sup_{\veps} \Vnorm{\ue}_{\infty}$. Here, \,$\rightharpoonupOp^*$ denotes weak-star convergence in $L^{\infty}(G)$. Let $\rho$ be the PDF given in Assumption 1.
Suppose that
\begin{equation}\label{eq-CompensatedCompactnessFramework-CompactnessAssumption}
    \big( \dt h_i(\ue) + \dx j_i (\ue) \big) \rho  \text{ is compact in } W^{-1,2}_{loc}(G)\,, \quad i=1,\,2,
\end{equation}
where
\begin{equation} 
(h_1(\theta),j_1(\theta)) = (\theta - k, f(\theta)-f(k)) \quand (h_2(\theta),j_2(\theta)) = \left( f(\theta)-f(k),\intdmt{k}{\theta}{(f'(s))^2}{s} \right)\,,
\end{equation}
with $k \in \bbR$ an arbitrary constant. Then,
\begin{enumerate}
    \item[1)] $\overline{f} = f(u)$ a.e.
    \item[2)] If in addition
    $
    f''(\theta) \neq 0 \text{ for almost every }  \theta \in [-M,M]\,,
    $
    then there exists a subsequence of the $\ue$ that converge to $u$ a.e.\ in $G$.
\end{enumerate}
\end{theorem}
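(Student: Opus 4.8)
The plan is to adapt the classical Tartar--Murat compensated compactness argument for scalar conservation laws (as in \cite{Tartar,Dafermos,YunguangLuBook}) to the present parametrized setting, treating the density $\rho$ as a fixed extra factor that rides along in the $W^{-1,2}_{loc}$ estimates. The key realization is that $G \subset \bbR \times (0,\infty) \times \Gamma$ is a bounded open set in $\bbR^{1+1+N}$, and the div-curl / Young measure machinery is insensitive to the ambient dimension: we only ever differentiate in $t$ and $x$, so the extra variable $y$ plays the role of a mute parameter, and $\rho(y) \in L^{\infty}(\Gamma)$ simply gets absorbed. First I would associate to the bounded sequence $\ue$ its Young measure $\nu_{(x,t,y)}$ on $[-M,M]$, so that for every $g \in C([-M,M])$ we have $g(\ue) \rightharpoonupOp^* \langle \nu_{(x,t,y)}, g\rangle$ in $L^\infty(G)$; in particular $u(x,t,y) = \langle \nu_{(x,t,y)}, \mathrm{id}\rangle$ and $\overline{f}(x,t,y) = \langle \nu_{(x,t,y)}, f\rangle$.

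Next I would set up the div-curl pairing. Define the two pairs of fields
$\Phi^{\veps}_1 = (h_1(\ue)\rho, j_1(\ue)\rho)$ and $\Phi^{\veps}_2 = (h_2(\ue)\rho, j_2(\ue)\rho)$
as functions on $G \subset \bbR^{1+1+N}$, padded with zeros in the remaining $N$ slots so that the relevant ``divergence'' is $\dt(h_i(\ue)\rho) + \dx(j_i(\ue)\rho)$ and the ``curl'' structure is the one needed by Theorem~\ref{thm-ContinuityOfDeterminant}. By the uniform $L^\infty$ bound these are bounded in $[L^2(G)]^2$ (here the boundedness of $G$ and of $\rho$ is used), and up to a subsequence they converge weakly in $[L^2(G)]^2$ to $(\langle\nu,h_i\rangle\rho, \langle\nu,j_i\rangle\rho)$. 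Hypothesis \eqref{eq-CompensatedCompactnessFramework-CompactnessAssumption} says exactly that $\dt(h_i(\ue)\rho) + \dx(j_i(\ue)\rho)$ is compact in $W^{-1,2}_{loc}(G)$. Applying Theorem~\ref{thm-ContinuityOfDeterminant} with these two pairs gives, along a further subsequence,
\[
\det\begin{bmatrix} h_1(\ue)\rho & j_1(\ue)\rho \\ h_2(\ue)\rho & j_2(\ue)\rho\end{bmatrix} \rightharpoonup \det\begin{bmatrix} \langle\nu,h_1\rangle\rho & \langle\nu,j_1\rangle\rho \\ \langle\nu,h_2\rangle\rho & \langle\nu,j_2\rangle\rho\end{bmatrix}
\]
in $\cD'(G)$. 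Pulling out the common factor $\rho^2$ and identifying the weak-star limit of the left side via the Young measure, this is the statement that for a.e.\ $(x,t,y)$,
\[
\big\langle \nu,\ h_1 j_2 - h_2 j_1 \big\rangle \,\rho^2 \;=\; \big( \langle\nu,h_1\rangle\langle\nu,j_2\rangle - \langle\nu,h_2\rangle\langle\nu,j_1\rangle \big)\,\rho^2\,.
\]
Since $\supp\rho = \overline{\Gamma}$, the factor $\rho^2$ may be cancelled on a full-measure set, leaving the Tartar commutation relation
$\langle\nu, h_1 j_2 - h_2 j_1\rangle = \langle\nu,h_1\rangle\langle\nu,j_2\rangle - \langle\nu,h_2\rangle\langle\nu,j_1\rangle$ for a.e.\ $(x,t,y)$, with $k$ arbitrary.

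From here the argument is purely pointwise in $(x,t,y)$ and is the standard one: with the specific choice $(h_1,j_1)=(\theta-k, f(\theta)-f(k))$ and $(h_2,j_2)=(f(\theta)-f(k),\int_k^\theta (f'(s))^2\,\mathrm{d}s)$, the commutation relation forces (after a short manipulation, differentiating in $k$ and using $h_1 = \theta - k$) that $\langle\nu, (f(\theta)-\overline f)^2\rangle = (\lambda-u)\langle\nu, f'(\theta)(f(\theta)-\overline f)\rangle$-type identities collapse; concretely one recovers $\overline f = f(u)$ a.e., which is conclusion 1). For conclusion 2), under the nondegeneracy $f''\neq 0$ a.e.\ on $[-M,M]$ the commutation identity shows the support of $\nu_{(x,t,y)}$ is a single point for a.e.\ $(x,t,y)$, hence $\nu_{(x,t,y)} = \delta_{u(x,t,y)}$, and a Young-measure argument upgrades weak-star convergence to convergence in measure and then, along a subsequence, to a.e.\ convergence on $G$. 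I would cite \cite{Tartar,Dafermos,YunguangLuBook} for these now-classical pointwise steps rather than reproduce them.

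The main obstacle, and the only place where the parametrized setting genuinely intervenes, is the bookkeeping at the determinant-identification step: one must be careful that the weak-$\star$ limit in $L^\infty(G)$ of the product $h_i(\ue) j_k(\ue)\rho^2$ is $\langle\nu, h_i j_k\rangle\rho^2$ (not $\langle\nu,h_i\rangle\langle\nu,j_k\rangle\rho^2$), which is legitimate because $\rho^2$ is a fixed $L^\infty$ weight independent of $\veps$ and $h_i j_k$ is a fixed continuous function of the single oscillating quantity $\ue$; and then that the cancellation of $\rho^2$ is valid precisely because $\supp\rho = \overline\Gamma$ guarantees $\rho>0$ a.e.\ on $\Gamma$. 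Everything else — the div-curl lemma, the Young measure representation, the final pointwise rigidity — is dimension-agnostic and transfers verbatim from the deterministic theory, since no derivative in $y$ is ever taken. I would also note explicitly that all the subsequence extractions can be arranged along a single $\veps$-subsequence by a standard diagonal argument over a countable dense set of constants $k$.
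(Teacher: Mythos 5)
Your proposal is correct and reaches the same conclusions, but it follows a genuinely different route from the one the paper uses. You build the argument around a Young measure $\nu_{(x,t,y)}$ associated to the bounded sequence $\ue$, cancel the weight $\rho^2$ early (using $\rho>0$ a.e.\ on $\Gamma$) to recover a $\rho$-free Tartar commutation identity, and then invoke the classical pointwise rigidity for scalar fluxes. The paper, by contrast, deliberately avoids Young measures: it applies the weak continuity of the $2\times 2$ determinant (Theorem~\ref{thm-ContinuityOfDeterminant}) to $\Phi^\veps_i = (h_i(\ue)\rho, j_i(\ue)\rho)$ and then works entirely with the ``overline'' weak-limit algebra, expanding both sides of the determinant convergence \eqref{eq-CompensatedCompactnessFramework-ConvergenceOfDeterminant}, canceling common terms, and arriving at the key identity \eqref{eq-CompensatedCompactnessFramework-LeftoverTerms} with $\rho^2$ carried along throughout; the weight only plays a genuine role at the very end, where $\rho\neq 0$ a.e.\ is used to upgrade the inequality on $\iiint_{\{ |\ue-u|>\alpha\}}\rho^2$ into a statement about Lebesgue measure of that set. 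What your approach buys is conceptual transparency: once $\rho^2$ is factored out and cancelled, the argument is verbatim the deterministic Tartar theory with $y$ a mute parameter, which makes crystal clear exactly where (and only where) the non-degeneracy of $\rho$ enters. What the paper's approach buys is that it never needs to invoke the (standard, but not re-proved here) Young measure representation theorem, and it keeps the weight $\rho$ visible through all the intermediate cancellations so that the reader can track how it interacts with the $L^2$ pairings. The two are equivalent in strength; the paper's introduction even flags this choice explicitly, stating a preference for the determinant/weak-limit variant in the presence of $\rho$, though your proposal shows the Young measure route poses no real extra difficulty. Two small cautions for you to clean up: the specific final manipulation should be the Cauchy--Schwarz argument with $k$ replaced by the barycenter $u$ (the paper's estimate \eqref{eq-CompensatedCompactnessFramework-StrictHolder}), rather than ``differentiating in $k$''; and you should make explicit that the determinant theorem gives a $\cD'$-limit which you then identify with the Young-measure weak-$\star$ limit by $L^\infty$ boundedness of the determinant sequence on the bounded set $G$.
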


\begin{proof}
There are two paradigms to consider here: one is using the div-curl lemma of Murat and Tartar \cite{tartar1979compensated} and the other is using the weak continuity of the determinant. We adopt the latter and follow the proof of \cite[Theorem~3.1.1]{YunguangLuBook}. We first must verify the assumptions of Theorem \ref{thm-ContinuityOfDeterminant}. Since $G$ is a bounded set, the weak-$L^2$ limits of $\ue$ and $f(\ue)$ exist. Moreover, $L^2(G) \subseteq L^1(G)$ and so by the definition of weak convergence the weak-$L^2$ limits of $\ue$ and $f(\ue)$ are equal almost everywhere to their $L^{\infty}$ weak-star limits $u$ and $\overline{f}$ respectively.
In addition, $h_i(\ue) \rho$ and $j_i(\ue) \rho$ are bounded in $L^2(G)$ for $i = 1,2$, again since $G$ is a bounded set. Thus there exists a subsequence (still denoted by $\veps$) such that
\begin{equation}
    h_1(\ue) \rho \rightharpoonup \overline{h_1(\ue) \rho}\,, \quad h_2(\ue) \rho \rightharpoonup \overline{h_2(\ue) \rho}\,, \quad 
    j_1(\ue) \rho \rightharpoonup \overline{j_1(\ue) \rho}\,, \quad j_2(\ue) \rho \rightharpoonup \overline{j_2(\ue) \rho}\,,
\end{equation}
weakly in $L^2(G)$.
By the compactness assumption \eqref{eq-CompensatedCompactnessFramework-CompactnessAssumption} we can therefore use the weak continuity of the determinant (Theorem \ref{thm-ContinuityOfDeterminant}) with components $\Phi_1^{\veps} = h_1(\ue) \rho$, $\Phi_2^{\veps} = j_1(\ue) \rho$, $\Phi_3^{\veps} = h_2(\ue) \rho$, $\Phi_4^{\veps} = j_2(\ue) \rho$ to conclude that
\begin{equation}\label{eq-CompensatedCompactnessFramework-ConvergenceOfDeterminant}
\det    
    \begin{bmatrix}
    h_1(\ue) \rho & j_1(\ue) \rho \\
    h_2(\ue) \rho & j_2(\ue) \rho \\
    \end{bmatrix}
    \rightharpoonup \det
    \begin{bmatrix}
    \overline{h_1(\ue) \rho} & \overline{j_1(\ue) \rho} \\
    \overline{h_2(\ue) \rho} & \overline{j_2(\ue) \rho} \\
    \end{bmatrix}    
\end{equation}
weakly in $L^2(G)$. We will examine both sides of the limit and obtain cancellation in some of the terms.

We start with the right-hand side. By the definitions of $h_i$ and $j_i$
\begin{equation}\label{eq-CompensatedCompactnessFramework-IandII}
\begin{split}
\det
\begin{bmatrix}
    \overline{h_1(\ue) \rho} & \overline{j_1(\ue) \rho} \\
    \overline{h_2(\ue) \rho} & \overline{j_2(\ue) \rho} \\
\end{bmatrix}
&=
\left(\overline{\rho (\ue-k)} \right) \left( \overline{ \rho \intdmt{k}{\ue}{(f'(s))^2}{s}} \right) - \left( \overline{\rho (f(\ue)-f(k))} \right)^2 := \mathrm{I} - \mathrm{II}.
\end{split}
\end{equation}
We expand $\mathrm{I}$ as follows:
\begin{equation}\label{eq-CompensatedCompactnessFramework-SplittingOfI}
\begin{split}
    \mathrm{I} &= \left( \overline{\rho(\ue-k)} \right) \left( \overline{\rho \intdmt{k}{\ue}{(f'(s))^2}{s}} \right) = \left( \overline{\rho(\ue-k)} \right) \left( \overline{\rho \intdmt{u}{\ue}{(f'(s))^2}{s}} + \rho \intdmt{k}{u}{(f'(s))^2}{s} \right)  \\
    &= \left( \overline{\rho(\ue-u+u-k)} \right) \left( \overline{\rho \intdmt{u}{\ue}{(f'(s))^2}{s}} \right) + \left( \overline{\rho(\ue-k)} \right) \left( \rho \intdmt{k}{u}{(f'(s))^2}{s} \right)\\
    &= \left( \overline{\rho (\ue - u)} \right) \left( \overline{\rho \intdmt{u}{\ue}{(f'(s))^2}{s}} \right) + \left( \rho (u-k) \right) \left( \overline{\rho \intdmt{u}{\ue}{(f'(s))^2}{s}} \right) + \left( \overline{\rho (\ue - k)} \right) \left( \rho\intdmt{k}{u}{(f'(s))^2}{s} \right) \\
    &= \left( \rho (u-k) \right) \left( \overline{\rho \intdmt{u}{\ue}{(f'(s))^2}{s}} \right)  + \left( \overline{\rho (\ue - k)} \right) \left( \rho \intdmt{k}{u}{(f'(s))^2}{s} \right)\,,
\end{split}
\end{equation}
where the final equality follows from the fact that
\begin{equation} 
\overline{\rho(\ue-u)} = \overline{\ue \rho - u \rho} = \iiint\limits_{G}{(\ue \rho - u \rho) \underbrace{\vphi}_{\in L^2(G)}} \, \rmd y \, \rmd x \, \rmd t = \iiint\limits_G{(\ue - u) \underbrace{\rho \vphi}_{\in L^1(G)}}\, \rmd y \rmd x \rmd t = 0\,.
\end{equation}
Meanwhile
\begin{equation}
\begin{split}\label{eq-CompensatedCompactnessFramework-SplittingOfII}
    \mathrm{II} &= \left( \overline{\rho (f(\ue)-f(u) + f(u) - f(k))} \right)^2 \\
    &= \left( \overline{\rho(f(\ue)-f(u))} \right)^2 + \rho^2 (f(u)-f(k))^2 + 2  \left( \overline{\rho (f(\ue)-f(u))} \right) \rho (f(u)-f(k))\,.
\end{split}
\end{equation}
Substituting \eqref{eq-CompensatedCompactnessFramework-SplittingOfI} and \eqref{eq-CompensatedCompactnessFramework-SplittingOfII} into \eqref{eq-CompensatedCompactnessFramework-IandII} gives
\begin{equation}\label{eq-CompensatedCompactnessFramework-ProofRHS}
\begin{split}
\det
	\begin{bmatrix}
    \overline{h_1(\ue) \rho} & \overline{j_1(\ue) \rho} \\
    \overline{h_2(\ue) \rho} & \overline{j_2(\ue) \rho} \\
    \end{bmatrix} &= \left( \overline{\rho (\ue - k)} \right) \left( \rho \intdmt{k}{u}{(f'(s))^2}{s} \right) - \left( \overline{\rho(f(\ue)-f(u))} \right)^2 \\
    &\qquad -  \rho^2 (f(u)-f(k))^2 - 2  \left( \overline{\rho (f(\ue)-f(u))} \right) \rho (f(u)-f(k))\,.
\end{split}
\end{equation}

We now expand the left-hand side of \eqref{eq-CompensatedCompactnessFramework-ConvergenceOfDeterminant}:
\begin{equation}\label{eq-CompensatedCompactnessFramework-ProofLHS}
\begin{split}
\det&
\begin{bmatrix}
    h_1(\ue) \rho & j_1(\ue) \rho \\
    h_2(\ue) \rho & j_2(\ue) \rho \\
\end{bmatrix}
= \rho^2 (\ue - k) \intdmt{k}{\ue}{(f'(s))^2}{s} - \rho^2 (f(\ue)-f(k))^2 \\
	&= \rho^2 \left( (\ue - k) \left( \intdmt{k}{u}{(f'(s))^2}{s} + \intdmt{u}{\ue}{(f'(s))^2}{s} \right) - (f(\ue)-f(u)+f(u)-f(k))^2 \right)\\
	&= \rho^2 \left( (\ue - u) \intdmt{u}{\ue}{(f'(s))^2}{s} + (u-k) \intdmt{u}{\ue}{(f'(s))^2}{s} + (\ue - k)\intdmt{k}{u}{(f'(s))^2}{s} \right) \\
	& \qquad - \rho^2 \left( (f(\ue)-f(u))^2 + 2(f(\ue)-f(u))(f(u)-f(k)) + (f(u)-f(k))^2 \right)\,.
\end{split}
\end{equation}
We then take the weak limit in \eqref{eq-CompensatedCompactnessFramework-ProofLHS} and cancel out common terms from \eqref{eq-CompensatedCompactnessFramework-ProofRHS}. The result is
\begin{equation}\label{eq-CompensatedCompactnessFramework-LeftoverTerms}
    \overline{\rho^2 \left( (\ue-u) \intdmt{u}{\ue}{(f'(s))^2}{s} - (f(\ue)-f(u))^2 \right)} = - \left( \overline{\rho (f(\ue)-f(u))} \right)^2 \leq 0\,.
\end{equation}
We now continue with the argument in \cite{YunguangLuBook} but provide further details here for the reader. By H\"older's inequality, for any $u$, $v \in [-M,M]$
\begin{equation}\label{eq-CompensatedCompactnessFramework-StrictHolder}
(f(v)-f(u))^2 = \left( \intdmt{u}{v}{f'(s)}{s} \right)^2 \leq (v-u) \intdmt{u}{v}{(f'(s))^2}{s} \,.
\end{equation}
Thus the left-hand side of \eqref{eq-CompensatedCompactnessFramework-LeftoverTerms} is nonnegative, and therefore
\begin{equation}\label{eq-CompensatedCompactnessFramework-Equality1}
    \overline{\rho^2 \left( (\ue-u) \intdmt{u}{\ue}{(f'(s))^2}{s} - (f(\ue)-f(u))^2 \right)} = 0
\end{equation}
and
\begin{equation}\label{eq-CompensatedCompactnessFramework-Equality2}
    \left( \overline{\rho (f(\ue)-f(u))} \right)^2 = 0\,.
\end{equation}
It follows immediately from \eqref{eq-CompensatedCompactnessFramework-Equality2} that $\overline{f}=f(u)$ almost everywhere, and claim 1) is established.

To prove claim 2), we will show that $\ue$ converges to $u$ in measure, and thus a subsequence converges almost everywhere. Define $\Upsilon : [-M,M]^2 \to \bbR$ by
\begin{equation} 
\Upsilon(v,w) := (w-v) \intdmt{v}{w}{(f'(s))^2}{s} - (f(w)-f(v))^2\,.
\end{equation}
Since
\begin{equation}\label{eq-CompensatedCompactnessFramework-NonnegativeFxn}
\p_w \Upsilon(v,w) = \intdmt{v}{w}{(f'(w)-f'(s))^2}{s} \geq 0
\end{equation}
for every $w \geq v$, for each fixed $v \in [-M,M)$ the function $\Upsilon(v,\cdot)$ is nondecreasing on $[v,M]$.
Now, let $\alpha > 0$, and assume that there exists $v \in [-M,M)$ such that $v+\alpha < M$. 
An application of H\"older's inequality (see \eqref{eq-CompensatedCompactnessFramework-StrictHolder}) ensures that $\Upsilon(v,v+\alpha) \geq 0$, with equality attained if and only if $f'(s)$ is constant on the interval $[v,v+\alpha]$. However, $\alpha > 0$ and by assumption $f''(s) \neq 0$ almost everywhere on $[-M,M]$, so H\"older's inequality is strict. Therefore $\Upsilon(v,v+\alpha) > 0$, and by \eqref{eq-CompensatedCompactnessFramework-NonnegativeFxn} for every $w \in [-M,M]$ satisfying $w > v + \alpha$
\begin{equation}
\Upsilon(v,w) \geq \Upsilon(v,v+\alpha) := C_{\alpha,f,v}> 0\,.
\end{equation}
Thus, for any $\veps$ and for any $(x,t,y) \in G$ such that $\ue - u > \alpha,$
\begin{equation} 
(\ue - u) \intdmt{u}{\ue}{(f'(s))^2}{s} - (f(\ue)-f(u))^2 \geq C_{\alpha,f,u}\,.
\end{equation} 
Thus, for any $\alpha > 0$
\begin{equation} 
\iiint\limits_{\{ \ue -u > \alpha \}} \left( (\ue - u) \intdmt{u}{\ue}{(f'(s))^2}{s} - (f(\ue)-f(u))^2 \right) \rho^2 \, \rmd y \, \rmd x \, \rmd t \geq C_{\alpha,f,u} \iiint\limits_{\{ \ue - u > \alpha \}} \rho^2 \, \rmd y \, \rmd x \, \rmd t\,.
\end{equation}
However, we have from the first equality \eqref{eq-CompensatedCompactnessFramework-Equality1} that
\begin{equation}
\lim\limits_{\veps \to 0} \iiint\limits_G \left( (\ue - u) \intdmt{u}{\ue}{(f'(s))^2}{s} - (f(\ue)-f(u))^2 \right) \rho^2 \, \rmd y \, \rmd x \, \rmd t = 0\,,
\end{equation}
since constant functions are in $L^1(G)$. Hence,
\begin{equation}
\begin{split}
    \lim\limits_{\veps \to 0} \iiint\limits_{\{ \ue - u > \alpha \}} \rho^2 \, \rmd y \, \rmd x \, \rmd t
    &\leq C \lim\limits_{\veps \to 0} \iiint\limits_{\{ \ue -u > \alpha \}} \left( (\ue - u) \intdmt{u}{\ue}{(f'(s))^2}{s} - (f(\ue)-f(u))^2 \right) \rho^2 \, \rmd y \, \rmd x \, \rmd t \\
    &\leq C \lim\limits_{\veps \to 0} \iiint\limits_G \left( (\ue - u) \intdmt{u}{\ue}{(f'(s))^2}{s} - (f(\ue)-f(u))^2 \right) \rho^2 \, \rmd y \, \rmd x \, \rmd t = 0\,.
\end{split}
\end{equation}
By definition of the PDF $\rho$, $\rho \neq 0$ almost everywhere in $\Gamma$.
Therefore, the only way that the left-hand side integral can converge to $0$ is that for every $\alpha > 0$
\begin{equation} 
\lim\limits_{\veps \to 0} \meas \left( \left\lbrace (x,t,y) \in G \, \Big| \,  \ue-u> \alpha \right\rbrace \right) = 0\,.
\end{equation}
By a similar argument, we can also establish that
\begin{equation}
\begin{split}
    &\lim\limits_{\veps \to 0} \iiint\limits_{\{ \ue - u < - \alpha \}} \rho^2 \, \rmd y \, \rmd x \, \rmd t \\
    &\qquad \qquad \leq C \lim\limits_{\veps \to 0} \iiint\limits_{\{ \ue -u < - \alpha \}} {\left( (\ue - u) \intdmt{u}{\ue}{(f'(s))^2}{s} - (f(\ue)-f(u))^2 \right) \rho^2}\, \rmd y \, \rmd x \, \rmd t \\
    &\qquad \qquad \leq C \lim\limits_{\veps \to 0} \iiint\limits_G {\left( (\ue - u) \intdmt{u}{\ue}{(f'(s))^2}{s} - (f(\ue)-f(u))^2 \right) \rho^2}\, \rmd y \, \rmd x \, \rmd t = 0\,,
\end{split}
\end{equation}
from which it follows that
\begin{equation} 
\lim\limits_{\veps \to 0} \meas \left( \left\lbrace (x,t,y) \in G \, \Big| \,  \ue-u < - \alpha \right\rbrace \right) = 0\,.
\end{equation}
Therefore,
\begin{equation} 
\lim\limits_{\veps \to 0} \meas \left( \left\lbrace (x,t,y) \in G \, \Big| \,  |\ue-u | > \alpha \right\rbrace \right) = 0\,, \text{ for every } \alpha > 0\,.
\end{equation}
Thus a subsequence converges almost everywhere in $G$, and the proof is complete.
\end{proof}

\subsection{Constructing Entropy Extensions}

Here we introduce the assumptions for the entropy of the system, and construct the entropy necessary for invoking the compactness program built in the previous section. To begin, the converse statement for the existence of entropy described in Section \ref{sec-Preliminaries} is as follows:

\begin{theorem}[Entropy Extension; Theorem 3.2 in \cite{CLL}]\label{thm-PartialConverseForEntropy}
Let $(\ell,q)$ be a strictly convex entropy pair for the limiting equation \eqref{eq-ReducedSystemRandom}. Assume that the strict subcharacteristic condition \eqref{eq-SubcharacteristicCondition} holds. Then there exists a strictly convex entropy pair $(\eta,Q)$ for the system \eqref{eq-FullSystemRandom} over an open set $D_{\ell} \subset D$ containing the local equilibrium curve $K$, along which
\begin{equation}\label{eq-EntropyRestrictionToK}
    \eta(u,e(u))=\ell(u)\,, \qquad Q(u,e(u)) = q(u)\,, \qquad \dv \eta(u,e(u)) = 0\,. 
\end{equation}
\end{theorem}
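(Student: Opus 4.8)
The plan is to treat condition i) of Definition~\ref{def-EntropyForFullSystem} as the linear second-order PDE
\[
  \du f_2 \, \dvv \eta - (\dv f_2 - \du f_1)\, \duv \eta - \dv f_1 \, \duu \eta = 0
\]
for the unknown $\eta$ on the $(u,v)$-plane, and to solve the Cauchy problem for this equation with the data $\eta(u,e(u)) = \ell(u)$ and $\dv\eta(u,e(u)) = 0$ prescribed along $K$. Since \eqref{eq-FullSystemRandom} is strictly hyperbolic, near $K$ there are characteristic coordinates $(w,z)$ built from the two eigendirection fields of $\dU F$ in which the displayed equation takes the classical form $\eta_{wz} = a\,\eta_w + b\,\eta_z$; its characteristic curves are exactly $\{w=\mathrm{const}\}$ and $\{z=\mathrm{const}\}$.

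The first key point is that $K$ is non-characteristic for this equation. Using $\dv f_1 \neq 0$ on $K$ to write the eigenvectors of $\dU F$ as $(\dv f_1, \Lambda_\pm - \du f_1)$, one checks that the tangent vector $(1, e'(u))$ to $K$ is parallel to an eigenvector precisely when $\Lambda_+(u,e(u)) = \lambda(u)$ or $\Lambda_-(u,e(u)) = \lambda(u)$, where $\lambda(u) = \du f_1 + \dv f_1\, e'(u)$; equivalently, when the characteristic polynomial of $\dU F$ vanishes at $\lambda(u)$. The strict subcharacteristic condition \eqref{eq-SubcharacteristicCondition} excludes both cases, so $K$ meets both characteristic families transversally. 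Hence, by the standard theory of linear second-order hyperbolic equations in two variables (Riemann's representation / integration along characteristics, which yields a $C^2$ solution from $C^2$ data), there is a neighborhood $D_\ell \subset D$ of $K$ and a unique $\eta \in C^2(D_\ell)$ solving the equation with $\eta|_K = \ell$, $\dv\eta|_K = 0$. Condition i) then holds on $D_\ell$ by construction, so an entropy flux $Q$ on $D_\ell$ exists via \eqref{eq-Preliminaries-EntropyFluxDef}, unique up to an additive constant that we fix by requiring $Q = q$ at one point of $K$. Differentiating $Q(u,e(u))$ along $K$ and using $\dv\eta|_K = 0$, $\du\eta|_K = \ell'$ together with the scalar relation $q' = \ell' f'$ gives $\tfrac{d}{du}Q(u,e(u)) = \ell'(u)\lambda(u) = q'(u)$, so $Q = q$ on all of $K$; this establishes \eqref{eq-EntropyRestrictionToK}.

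It remains to verify ii)--iii). On $K$ the three second derivatives of $\eta$ are completely pinned down: differentiating $\eta|_K = \ell$ twice and $\dv\eta|_K = 0$ once yields $\duu\eta + 2e'\,\duv\eta + (e')^2 \dvv\eta = \ell''$ and $\duv\eta + e'\,\dvv\eta = 0$, and evaluating the entropy equation on $K$ supplies the third relation. The $3\times 3$ linear system for $(\duu\eta,\duv\eta,\dvv\eta)$ has determinant a nonzero multiple of the characteristic polynomial of $\dU F$ at $\lambda(u)$ — nonzero exactly because $K$ is non-characteristic — and solving it gives, on $K$, $\dvv\eta > 0$, $\duv\eta = -e'\dvv\eta$, $\duu\eta = \ell'' + (e')^2\dvv\eta$, whence $\det \dUU\eta = \ell''\,\dvv\eta > 0$ as well. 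By the Sylvester criterion $\dUU\eta > 0$ on $K$, and this positivity holds if and only if the strict inequalities $\Lambda_-(u,e(u)) < \lambda(u) < \Lambda_+(u,e(u))$ hold; by continuity, after shrinking $D_\ell$, $\dUU\eta > 0$ on all of $D_\ell$, giving iii). For ii), $\dU\eta\cdot\cR = \dv\eta\cdot r$; since $\dv\eta$ vanishes on $K$ with $\dvv\eta > 0$ and $r$ vanishes on $K$ with $\dv r > 0$, both factors have the sign of $v - e(u)$ near $K$, so their product is $\geq 0$ on $D_\ell$ (shrink once more), vanishing exactly on $K$, which is where $\cR = 0$.

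The main obstacle is the algebraic step in the last paragraph: extracting $\dUU\eta$ on $K$ from the Cauchy data and the entropy equation and showing its positive-definiteness is \emph{equivalent} to the strict subcharacteristic condition — this is the computation at the heart of \cite{CLL} and the place where the hypothesis is used essentially. Everything else — solvability of the linear hyperbolic Cauchy problem with $C^2$ data, construction of $Q$, and the sign analysis for ii) — is routine once $K$ is known to be non-characteristic, which is itself a short consequence of \eqref{eq-SubcharacteristicCondition}.
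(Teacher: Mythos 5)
Your proposal correctly reconstructs the argument of Theorem~3.2 in \cite{CLL}, which is the source the paper cites without reproducing the proof: treat condition~i) of Definition~\ref{def-EntropyForFullSystem} as a linear second-order hyperbolic PDE for $\eta$, pose the Cauchy problem on $K$ with data $\eta|_K=\ell$, $\dv\eta|_K=0$, observe that $K$ is non-characteristic because the strict subcharacteristic condition \eqref{eq-SubcharacteristicCondition} is precisely the statement that $\lambda(u)$ is not a root of the characteristic polynomial of $\dU F(u,e(u))$, solve by characteristics, and then read off $\dUU\eta|_K$ from the two differentiated data relations together with the PDE itself. The algebra checks out: the $3\times3$ determinant you invoke equals $-p(\lambda(u))/\dv f_1$ with $p$ the characteristic polynomial of $\dU F$, whence $\dvv\eta = (\dv f_1)^2\ell''/(-p(\lambda))>0$, $\det\dUU\eta=\ell''\,\dvv\eta>0$, and the entropy-flux and dissipation verifications are as you describe. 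Since the paper itself only restates the theorem and defers to \cite{CLL}, there is no alternative proof in the paper to contrast with; your route is the standard one and is essentially identical to that of the cited reference.
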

Next we construct two entropy for \eqref{eq-FullSystemRandom} whose difference on $K$ is $f(u)$, up to a constant.

\begin{definition}\label{def-Bgamma}
For any bounded open set $B \Subset D$ and for any $\gamma > 0$, define the set
\begin{equation} 
B_{\gamma}:= B \cap \{ (u,v) \in D \, \big| \, |v-e(u)| < \gamma \}\,.
\end{equation} 
\end{definition}

\begin{lemma}\label{lma-EntropyExistence}
With all the assumptions of Theorem \ref{thm-PartialConverseForEntropy}, for every bounded open set $B \subset \bbR^2$ there is a constant $\gamma > 0$ depending only on the flux $F$ and the entropy $\ell$ such that
$
\overline{B_{\gamma}} \subset D_{\ell}\,.
$
As a result, on the set $\overline{B_{\gamma}}$ we have for some constant $c > 0$

\begin{equation}\label{eq-StrongConvexityConditionsOnBgamma}
    i) \,\, \dUU \eta \geq c\,,  \qquad iii) \,\, \dv \eta(U) \, r(U) > 0 \text{ on } \overline{B_{\gamma}} \setminus K\,.
\end{equation}

\end{lemma}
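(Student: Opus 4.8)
The plan is to reduce everything to the single inclusion $\overline{B_\gamma}\subset D_\ell$: once this is established, conclusions (i) and (iii) follow at once from the compactness of $\overline{B_\gamma}$ together with the properties of the entropy pair $(\eta,Q)$ supplied by Theorem~\ref{thm-PartialConverseForEntropy}. I would therefore split the argument into three steps.

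\emph{Step 1: choosing $\gamma$.} Since $B$ is bounded, I would fix $R>0$ with $\overline{B}\subseteq\overline{B(0,R)}$ and set $\mathcal{K}:=K\cap\overline{B(0,R+1)}$. Because the equilibrium curve $K$ is closed (it is the graph of the continuous map $e$ over $\pi_1(\overline{D})$), the set $\mathcal{K}$ is compact, and $\mathcal{K}\subset K\subset D_\ell$ with $D_\ell$ open; hence $\delta:=\operatorname{dist}(\mathcal{K},\bbR^2\setminus D_\ell)>0$ (with $\delta=+\infty$ if $D_\ell=\bbR^2$). I would then take $\gamma:=\min\{\delta/2,\,1\}$, which depends only on $B$ and on $D_\ell$; and $D_\ell$ is in turn determined by $F$ and $\ell$ through Theorem~\ref{thm-PartialConverseForEntropy}. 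Now let $U=(u,v)\in\overline{B_\gamma}$. Then $U\in\overline{B}\subseteq\overline{B(0,R)}$ and $U\in\overline{D}$ (since $B_\gamma\subset D$), so $u\in\pi_1(\overline{D})$ and, by continuity of $(u,v)\mapsto v-e(u)$, $|v-e(u)|\le\gamma$. Consequently the equilibrium point $(u,e(u))$ lies in $K$ and $|(u,e(u))|\le|U|+\gamma\le R+1$, i.e.\ $(u,e(u))\in\mathcal{K}$; therefore
\[
\operatorname{dist}\!\left(U,\bbR^2\setminus D_\ell\right)\;\ge\;\operatorname{dist}\!\left((u,e(u)),\bbR^2\setminus D_\ell\right)-|U-(u,e(u))|\;\ge\;\delta-\gamma\;\ge\;\delta/2\;>\;0,
\]
so $U\in D_\ell$. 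This yields $\overline{B_\gamma}\subset D_\ell$, and $\overline{B_\gamma}$, being closed and bounded, is compact.

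\emph{Steps 2 and 3: the two bounds.} For (i): by Theorem~\ref{thm-PartialConverseForEntropy}, $\eta\in C^2(D_\ell)$ is strictly convex, so the smallest eigenvalue of $\dUU\eta(U)$ is strictly positive for each $U\in D_\ell$; being a continuous function of $U$, it attains a positive minimum $c>0$ over the compact set $\overline{B_\gamma}\subset D_\ell$, which is exactly the assertion $\dUU\eta\ge c$ on $\overline{B_\gamma}$. For (iii): the pair $(\eta,Q)$ is a convex entropy pair for~\eqref{eq-FullSystemRandom} over $D_\ell$, so condition~ii) of Definition~\ref{def-EntropyForFullSystem} gives $\dU\eta(U)\cdot\cR(U)\ge0$ on $D_\ell$, with equality exactly when $U\in K$; since $\cR=(0,r)^{\intercal}$ this says $\dv\eta(U)\,r(U)\ge0$ on $D_\ell$, vanishing precisely on $K$, so restricting to $\overline{B_\gamma}\setminus K\subset D_\ell\setminus K$ yields $\dv\eta(U)\,r(U)>0$.

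\emph{Expected main obstacle.} The only delicate point is Step~1: one must ensure that passing to the closure in $\overline{B_\gamma}$ does not push points out of the (possibly thin) neighbourhood $D_\ell$ on which the extended entropy is defined, and that $\gamma$ can be chosen without reference to $\eta$, $Q$, or any particular solution. Localizing $K$ to a fixed compact ball determined by $B$ before measuring its distance to $\partial D_\ell$ resolves both issues; moreover, if the extension in Theorem~\ref{thm-PartialConverseForEntropy} is in fact valid on a full tube $\{\,U:\operatorname{dist}(U,K)<\gamma_0\,\}$ with $\gamma_0=\gamma_0(F,\ell)$, then $\gamma$ may even be taken independent of $B$. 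Steps~2 and~3 are then routine consequences of compactness and of Definition~\ref{def-EntropyForFullSystem}.
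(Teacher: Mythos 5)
The paper itself gives no proof of this lemma; it only remarks that the statement appears in-line in \cite{CLL}, so there is no argument in the text to compare yours against. Your proof is correct and fills the gap cleanly. Step~1 handles the one genuinely non-trivial point---that $\gamma$ can be chosen so the \emph{closure} $\overline{B_\gamma}$ stays inside the open set $D_\ell$---by localizing $K$ to the compact piece $\mathcal{K}=K\cap\overline{B(0,R+1)}$, taking $\delta=\operatorname{dist}(\mathcal{K},\bbR^2\setminus D_\ell)>0$, and using the triangle-inequality estimate $\operatorname{dist}(U,\bbR^2\setminus D_\ell)\ge\operatorname{dist}((u,e(u)),\bbR^2\setminus D_\ell)-|v-e(u)|\ge\delta-\gamma\ge\delta/2$. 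Note that $K$ is closed because any limit of points of $K$ lies in $\overline{D}$ and hence has first coordinate in $\pi_1(\overline{D})$, so continuity of $e$ applies; this is needed for $\mathcal{K}$ to be compact and is implicit in your argument. Steps~2 and~3 are then routine: strict convexity of $\eta$ from Theorem~\ref{thm-PartialConverseForEntropy} plus continuity of the smallest eigenvalue of $\dUU\eta$ over the compact $\overline{B_\gamma}$ gives the uniform bound $c>0$, and condition~ii) of Definition~\ref{def-EntropyForFullSystem} (applied over $D_\ell$) gives strict positivity of $\dv\eta\,r$ off $K$.

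You are also right to flag the dependence of $\gamma$: as you construct it, $\gamma$ depends on $B$ through the radius $R$, whereas the lemma literally asserts dependence only on $F$ and $\ell$. Independence of $B$ would follow only if $D_\ell$ contains a tube of uniform width around all of $K$, which Theorem~\ref{thm-PartialConverseForEntropy} does not explicitly assert. This has no bearing on the paper's use of the lemma (in Lemma~\ref{lma-EntropyForCompensatedCompactness} the sets $B$, $B'$ are fixed before $\gamma$ is chosen), but the caveat is genuine and worth recording.
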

This lemma is stated in-line in \cite{CLL}. We state it in a lemma here to emphasize the strong convexity conditions on $\eta$. The strong convexity is necessary for the analysis in Theorem \ref{thm-Theorem4.1}, which is the main result in this section.

We can actually prove something stronger regarding the existence of strongly convex entropy pairs for the full system \eqref{eq-FullSystemRandom}. Lemma \ref{lma-EntropyForCompensatedCompactness} will allow us to employ the compensated compactness framework that was set up in the preliminary theorems, which is the key technique in the proof of Theorem \ref{thm-Theorem4.1}.

\begin{lemma}[Construction of Desired Entropy]\label{lma-EntropyForCompensatedCompactness}
Let $D \subseteq \bbR^2$ be an open convex set, and  let $B$, $B'$ be bounded open convex subsets of $D$ such that $B \cap K \neq \emptyset$ and $B \Subset B' \Subset D$. Let $f$ be the reduced flux in \eqref{eq-ReducedSystemRandom}. Let $\ell_1 : \pi_1(D) \to \bbR$ be any strictly convex function, and define
\begin{equation}\label{eq-EntropyForCompensatedCompactness}
    \ell_2(u) = \ell_1(u) + Cf(u)\,, \qquad u \in \pi_1(D)\,,
\end{equation}
where the constant $C>0$ is chosen so that
\begin{equation}\label{eq-EntropyForCompensatedCompactnessProof-BoundOnC}
    C < \frac{\inf\limits_{u \in \overline{\pi_1(B')}} \ell_1''(u)}{\sup\limits_{u \in \overline{\pi_1(B')}} |f''(u)|}\,.
\end{equation}
Assume the strict subcharacteristic condition \eqref{eq-SubcharacteristicCondition}. Then there exists a constant $\gamma > 0$ such that there are two strongly convex entropy pairs for the full system \eqref{eq-FullSystemRandom} denoted $(\eta_i,Q_i)$, $i \in \{1,2\}$, on the compact set $\overline{B_{\gamma}}$ satisfying $\ell_i(u) = \eta_i(u,e(u))$ for every $u \in \pi_1(\overline{B_{\gamma}})$. Moreover, the entropy satisfy 
\begin{equation}\label{eq-EntropyForCompensatedCompactnessProof-RelaxDissipation}
\dv \eta_i(U)r(U) > 0 \text{ on } \overline{B_{\gamma}} \setminus K\,, \qquad \dv \eta_i(u,e(u)) = 0\,.
\end{equation}
\end{lemma}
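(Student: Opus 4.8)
The plan is to produce the two pairs $(\eta_i,Q_i)$ by invoking the entropy extension theorem (Theorem~\ref{thm-PartialConverseForEntropy}) once for $\ell_1$ and once for $\ell_2$, and then shrinking to a common equilibrium neighborhood via Lemma~\ref{lma-EntropyExistence}. Before that machinery applies, the one substantive point to verify is that $\ell_2$ is again a strictly convex entropy for the reduced equation. Since $\ell_1 \in C^2$ is strictly convex and $\overline{\pi_1(B')}$ is compact, $m := \inf_{\overline{\pi_1(B')}} \ell_1'' > 0$, and $M := \sup_{\overline{\pi_1(B')}} |f''| < \infty$ because $f \in C^2$. Then \eqref{eq-EntropyForCompensatedCompactness} gives $\ell_2'' = \ell_1'' + C f'' \geq m - CM$, which is positive on $\overline{\pi_1(B')}$ by the choice \eqref{eq-EntropyForCompensatedCompactnessProof-BoundOnC} of $C$ (the degenerate case $M=0$ being immediate). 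Pairing each $\ell_i$ with $q_i(u) := \int^u \ell_i'(\theta) f'(\theta)\,\mathrm{d}\theta$ makes $(\ell_i,q_i)$ a strictly convex entropy pair for \eqref{eq-ReducedSystemRandom}.

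Next I would feed each $(\ell_i,q_i)$ into Theorem~\ref{thm-PartialConverseForEntropy}, applied over the convex subdomain $B' \Subset D$ (legitimate because the extension it builds is local near $K$). This yields, for $i\in\{1,2\}$, a strictly convex entropy pair $(\eta_i,Q_i)$ for the full system \eqref{eq-FullSystemRandom} on an open set $D_{\ell_i} \subset B'$ containing the portion of $K$ in $B'$, along which \eqref{eq-EntropyRestrictionToK} holds; in particular $\eta_i(u,e(u)) = \ell_i(u)$ and $\dv\eta_i(u,e(u)) = 0$. Applying Lemma~\ref{lma-EntropyExistence} to $\eta_i$ with the given $B$ produces $\gamma_i > 0$ with $\overline{B_{\gamma_i}} \subset D_{\ell_i}$ on which $\dUU\eta_i \geq c_i > 0$ and $\dv\eta_i(U)\,r(U) > 0$ on $\overline{B_{\gamma_i}}\setminus K$. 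Setting $\gamma := \min\{\gamma_1,\gamma_2\} > 0$, the inclusions $\overline{B_\gamma} \subset \overline{B_{\gamma_1}} \cap \overline{B_{\gamma_2}} \subset \overline{B} \subset B'$ show that both pairs are defined on $\overline{B_\gamma}$, where $\dUU\eta_i \geq \min\{c_1,c_2\} > 0$ (strong convexity), where condition~ii) of Definition~\ref{def-EntropyForFullSystem} holds since $\dU\eta_i\cdot\cR = \dv\eta_i\,r$ is positive off $K$ and vanishes on $K$ by $\dv\eta_i(u,e(u)) = 0$, and where $\eta_i(u,e(u)) = \ell_i(u)$ for $u \in \pi_1(\overline{B_\gamma})$ and \eqref{eq-EntropyForCompensatedCompactnessProof-RelaxDissipation} hold --- which is the claim.

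The only place demanding care is the domain bookkeeping in the previous paragraph: Theorem~\ref{thm-PartialConverseForEntropy} and Lemma~\ref{lma-EntropyExistence} are phrased over $D$, whereas $\ell_2$ is only known to be strictly convex on $\overline{\pi_1(B')}$, so one must either replace $D$ by $B'$ in those statements (possible because the constructions there are local near $K$) or, equivalently, replace $\ell_2$ by a globally strictly convex $C^2$ function agreeing with $\ell_1 + Cf$ on $\overline{\pi_1(B')}$ before extending. Either way, since $\overline{B_\gamma} \subset \overline{B} \subset B'$, none of the pointwise convexity or dissipation inequalities is affected on $\overline{B_\gamma}$, and the remainder is a direct assembly of the two cited results; no new compactness or PDE input is required, and the parameter $y$ and the density $\rho$ play no role here since the lemma concerns only the pointwise structure of $F$, $r$, $e$, and the entropies.
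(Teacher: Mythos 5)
Your proposal is correct and follows essentially the same route as the paper's proof: verify $\ell_2'' = \ell_1'' + Cf'' > 0$ on $\overline{\pi_1(B')}$ from the bound on $C$, apply Theorem~\ref{thm-PartialConverseForEntropy} to each $\ell_i$ (over $B'$ for $\ell_2$), shrink to a common $\overline{B_\gamma}$ via Lemma~\ref{lma-EntropyExistence}, and read off strong convexity and \eqref{eq-EntropyForCompensatedCompactnessProof-RelaxDissipation}. The only cosmetic differences from the paper are that it applies Lemma~\ref{lma-EntropyExistence} first with $B'$ for $\eta_1$ and then with $B$ for $\eta_2$, and takes $\gamma = \tfrac12\min(\gamma_1,\gamma_2)$ rather than the bare minimum, neither of which affects the argument.
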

\begin{proof}
First, the function $q_1(u) := \intdmt{}{u}{\ell_1'(s) f'(s)}{s}$ for $u \in \pi_1(D)$ gives an entropy flux corresponding to $\ell_1$. Therefore, Theorem \ref{thm-PartialConverseForEntropy} applies and there exists $(\eta_1,Q_1)$ and an open set $D_{\ell_1} \subset D$ such that $(\eta_1,Q_1)$ is a strictly convex entropy pair for the full system \eqref{eq-FullSystemRandom} over $D_{\ell_1}$ satisfying \eqref{eq-EntropyRestrictionToK}. Then we can use Lemma \ref{lma-EntropyExistence} to say that there exists $\gamma_1 >0$ such that for the set
\begin{equation} 
B'_{\gamma_1} := B' \cap \{ (u,v) \in D \, \big| \, |v-e(u)| < \gamma_1 \}
\end{equation}
we have $\overline{B'_{\gamma_1}} \subset D_{\ell_1}$ and the entropy $\eta_1$ satisfies the condition \eqref{eq-StrongConvexityConditionsOnBgamma} on $\overline{B'_{\gamma_1}}$.

Now, the bound \eqref{eq-EntropyForCompensatedCompactnessProof-BoundOnC} implies that $\ell_2(u)$ is a strictly convex function on $\pi_1(B')$. Again, $q_2(u) := \intdmt{}{u}{\ell_2'(s) f'(s)}{s}$ for $u \in \pi_1(D)$ is an entropy flux corresponding to $\ell_2$. So Theorem \ref{thm-PartialConverseForEntropy} applies and there exists $(\eta_2,Q_2)$ and an open set $D_{\ell_2} \subset B'$ such that $(\eta_2,Q_2)$ is a strictly convex entropy pair for the full system \eqref{eq-FullSystemRandom} over $D_{\ell_2}$ satisfying \eqref{eq-EntropyRestrictionToK}. Then again we use Lemma \ref{lma-EntropyExistence} to conclude that there exists $\gamma_2>0$ such that for the set 
\begin{equation} 
B_{\gamma_2} := B \cap \{ (u,v) \in B' \, \big| \, |v-e(u)| < \gamma_2 \}\,,
\end{equation}
$\overline{B_{\gamma_2}} \subset D_{\ell_2}$ and the entropy $\eta_2$ satisfies the conditions \eqref{eq-StrongConvexityConditionsOnBgamma} on $\overline{B_{\gamma_2}}$.

Define $\gamma = \frac{1}{2} \min(\gamma_1, \gamma_2)$. Since $B  \Subset B'$ we have that $(\eta_i, Q_i)$ are strongly convex entropy pairs on $\overline{B_{\gamma}}$ for $i = 1,2$, and finally we observe that \eqref{eq-EntropyForCompensatedCompactnessProof-RelaxDissipation} is satisfied.
\end{proof}

\subsection{Convergence Result}

We now arrive at the main result of this section. Following the argument of \cite{CLL} we are able to prove pointwise convergence of solutions $\Ue$ under an \textit{a priori} $L^{\infty}$ assumption and some additional assumptions on the structure of the system \eqref{eq-FullSystemRandom} and the equation \eqref{eq-ReducedSystemRandom}, specifically the strict subcharacteristic condition \eqref{eq-SubcharacteristicCondition} and genuine nonlinearity of the flux $f$.
 
\begin{theorem}[Convergence to Equilibrium]\label{thm-Theorem4.1}
Assume the strict subcharacteristic condition \eqref{eq-SubcharacteristicCondition}, and define $B$, $B'$, $\ell_1$, $\ell_2$ as in Lemma \ref{lma-EntropyForCompensatedCompactness}. Define $B_{\gamma}$ with $\gamma > 0$ to be the set satisfying the conclusions of Lemma \ref{lma-EntropyForCompensatedCompactness} (note that $B_{\gamma}$ is bounded, open, and convex). Suppose that for a sequence $\veps > 0$ the initial data $\Une$ for the system \eqref{eq-FullSystemRandom} takes values in $\overline{B_{\gamma}}$. Suppose further that there exist weak entropy solutions $\Ue$ of the system \eqref{eq-FullSystemRandom} taking values in $\overline{B_{\gamma}}$.

Suppose that there exists a function $\Ubar(y) = (\ubar(y),\vbar(y)) : \Gamma \to K$ and a constant $\beta>0$ independent of $\veps$ such that
\begin{equation}\label{eq-Theorem4.1-L2UniformBoundAssumption}
    \Vnorm{\Une - \Ubar}_{2;\rho} \leq \beta
\end{equation}
for every $\veps$, and that
\begin{equation}
    \lambda'(u) = f''(u) \neq 0 \text{ almost everywhere in } \pi_1(\overline{B_{\gamma}})\,.
\end{equation}
Then there exists a subsequence (still denoted by $\veps$) such that
\begin{equation} 
\Ue \to U \quad \text{ in } \left[ L^p_{loc}(\bbR \times (0,\infty) \times \Gamma) \right]^2\,,
\end{equation}
where $1 \leq p < \infty$ and the limit function $U = (u,v)$ satisfies
\begin{enumerate}
    \item[i)] $v(x,t,y) = e(u(x,t,y))$ for almost every $(x,t,y) \in \bbR \times (0,\infty) \times \Gamma$
    \item[ii)] $u$ is a weak solution of the equilibrium equation \eqref{eq-ReducedSystemRandom} with initial data $u_0$ defined as the weak-star limit of $\une$ in the space $L^{\infty}(\bbR \times \Gamma)$.
\end{enumerate}
\end{theorem}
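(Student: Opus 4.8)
The plan is to reduce the theorem to a single application of Theorem~\ref{thm-CompensatedCompactnessFramework} to the first-component sequence $\ue$ on an arbitrary bounded open $G \subset \bbR \times (0,\infty) \times \Gamma$, and then to patch the local conclusions together by a diagonal extraction over an exhaustion of $\bbR\times(0,\infty)\times\Gamma$ by such sets. The uniform $L^\infty$ hypothesis of Theorem~\ref{thm-CompensatedCompactnessFramework} is automatic since $\Ue$ is valued in the bounded set $\overline{B_\gamma}$; passing along a subsequence to the weak-$\star$ limits $\ue \rightharpoonupOp^* u$, $f(\ue)\rightharpoonupOp^* \overline f$, and also $\une \rightharpoonupOp^* u_0$, is likewise automatic. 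The two substantive tasks are (A) an entropy-dissipation estimate forcing $\Ue$ onto the equilibrium curve $K$, and (B) the verification of the $W^{-1,2}_{loc}$ compactness hypothesis \eqref{eq-CompensatedCompactnessFramework-CompactnessAssumption} for the scalar pairs $(h_1,j_1)$ and $(h_2,j_2)$, in which the two strongly convex entropies produced by Lemma~\ref{lma-EntropyForCompensatedCompactness} play the central role. As in the rest of Section~\ref{sec-Justification}, $\rho$ is carried along as a factor multiplying the sequence, so the distributions involved live on $\bbR\times(0,\infty)\times\Gamma$.

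For (A), fix the strongly convex entropy pair $(\eta_1,Q_1)$ on $\overline{B_\gamma}$ given by Lemma~\ref{lma-EntropyForCompensatedCompactness}, which by \eqref{eq-EntropyForCompensatedCompactnessProof-RelaxDissipation} has $\dv\eta_1 = 0$ on $K$ and $\dv\eta_1(U)r(U) > 0$ on $\overline{B_\gamma}\setminus K$. Applying the entropy inequality \eqref{eq-DefinitionOfEntropySolution-EntropyInequality} with $(\eta_1,Q_1)$ --- legitimate since $\Ue$ is valued in $\overline{B_\gamma}$ --- to the relative entropy of $\Ue$ with respect to $\Ubar(y)\in K$ (for which the linear correction $\dU\eta_1(\Ubar)\cdot(\,\cdot\,)$ contributes no relaxation term, because $\dv\eta_1=0$ on $K$) and invoking the bound \eqref{eq-Theorem4.1-L2UniformBoundAssumption}, one obtains
\[
\sup_{\veps}\ \frac{1}{\veps}\iiintdmt{0}{\infty}{\bbR}{\Gamma}{\dv\eta_1(\Ue)\,r(\Ue)\,\rho(y)}{y}{x}{t} \;\leq\; C\beta^2
\]
(a purely local version of this bound, which is all that is needed below, also follows by testing \eqref{eq-DefinitionOfEntropySolution-EntropyInequality} against a cutoff and using that $\eta_1(\Ue)$ and $Q_1(\Ue)$ are uniformly bounded). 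Because $\dvv r \equiv 0$, we have $r(u,v) = \dv r(u,e(u))\,(v-e(u))$; together with $\dv\eta_1(u,e(u))=0$, $\dvv\eta_1 \geq c > 0$, and $\dv r > 0$ this gives $\dv\eta_1(U)r(U) \geq c'\,|v-e(u)|^2$ on $\overline{B_\gamma}$. Hence $\ve - e(\ue) \to 0$ in $L^2_\rho(G)$, and since $\rho > 0$ a.e.\ and the difference is bounded, a subsequence has $\ve - e(\ue)\to 0$ a.e.\ and in $L^p_{loc}$ for every $p<\infty$; in particular $f_1(\Ue) - f(\ue) = f_1(\ue,\ve)-f_1(\ue,e(\ue)) \to 0$ in $L^p_{loc}$ as well.

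For (B), fix $i\in\{1,2\}$ and read \eqref{eq-DefinitionOfEntropySolution-EntropyInequality} for $(\eta_i,Q_i)$ as $\dt(\eta_i(\Ue)\rho) + \dx(Q_i(\Ue)\rho) = -\mu_i^\veps - \tfrac{1}{\veps}\dv\eta_i(\Ue)r(\Ue)\rho$ with $\mu_i^\veps \geq 0$. By (A) and the sign of $\mu_i^\veps$, both terms on the right are bounded in the space of Radon measures on bounded sets, hence relatively compact in $W^{-1,p_1}_{loc}(G)$ for some $p_1 \in (1,2)$; the left side is bounded in $W^{-1,q_3}_{loc}(G)$ for every $q_3<\infty$ because $\eta_i(\Ue)$, $Q_i(\Ue)$ are uniformly bounded. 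Theorem~\ref{thm-CompactnessInterpolation} (with Theorem~\ref{thm-MuratTheorem} available for the one-signed piece) then makes $(\dt\eta_i(\Ue)+\dx Q_i(\Ue))\rho$ relatively compact in $W^{-1,2}_{loc}(G)$. Since $\eta_i(u,e(u))=\ell_i(u)$ and $Q_i(u,e(u))=q_i(u)$ by \eqref{eq-EntropyRestrictionToK}, and $\eta_i(\Ue)-\ell_i(\ue) = \dv\eta_i(\,\cdot\,)(\ve-e(\ue))\to 0$ in $L^2_{loc}$ (likewise for $Q_i$), one may replace $\eta_i(\Ue),Q_i(\Ue)$ by $\ell_i(\ue),q_i(\ue)$ modulo strongly null errors, so $(\dt\ell_i(\ue)+\dx q_i(\ue))\rho$ is relatively compact in $W^{-1,2}_{loc}(G)$ for $i=1,2$. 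Finally, $(\dt h_1(\ue)+\dx j_1(\ue))\rho = \dx\!\big((f(\ue)-f_1(\Ue))\rho\big)\to 0$ in $W^{-1,2}_{loc}(G)$ by (A), while the normalization $\ell_2=\ell_1+Cf$ in \eqref{eq-EntropyForCompensatedCompactness} forces $q_2-q_1 = C\int (f')^2$, so $(h_2,j_2)$ is $\tfrac1C$ times $(\ell_2-\ell_1,\,q_2-q_1)$ and $(\dt h_2(\ue)+\dx j_2(\ue))\rho$ is relatively compact as a difference of relatively compact sequences. This establishes \eqref{eq-CompensatedCompactnessFramework-CompactnessAssumption}.

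Theorem~\ref{thm-CompensatedCompactnessFramework} now yields $\overline f = f(u)$ a.e.\ on $G$ and, using $f''\neq 0$ a.e.\ in $\pi_1(\overline{B_\gamma})$, a subsequence of $\ue$ converging to $u$ a.e.\ on $G$; a diagonal extraction over bounded open sets exhausting $\bbR\times(0,\infty)\times\Gamma$ upgrades this to a.e.\ convergence on the whole domain, and boundedness with dominated convergence (together with (A) and continuity of $e$ for the second component) promotes it to convergence of $\Ue=(\ue,\ve)$ to $U=(u,e(u))$ in $[L^p_{loc}]^2$, which is the stated conclusion and claim i). For claim ii), pass to the limit in the weak formulation of the first equation of \eqref{eq-FullSystemRandom} --- Definition~\ref{def-2x2Solution} with test function $(\vphi_1,0)$, which carries no relaxation term: $\ue\to u$ and $f_1(\Ue)\to f(u)$ in $L^1_{loc}$ on $\supp\vphi_1$, and $\iintdm{\bbR}{\Gamma}{\une\,\vphi_1(x,0,y)\,\rho(y)}{y}{x} \to \iintdm{\bbR}{\Gamma}{u_0\,\vphi_1(x,0,y)\,\rho(y)}{y}{x}$ by weak-$\star$ convergence of $\une$; the result is exactly the weak formulation of \eqref{eq-ReducedSystemRandom} with data $u_0$. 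The main obstacle is step (A): the entropy inequality is only a one-sided distributional statement with test functions vanishing at $t=0$, so extracting from it a genuinely $\veps$-uniform control of the relaxation dissipation --- and keeping the role of $\rho$ (as part of the sequence, not as a weight) consistent throughout --- is the delicate point, and it is precisely where hypothesis \eqref{eq-Theorem4.1-L2UniformBoundAssumption} and the relative-entropy device with $\Ubar\in K$ are used. The bookkeeping in (B) that routes the non-strictly-convex ``entropies'' $f$ and $\theta\mapsto\theta$ through the two strictly convex entropies of Lemma~\ref{lma-EntropyForCompensatedCompactness} is the other spot needing attention, but it follows directly from the construction \eqref{eq-EntropyForCompensatedCompactness}.
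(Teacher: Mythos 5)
Your proof is correct and follows the same overall strategy as the paper: establish that $\ve - e(\ue) \to 0$ in $L^2$, verify the $W^{-1,2}_{loc}$ compactness hypothesis of Theorem~\ref{thm-CompensatedCompactnessFramework} for the pairs $(h_1,j_1)$ and $(h_2,j_2)$ using the two strongly convex entropy pairs $(\eta_i,Q_i)$ of Lemma~\ref{lma-EntropyForCompensatedCompactness}, apply the compensated compactness theorem, and pass to the limit in the first weak equation.

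There are two organizational differences worth noting. For the entropy--dissipation bound (your step~(A)), the paper proves a global estimate \eqref{eq-Theorem4.1Proof-L2Estimate} in terms of the initial data via a careful relative-entropy argument (Appendix~\ref{apdx-Step1}); you point out in passing that a local version suffices, obtained simply by testing \eqref{eq-DefinitionOfEntropySolution-EntropyInequality} against a fixed nonnegative cutoff and using that $\eta_1(\Ue),Q_1(\Ue)$ are uniformly bounded on $\overline{B_\gamma}$ together with $\dv\eta_1(U)r(U)\geq c'|v-e(u)|^2$ (which follows from $\dv\eta_1=0$ on $K$, $\dvv\eta_1\geq c$, $\dvv r\equiv 0$, and $\dv r>0$). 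This observation is correct and, when developed, bypasses Appendix~\ref{apdx-Step1}; it also shows that the $L^2_\rho$ hypothesis \eqref{eq-Theorem4.1-L2UniformBoundAssumption} is not strictly needed for the local compactness argument (though the paper's quantitative global estimate is what it buys you). For the compactness of $(\dt\ell_i(\ue)+\dx q_i(\ue))\rho$ (your step~(B)), the paper works with $(\ell_i,q_i)$ directly, decomposes the discrepancy into $I^\veps_{i1}+I^\veps_{i2}+I^\veps_{i3}$ by Taylor expansion, bounds $I^\veps_{i1},I^\veps_{i2}$ in $W^{-1,2}_{loc}$ and $I^\veps_{i3}$ in $L^1_{loc}$, then applies Murat's lemma and interpolation; you instead obtain the $\mathcal{M}_{loc}$ bound on $(\dt\eta_i(\Ue)+\dx Q_i(\Ue))\rho$ by splitting it as a nonnegative defect measure plus the relaxation term, then transfer this to $(\dt\ell_i(\ue)+\dx q_i(\ue))\rho$ via the $L^2_{loc}$-null error $\eta_i(\Ue)-\ell_i(\ue)=\dv\eta_i(\ue,\xi)(\ve-e(\ue))$. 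The two routes are equivalent and rely on the same compact embedding $\mathcal{M}\hookrightarrow W^{-1,p_1}_{loc}$ and the interpolation Theorem~\ref{thm-CompactnessInterpolation}; yours is arguably a cleaner bookkeeping. The identification $(h_2,j_2)=\tfrac1C(\ell_2-\ell_1,q_2-q_1)$ up to constants, and the use of the first conservation law together with step~(A) to obtain \eqref{eq-Theorem4.1CompactEntropy1}, matches the paper exactly.
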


\begin{proof}
\underline{Step 1}: Throughout the proof we use the following $L^2$ estimate: there exists a constant $C>0$ depending only on $\eta_i$, $F$, $r$ and their derivatives such that
\begin{equation}\label{eq-Theorem4.1Proof-L2Estimate}
    \iiintdmt{0}{\infty}{\bbR}{\Gamma}{|\ve-e(\ue)|^2 \rho}{y}{x}{t} \leq C \veps \iintdm{\bbR}{\Gamma}{|\Une - \Ubar|^2 \rho}{y}{x}\,,
\end{equation}
for every $\veps>0$.
See Appendix \ref{apdx-Step1} for the proof.

\underline{Step 2}: Let $G \subset \bbR \times (0,\infty) \times \Gamma$ be a bounded set. We will show that
\begin{align}
    \big( \dt \ue + \dx f(\ue) \big) \rho &\text{ is compact in } W^{-1,2}_{loc}(G)\,, \label{eq-Theorem4.1CompactEntropy1} \\
    \big( \dt \ell_i(\ue) + \dx q_i(\ue) \big) \rho &\text{ is compact in } W^{-1,2}_{loc}(G)\,, \quad i=1,2\,. \label{eq-Theorem4.1CompactEntropy2}
\end{align}    
This will allow us to use the convergence result in Theorem \ref{thm-CompensatedCompactnessFramework}.

First we show \eqref{eq-Theorem4.1CompactEntropy1}. Because $\Ue$ is a weak solution of \eqref{eq-FullSystemRandom}, we use the first equation in the system \eqref{eq-FullSystemRandom} to write that
\begin{equation}
\begin{split}
    \big( \dt \ue + \dx f(\ue) \big) \rho &= \big( - \dx f_1(\ue,\ve) + \dx f(\ue) \big) \rho \\
   &= \Big( \dx \big[ f_1(\ue,e(\ue)) - f_1(\ue,\ve) \big] \Big) \rho
\end{split}
\end{equation}
in the sense of distributions.
Using Taylor's theorem,
\begin{equation}
\begin{split}
    \big( - \dx f_1(\ue,\ve) + \dx f(\ue) \big) \rho
    &= \Big( \dx \big[ \dv f_1(\ue,\xi)(e(\ue)-\ve) \big] \Big) \rho
\end{split}
\end{equation}
for some $\xi$ on the line segment connecting $e(\ue)$ and $\ve$.
Using H\"older's inequality and the estimate \eqref{eq-Theorem4.1Proof-L2Estimate}, we have that for every $V \Subset G$
\begin{equation}\label{eq-Theorem4.1Proof-CompactnessEstimate5}
\begin{split}
   \Vnorm{ \big( \dt \ue + \dx f(\ue)  \big) \rho }_{W^{-1,2}(V)} &=  \Vnorm{\dx \big[ \dv f_1(\ue,\xi)(e(\ue)-\ve) \big] }_{W^{-1,2}(V)} \\
   &= \sup_{\Vnorm{\vphi}_{W^{1,2}_0(V)}\leq 1} \left| \iiint\limits_V {\big( \dv f_1(\ue,\xi)(e(\ue)-\ve) \big) \rho \, \dx \vphi }\, \mathrm{d}{y}\, \mathrm{d}{x}\, \mathrm{d}{t} \right| \\
   &\leq C \sup_{\Vnorm{\vphi}_{W^{1,2}_0(V)}\leq 1} \iiint\limits_V \big|e(\ue)-\ve \big| \, | \dx \vphi | \, \rho \, \mathrm{d}y \, \mathrm{d}x \, \mathrm{d}t \\
   &\leq C \sup_{\Vnorm{\vphi}_{W^{1,2}_0(V)}\leq 1} \Vnorm{\dx \vphi \sqrt{\rho}}_{L^2(V)} \Vnorm{(e(\ue)-\ve) \sqrt{\rho}}_{L^2(V)} \\
   &\leq C \sqrt{\veps}\,.
\end{split}
\end{equation}
The constant $C$ depends only on $\sup_{B_{\gamma}}{|\dv f_1|}$, $\Vnorm{\rho}_{L^{\infty}(\Gamma)}$, $\beta$, and the constant on the right hand side of \eqref{eq-Theorem4.1Proof-L2Estimate}. Since the left hand side of \eqref{eq-Theorem4.1Proof-CompactnessEstimate5} converges to $0$ as $\veps \to 0$ and since $V \Subset G$ is arbitrary, the compactness result \eqref{eq-Theorem4.1CompactEntropy1} is proved.

Next we show \eqref{eq-Theorem4.1CompactEntropy2}. For $i = 1,2$, and for every $\veps$,
\begin{equation}
\begin{split}
\left( \dt \eta_i (\Ue) + \dx Q_i(\Ue) + \frac{1}{\veps}\dv \eta_i(\Ue)r(\Ue) \right) \rho \leq 0 
\end{split}
\end{equation}
in the sense of distributions, since $\Ue$ is an entropy solution of \eqref{eq-FullSystemRandom}. Therefore, since $r(\ue,e(\ue)) \equiv 0$,
\begin{equation}\label{eq-Theorem4.1Proof-NegativityOfDistributions}
\begin{split}
    \big( \dt \ell_i(\ue) + \dx q_i(\ue) \big) \rho
    &\leq \big( \dt \ell_i(\ue) + \dx q_i(\ue) \big) \rho - \left( \dt \eta_i (\Ue) + \dx Q_i(\Ue) + \frac{1}{\veps}\dv \eta_i(\Ue)r(\Ue) \right) \rho \\
    &= \Big( \dt \big[ \ell_i(\ue) - \eta_i(\Ue) \big] \Big) \rho + \Big( \dx \big[ q_i(\ue) - Q_i(\Ue) \big] \Big) \rho \\
    & \qquad \qquad + \frac{1}{\veps} \dv \eta_i(\Ue) \big( r(\ue,e(\ue)) - r(\Ue) \big) \rho \\
    &:= I^{\veps}_{i1} + I^{\veps}_{i2} + I^{\veps}_{i3}
\end{split}
\end{equation}
in the sense of distributions.

Now let $V \Subset G$. Then by Taylor's theorem and H\"older's inequality,
\begin{equation}
\begin{split}
    \Vnorm{I^{\veps}_{i1}}_{W^{-1,2}(V)} &= \sup_{\Vnorm{\vphi}_{W^{1,2}_0(V)} \leq 1} \left| \iiint\limits_V {(\ell_i(\ue)-\eta_i(\Ue))  \rho \, \,  \dt \vphi }\, \mathrm{d}y \, \mathrm{d}x \, \mathrm{d}t \right| \\
    &\leq \sup_{\Vnorm{\vphi}_{W^{1,2}_0(V)} \leq 1} C \iiint\limits_V {|e(\ue)-\ve| \, | \dt \vphi | \, \rho}\, \mathrm{d}y \, \mathrm{d}x \, \mathrm{d}t \\
    &\leq C \sup_{\Vnorm{\vphi}_{W^{1,2}_0(V)} \leq 1} \Vnorm{\dt \vphi \, \sqrt{\rho}}_{L^2(V)} \Vnorm{\big( e(\ue)-\ve \big) \sqrt{\rho}}_{L^2(V)}.
\end{split}
\end{equation}
Using the $L^2$ estimate \eqref{eq-Theorem4.1Proof-L2Estimate} and the fact that $\rho \in L^{\infty}(\Gamma)$, there exists a constant $C$ depending only on $\eta_i$, $\rho$, and $\ell_i$ such that
\begin{equation} 
\Vnorm{I^{\veps}_{i1}}_{W^{-1,2}(V)} \leq C \sqrt{\veps} \Vnorm{\Une - \Ubar}_{2;\rho}.
\end{equation}
Therefore by assumption \eqref{eq-Theorem4.1-L2UniformBoundAssumption} on the initial data
\begin{equation} 
\Vnorm{I^{\veps}_{i1}}_{W^{-1,2}(V)} \to 0 \quad \text{ as } \veps \to 0.
\end{equation}
Similarly, $I^{\veps}_{i2}$ can be bounded exactly the same way, and
\begin{equation} 
\Vnorm{I^{\veps}_{i2}}_{W^{-1,2}(V)} \to 0 \quad \text{ as } \veps \to 0.
\end{equation}
So $I^{\veps}_{i1}$, $I^{\veps}_{i2}$ are compact in $W^{-1,2}_{loc}(G)$.
Because of the power $\veps^{-1}$ we estimate $I_{i3}^{\veps}$ differently. $I^{\veps}_{i3}$ we estimate as
\begin{equation}
\begin{split} 
    \Vnorm{I^{\veps}_{i3}}_{L^1(V)} &= \frac{1}{\veps} \iiint\limits_V {|\dv \eta_i(\Ue)| \, | r(\ue,e(\ue)) - r(\ue,\ve)| \, \rho}\, \mathrm{d}y \, \mathrm{d}x \, \mathrm{d}t \\
    & =  \frac{1}{\veps} \iiint\limits_V {|\dv \eta_i(\Ue) - \dv \eta_i(\ue, e(\ue))| \, | r(\ue,e(\ue)) - r(\ue,\ve)| \, \rho}\, \mathrm{d}y \, \mathrm{d}x \, \mathrm{d}t \\
    &\leq \frac{C}{\veps} \iiint\limits_V {|e(\ue)-\ve|^2 \rho}\, \mathrm{d}y \, \mathrm{d}x \, \mathrm{d}t \\
    &\leq C  \Vnorm{\Une - \Ubar}^2_{2;\rho} = C \beta^2 = C
\end{split}
\end{equation}
by Taylor's theorem, \eqref{eq-EntropyForCompensatedCompactnessProof-RelaxDissipation} and the $L^2$ estimate \eqref{eq-Theorem4.1Proof-L2Estimate}. The constant $C$ depends only on $\eta_i$, $\rho$, $r$, and $\beta$. Crucially, the constant is independent of $\veps$. So the sequence $I^{\veps}_{i3}$ is bounded in $L^1_{loc}(G)$.
As a consequence of the Rellich-Kondrachov Theorem \cite[Theorem 6.3, Part IV]{adams2003sobolev}, the embedding $ L^1(G) \hookrightarrow W^{-1,p_1}(G) $ is compact for $p_1 \in \left( 1, \frac{(2+N)}{(2+N)-1} \right)$, so we have that 
\begin{equation}
I^{\veps}_{i3} \text{ is compact in } W^{-1,p_1}_{loc}(G) \text{ for any } p_1 \in \left( 1, \frac{2+N}{1+N} \right)\,.
\end{equation}
Since $p_1 < 2$, we have that $W^{-1,2}_{loc}(G) \subset W^{-1,p_1}_{loc}(G)$, and so
\begin{equation}
I^{\veps}_i := I^{\veps}_{i1} + I^{\veps}_{i2} + I^{\veps}_{i3} \text{ is compact in } W^{-1,p_1}_{loc}(G), \quad 1 < p_1 < \frac{2+N}{1+N}\,.
\end{equation}
Now we use Murat's Lemma (Theorem \ref{thm-MuratTheorem} above, with $\Phi_{\veps} = \big( \dt \ell_i(\ue) + \dx q_i (\ue) \big) \rho - I^{\veps}_i$). In \eqref{eq-Theorem4.1Proof-NegativityOfDistributions} we see that $\big( \dt \ell_i(\ue) + \dx q_i (\ue) \big) \rho - I^{\veps}_i \leq 0$ in the distributional sense. Further, using H\"older's inequality we obtain that for every $V \Subset G$, $p \in (1,\infty)$,
\begin{equation}\label{eq-Theorem4.1Proof-DistributionEstimate}
\begin{split}
    \Vnorm{\big( \dt \ell_i(\ue) + \dx q_i (\ue) \big) \rho}_{W^{-1,p}(V)}
    &= \sup_{\Vnorm{\vphi}_{W^{1,p'}_0 (V)} \leq 1} \left| \iiint\limits_V {\big( \ell_i(\ue) \dt \vphi + q_i(\ue) \dx \vphi \big) \rho}\, \mathrm{d}y \, \mathrm{d}x \, \mathrm{d}t \right| \\
    &\leq C \sup_{\Vnorm{\vphi}_{W^{1,p'}_0 (V)} \leq 1} \iiint\limits_V {|\dt \vphi| + |\dx \vphi|}\, \mathrm{d}y \, \mathrm{d}x \, \mathrm{d}t \\
    &\leq C |G|^{1/p},
\end{split}
\end{equation}
where $p' = p/(p -1)$, and $C$ depends on $\ell_i$, $q_i$, and $\rho$. 
The constant $C$ is independent of $\veps$, so the sequence $\big( \dt \ell_i(\ue) + \dx q_i (\ue) \big) \rho$ is bounded in $W^{-1,p_1}_{loc}(G)$.
Therefore, the conditions of Murat's Lemma are satisfied (with  $q_1 = p_1$, $q_2 = p_2$), and we obtain
\begin{equation}
\big( \dt \ell_i(\ue) + \dx q_i(\ue) \big) \rho - I^{\veps}_i \text{ is compact in } W^{-1,p_2}_{loc}(G)\,, \quad p_2 \in (1, p_1)\,,
\end{equation}
and thus
\begin{equation}
\big( \dt \ell_i(\ue) + \dx q_i(\ue) \big) \rho \text{ is compact in } W^{-1,p_2}_{loc}(G)\,, \quad p_2 \in (1, p_1)\,.
\end{equation}
In view of the estimate \eqref{eq-Theorem4.1Proof-DistributionEstimate} we see that actually
\begin{equation}
\big( \dt \ell_i(\ue) + \dx q_i (\ue) \big) \rho \text{ is bounded in } W^{-1,r}_{loc}(G),
\end{equation}
for any $r > 2$. Therefore, by Theorem \ref{thm-CompactnessInterpolation} with constants $q_1=p_2$, $q_2=2$ and $q_3 = r$, we finally have that
\begin{equation}
\big( \dt \ell_i(\ue) + \dx q_i (\ue) \big) \rho \text{ is compact in } W^{-1,2}_{loc}(G)\,, \quad i = 1, \, 2\,,
\end{equation}
which recovers \eqref{eq-Theorem4.1CompactEntropy2}.

\underline{Step 3:} We conclude the theorem. By the condition on the two entropy \eqref{eq-EntropyForCompensatedCompactness} we have \\$\ell_2(u) - \ell_1(u) = Cf(u)$, and because $q_i' = f' \cdot \ell_i'$,
\begin{equation}\label{eq-Theorem4.1CompactEntropy3}
    \left( \dt f(\ue) + \dx \left( \intdmt{}{\ue}{(f'(s))^2}{s} \right) \right) \rho  \text{ is compact in } W^{-1,2}_{loc}(G).
\end{equation}
By \eqref{eq-Theorem4.1CompactEntropy1} and \eqref{eq-Theorem4.1CompactEntropy3} the conditions of Theorem \ref{thm-CompensatedCompactnessFramework} are satisfied, and taking subsequences as necessary we conclude that $\ue$ converges almost everywhere in $G$ to a function $u$. Hence, the Lebesgue Dominated Convergence Theorem ensures that $\ue \to u$ in $L^p_{loc}(\bbR \times (0,\infty) \times \Gamma)$, for $1 \leq p < \infty$. The estimate \eqref{eq-Theorem4.1Proof-L2Estimate} ensures that $v := \lim\limits_{\veps \to 0}\ve = e(u)$ pointwise. Finally, applying the Lebesgue Dominated Convergence Theorem, we get that $u$ is a weak solution to the reduced system \eqref{eq-ReducedSystemRandom}. The proof is complete.
\end{proof}
\subsection{Uniqueness of the Limiting Solution}\label{sec-Uniqueness}
Define $u$ as the limit function obtained in Theorem \ref{thm-Theorem4.1}. Issues arise when we attempt to verify that this $u$ obtained is the unique entropy solution of \eqref{eq-ReducedSystemRandom}. The following, derived from the work of \cite{LattanzioMarcati}, are sufficient conditions in order to verify that $u$ is the unique entropy solution.
\begin{enumerate}
    \item[(H1)] The initial data $(\une, \vne)$ converge almost everywhere to a function $(u_0,v_0)$ taking values in $D$ satisfying $v_0 = e(u_0)$.
    \item[(H2)] For every $\veps$, the solution $\Ue$ to \eqref{eq-FullSystemRandom} is the unique limit in $\big[ L^1_{\rho^2,loc}(\bbR \times (0,\infty) \times \Gamma) \big]^2$ as $\nu \to 0$ of classical solutions $(\uen,\ven):= \Uen$ of the approximate parabolic system
    \begin{equation}\label{eq-FullSystemViscosityApproximation}
    \begin{cases}
        \dt \uen + \dx f_1(\uen,\ven) = \nu \dxx \uen \\
        \dt \ven + \dx f_2(\uen,\ven) + \frac{1}{\veps}r(\uen,\ven) = \nu \dxx \ven \\
        (\uen,\ven)(x,0,y) = (\unen,\vnen)(x,y)\,,
    \end{cases}
    \end{equation}
    where $(\unen,\vnen):= \Unen$ are smooth approximations of $\Une$ converging to $\Une$ in $\big[ L^1_{\rho, loc}(\bbR \times \Gamma) \big]^2$.
    \item[(H3)] For every convex entropy pair $(\ell,q)$ over $\pi_1(D)$, the function $u$ satisfies
    \begin{equation}
        \iiintdmt{0}{\infty}{\bbR}{\Gamma}{\Big( \ell(u) \dt \vphi + q(u) \dx \vphi \Big) \rho}{y}{x}{t} \geq 0
    \end{equation}
    for every $\vphi \in C^{\infty}_c(\bbR^d \times (0,\infty) \times \Gamma)$, $\vphi \geq 0$.
\end{enumerate}

\begin{remark}
Considering (H2), it is possible to prove a result analogous to Theorem \ref{thm-Theorem4.1} with a sequence $\Uen$ solving the parabolic equations \eqref{eq-FullSystemViscosityApproximation} in place of a sequence $\Ue$ solving the hyperbolic system \eqref{eq-FullSystemRandom}; see \cite[Chapter 15]{YunguangLuBook} for the argument in the deterministic case.
\end{remark}

We verify the sufficiency of (H1), (H2), and (H3) in this section. Throughout, we use all of the analysis in the previous sections, like the entropy extension of Theorem \ref{thm-PartialConverseForEntropy} and the definition of $B_{\gamma}$ in Lemma \ref{lma-EntropyForCompensatedCompactness}. 

\begin{lemma}[A Stability Result]\label{lma-Uniqueness-L2StabilityLemma}
Take all the assumptions of Theorem \ref{thm-Theorem4.1}. Without loss of generality we can assume that there exists an entropy $(\eta,Q)$ on $B_{\gamma}$ such that $\eta$ is an extension of $\vphi(u) := |u|^2$. (If this is not the case, the constant $\gamma$ can be further restricted.)
Further assume  (H1), (H2), and that the approximate solutions $\Uen$ and approximating initial data $\Unen$ to \eqref{eq-FullSystemViscosityApproximation} both take values in $\overline{B_{\gamma}}$ for every $\nu$.
Then, for any $V \Subset \bbR$, we have
\begin{equation}\label{eq-Uniqueness-L2Stability}
    \iiintdmt{0}{T}{V}{\Gamma}{|u(x,t,y)|^2 \rho}{y}{x}{t} \leq \iiintdmt{0}{T}{\bbR}{\Gamma}{|u_0(x,y)|^2 \rho}{y}{x}{t} + CT^2\,.
\end{equation}
\end{lemma}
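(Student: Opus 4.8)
The plan is to run the standard vanishing-viscosity entropy computation at the level of the smooth parabolic solutions $\Uen$ supplied by hypothesis (H2), localize it with a spatial cutoff, and then pass to the limit first in the viscosity parameter $\nu$ and then in the relaxation parameter $\veps$. The reduction stated in the lemma gives us a strongly convex entropy pair $(\eta,Q)$ on $\overline{B_{\gamma}}$ with $\eta(u,e(u))=|u|^{2}$ on $K\cap\overline{B_{\gamma}}$, and by Lemma \ref{lma-EntropyExistence} (cf.\ \eqref{eq-StrongConvexityConditionsOnBgamma}) we have $\dUU\eta\geq c>0$ on $\overline{B_{\gamma}}$ together with $\dv\eta(U)\,r(U)\geq0$ there (strict off $K$, zero on $K$), with $Q$ globally defined and $C^{1}$ on an open neighborhood of $\overline{B_{\gamma}}$.

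First I would fix $\veps$ and a classical solution $\Uen$ of \eqref{eq-FullSystemViscosityApproximation} valued in $\overline{B_{\gamma}}$. Multiplying the system on the left by $\dU\eta(\Uen)$ and using the entropy-flux identity \eqref{eq-Preliminaries-EntropyFluxDef} gives the pointwise identity
\[
\dt\eta(\Uen)+\dx Q(\Uen)+\tfrac{1}{\veps}\,\dv\eta(\Uen)\,r(\Uen)=\nu\,\dxx\eta(\Uen)-\nu\,(\dx\Uen)^{\intercal}\,\dUU\eta(\Uen)\,(\dx\Uen),
\]
and the strong convexity of $\eta$ together with $\dv\eta(\Uen)r(\Uen)\geq0$ let me drop the last term on the right and the relaxation term on the left, leaving $\dt\eta(\Uen)+\dx Q(\Uen)\leq\nu\,\dxx\eta(\Uen)$. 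I would then fix $\chi\in C^{\infty}_{c}(\bbR)$ with $0\leq\chi\leq1$ and $\chi\equiv1$ on $V$, multiply by $\chi(x)\rho(y)\geq0$, integrate over $\bbR\times\Gamma$ and over $s\in(0,t)$, and integrate by parts in $x$ (legitimate since $\Uen$ is smooth and $\chi$ is compactly supported). Since $\eta$ and $Q$ are bounded on the compact set $\overline{B_{\gamma}}$ and $\int_{\Gamma}\rho=1$, the terms produced by $\chi'$ and $\chi''$ are $O(t)$ and $O(\nu t)$ respectively, with constants independent of $\veps,\nu$, so a further integration in $t\in(0,T)$ yields
\[
\int_{0}^{T}\!\!\int_{\bbR}\!\!\int_{\Gamma}\eta(\Uen)\,\chi\rho\,\mathrm{d}y\,\mathrm{d}x\,\mathrm{d}t\ \leq\ T\!\int_{\bbR}\!\!\int_{\Gamma}\eta(\Unen)\,\chi\rho\,\mathrm{d}y\,\mathrm{d}x+C(1+\nu)\tfrac{T^{2}}{2}.
\]

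Finally I would pass to the limit twice. Letting $\nu\to0$ along a subsequence, (H2) gives $\Uen\to\Ue$ in $L^{1}_{\rho^{2},loc}$ and $\Unen\to\Une$ in $L^{1}_{\rho,loc}$; since $\rho>0$ a.e.\ and every iterate lies in the fixed compact set $\overline{B_{\gamma}}$, a further subsequence converges a.e., and bounded convergence then upgrades this to convergence of $\eta(\Uen)$ and $\eta(\Unen)$ in the relevant weighted $L^{1}_{loc}$ spaces, so the displayed inequality persists with $\Uen,\Unen$ replaced by $\Ue,\Une$. Letting $\veps\to0$ along the subsequence of Theorem \ref{thm-Theorem4.1}, I get $\Ue\to U=(u,e(u))$ in $L^{p}_{loc}$ and $\Une\to(u_{0},e(u_{0}))$ a.e.\ by (H1), all valued in $\overline{B_{\gamma}}$; because $\eta$ equals $|u|^{2}$ on $K\cap\overline{B_{\gamma}}$ this gives $\eta(\Ue)\to|u|^{2}$ and $\eta(\Une)\to|u_{0}|^{2}$, and the same dominated-convergence argument produces $\int_{0}^{T}\!\int_{\bbR}\!\int_{\Gamma}|u|^{2}\chi\rho\leq T\!\int_{\bbR}\!\int_{\Gamma}|u_{0}|^{2}\chi\rho+\tfrac{C}{2}T^{2}$. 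Using $\chi\geq0$ with $\chi\equiv1$ on $V$ on the left, $\chi\leq1$ on the right, and $\int_{0}^{T}\!\int_{\bbR}\!\int_{\Gamma}|u_{0}|^{2}\rho=T\Vnorm{u_{0}}_{2;\rho}^{2}$, this is exactly \eqref{eq-Uniqueness-L2Stability} after renaming the constant. The entropy computation is routine; \textbf{the main obstacle} is the double limit passage — reconciling the $L^{1}_{\rho^{2},loc}$ convergence furnished by (H2) with the $L^{1}_{loc}$ convergence actually used (handled by passing to a.e.-convergent subsequences, which is justified only because every iterate is confined to $\overline{B_{\gamma}}$), and keeping track of the fact that $\eta$ coincides with $|u|^{2}$ only on the equilibrium curve $K$, so the identification $\eta(\Ue)\to|u|^{2}$ becomes available only after the relaxation limit has forced $v=e(u)$.
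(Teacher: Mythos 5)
Your proposal is correct and follows essentially the same line as the paper's own proof: multiply the parabolic system by $\dU\eta(\Uen)$ to get the pointwise entropy identity, drop the nonnegative relaxation and quadratic-form terms, localize with a spatial cutoff $\Psi$ (your $\chi$), integrate by parts, bound the $\Psi'$ and $\nu\Psi''$ contributions by $O(t)$, integrate in $t$, and then pass $\nu\to0$ via (H2) followed by $\veps\to0$ via Theorem~\ref{thm-Theorem4.1} and (H1), using dominated convergence throughout (justified by confinement of all iterates to the compact set $\overline{B_{\gamma}}$). The only cosmetic difference is that you integrate over $[0,T]$ before sending $\nu\to0$ whereas the paper does the reverse; since the estimate is uniform in $\nu$, the two orderings are interchangeable. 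Your closing remark about $\eta$ agreeing with $|u|^{2}$ only on $K$ — so that $\eta(\Ue)\to|u|^{2}$ is available only once the relaxation limit forces $v=e(u)$ — correctly identifies a subtlety the paper leaves implicit.
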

\begin{proof}
Let $\Psi \in C^{\infty}_c(\bbR)$ with  $\Psi \equiv 1$ on $V$ and $0 \leq \Psi \leq 1$ on $\bbR$. Since $\Uen$ is a classical solution, multiplying the system in (H2) by $\dU \eta(\Uen)$ and using that $\dU \eta(\Uen) \, \dx \Uen = \dx [\eta(\Uen)]$ reveals that
\begin{equation} 
\dt \eta(\Uen) + \dx Q(\Uen) + \frac{1}{\veps}\dv \eta(\Uen) r(\Uen) + \nu \big( \Vint{\dUU \eta(\Uen) \dx \Uen, \dx \Uen} - \dxx [\eta(\Uen)] \, \big) = 0\,.
\end{equation}
Therefore, since $\dv \eta(U)r(U) \geq 0$ and since $\dUU \eta(U) > 0$ as a quadratic form,
\begin{equation}
\begin{split} 
    \iintdm{\bbR}{\Gamma}{\Psi & \eta(\Uen(x,t,y)) \rho}{y}{x} - \iintdm{\bbR}{\Gamma}{\Psi \eta(\Unen) \rho}{y}{x} \\
        &= - \iiintdmt{0}{t}{\bbR}{\Gamma}{\Psi \dx Q(\Uen) \rho}{y}{x}{s} - \frac{1}{\veps}\iiintdmt{0}{t}{\bbR}{\Gamma}{\dv \eta(\Uen) r(\Uen) \rho}{y}{x}{s} \\
        &\quad - \nu \iiintdmt{0}{t}{\bbR}{\Gamma}{\Vint{\dUU \eta(\Uen) \dx \Uen, \dx \Uen} \Psi \rho }{y}{x}{s} + \nu \iiintdmt{0}{t}{\bbR}{\Gamma}{\Psi \dxx [\eta(\Uen)] \rho}{y}{x}{s} \\
    &\leq \iiintdmt{0}{t}{\bbR}{\Gamma}{\Psi'(x) Q(\Uen) \rho}{y}{x}{s} + \nu \iiintdmt{0}{t}{\bbR}{\Gamma}{\Psi''(x) \eta(\Uen) \rho}{y}{x}{s} \\
    &\leq \Vnorm{\Psi'}_{L^1(\bbR)} \Vnorm{Q}_{L^{\infty}(B_{\gamma})} \Vnorm{\rho}_{L^1(\Gamma)}t + \nu \Vnorm{\Psi''}_{L^1(\bbR)} \Vnorm{\eta}_{L^{\infty}(B_{\gamma})} \Vnorm{\rho}_{L^1(\Gamma)}t\,.
\end{split}
\end{equation}
Taking $\nu \to 0$ and integrating over $[0,T]$, we obtain using (H2) and the Lebesgue Dominated Convergence Theorem
\begin{equation}
\begin{split}
    \iiintdmt{0}{T}{\bbR}{\Gamma}{\Psi \eta(\Ue) \rho}{y}{x}{t} - \iiintdmt{0}{T}{\bbR}{\Gamma}{\Psi \eta(\Une) \rho}{y}{x}{t} \leq CT^2\,.
\end{split}
\end{equation}
As $\veps \to 0$, using Theorem \ref{thm-Theorem4.1}, (H1) and the Lebesgue Dominated Convergence Theorem we obtain \begin{equation}
    \iiintdmt{0}{T}{\bbR}{\Gamma}{\Psi(x)|u(x,t,y)|^2 \rho}{y}{x}{t} \leq \iiintdmt{0}{T}{\bbR}{\Gamma}{\Psi(x)|u_0(x,y)|^2 \rho}{y}{x}{t} + CT^2\,.
\end{equation}
Using the definition of $\Psi$ we obtain \eqref{eq-Uniqueness-L2Stability}.
\end{proof}

\begin{theorem}[Verifying Uniqueness]\label{thm-UniquenessOfLimitingSolution}
With all the assumptions of Lemma \ref{lma-Uniqueness-L2StabilityLemma}, we have
\begin{equation}
    \lim\limits_{T \to 0^+} \frac{1}{T} \iiintdmt{0}{T}{V}{\Gamma}{|u(x,t,y)-u_0(x,y)|\rho(y)}{y}{x}{t} = 0
\end{equation}
for every $V \Subset \bbR$. Further, if (H3) aslo holds, then $u$ is the unique weak entropy solution of the reduced equation \eqref{eq-ReducedSystemRandom} with initial condition $u_0$.
\end{theorem}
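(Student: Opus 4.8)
The plan is to verify that the limit function $u$ satisfies every requirement in Definition~\ref{def-EntropySolution-ReducedSystem} with initial datum $u_0$, and then to invoke Theorem~\ref{thm-Appendix-UniquenessOfEntropySolution}. The weak‑solution property (including the $t=0$ boundary term) is already delivered by Theorem~\ref{thm-Theorem4.1}(ii), in which $u_0$ is the weak‑$\ast$ $L^\infty$ limit of $\une$; by (H1) and dominated convergence this coincides with the a.e.\ limit in (H1). The entropy inequalities demanded in Definition~\ref{def-EntropySolution-ReducedSystem} are exactly (H3) (the passage from $\pi_1(D)$ to $\pi_1(\overline D)$ is harmless, by restriction). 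Hence the only substantive point is the initial‑layer estimate
\[
\lim_{T\to0^+}\frac1T\iiintdmt{0}{T}{V}{\Gamma}{|u-u_0|\,\rho}{y}{x}{t}=0,\qquad V\Subset\bbR .
\]

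First I would reduce this to an $L^2$ statement. Fix a bounded open interval $W\supset\overline V$ and write $\langle g,h\rangle:=\iintdm{W}{\Gamma}{g\,h\,\rho}{y}{x}$, $\Vnorm{g}^2:=\langle g,g\rangle$ for the inner product and norm of the finite‑measure space $L^2(W\times\Gamma,\rho\,\mathrm dy\,\mathrm dx)$, in which every $u(\cdot,t,\cdot)$ and $u_0$ lies since they are bounded. Cauchy--Schwarz in $(x,y)$ and then in $t$ gives
\[
\frac1T\iiintdmt{0}{T}{V}{\Gamma}{|u-u_0|\,\rho}{y}{x}{t}\le\big(|V|\,\Vnorm{\rho}_{L^1(\Gamma)}\big)^{1/2}\left(\frac1T\intdmt{0}{T}{\Vnorm{u(\cdot,t,\cdot)-u_0}^2}{t}\right)^{1/2},
\]
so it is enough to show the bracket on the right tends to $0$. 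Expanding the square,
\[
\frac1T\intdmt{0}{T}{\Vnorm{u(\cdot,t,\cdot)-u_0}^2}{t}=\frac1T\intdmt{0}{T}{\Vnorm{u(\cdot,t,\cdot)}^2}{t}-2\Big\langle\tfrac1T\intdmt{0}{T}{u(\cdot,t,\cdot)}{t},\,u_0\Big\rangle+\Vnorm{u_0}^2 .
\]
For the first term I would apply Lemma~\ref{lma-Uniqueness-L2StabilityLemma} with the set $W$ in place of its $V$: its proof, run with a cutoff $\Psi\equiv1$ on $W$ and $\supp\Psi\subset W'$ for a prescribed open $W'\Supset W$, yields $\frac1T\intdmt{0}{T}{\Vnorm{u(\cdot,t,\cdot)}^2}{t}\le\iintdm{W'}{\Gamma}{|u_0|^2\rho}{y}{x}+C_{W'}T$; taking $\limsup_{T\to0^+}$ and then letting $W'\downarrow\overline W$ gives $\limsup_{T\to0^+}\frac1T\intdmt{0}{T}{\Vnorm{u(\cdot,t,\cdot)}^2}{t}\le\Vnorm{u_0}^2$.

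For the middle term I would show $\frac1T\intdmt{0}{T}{u(\cdot,t,\cdot)}{t}\rightharpoonup u_0$ weakly in $L^2(W\times\Gamma,\rho)$. Testing the weak formulation of \eqref{eq-ReducedSystemRandom} for $u$ against $\varphi(x,t,y)=\theta_T(t)\,\psi(x,y)$, with $\psi\in C^\infty_c(W\times\Gamma^{\circ})$ and $\theta_T(t):=\max\{0,1-t/T\}$ (a Lipschitz test function, admissible by density), and using $\theta_T'=-1/T$ on $(0,T)$ and $\theta_T(0)=1$, gives
\[
\frac1T\intdmt{0}{T}{\langle u(\cdot,t,\cdot),\psi\rangle}{t}=\langle u_0,\psi\rangle+\iiintdmt{0}{T}{\bbR}{\Gamma}{f(u)\,\theta_T\,\dx\psi\,\rho}{y}{x}{t},
\]
and the last integral is $O(T)$ because $f(u)$, $\dx\psi$, $\rho$ are bounded, $\psi$ has compact $x$‑support, and $0\le\theta_T\le1$ vanishes for $t\ge T$. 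Hence $\frac1T\intdmt{0}{T}{\langle u(\cdot,t,\cdot),\psi\rangle}{t}\to\langle u_0,\psi\rangle$ for every such $\psi$, and since these $\psi$ are dense in $L^2(W\times\Gamma,\rho)$ while the time‑averages are uniformly bounded there, the weak convergence follows; thus the middle term tends to $-2\Vnorm{u_0}^2$. Combining the three terms, $\limsup_{T\to0^+}\frac1T\intdmt{0}{T}{\Vnorm{u(\cdot,t,\cdot)-u_0}^2}{t}\le\Vnorm{u_0}^2-2\Vnorm{u_0}^2+\Vnorm{u_0}^2=0$, and nonnegativity of the integrand forces the limit to be $0$. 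This proves the initial‑layer estimate, so together with the first paragraph $u$ is a weak entropy solution of \eqref{eq-ReducedSystemRandom} with datum $u_0$ once (H3) is assumed, and Theorem~\ref{thm-Appendix-UniquenessOfEntropySolution} identifies it as the unique one.

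The step I expect to be the main obstacle is the first term in the $L^2$ expansion. Lemma~\ref{lma-Uniqueness-L2StabilityLemma} bounds the time‑integrated $L^2$ norm of $u$ only by an integral of $|u_0|^2$ over a set strictly larger than $W$, and one must shrink that set back down to $W$; this is legitimate only because the limits are taken in the order $\limsup_{T\to0^+}$ first — which eliminates the cutoff‑dependent constant $C_{W'}$ — and then $W'\downarrow\overline W$, using also that $|\partial W|=0$ so that $\iintdm{\overline W}{\Gamma}{|u_0|^2\rho}{y}{x}=\iintdm{W}{\Gamma}{|u_0|^2\rho}{y}{x}$. Secondary care is needed to confirm that Lipschitz temporal cutoffs are admissible test functions in the weak formulation and that Theorem~\ref{thm-Theorem4.1}(ii) indeed supplies the weak formulation including the boundary term at $t=0$.
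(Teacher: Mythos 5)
Your proof is correct, and it takes a genuinely different route from the paper's for the one nontrivial piece: the cross term in the expansion of $\frac1T\int_0^T\Vnorm{u(\cdot,t,\cdot)-u_0}^2\,dt$. Both proofs begin with the same reduction to an $L^2$ statement (your Cauchy--Schwarz and the paper's Jensen are interchangeable) and the same algebraic expansion of $(u-u_0)^2$, and both handle the $\Vnorm{u}^2$ contribution via the sharper, cutoff-dependent form of Lemma~\ref{lma-Uniqueness-L2StabilityLemma} (you have correctly observed that the version with $\Psi$ kept on both sides is what the lemma's proof actually delivers, and that the order of limits $T\to0^+$ then $W'\downarrow\overline W$ is what removes the $\Psi$-dependent constant). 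The divergence is in the cross term: the paper approximates $d'(u_0)=2u_0$ by smooth compactly supported $g_n$, and then bounds $\frac1T\iiintdmt{0}{T}{\bbR}{\Gamma}{g_n(u_0-u)\rho}{y}{x}{t}$ by going \emph{back} to the parabolic system \eqref{eq-FullSystemViscosityApproximation}, integrating $\ds\uen$ in time and integrating by parts in $x$, and then passing $\nu\to0$, $\veps\to0$ via (H1)--(H2); this produces an $n$-dependent constant $C_n$ and an extra nested limit in $n$. You instead test the weak formulation of \eqref{eq-ReducedSystemRandom} for $u$ (which Theorem~\ref{thm-Theorem4.1}(ii) already supplies, with $t=0$ boundary term, and with initial datum identified with $u_0$ through (H1) plus boundedness) against the temporal hat cutoff $\theta_T\psi$, deducing weak-$L^2$ convergence of the time averages $\frac1T\int_0^T u(\cdot,t,\cdot)\,dt\rightharpoonup u_0$. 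This removes both the return trip to the viscous system and the $g_n$-approximation/$n$-limit. Your route is arguably cleaner and depends on fewer moving parts; what the paper's route buys is that it never needs to make precise the density argument justifying Lipschitz-in-time test functions nor the identification of the weak-$\ast$ limit of $\une$ with the a.e.\ limit $u_0$, since it stays at the level of classical parabolic solutions where everything is smooth. Both are valid; your caveats at the end correctly identify the two spots that need minor bookkeeping.
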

\begin{proof}
Let $d(u) := |u|^2$. Let $D(u,u_0) = d(u)-d(u_0)-d'(u_0)(u-u_0) = (u-u_0)^2$.
For any $V \Subset \bbR$, Jensen's inequality implies that
\begin{equation}\label{eq-Uniqueness-Estimate1}
\begin{split}
    \frac{1}{T \, |V| \, |\Gamma|} \iiintdmt{0}{T}{V}{\Gamma}{|u-u_0| \, \rho }{y}{x}{t} &\leq C \left( \frac{1}{T} \iiintdmt{0}{T}{V}{\Gamma}{|u-u_0|^2 \, \rho }{y}{x}{t} \right)^{1/2} \\
    &= C \left( \frac{1}{T} \iiintdmt{0}{T}{V}{\Gamma}{D(u,u_0) \, \rho }{y}{x}{t} \right)^{1/2}\,,
\end{split}
\end{equation}
where the constant $C$ depends on $V$, $\Gamma$ and $\rho$. Now, let $(g_n) \subset C^{\infty}_c(\bbR \times \Gamma)$ be a sequence of functions such that
\begin{equation}
\iintdm{V}{\Gamma}{|g_n - d'(u_0)| \rho}{y}{x} \to 0\,, \quad n \to \infty\,.
\end{equation}
Then by Lemma \ref{lma-Uniqueness-L2StabilityLemma},
\begin{equation}
\begin{split}
    \iiintdmt{0}{T}{V}{\Gamma}{& D(u,u_0) \rho}{y}{x}{t} \\
    &= \int_{0}^{T} \int_{V} \int_{\Gamma}(d(u)-d(u_0)-d'(u_0)(u-u_0) + g_n \cdot (u-u_0) - g_n \cdot (u-u_0) ) \rho \, \mathrm{d}y \, \mathrm{d}x \, \mathrm{d}t \\
    &\leq \iiintdmt{0}{T}{V}{\Gamma}{|u|^2 \rho}{y}{x}{t} - \iiintdmt{0}{T}{V}{\Gamma}{|u_0|^2 \rho}{y}{x}{t} \\
    &\quad + T \left( \Vnorm{u}_{\infty} + \Vnorm{u_0}_{\infty} \right) \cdot \Vnorm{g_n - d'(u_0)}_{L^1_{\rho}(V \times \Gamma)} + \iiintdmt{0}{T}{V}{\Gamma}{g_n(u_0-u) \rho}{y}{x}{t} \\
    &\leq \iiintdmt{0}{T}{\bbR \setminus V}{\Gamma}{|u_0|^2 \rho}{y}{x}{t} + CT^2 \\
    &\qquad + T \left( \Vnorm{u}_{\infty} + \Vnorm{u_0}_{\infty} \right) \Vnorm{g_n - d'(u_0)}_{L^1_{\rho}(V \times \Gamma)} + \iiintdmt{0}{T}{V}{\Gamma}{g_n(u-u_0) \rho}{y}{x}{t}\,.
\end{split}
\end{equation}
Let $\{ V_i \}$ be a nested sequence of compact sets such that $\cup V_i = \bbR$ and such that $V \subset V_1$. Then for any $\delta >0$ there exists $i$ and $n_i$ large such that 
\begin{equation}
\iiintdmt{0}{T}{\bbR \setminus V_i}{\Gamma}{|u_0|^2 \rho}{y}{x}{t} + T \left( \Vnorm{u}_{\infty} + \Vnorm{u_0}_{\infty} \right) \Vnorm{g_{n_i} - d'(u_0)}_{L^1_{\rho}(V_i \times \Gamma)} < \delta\,.
\end{equation}
\noindent By repeating the above argument for this $i$, since
\begin{equation}
\iiintdmt{0}{T}{V}{\Gamma}{D(u,u_0) \rho}{y}{x}{t} \leq \iiintdmt{0}{T}{V_i}{\Gamma}{D(u,u_0) \rho}{y}{x}{t}\,,
\end{equation}
we obtain
\begin{equation}\label{eq-Uniqueness-Estimate2}
\begin{split}
    \frac{1}{T} \iiintdmt{0}{T}{V}{\Gamma}{D(u,u_0)}{y}{x}{t}
    &\leq CT + \delta + \frac{1}{T} \iiintdmt{0}{T}{\bbR}{\Gamma}{g_n(u-u_0) \rho}{y}{x}{t}\,,
\end{split}
\end{equation}
for $\delta > 0$ arbitrary. Thanks to estimates \eqref{eq-Uniqueness-Estimate1} and \eqref{eq-Uniqueness-Estimate2}, it suffices to show
\begin{equation}\label{eq-Uniqueness-Estimate3}
    \lim\limits_{T \to 0^+} \frac{1}{T} \iiintdmt{0}{T}{\bbR}{\Gamma}{g_n(u_0-u) \rho}{y}{x}{t} = 0\,,
\end{equation}
for each fixed $n \in \bbN$.
Let $\Uen$ be the solution to the parabolic equations \eqref{eq-FullSystemViscosityApproximation} with initial data $\Unen$. Then using this,
\begin{equation}
\begin{split}
    \frac{1}{T} \iiintdmt{0}{T}{\bbR}{\Gamma}{g_n(\unen - \uen) \rho}{y}{x}{t}
    &= - \frac{1}{T} \iiintdmt{0}{T}{\bbR}{\Gamma}{\intdmt{0}{t}{\ds \uen \, g_n \,  \rho}{s}}{y}{x}{t} \\
    &= \frac{1}{T}\iiintdmt{0}{T}{\bbR}{\Gamma}{\intdmt{0}{t}{ \left( \dx f_1(\uen,\ven) - \nu \dxx \uen \right) g_n \rho}{s}}{y}{x}{t} \\
    &= - \frac{1}{T} \iiintdmt{0}{T}{\bbR}{\Gamma}{\intdmt{0}{t}{ \left( f_1(\uen,\ven) \dx g_n + \nu \uen \dxx g_n \right) \rho}{s}}{y}{x}{t} \\
    &\leq C_n T\,,
\end{split}
\end{equation}
since $g_n \in C^{\infty}_c(\bbR \times \Gamma)$, and since $(u,v)$ takes values in $\overline{B_{\gamma}}$. Note that the sequence of constants $C_n$ may be unbounded in $n$, but is independent of $\veps$ and $\nu$.
Therefore by (H1) and (H2), the Lebesgue Dominated Convergence Theorem gives
\begin{equation}
\frac{1}{T} \iiintdmt{0}{T}{\bbR}{\Gamma}{g_n(u_0-u) \rho}{y}{x}{t} = \lim\limits_{\veps, \nu \to 0} \frac{1}{T} \iiintdmt{0}{T}{\bbR}{\Gamma}{g_n(\unen - \uen) \rho}{y}{x}{t} \leq C_n T\,,
\end{equation}
and \eqref{eq-Uniqueness-Estimate3} follows.
\end{proof}
\section{Example: The Semi-linear \textit{p}-system}\label{sec-SemiLinearPSystem}
Our goal here is to apply the analysis of the previous sections to the case of the semi-linear \textit{p}-system
\begin{equation}\label{eq-PSystemSemilinearFull}
\begin{cases}
    \dt \ue + \dx \ve = 0 \\
    \dt \ve + a^2 \dx \ue + \frac{1}{\veps}(\ve-f(\ue)) = 0 \\
    (\ue,\ve)|_{t=0} = (\une,\vne)
\end{cases}
\end{equation}
where $a$ is some positive constant.
We assume that $f \in C^2(\bbR)$ with $f(0)=f'(0)=0$.
The subcharacteristic condition \eqref{eq-SubcharacteristicCondition} becomes
\begin{equation}\label{eq-PSystemSemilinear-SubcharacteristicCondition}
    -a < f'(u) < a\,, \quad u \in \bbR\,.
\end{equation}
The curve of local equilibria is the set $K := \{ (u,v) \in \bbR^2 \, | \, u \in \bbR\,,\,  v = f(u) \}$.
The approximate parabolic system \eqref{eq-FullSystemViscosityApproximation} for the semi-linear \textit{p}-system is
\begin{equation}\label{eq-PSystemSemilinearFull-ViscosityApproximation}
\begin{cases}
    \dt \uen + \dx \ven = \nu \dxx \uen \\
    \dt \ven + a^2 \dx \uen + \frac{1}{\veps}(\ven-f(\uen)) = \nu \dxx \ven \\
    (\uen,\ven)|_{t=0} = (\unen,\vnen)\,.
\end{cases}
\end{equation}

This analysis is a combination of the arguments found in \cite{Dafermos, Serre, NataliniPSystem, NataliniQuasilinearSystems}.
Much of what we have done in the previous sections rely on apriori assumptions made on the solutions $\Ue$. Specifically, we assumed that $\Ue$ are entropy solutions, uniformly bounded in $L^{\infty}$, and the limits of vanishing viscosity solutions $\Uen$. In this section we show that -- with enough assumptions on the data $\Une$ -- there exist unique solutions, one for each $\veps$, of the semi-linear \textit{p}-system satisfying suitable versions of these criteria. We obtain $L^{\infty}$-stability results for solutions of \eqref{eq-PSystemSemilinearFull}, which is used to verify the uniform boundedness of $\Ue$ in $L^{\infty}$, a necessary condition for invoking Theorem \ref{thm-Theorem4.1}. For data $\Une$ in $L^1 \cap L^{\infty}$ we can verify (H2) and (H3) for the \textit{p}-system. For data only in $L^{\infty}$, however, we are not able to verify (H2), but instead show that the $p$-system satisfies an alternative version of the assumption, denoted (H2)$'$. In what follows we consider the case of data in $L^{\infty}$ only.

\begin{enumerate}
\item[(H2)$'$] Let $\Une \in \big[ L^{\infty}(\bbR \times \Gamma) \big]^2$. For every $\veps$, the solution $\Ue$ to \eqref{eq-FullSystemRandom} is the unique limit in $\big[ L^1_{\rho^2,loc}(\bbR \times (0,\infty) \times \Gamma) \big]^2$ as $j \to \infty$ of solutions $U^{\veps,j} \in C \left( [0,\infty); \big[ L^1_{\rho,loc}(\bbR \times \Gamma) \big]^2 \right)$ to \eqref{eq-FullSystemRandom} with initial data $U^{\veps,j}_0 := \chi_j \Une$, where $\chi_j(x,y)$ is the characteristic function on the set $[-j,j] \times \Gamma$. These solutions are in turn, for each $j$, the unique limit in $C \left( [0,\infty); \big[ L^1_{\rho,loc}(\bbR \times \Gamma) \big]^2 \right)$ as $\nu \to 0$ of classical solutions $U^{\veps,\nu,j}$ to the approximate parabolic system \eqref{eq-FullSystemViscosityApproximation} with initial data $U^{\veps,\nu,j}_0$, where $U^{\veps,\nu,j}$ are compactly supported $C^{\infty}$ approximations of $U^{\veps,j}_0$ converging to $U^{\veps,j}_0$ in $\big[ L^1_{\rho}(\bbR \times \Gamma) \big]^2$.
\end{enumerate}

\begin{theorem}
The results of Section \ref{sec-Uniqueness}, in particular Lemma \ref{lma-Uniqueness-L2StabilityLemma} and Theorem 
\ref{thm-UniquenessOfLimitingSolution}, remain valid with the assumption (H2) replaced by (H2)$'$.
\end{theorem}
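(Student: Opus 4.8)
The plan is to revisit the proofs of Lemma~\ref{lma-Uniqueness-L2StabilityLemma} and Theorem~\ref{thm-UniquenessOfLimitingSolution} and to observe that (H2) enters there only to realize $\Ue$ — and hence its first component $\ue$ — as a limit of classical solutions of the parabolic system \eqref{eq-FullSystemViscosityApproximation}, along which the pointwise bounds needed for integration by parts and for the Lebesgue Dominated Convergence Theorem are at hand. Under (H2)$'$ the same role is played by the two-step approximation $U^{\veps,j} = \lim_{\nu\to0} U^{\veps,\nu,j}$ followed by $\Ue = \lim_{j\to\infty} U^{\veps,j}$. So the strategy is: carry out each estimate at the level of the classical solutions $U^{\veps,\nu,j}$, pass to the limit $\nu\to0$ to obtain the corresponding statement for $U^{\veps,j}$, then pass to the limit $j\to\infty$ to recover it for $\Ue$, after which the limit $\veps\to0$ is taken exactly as before using Theorem~\ref{thm-Theorem4.1} and (H1). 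The structural fact that makes this work is that, under the hypotheses of Lemma~\ref{lma-Uniqueness-L2StabilityLemma} together with the $L^{\infty}$-stability available for the $p$-system, all the iterates $U^{\veps,\nu,j}$, $U^{\veps,j}$ and their initial data take values in the fixed compact set $\overline{B_{\gamma}}$; hence the supremum norms of $\eta$, $Q$, $r$ and their derivatives over $\overline{B_{\gamma}}$ control every constant that appears, uniformly in $j$, $\nu$ and $\veps$.

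For Lemma~\ref{lma-Uniqueness-L2StabilityLemma}, I would multiply \eqref{eq-FullSystemViscosityApproximation} (with data $U^{\veps,\nu,j}_0$) on the left by $\dU\eta(U^{\veps,\nu,j})$, discard the nonnegative relaxation term (condition ii) of Definition~\ref{def-EntropyForFullSystem}) and the nonnegative viscous quadratic form (strong convexity), and integrate against the cutoff $\Psi$, reaching the same inequality as in the current proof with a constant depending only on $\Vnorm{\Psi'}_{L^1(\bbR)}$, $\Vnorm{\Psi''}_{L^1(\bbR)}$, $\Vnorm{Q}_{L^{\infty}(B_{\gamma})}$, $\Vnorm{\eta}_{L^{\infty}(B_{\gamma})}$ and $\Vnorm{\rho}_{L^1(\Gamma)}$. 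Letting $\nu\to0$ (the inner limit in (H2)$'$) and invoking the Dominated Convergence Theorem along an almost-everywhere convergent subsequence gives the same inequality for $U^{\veps,j}$; here one uses that once $j$ is large enough that $[-j,j]\supset\supp\Psi$ one has $\Psi\,\eta(U^{\veps,j}_0) = \Psi\,\eta(\chi_j\Une) = \Psi\,\eta(\Une)$, so the initial-data term stabilizes exactly and the truncation produces no spurious contribution. Integrating over $[0,T]$, then letting $j\to\infty$ and finally $\veps\to0$ exactly as in the original argument, reproduces \eqref{eq-Uniqueness-L2Stability} verbatim.

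For Theorem~\ref{thm-UniquenessOfLimitingSolution}, everything up to and including the reduction to \eqref{eq-Uniqueness-Estimate3} is untouched, since it uses only Lemma~\ref{lma-Uniqueness-L2StabilityLemma} (now established) and the $\overline{B_{\gamma}}$-boundedness of $u$ and $u_0$. To prove \eqref{eq-Uniqueness-Estimate3} I would repeat the integration-by-parts computation of that theorem with $\Uen$ replaced by $U^{\veps,\nu,j}$ and $\unen$, $\uen$ replaced by the first components of $U^{\veps,\nu,j}_0$ and $U^{\veps,\nu,j}$: for $j$ large, $g_n U^{\veps,j}_0 = g_n\chi_j\Une = g_n\Une$ since $\supp g_n$ is compact, and the computation yields a bound of the form $C_n T$ with $C_n$ depending on $g_n$ and $\sup_{\overline{B_{\gamma}}}|f_1|$ but not on $\veps$, $\nu$ or $j$ — the term containing $\nu\,\dxx g_n$ carries an explicit factor $\nu$ and so stays bounded as $\nu\to0$. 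Passing $\nu\to0$, then $j\to\infty$, then $\veps\to0$ through the chain of limits in (H2)$'$ and Theorem~\ref{thm-Theorem4.1}, with the Dominated Convergence Theorem at each stage, gives \eqref{eq-Uniqueness-Estimate3}; the remaining steps, including the use of (H3) for the final uniqueness conclusion, are word-for-word the same.

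The main obstacle is the bookkeeping of the three nested limits $\nu\to0$, $j\to\infty$, $\veps\to0$: at each stage one must produce a dominating function independent of the relevant parameter, which here is supplied by the uniform $L^{\infty}$ bound coming from the $\overline{B_{\gamma}}$-valued property of the iterates together with $\rho\in L^{\infty}(\Gamma)$ and the compact $x$-support coming from $\Psi$ and $g_n$. A secondary point to record is that multiplying $\Une$ by $\chi_j$ could a priori create values outside $\overline{B_{\gamma}}$; this is harmless because on the fixed compact supports of $\Psi$ and $g_n$ the truncation is the identity once $j$ is large, so every evaluation of $\eta$, $Q$ and $f_1$ that actually enters the estimates remains on $\overline{B_{\gamma}}$.
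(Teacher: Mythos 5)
Your proposal is correct and fills in exactly the line-by-line verification that the paper leaves to the reader. The strategy — work at the level of the classical solutions $U^{\veps,\nu,j}$, pass $\nu\to0$ to reach $U^{\veps,j}$, then $j\to\infty$ to reach $\Ue$, then $\veps\to0$ as before — is the only sensible way to carry out the promised repetition, and your two key observations are the right ones: (a) the initial-data terms $\int \Psi\,\eta(U^{\veps,j}_0)\rho$ and $\int g_n\,u^{\veps,j}_0\rho$ stabilize at their $j=\infty$ values once $[-j,j]$ contains the fixed compact $x$-supports of $\Psi$ and $g_n$, because there $\chi_j\Une=\Une$; and (b) the single $\nu$-dependent term $\nu\,\uen\,\dxx g_n$ carries its own $\nu$ prefactor and vanishes in the limit, so the bound $C_nT$ is uniform in all three parameters.

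One small imprecision in your closing remark. You argue that $\chi_j\Une$ possibly leaving $\overline{B_\gamma}$ is harmless because the truncation is the identity on the support of $\Psi$ and $g_n$. That observation controls the \emph{initial} data at $t=0$ on that set, but not the \emph{solution} $U^{\veps,\nu,j}(x,t,y)$ for $t>0$ and $x\in\supp\Psi$ (which involves parabolic regularization, so it is influenced by data everywhere, including where the truncation puts $(0,0)$). The correct accounting is that, under the hypotheses one inherits from Lemma~\ref{lma-Uniqueness-L2StabilityLemma} with (H2) replaced by (H2)$'$, the approximate solutions $U^{\veps,\nu,j}$ and approximate data $U^{\veps,\nu,j}_0$ are assumed to take values in $\overline{B_\gamma}$ for every $\nu$ and $j$, so the issue is moot by hypothesis; and for the $p$-system specifically this assumption is actually verifiable because the entropy pairs extend to all of $\bbR^2$ (Lemma~\ref{lma-PSystemSemiLinear-EntropyExtension}) and the $L^\infty$ bound \eqref{eq-PSystemSemiLinear-LInftyBoundsForSoln} confines all the iterates, uniformly in $j,\nu,\veps$. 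That repair does not change anything downstream in your argument, so the conclusion stands.
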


This theorem is not an obvious corollary of Lemma \ref{lma-Uniqueness-L2StabilityLemma} and Theorem 
\ref{thm-UniquenessOfLimitingSolution}. The proof, however, consists of nothing but an identical repetition of the arguments given in Section \ref{sec-Uniqueness}. This can be verified without much difficulty by a line-by-line review of the proofs.

The key technique in verifying (H2)$'$ and (H3) is to ``diagonalize" the systems \eqref{eq-PSystemSemilinearFull} and \eqref{eq-PSystemSemilinearFull-ViscosityApproximation} and to appeal to the theory for scalar conservation laws. In terms of these Riemannian coordinates
\begin{equation}
    A:= \begin{bmatrix}
        -a & 1 \\
        -a & -1 \\
        \end{bmatrix}\,,
    \qquad
    	\We = (\we,\ze) := A \, \Ue\,, \qquad \Wne = (\wne, \zne) := A \, \Une\,,
\end{equation}
the system \eqref{eq-PSystemSemilinearFull} takes the form
\begin{equation}\label{eq-PSystemSemiLinear-RiemannianCoordinates}
\begin{cases}
    \dt \We + \diag(-a,a) \dx \We = \frac{1}{\veps} \cH(\We) \\
    \We|_{t=0} = \Wne\,,
\end{cases}
\end{equation}
where
\begin{equation}
    \cH(W) := \Big( -H(W), \, H(W) \Big)\,; \qquad 
    H(w,z) := \frac{w - z}{2} - f \left( - \frac{w + z}{2a} \right)\,.
\end{equation}
Similarly, the approximate parabolic system \eqref{eq-PSystemSemilinearFull-ViscosityApproximation} takes the form
\begin{equation}\label{eq-PSystemSemiLinear-ViscosityApproximation-RiemannianCoordinates}
\begin{cases}
    \dt \Wen + \diag(-a,a) \, \dx \Wen = \nu \dxx \Wen +  \frac{1}{\veps} \cH(\Wen) \\
    \Wen|_{t=0} = \Wnen\,.
\end{cases}
\end{equation}

For the rest of this section we prove results for the systems \eqref{eq-PSystemSemilinearFull} and \eqref{eq-PSystemSemilinearFull-ViscosityApproximation} via the diagonalized systems \eqref{eq-PSystemSemiLinear-RiemannianCoordinates} and \eqref{eq-PSystemSemiLinear-ViscosityApproximation-RiemannianCoordinates}, respectively.
The proofs for the systems \eqref{eq-PSystemSemiLinear-RiemannianCoordinates} and \eqref{eq-PSystemSemiLinear-ViscosityApproximation-RiemannianCoordinates} can be found in Appendices \ref{apdx-QLParabolicSystems}, \ref{apdx-SLParabolicSystems} and \ref{apdx-SLHyperbolicSystems}.
Convex entropy pairs for the system \eqref{eq-PSystemSemiLinear-RiemannianCoordinates} are defined just as in Definition \ref{def-EntropyForFullSystem}, but with $\bbR^2$ in place of $\overline{D}$ and $\left\lbrace (w,z) \in \bbR^2 \, \Big| \, \frac{w-z}{2} = f\left( - \frac{w+z}{2} \right) \right\rbrace$ in place of $K$. Weak solutions and weak entropy solutions to \eqref{eq-PSystemSemiLinear-RiemannianCoordinates} are defined according to Definition \ref{def-2x2Solution}, but with $\We$ in place of $\Ue$.

\begin{remark}\label{rmk-EquivalenceOfEntropySolutions}
Suppose that $\Une \in \big[L^{\infty}(\bbR \times \Gamma) \big]^2$, and define $\Wne := A \Une$. Then $\We$ is a weak entropy solution to \eqref{eq-PSystemSemiLinear-RiemannianCoordinates} with initial data $\Wne$ if and only if $\Ue$ is a weak entropy solution to \eqref{eq-PSystemSemilinearFull} with initial data $\Une$. This can be checked using a direct computation. In particular, note that if $\eta(W)$ is a convex entropy for \eqref{eq-PSystemSemiLinear-RiemannianCoordinates}, then $\tilde{\eta}(U) := \eta(AU)$ is a convex entropy for \eqref{eq-PSystemSemilinearFull}. Conversely, if $\eta(U)$ is a convex entropy for \eqref{eq-PSystemSemilinearFull}, then $\tilde{\eta}(W) := \eta(A^{-1}W)$ is a convex entropy for \eqref{eq-PSystemSemiLinear-RiemannianCoordinates}.
\end{remark}

\subsection{Verification of \texorpdfstring{(H2)$'$}{(H2)'}}
We prove that the $p$-system verifies (H2)$'$ in two steps. First, in Theorem \ref{thm-GlobalWellPosednessForViscosityApproximation} we prove global existence and uniqueness of classical solutions to the approximate parabolic system \eqref{eq-PSystemSemilinearFull-ViscosityApproximation}. Second, in Theorem \ref{thm-H2-Step2} we prove that there exists a unique weak entropy solution to the $p$-system \eqref{eq-PSystemSemilinearFull} obtained in the sense described in (H2)$'$.

\begin{theorem}\label{thm-GlobalWellPosednessForViscosityApproximation}
Let $\veps$, $\nu > 0$. Let $\Unen$, $\dx \Unen$, $\dxx \Unen$ all belong to $\big[ L^2(\bbR \times \Gamma) \big]^2 \cap \big[ L^{\infty}(\bbR \times \Gamma) \big]^2$, and assume that
\begin{equation}
    \Vnorm{\Unen} + \Vnorm{\dx \Unen} + \Vnorm{\dxx \Unen} \leq C
\end{equation}
uniformly in $\veps$ and $\nu$, where $\Vnorm{\cdot} = \Vnorm{\cdot}_{\infty} + \Vnorm{\cdot}_{2;\rho}$.
Then there exists a unique global classical solution $\Uen$ of \eqref{eq-PSystemSemilinearFull-ViscosityApproximation} on $\bbR \times [0,\infty) \times \Gamma$ satisfying
\begin{equation}
\begin{split}
    \Uen\,, \, \dt \Uen\,, \, \dx \Uen\,, \, \dxx \Uen \, \in L^{\infty} \left( (0,\infty); \, \big[ L^2_{\rho}(\bbR \times \Gamma) \big]^2 \right) \cap \big[ L^{\infty}(\bbR \times (0,\infty) \times \Gamma) \big]^2\,.
\end{split}
\end{equation}
In addition, the solutions $\Uen$ satisfy
\begin{equation}\label{eq-PSystemSemiLinear-LInftyBoundsForSoln}
    \Vnorm{\Uen}_{\infty} \leq \Vnorm{A^{-1}} \bigg( \beta + \max \left\lbrace \left| f \left( -\frac{\beta}{a} \right) \right|\,, \, \left| f \left( \frac{\beta}{a} \right) \right| \right\rbrace \bigg)
\end{equation}
uniformly in $\veps$ and $\nu$, where $\beta := \sup_{\veps,\nu} \Vnorm{A \Unen}_{\infty}$.
\end{theorem}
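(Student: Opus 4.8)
The natural route is to pass to the Riemannian coordinates $W = AU$, so that the claim becomes the corresponding well-posedness statement for the diagonal, weakly-coupled semilinear parabolic system \eqref{eq-PSystemSemiLinear-ViscosityApproximation-RiemannianCoordinates}. Since $A$ and $A^{-1}$ are bounded linear isomorphisms of $\bbR^2$, every norm and function-space membership in the statement transfers, and the factor $\Vnorm{A^{-1}}$ appearing in \eqref{eq-PSystemSemiLinear-LInftyBoundsForSoln} is precisely the price of undoing this change of variables. For the diagonalized system one then argues in three stages: local existence and uniqueness, a uniform \emph{a priori} $L^\infty$ bound, and global continuation.

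For local well-posedness I would recast \eqref{eq-PSystemSemiLinear-ViscosityApproximation-RiemannianCoordinates} in Duhamel form using the constant-coefficient convection--diffusion semigroups $e^{t(\nu\dxx - \lambda_i\dx)}$ (with $\lambda_1 = -a$, $\lambda_2 = a$) acting componentwise, and run a Banach fixed-point argument in $C([0,T_0];X)$, where $X$ collects pairs whose components and whose first two spatial derivatives lie in $L^2_\rho(\bbR\times\Gamma)\cap L^\infty(\bbR\times\Gamma)$. The only nonlinearity is the zeroth-order relaxation term $\tfrac1\veps\cH$, which is $C^2$ because $f\in C^2$ and hence Lipschitz on bounded subsets of $X$; the smoothing of the heat semigroup closes the contraction for $T_0$ depending only on $\Vnorm{\Unen}$, $\Vnorm{\dx\Unen}$, $\Vnorm{\dxx\Unen}$. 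Parabolic regularity then upgrades the mild solution to a classical one and supplies the asserted membership of $\Uen,\dt\Uen,\dx\Uen,\dxx\Uen$. The density $\rho=\rho(y)$ enters only as a fixed weight in the $y$-variable --- the equation contains no $y$-derivatives --- so this is the deterministic argument carried out uniformly in $y$; the full details are deferred to Appendix~\ref{apdx-SLParabolicSystems}, using the quasilinear framework of Appendix~\ref{apdx-QLParabolicSystems} where convenient.

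The crux is the \emph{a priori} $L^\infty$ bound \eqref{eq-PSystemSemiLinear-LInftyBoundsForSoln}, which must be uniform in both $\veps$ and $\nu$ despite the $\veps^{-1}$ factor multiplying $\cH$. I would obtain it by an invariant-region / maximum-principle argument for \eqref{eq-PSystemSemiLinear-ViscosityApproximation-RiemannianCoordinates}: the viscous term $\nu\dxx$ preserves closed convex sets, the diagonal transport $\diag(-a,a)\dx$ preserves rectangles aligned with the $w$- and $z$-axes, and --- decisively --- the subcharacteristic condition \eqref{eq-PSystemSemilinear-SubcharacteristicCondition} forces $\partial_w H > 0 > \partial_z H$, so that the normal component of $\tfrac1\veps\cH(W)$ along the boundary of an appropriate rectangle tied to the equilibrium curve $\{H=0\}$ has the sign that keeps the solution inside, uniformly in $\veps$. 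Reading off the size of such a region in terms of $\beta = \sup_{\veps,\nu}\Vnorm{A\Unen}_\infty$ (recalling $f(0)=f'(0)=0$), and transferring back through $A^{-1}$, produces \eqref{eq-PSystemSemiLinear-LInftyBoundsForSoln}; the fact that the relevant spatial maxima are attained is guaranteed by the spatial decay coming from the $L^2_\rho$ estimates.

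Globality is then routine continuation: the local existence time $T_0$ depends only on norms that stay bounded on every interval $[0,T]$ --- the $L^\infty$ norm by the bound just proved, and the $L^2_\rho$ norms of $W$ and its $x$-derivatives by energy estimates in which the $\veps^{-1}$ relaxation term is dissipative (or at worst Gr\"onwall-absorbed) --- so the solution extends to $[0,\infty)$, and the same difference estimates give uniqueness. I expect the $L^\infty$ step to be the main obstacle: one must check that the $\veps^{-1}$ term cannot inflate the bound, which relies entirely on the structural sign condition $\partial_w H > 0 > \partial_z H$ inherited from $|f'|<a$, and one must justify the maximum principle on the unbounded line $\bbR$, which is where the decay afforded by the $L^2_\rho$ control of $W$ and its derivatives is used.
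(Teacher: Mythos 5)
Your proposal follows the same overall architecture as the paper's: change to Riemannian coordinates $W=AU$, local existence by a Duhamel fixed-point argument, a uniform-in-$(\veps,\nu)$ $L^\infty$ bound, then continuation to a global solution. (The choice to use the convection--diffusion semigroups $e^{t(\nu\dxx-\lambda_i\dx)}$ rather than the heat kernel alone, as the paper does in Appendix~\ref{apdx-QLParabolicSystems} after rescaling out $\nu$, is a cosmetic variant.) The genuine divergence is in the $L^\infty$ \emph{a priori} bound. Where you propose a static invariant rectangle, the paper's Theorem~\ref{thm-PSystemSemiLinear-GlobalExistenceOfViscositySolutions} compares $\Wen$ against the explicit spatially-homogeneous ODE solutions of \eqref{eq-PSystemSemiLinear-ODESystem} using the quasimonotone comparison principle of Theorem~\ref{thm-PSystemSemiLinear-QuasiMonotonicity}; both routes exploit the sign information $\partial_w H>0>\partial_z H$ that the subcharacteristic condition supplies. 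Two points in favor of the paper's ODE-barrier route are worth recording. First, a static invariant rectangle is not free to choose: on the face $\{w=w_+\}$ one needs $H(w_+,z)\geq 0$, with minimum at $z=z_+$ because $\partial_z H<0$, while on the face $\{z=z_+\}$ one needs $H(w,z_+)\leq 0$, with maximum at $w=w_+$, forcing $H(w_+,z_+)=0$ exactly --- the NE and SW corners must sit on the equilibrium curve $K$. This is consistent with your ``tied to $\{H=0\}$,'' but the corner that results is generically a \emph{different} point of $K$ than the limit $\lim_{t\to\infty}(p_1^+(t),p_2^+(t))$, and the constant one reads off is not manifestly the expression in \eqref{eq-PSystemSemiLinear-LInftyBoundsForSoln}; the monotone ODE barriers converging to a point of $K$ give that constant with no extra computation. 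Second, since the barriers are constant in $(x,y)$, the comparison principle already addresses the maximum-principle-on-$\bbR$ issue you flag, without a separate appeal to $L^2_\rho$ decay.
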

\begin{proof}
It is clear that $\Uen$ is a classical solution of \eqref{eq-PSystemSemilinearFull-ViscosityApproximation} with initial data $\Unen$ if and only if $\Wen$ is a classical solution of \eqref{eq-PSystemSemiLinear-ViscosityApproximation-RiemannianCoordinates} with initial data $\Wnen := A \Unen$. Thus, the theorem is proved using the coordinate change $A$ in conjunction with the results of Proposition \ref{prop-LocalExistenceOfClassicalViscApproxSoln-RiemannianCoordinates}, Corollary \ref{cor-PSystemSemiLinear-UniquenessParabolicEquation-RiemannianCoordinates}, and Theorem \ref{thm-PSystemSemiLinear-GlobalExistenceOfViscositySolutions}.
\end{proof}

\begin{theorem}\label{thm-H2-Step2}
Let $\Une \in  \big[ L^2_{\rho}(\bbR \times \Gamma) \big]^2 \cap \big[ L^{\infty}(\bbR \times \Gamma) \big]^2$. Then there exists a unique weak entropy solution $\Ue$ to the system \eqref{eq-PSystemSemilinearFull} with initial data $\Une$. The function $\Ue$ satisfies
\begin{equation}
    \Ue \in C \left( [0,\infty); \, \big[ L^1_{\rho,loc}(\bbR \times \Gamma) \big]^2 \right)
    \cap \big[ L^{\infty}(\bbR \times (0,\infty) \times \Gamma) \big]^2,
\end{equation}
satisfies the $L^{\infty}$ bounds \eqref{eq-PSystemSemiLinear-LInftyBoundsForSoln},
and is obtained as the limit of a sequence of functions described in (H2)$'$.
\end{theorem}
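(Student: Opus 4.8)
The plan is to work entirely in the Riemann coordinates $\We = A\Ue$ and to invoke Remark~\ref{rmk-EquivalenceOfEntropySolutions}: it suffices to produce a unique weak entropy solution $\We$ of the diagonalized system \eqref{eq-PSystemSemiLinear-RiemannianCoordinates} with data $\Wne := A\Une$, having the stated regularity and the $L^{\infty}$ bounds, since then $\Ue := A^{-1}\We$ is the solution claimed for \eqref{eq-PSystemSemilinearFull}. The construction follows the layered limiting procedure prescribed in (H2)$'$. \emph{Step 1 (fixed $\veps,\nu,j$).} For each $j$ set $W^{\veps,j}_0 := A\chi_j\Une$ and pick compactly supported $C^{\infty}$ mollifications $W^{\veps,\nu,j}_0$ of it, bounded in $[L^2_{\rho}(\bbR\times\Gamma)]^2\cap[L^{\infty}(\bbR\times\Gamma)]^2$ together with their first two $x$-derivatives, uniformly in $\nu$. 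Theorem~\ref{thm-GlobalWellPosednessForViscosityApproximation} (equivalently the appendix propositions quoted in its proof) yields a unique global classical solution $W^{\veps,\nu,j}$ of \eqref{eq-PSystemSemiLinear-ViscosityApproximation-RiemannianCoordinates} obeying the $L^{\infty}$ bound \eqref{eq-PSystemSemiLinear-LInftyBoundsForSoln} with a constant independent of $\nu$ and $j$. \emph{Step 2 ($\nu\to 0$).} Because the principal part of \eqref{eq-PSystemSemiLinear-ViscosityApproximation-RiemannianCoordinates} is the \emph{linear} transport $\dt + \diag(-a,a)\dx$ and the source $\tfrac1\veps\cH$ is Lipschitz on the invariant box supplied by \eqref{eq-PSystemSemiLinear-LInftyBoundsForSoln}, the family $\{W^{\veps,\nu,j}\}_{\nu}$ is Cauchy in $C([0,T];[L^1_{\rho,loc}(\bbR\times\Gamma)]^2)$ for every $T$; this is the content of Appendix~\ref{apdx-SLHyperbolicSystems}, resting on the parabolic estimates of Appendices~\ref{apdx-QLParabolicSystems}--\ref{apdx-SLParabolicSystems}. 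Its limit $W^{\veps,j}$ is a weak entropy solution of \eqref{eq-PSystemSemiLinear-RiemannianCoordinates} with data $W^{\veps,j}_0$: the entropy inequalities survive the limit since the viscous production $-\nu\langle\dUU\eta(W^{\veps,\nu,j})\dx W^{\veps,\nu,j},\dx W^{\veps,\nu,j}\rangle\le 0$ and $\nu\dxx\eta(W^{\veps,\nu,j})\to 0$ in $\cD'$, while the bound \eqref{eq-PSystemSemiLinear-LInftyBoundsForSoln} and the time-continuity pass through.

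\emph{Step 3 ($j\to\infty$).} The crux is an $L^1$ stability estimate for weak entropy solutions of \eqref{eq-PSystemSemiLinear-RiemannianCoordinates}, weighted by $\rho^2$: if $\We,\tilde{\We}$ are two such with data $\Wne,\tilde{\Wne}$, then on cones of slope $a$
\[
\iintdm{|x|\le m}{\Gamma_1}{|\We-\tilde{\We}|(x,t,y)\,\rho^2(y)}{y}{x}\le e^{Ct/\veps}\iintdm{|x|\le m+at}{\Gamma_1}{|\Wne-\tilde{\Wne}|(x,y)\,\rho^2(y)}{y}{x},
\]
with $C$ depending only on $a$ and $f$. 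One obtains this by applying Kruzhkov's doubling of variables to each diagonal component, doubling the parameter $y$ as well (whence the weight $\rho^2$, exactly as after Theorem~\ref{thm-Appendix-UniquenessOfEntropySolution}) and absorbing the coupling source $\cH$ by Gronwall's inequality. Since $W^{\veps,j}_0 = A\chi_j\Une\to A\Une$ in $[L^1_{\rho,loc}]^2$ (indeed in $[L^2_{\rho}]^2$, by dominated convergence using $\Une\in[L^2_{\rho}]^2$), this estimate makes $\{W^{\veps,j}\}$ Cauchy in $C([0,T];[L^1_{\rho^2,loc}]^2)$ for every $T$; the uniform $L^{\infty}$ bound then upgrades this to $C([0,T];[L^1_{\rho,loc}]^2)$ (split the domain into $\{\rho\ge\delta\}$, where $\rho\le\delta^{-1}\rho^2$, and $\{\rho<\delta\}$, which contributes at most $O(\delta)$ thanks to the $L^{\infty}$ bound). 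The limit $\We$ therefore lies in $C([0,\infty);[L^1_{\rho,loc}]^2)\cap[L^{\infty}]^2$, satisfies \eqref{eq-PSystemSemiLinear-LInftyBoundsForSoln}, and---passing to the limit in the weak and entropy formulations with the uniform $L^{\infty}$ bound and dominated convergence---is a weak entropy solution of \eqref{eq-PSystemSemiLinear-RiemannianCoordinates} with data $A\Une$. The same estimate yields uniqueness: of $\We$ among all weak entropy solutions with data $A\Une$, and of the construction, i.e.\ independence of the mollifiers and truncations chosen in (H2)$'$. (It also shows that the intermediate $W^{\veps,j}$ of Step~2 is itself the unique weak entropy solution with its compactly supported data.) Transporting back by $A^{-1}$ and invoking Remark~\ref{rmk-EquivalenceOfEntropySolutions} completes the proof.

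\emph{Main obstacle.} The delicate point is the stability estimate of Step~3: one must run Kruzhkov's doubling in $(x,t)$ simultaneously with a doubling of the parameter $y$, keep the nonlinear source $\cH$ controlled by Gronwall on cones of slope $a$, and---most importantly---verify that every constant depends only on $a$, $f$ and $\Vnorm{\rho}_{L^{\infty}(\Gamma)}$ (and $\veps$, which is fixed throughout), never on $\nu$ or $j$. A related subtlety is that $L^{\infty}$ data need not have bounded variation, so the compactness in Step~2 cannot be obtained from Helly's theorem and must instead be extracted from the contractive structure of the semilinear diagonal system (linear transport plus Lipschitz source); this is precisely why the diagonalization is indispensable and why the well-posedness work is routed through Appendices~\ref{apdx-QLParabolicSystems}--\ref{apdx-SLHyperbolicSystems}.
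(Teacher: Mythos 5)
Your proposal is correct and runs along essentially the same lines as the paper's proof: diagonalize via $A$, build classical viscous solutions for each fixed $(\veps,\nu,j)$, pass $\nu\to 0$ by equicontinuity and compactness in $C\big([0,\infty);L^1_{\rho,loc}\big)$ (Theorems~\ref{thm-PSystemSemiLinear-L1ModulusOfContinuityInSpace}, \ref{thm-PSystemSemiLinear-L1ModulusOfContinuityInTime}, \ref{thm-PSystemSemiLinear-AlmostEverywhereConvergenceToEntropySolution}) combined with the uniqueness coming from the $\rho^2$-weighted $L^1$ stability estimate, and then send $j\to\infty$ using the same stability estimate. The one place where the paper's argument is slightly slicker is the $j\to\infty$ step: you show $\{W^{\veps,j}\}_j$ is Cauchy in $L^1_{\rho^2,loc}$ and then upgrade to $L^1_{\rho,loc}$ using the $L^\infty$ bound together with a split of $\Gamma_1$ into $\{\rho\ge\delta\}$ and $\{\rho<\delta\}$, whereas the paper exploits finite speed of propagation directly---the cone structure $|x|\le m+at$ in \eqref{eq-PSystemSemiLinear-L1LocStabilityEstimate} shows that on any compact subset of $\bbR\times[0,\infty)\times\Gamma$, $W^{\veps,j}$ is actually \emph{equal} to $\We$ once $j$ exceeds $m+at$, so the $C\big([0,\infty);L^1_{\rho,loc}\big)$ regularity of $\We$ is inherited immediately from the $W^{\veps,j}$ without any $\rho^2$-to-$\rho$ bookkeeping. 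Both routes are valid; yours is correct, just marginally longer. A small framing note: the $\nu\to 0$ convergence in the paper is not proved to be Cauchy a priori (initial data in $L^\infty$ need not be $BV$), but rather compact by Fréchet--Kolmogorov, with full-sequence convergence recovered from uniqueness of the entropy solution---which is what your cited appendix theorems actually deliver, even though your narrative phrases it as ``Cauchy.''
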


\begin{proof}
Define $U^{\veps,j}_0$ and $U^{\veps,\nu,j}_0$ as in (H2)$'$. By Remark \ref{rmk-EquivalenceOfEntropySolutions}, Theorem \ref{thm-PSystemSemiLinear-L1LocStabilityEstimate} and the invertibility of $A$, there exists at most one weak entropy solution to \eqref{eq-PSystemSemilinearFull}.
Theorems \ref{thm-PSystemSemiLinear-L1ModulusOfContinuityInSpace} and \ref{thm-PSystemSemiLinear-L1ModulusOfContinuityInTime} reveal that the sequence of solutions $U^{\veps,\nu,j}$ to \eqref{eq-PSystemSemilinearFull-ViscosityApproximation} with initial data $U^{\veps,\nu,j}_0$ is equicontinuous in $\nu$ with respect to the norm $C \left( [0,\infty); \, \big[ L^1_{\rho,loc}(\bbR \times \Gamma) \big]^2 \right)$. The Frechet-Kolmogorov Theorem, Theorem \ref{thm-PSystemSemiLinear-AlmostEverywhereConvergenceToEntropySolution}, Remark \ref{rmk-EquivalenceOfEntropySolutions}, and properties of $A$ reveal that the limit of any subsequence in $\nu$ is a weak entopy solution of \eqref{eq-PSystemSemilinearFull}. Since weak entropy solutions of \eqref{eq-PSystemSemilinearFull} are unique, the entire sequence converges to the same limit, defined as $U^{\veps,j}$.
Finally, by reviewing the proof of Theorem $\ref{thm-PSystemSemiLinear-WellPosednessInL1ForDiagonalSystem}$ and using $A$, we see that we can define the weak entropy solution $\Ue$ of \eqref{eq-PSystemSemilinearFull} with initial data $\Une$ as the limit described in (H2)$'$, and the proof is complete.
\end{proof}

\subsection{Verification of (H3)}

Just as with general $2 \times 2$ systems, any convex function $\ell : \bbR \to \bbR$ can be extended to convex entropy for the \textit{p}-system via the application of Theorem \ref{thm-PartialConverseForEntropy}. For general systems, however, it may happen that the sets $D_{\ell}$ ``degenerate" in the following sense: Take an infinite family of convex functions on $\pi_1(D)$ $(\ell_{\alpha})$. Then it is possible that the sets $D_{\ell_{\alpha}}$ are contained in sets of the type $B_{\gamma}$ for arbitrarily small $\gamma$. Therefore, no \textit{a priori} $L^{\infty}$ bound for the solutions can be set. We do not encounter this difficulty in the case of the $p$-system; the key is that the constructed entropy are valid over all of $\bbR^2$ rather than over some bounded set containing the equilibria curve $K$.
We prove this now; the first lemma states the validity of the extension over all of $\bbR^2$, and the second lemma demonstrates that the function $u$ verifies infinitely many entropy inequalities.

\begin{lemma}[Entropy Extension for the $p$-system]\label{lma-PSystemSemiLinear-EntropyExtension}
Let $\ell : \bbR \to \bbR $ be a convex (strictly convex, strongly convex) function that is of class $C^2(\bbR)$, and set
\begin{equation}
q(u) := \intdmt{0}{u}{\ell'(\theta) f'(\theta)}{\theta}\,.
\end{equation}
Then there exist functions $\eta : \bbR^2 \to \bbR$ and $Q : \bbR^2 \to \bbR$ such that $(\eta,Q)$ is a convex (strictly convex, strongly convex) entropy pair for the semi-linear \textit{p}-system \eqref{eq-PSystemSemilinearFull} over all of $\bbR^2$, satisfying
\begin{equation}
    \eta(u,f(u)) = \ell(u)\,, \qquad Q(u,f(u)) = q(u)\,, \qquad u \in \bbR\,.
\end{equation}
In addition, $\eta$ and $Q$ satisfy
\begin{equation}\label{eq-PSystemSemiLinear-MoreEntropyConditions}
\dv \eta(u,f(u)) = 0\,, \qquad 
 \exists \, \gamma > 0 \text{ such that } \, \, \dv \eta(U) \, (v - f(u)) \geq \gamma|v-f(u)|^2\,.
\end{equation}
\end{lemma}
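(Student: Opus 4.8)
The plan is to construct the entropy pair $(\eta, Q)$ explicitly by solving the linear PDE that characterizes entropies for the $p$-system, exploiting the constant-coefficient structure that distinguishes this case from general $2\times 2$ systems. First I would write down condition i) of Definition~\ref{def-EntropyForFullSystem} for the flux $F(u,v) = (v, a^2 u)$: here $\du f_1 = 0$, $\dv f_1 = 1$, $\du f_2 = a^2$, $\dv f_2 = 0$, so the symmetry condition becomes the wave equation $a^2 \dvv \eta - \duu \eta = 0$ on $\bbR^2$. Its general $C^2$ solution is $\eta(u,v) = p(v - au) + g(v + au)$ for arbitrary $C^2$ functions $p, g$; equivalently, in the Riemann coordinates $w = -au + v$, $z = -au - v$ (up to sign conventions matching $A$ above), $\eta$ splits into a function of $w$ plus a function of $z$. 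The free data in $p$ and $g$ is exactly what I will tune to match $\ell$ on the equilibrium curve and to secure the dissipation conditions \eqref{eq-PSystemSemiLinear-MoreEntropyConditions}.

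Next I would impose the three prescribed conditions on $K = \{v = f(u)\}$: $\eta(u,f(u)) = \ell(u)$, $\dv\eta(u,f(u)) = 0$, and (then automatically, via the entropy flux relation \eqref{eq-Preliminaries-EntropyFluxDef}) $Q(u,f(u)) = q(u)$. Writing $\eta = p(v - au) + g(v + au)$, the two pointwise conditions on $K$ give a $2\times 2$ linear system for $p'$ and $g'$ evaluated along the curves $v - au = f(u) - au$ and $v + au = f(u) + au$; since the subcharacteristic condition \eqref{eq-PSystemSemilinear-SubcharacteristicCondition} says $-a < f'(u) < a$, the maps $u \mapsto f(u) \mp au$ are strictly monotone ($C^2$ diffeomorphisms of $\bbR$), so I can invert them globally and recover $p'$ and $g'$ as explicit $C^1$ functions on all of $\bbR$, hence $p$ and $g$ by integration. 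This is precisely where the $p$-system improves on the general construction of Theorem~\ref{thm-PartialConverseForEntropy}: the inversion is global rather than local, so $D_\ell = \bbR^2$ and no shrinking of the domain occurs. I would then verify convexity: $\dUU\eta$ has eigen-structure determined by $p''$ and $g''$ in the rotated coordinates, and a direct computation expresses $p''$, $g''$ in terms of $\ell''$ and $f''$ along the equilibrium curve, so that $\ell$ convex (resp.\ strictly, strongly) pushes through to $\eta$ convex (resp.\ strictly, strongly) after controlling the lower-order contributions from $f''$ — here one uses that convexity need only be checked up to adding multiples of the linear-in-$(u,v)$ entropy $f_1$-type terms, as in the construction of $\ell_2$ in Lemma~\ref{lma-EntropyForCompensatedCompactness}, or one simply checks the quadratic form is positive using $|f'| < a$.

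Finally I would establish the quadratic dissipation bound in \eqref{eq-PSystemSemiLinear-MoreEntropyConditions}. Since $\dv\eta(u,f(u)) = 0$ and $\eta \in C^2$, Taylor expansion in the second variable gives $\dv\eta(u,v) = \dvv\eta(u,\xi)(v - f(u))$ for some $\xi$ between $f(u)$ and $v$, so $\dv\eta(U)\,(v - f(u)) = \dvv\eta(u,\xi)\,|v - f(u)|^2$; it then suffices to show $\dvv\eta$ is bounded below by a positive constant $\gamma$ on $\bbR^2$. From $\eta = p(v-au) + g(v+au)$ one has $\dvv\eta = p'' + g''$, and the strong-convexity input (or, if $\ell$ is merely strictly convex, a restriction to a neighborhood of $K$ and further shrinking of $\gamma$, exactly as the lemma statement's phrasing in Lemma~\ref{lma-Uniqueness-L2StabilityLemma} allows) gives the bound. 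The main obstacle I anticipate is bookkeeping the sign conventions and the change of variables cleanly enough that the convexity computation comes out transparently — the linear algebra of $\dUU\eta$ under the non-orthogonal change of coordinates $A$ is where errors creep in — but conceptually there is no difficulty beyond that, since the global invertibility supplied by the subcharacteristic condition is the only structural fact needed and it is already available. I would present the argument in the Riemann coordinates $(w,z)$ throughout, invoking Remark~\ref{rmk-EquivalenceOfEntropySolutions} to transfer between $\eta(W)$ and $\tilde\eta(U)$, which makes the decoupling $\eta = (\text{fn of }w) + (\text{fn of }z)$ and hence every subsequent computation essentially one-dimensional.
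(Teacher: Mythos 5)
Your proposal follows essentially the same route as the paper: write the general d'Alembert solution $\eta(u,v)=h(au+v)+k(au-v)$, note that the subcharacteristic condition makes the equilibrium curve $K$ non-characteristic, and hence the Cauchy data $\eta|_K=\ell$, $\dv\eta|_K=0$ determine $h',k'$ globally along $K$; the paper then simply cites Theorem~\ref{thm-PartialConverseForEntropy}, whereas you carry out the inversion and the convexity/dissipation checks explicitly. Two small cautions: (i) $u\mapsto f(u)\mp au$ are strictly monotone under $|f'|<a$ but need not be surjective onto $\bbR$, so if their ranges are proper intervals you must extend $h',k'$ off the ranges while preserving the lower bound on $h'',k''$ (routine, but worth saying); and (ii) the uniform quadratic dissipation bound $\dv\eta(U)\,(v-f(u))\geq\gamma|v-f(u)|^2$ with a single $\gamma>0$ on all of $\bbR^2$ really does require $\ell$ strongly convex, a subtlety of the lemma's phrasing that you correctly flag.
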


\begin{proof}
Entropy pairs $(\eta,Q)$ for the full system \eqref{eq-PSystemSemilinearFull} satisfy the linear hyperbolic system
\begin{equation}\label{eq-PSystemSemilinear-EntropyCauchyProblem}
\begin{cases}
    \du Q(U) - a^2 \dv \eta(U) = 0 \\
    \dv Q(U) - \du \eta(U) = 0
\end{cases}
\end{equation}
with general solution, c.f. \cite{Dafermos}
\begin{equation}
\begin{cases}
\eta(u,v) = h(au+v) + k(au-v) \\
Q(u,v) = ah(au+v) - ak(au-v)\,.
\end{cases}
\end{equation}
The subcharacteristic condition \eqref{eq-PSystemSemilinear-SubcharacteristicCondition} implies that the equilibrium curve $K$ is non-characteristic for the system.
Thus, given any entropy pair $(\ell,q)$ for the scalar conservation law, one may obtain extended entropy $(\eta,Q)$ just as in Theorem \ref{thm-PartialConverseForEntropy}, but satisfying the conditions of Definition \ref{def-EntropyForFullSystem} and \eqref{eq-PSystemSemiLinear-MoreEntropyConditions} on all of $\bbR^2$.
\end{proof}

\begin{corollary}[(H3) is verified]
Suppose $(\Ue)_{\veps}$ is a sequence of entropy solutions of \eqref{eq-PSystemSemilinearFull} taking values in $D \subset \bbR^2$. Suppose further that there exists a function $U = (u,v) \in \big[ L^{\infty}(\bbR \times (0,T) \times \Gamma) \big]^2$ such that $\Ue \to U$ almost everywhere. Let $(\ell,q)$ be a convex entropy pair for \eqref{eq-ReducedSystemRandom} over $\pi_1(D)$. Then $u$ satisfies
\begin{equation}\label{eq-PSystemSemiLinear-Proof-EntropyAdmissibility}
\iiintdmt{0}{\infty}{\bbR}{\Gamma}{\Big( \ell(u) \dt \vphi + q(u) \dx \vphi  \Big) \, \rho}{y}{x}{t} \geq 0
\end{equation}
for every $\vphi \in C^{\infty}_c(\bbR \times (0,\infty) \times \Gamma)$ with $\vphi \geq 0$.
\end{corollary}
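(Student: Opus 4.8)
The plan is to extend the scalar entropy pair $(\ell,q)$ to a convex entropy pair for the full $p$-system, to invoke the entropy inequality satisfied by each $\Ue$, and to exploit the strict quadratic dissipation built into the extended entropy both to force the limit onto the equilibrium curve and to pass to the limit $\veps\to0$.

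First I would apply Lemma~\ref{lma-PSystemSemiLinear-EntropyExtension} (extending $\ell$ to a $C^{2}$ convex function on all of $\bbR$ if it is only prescribed on $\pi_{1}(\overline{D})$, where $u$ takes its values, which alters neither $\ell$ nor $q$ there) to obtain a convex entropy pair $(\eta,Q)$ for \eqref{eq-PSystemSemilinearFull} over all of $\bbR^{2}$ with $\eta(u,f(u))=\ell(u)$, $Q(u,f(u))=q(u)$, and a constant $\gamma>0$ satisfying $\dv\eta(U)\,(v-f(u))\ge\gamma|v-f(u)|^{2}$ for every $U=(u,v)\in\bbR^{2}$, as in \eqref{eq-PSystemSemiLinear-MoreEntropyConditions}. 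Since the relaxation term for the $p$-system is $r(u,v)=v-f(u)$, the entropy inequality \eqref{eq-DefinitionOfEntropySolution-EntropyInequality} of Definition~\ref{def-2x2Solution}, applied to $(\eta,Q)$ and to a fixed nonnegative $\vphi\in C^{\infty}_{c}(\bbR\times(0,\infty)\times\Gamma)$, yields
\[
A^{\veps} := \iiintdmt{0}{\infty}{\bbR}{\Gamma}{\big(\eta(\Ue)\,\dt\vphi + Q(\Ue)\,\dx\vphi\big)\rho}{y}{x}{t} \;\geq\; \frac{\gamma}{\veps}\iiintdmt{0}{\infty}{\bbR}{\Gamma}{|\ve-f(\ue)|^{2}\,\vphi\,\rho}{y}{x}{t} \;\geq\; 0 .
\]
Because the $\Ue$ are uniformly bounded in $L^{\infty}$ (for the $p$-system this is furnished by the a priori bounds \eqref{eq-PSystemSemiLinear-LInftyBoundsForSoln}), the continuous functions $\eta(\Ue)$ and $Q(\Ue)$ are bounded on $\supp\vphi$ uniformly in $\veps$; with $\vphi$ compactly supported and $\rho\in L^{\infty}(\Gamma)$ this makes $A^{\veps}$ bounded by a constant $C$ independent of $\veps$. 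Feeding this back into the display gives $\iiintdmt{0}{\infty}{\bbR}{\Gamma}{|\ve-f(\ue)|^{2}\vphi\,\rho}{y}{x}{t}\le C\veps/\gamma\to0$, and then $\Ue\to U$ a.e.\ together with Fatou's lemma forces $\iiintdmt{0}{\infty}{\bbR}{\Gamma}{|v-f(u)|^{2}\vphi\,\rho}{y}{x}{t}=0$; letting $\vphi$ run over a countable family exhausting $\bbR\times(0,\infty)\times\Gamma$ yields $v=f(u)$ almost everywhere.

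Finally I would let $\veps\to0$ in the inequality $A^{\veps}\ge0$: by the a.e.\ convergence $\Ue\to U$, continuity of $\eta$ and $Q$, the uniform $L^{\infty}$ bound, compact support of $\vphi$, and $\rho\in L^{\infty}$, the dominated convergence theorem gives $A^{\veps}\to\iiintdmt{0}{\infty}{\bbR}{\Gamma}{\big(\eta(U)\,\dt\vphi+Q(U)\,\dx\vphi\big)\rho}{y}{x}{t}\ge0$. Using $v=f(u)$ a.e.\ to replace $\eta(U)=\eta(u,f(u))=\ell(u)$ and $Q(U)=Q(u,f(u))=q(u)$ then gives exactly \eqref{eq-PSystemSemiLinear-Proof-EntropyAdmissibility}.

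I expect the only genuine subtlety to be the middle step, where the $\veps^{-1}$ dissipation term must be shown neither to blow up nor to be simply discarded, but to vanish while certifying $v=f(u)$. This rests on two features peculiar to the $p$-system that fail for general $2\times2$ systems: the extended pair $(\eta,Q)$ is valid on all of $\bbR^{2}$ with a single dissipation constant $\gamma$ (Lemma~\ref{lma-PSystemSemiLinear-EntropyExtension}), and the solutions obey a uniform $L^{\infty}$ bound, so that $A^{\veps}$ is controlled independently of $\veps$. The rest is routine use of the entropy inequality and of dominated and Fatou convergence.
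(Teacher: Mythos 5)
Your proof is correct and follows the same route as the paper: extend $(\ell,q)$ to a global convex entropy pair $(\eta,Q)$ for \eqref{eq-PSystemSemilinearFull} via Lemma~\ref{lma-PSystemSemiLinear-EntropyExtension}, then pass to the limit $\veps\to0$ in the entropy inequality \eqref{eq-DefinitionOfEntropySolution-EntropyInequality} by dominated convergence. You also make explicit a step that the paper's two-sentence proof leaves implicit (or borrows from the conclusion of Theorem~\ref{thm-Theorem4.1}): using the uniform bound on $A^{\veps}$ together with the quadratic dissipation $\dv\eta(U)(v-f(u))\ge\gamma|v-f(u)|^{2}$ to conclude $v=f(u)$ a.e., which is exactly what justifies replacing $\eta(U),Q(U)$ by $\ell(u),q(u)$ in the limiting inequality.
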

\begin{proof}
By Lemma \ref{lma-PSystemSemiLinear-EntropyExtension} there exists a convex entropy $(\eta,Q)$ for the full system \eqref{eq-PSystemSemilinearFull} convex on all of $\bbR^2$ and hence convex on all of $D$. Then the Dominated Convergence Theorem applied to the entropy inequality \eqref{eq-DefinitionOfEntropySolution-EntropyInequality} for $\Ue$ with said entropy implies that \eqref{eq-PSystemSemiLinear-Proof-EntropyAdmissibility} is satisfied.
\end{proof}

\subsection{Main Theorem}

We summarize the results of this section in one theorem:

\begin{proposition}[Convergence for the semi-linear \textit{p}-system]
Assume the subcharacteristic condition \eqref{eq-PSystemSemilinear-SubcharacteristicCondition}. Assume that the initial data $(\Ue)_{\veps}$ for the \textit{p}-system \eqref{eq-PSystemSemilinearFull} is bounded in $\left[L^{\infty}(\bbR \times \Gamma) \right]^2 \cap \left[ L^2_{\rho}(\bbR \times \Gamma) \right]^2$, uniformly in $\veps$.
Assume also that $(\une,\vne)$ converge almost everywhere to a function $(u_0,v_0) \in \left[ L^{\infty}(\bbR \times \Gamma) \right]^2$ satisfying $v_0 = f(u_0)$. Assume that $f''(u) \neq 0$ almost everywhere.
Then for each $\veps$ there exists a unique global weak entropy solution $\Ue = (\ue,\ve)$ of the \textit{p}-system \eqref{eq-PSystemSemilinearFull} belonging to the class
\begin{equation}
C \left( (0,\infty); \big[ L^1_{\rho, loc}(\bbR \times \Gamma) \big]^2 \right) \cap L^{\infty}\left( (0,\infty); \left[L^2_{\rho}(\bbR \times \Gamma) \right]^2 \right) \cap \big[ L^{\infty}(\bbR \times (0,\infty) \times \Gamma) \big]^2
\end{equation}
and satisfying the bound
\begin{equation}
    \Vnorm{\Ue}_{\infty} \leq \Vnorm{A^{-1}} \bigg( \beta + \max \left\lbrace \left| f \left( -\frac{\beta}{a} \right) \right|\,, \, \left| f \left( -\frac{\beta}{a} \right) \right| \right\rbrace \bigg)\,, \qquad \beta := \sup_{\veps} \Vnorm{A \Une}_{\infty}\,,
\end{equation}
uniformly in $\veps$. The sequence $(\Ue)_{\veps}$ converge as $\veps \to 0$ in $\left[ L^p_{loc}(\bbR \times (0,\infty) \times \Gamma) \right]^2$ to a function \linebreak $U = (u,v) \in \big[ L^{\infty}(\bbR \times (0,\infty) \times \Gamma) \big] \cap C \left( (0,\infty); \big[ L^{1}_{\rho,loc}(\bbR \times \Gamma) \big]^2 \right)$ that satisfies
\begin{enumerate}
    \item[i)] $v = f(u)$ a.e.
    \item[ii)] $u$ is the unique weak entropy solution to the scalar conservation law \eqref{eq-ReducedSystemRandom} with initial data $u_0$.
\end{enumerate}
\end{proposition}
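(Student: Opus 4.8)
The statement is a synthesis of the theorems proved in Section~\ref{sec-SemiLinearPSystem} together with the general convergence result of Section~\ref{sec-Justification}, so the proof is essentially an orchestration of previously established facts. The plan is to proceed in three stages: (i)~establish existence, uniqueness, and the $L^\infty$ bound for $\Ue$ for each fixed $\veps$; (ii)~verify the hypotheses of Theorem~\ref{thm-Theorem4.1} and apply it to get $\veps\to 0$ convergence; (iii)~invoke the uniqueness machinery of Section~\ref{sec-Uniqueness} (with (H2) replaced by (H2)$'$, as permitted by the theorem immediately preceding Remark~\ref{rmk-EquivalenceOfEntropySolutions}) to identify the limit $u$ as \emph{the} entropy solution of \eqref{eq-ReducedSystemRandom}.

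\textbf{Step 1: well-posedness for fixed $\veps$.} First I would apply Theorem~\ref{thm-H2-Step2} to the data $\Une \in [L^2_\rho(\bbR\times\Gamma)]^2 \cap [L^\infty(\bbR\times\Gamma)]^2$: this gives, for each $\veps$, a unique weak entropy solution $\Ue$ lying in $C([0,\infty);[L^1_{\rho,loc}(\bbR\times\Gamma)]^2) \cap [L^\infty(\bbR\times(0,\infty)\times\Gamma)]^2$, obtained as the limit described in (H2)$'$, and satisfying the $L^\infty$ bound \eqref{eq-PSystemSemiLinear-LInftyBoundsForSoln}. The membership in $L^\infty((0,\infty);[L^2_\rho(\bbR\times\Gamma)]^2)$ is inherited from the construction through Theorem~\ref{thm-GlobalWellPosednessForViscosityApproximation} (the $L^2_\rho$ bounds on the viscosity approximants pass to the limit by Fatou/weak-lower-semicontinuity). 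Crucially, since $\beta := \sup_\veps \Vnorm{A\Une}_\infty < \infty$ by the uniform $L^\infty$ bound on the data, the bound \eqref{eq-PSystemSemiLinear-LInftyBoundsForSoln} is \emph{uniform in $\veps$}; this furnishes a fixed bounded set $\overline{B_\gamma}$ (indeed $\overline{B_\gamma}$ can be taken as $A^{-1}$ of a box of the stated radius, intersected with a neighborhood of $K$) in which all $\Ue$ and $\Une$ take values, which is exactly what Theorem~\ref{thm-Theorem4.1} requires.

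\textbf{Step 2: the limit $\veps\to 0$.} Next I would check the remaining hypotheses of Theorem~\ref{thm-Theorem4.1}. The strict subcharacteristic condition is \eqref{eq-PSystemSemilinear-SubcharacteristicCondition}; the genuine nonlinearity $f''\neq 0$ a.e.\ is assumed. For the $L^2_\rho$-closeness condition \eqref{eq-Theorem4.1-L2UniformBoundAssumption}, I would take $\Ubar(y) := (\ubar(y), f(\ubar(y)))$ for a suitable $\ubar$ (e.g.\ $\ubar \equiv 0$, noting $f(0)=0$, so $\Ubar\equiv(0,0)\in K$); then $\Vnorm{\Une-\Ubar}_{2;\rho} \le \Vnorm{\Une}_{2;\rho} + \Vnorm{\Ubar}_{2;\rho}$ is bounded uniformly in $\veps$ by the assumed uniform $L^2_\rho$ bound on the data (here one uses that $\Gamma$ is bounded and $\rho\in L^\infty$, so constant-in-$x$ functions need not be in $L^2_\rho$—hence one should instead localize, or simply use that the data are uniformly bounded in $[L^2_\rho]^2$ and subtract a genuinely $L^2_\rho$ equilibrium profile; the cleanest choice is $\Ubar\equiv 0$ since then $\Vnorm{\Une-\Ubar}_{2;\rho}=\Vnorm{\Une}_{2;\rho}\le\beta'$). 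One must also confirm that the semi-linear $p$-system admits the entropy pairs needed inside the proof of Theorem~\ref{thm-Theorem4.1}; Lemma~\ref{lma-PSystemSemiLinear-EntropyExtension} supplies strongly convex entropy extensions valid on \emph{all} of $\bbR^2$, which is even stronger than the local version used there. Applying Theorem~\ref{thm-Theorem4.1} then yields a subsequence with $\Ue\to U=(u,v)$ in $[L^p_{loc}]^2$, with $v=f(u)$ a.e.\ and $u$ a weak solution of \eqref{eq-ReducedSystemRandom} with initial data $u_0 = $ weak-$*$ limit of $\une$; hypothesis (H1) (assumed here: $(\une,\vne)\to(u_0,v_0)$ a.e.\ with $v_0=f(u_0)$) upgrades this to $u_0 = \lim \une$ a.e.

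\textbf{Step 3: uniqueness of the limit.} Finally I would verify (H1), (H2)$'$, (H3) and invoke Theorem~\ref{thm-UniquenessOfLimitingSolution} (in its (H2)$'$ form). (H1) is assumed; (H2)$'$ is precisely the content of Theorem~\ref{thm-H2-Step2}; (H3) is the ``(H3) is verified'' Corollary in Section~\ref{sec-SemiLinearPSystem}, whose hypotheses (entropy solutions $\Ue$ converging a.e.\ to $U$) are met by Steps~1--2. Theorem~\ref{thm-UniquenessOfLimitingSolution} then gives the strong trace property $\frac1T\iiint_{0}^{T}\!\!\int_V\!\!\int_\Gamma |u-u_0|\rho\to 0$ and hence, via the uniqueness theorem for scalar entropy solutions (Theorem~\ref{thm-Appendix-UniquenessOfEntropySolution}), that $u$ is \emph{the} unique weak entropy solution of \eqref{eq-ReducedSystemRandom} with data $u_0$. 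Since the limit is unique, the whole family $(\Ue)_\veps$ converges, not merely a subsequence. The continuity-in-time regularity $U\in C((0,\infty);[L^1_{\rho,loc}]^2)$ follows from the scalar theory for $u$ and $v=f(u)$ with $f$ continuous.

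\textbf{Main obstacle.} The routine parts are the citations; the one point demanding genuine care is confirming that the abstract convergence Theorem~\ref{thm-Theorem4.1} and the uniqueness apparatus of Section~\ref{sec-Uniqueness} apply \emph{verbatim} to the class of solutions produced by (H2)$'$ rather than (H2)—in particular that the solutions $\Ue$ from Theorem~\ref{thm-H2-Step2}, which a priori lie only in $C([0,\infty);[L^1_{\rho,loc}]^2)\cap[L^\infty]^2$ (not obviously limits of a \emph{single} viscosity family but of the nested double limit in $j$ and $\nu$), still satisfy the $L^2$ dissipation estimate \eqref{eq-Theorem4.1Proof-L2Estimate} and the entropy inequalities needed in Step~2 of the proof of Theorem~\ref{thm-Theorem4.1}. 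This is exactly the content of the (unnumbered) theorem stating that Section~\ref{sec-Uniqueness} remains valid with (H2)$'$ in place of (H2), so I would lean on that; the remaining work is bookkeeping the uniform-in-$\veps$ bounds through the chain Theorem~\ref{thm-GlobalWellPosednessForViscosityApproximation} $\Rightarrow$ Theorem~\ref{thm-H2-Step2} $\Rightarrow$ Theorem~\ref{thm-Theorem4.1}.
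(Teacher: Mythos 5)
Your orchestration is correct and is essentially the argument the paper intends: the Proposition is stated as a summary without a separate proof, and the natural synthesis is exactly your three stages (Theorem~\ref{thm-H2-Step2} for well-posedness and the uniform $L^\infty$ bound; Theorem~\ref{thm-Theorem4.1} for the $\veps\to 0$ limit, with $\Ubar\equiv(0,0)\in K$ and the global entropy extensions of Lemma~\ref{lma-PSystemSemiLinear-EntropyExtension} discharging the $B_\gamma$ hypotheses; and Theorem~\ref{thm-UniquenessOfLimitingSolution} in its (H2)$'$ form together with Theorem~\ref{thm-Appendix-UniquenessOfEntropySolution} and the subsequence principle to get convergence of the full family to the unique entropy solution).

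One clarification on your ``main obstacle'' paragraph: Theorem~\ref{thm-Theorem4.1} itself never invokes (H2) or (H2)$'$ --- its only structural hypothesis on $\Ue$ is that it be a weak entropy solution taking values in $\overline{B_\gamma}$, which Theorem~\ref{thm-H2-Step2} supplies directly, and the dissipation estimate~\eqref{eq-Theorem4.1Proof-L2Estimate} in Appendix~\ref{apdx-Step1} is derived from the entropy inequality alone via the relative entropy method. The (H2) vs.\ (H2)$'$ distinction genuinely enters only in Section~\ref{sec-Uniqueness}, and that is precisely what the unnumbered theorem at the start of Section~\ref{sec-SemiLinearPSystem} that you lean on is there to cover; so the concern is real but should be localized to Lemma~\ref{lma-Uniqueness-L2StabilityLemma} and Theorem~\ref{thm-UniquenessOfLimitingSolution}, not to Theorem~\ref{thm-Theorem4.1}.
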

\appendix

\section{Quasilinear Parabolic Systems}\label{apdx-QLParabolicSystems}
This section is devoted to proving the relevant results for a general $2 \times 2$ system of quasilinear parabolic equations. These results will be used to show the global well-posedness of the parabolic systems \eqref{eq-PSystemSemiLinear-ViscosityApproximation-RiemannianCoordinates} and \eqref{eq-PSystemSemilinearFull-ViscosityApproximation} in Appendix \ref{apdx-SLParabolicSystems} and Section \ref{sec-SemiLinearPSystem} respectively. The system we are concerned with in this section is
\begin{equation}\label{eq-QuasilinearParabolicSystem}
\begin{cases}
    &\dt u + \dx F_1(U) + \cQ_1(U)= \dxx u  \quad \text{ on } \bbR \times (0,T) \times \Gamma\,, \\
    &\dt v + \dx F_2(U) + \cQ_2(U) = \dxx v \quad \text{ on } \bbR \times (0,T) \times \Gamma\,, \\
    &U(x,0,y) = U_0(x,y)\,, \quad \text{ on } \bbR \times \Gamma\,.
\end{cases}
\end{equation}

We assume that $F_1$, $F_2$, $\cQ_1$ and $\cQ_2$ are all $C^2$, and we define $F := (F_1,F_2)$ and $\cQ:= (\cQ_1,\cQ_2)$. Throughout the section we assume that $U_0 \in \left[ L^{\infty}(\bbR \times \Gamma) \right]^2$. A classical solution $U$ of will satisfy the integral equation given by the fundamental solution of the heat equation

\begin{equation}\label{eq-PSystemSemiLinear-MildSolutionToHeatEquation}
\begin{split}
    U(x,t,y) &= \frac{1}{(4 \pi t)^{1/2}}\intdm{\bbR}{\e^{\frac{-|x-z|^2}{4t}}U_0(z,y)}{z} + \frac{1}{(4 \pi)^{1/2}}\intdmt{0}{t}{\frac{1}{(t-s)^{1/2}} \intdm{\bbR}{\p_z \left[ \e^{\frac{-|x-z|^2}{4(t-s)}} \right] F(U(z,s,y))}{z}}{s} \\
    &\quad - \frac{1}{(4\pi)^{1/2}}\intdmt{0}{t}{\frac{1}{(t-s)^{1/2}}\intdm{\bbR}{\e^{\frac{-|x-z|^2}{4(t-s)}}\cQ(U(z,s,y))}{z}}{s} \\
    &= \cJ(\cdot,t) \, \ast \, U_0(\cdot,y) + \intdmt{0}{t}{\dx \cJ(\cdot,t-s) \, \ast \, F(U(\cdot,s,y))}{s} - \intdmt{0}{t}{\cJ(\cdot,t-s) \, \ast \, \cQ(U(\cdot,s,y))}{s}\,,
\end{split}
\end{equation}
where
\begin{equation}
    \cJ(x,t) := \frac{1}{(4 \pi t)^{1/2}} \e^{-\frac{|x|^2}{4t}}\,;
\end{equation}
see Theorem \ref{thm-QuasilinearParabolicSystem-MildSolutionIsClassicalSolution} below for a proof. Note that if we write $k(x) = \frac{1}{\sqrt{4\pi}} \e^{-\frac{|x|^2}{4}} \in \cS(\bbR)$ then $\cJ(x,t) = t^{-1/2} k \left( \frac{x}{\sqrt{t}} \right)$ (Here, $\cS(\bbR)$ denotes the space of Schwartz functions on $\bbR$.)
To find of solutions to \eqref{eq-PSystemSemiLinear-MildSolutionToHeatEquation} we prove that the operator
\begin{equation}
\begin{split}
    &U \mapsto LU\,, \\
    &(LU)(t,y) := \cJ(\cdot,t) \, \ast \, U_0(\cdot,y) + \intdmt{0}{t}{\dx \cJ(\cdot,t-s) \, \ast \, F(U(\cdot,s,y))}{s} - \intdmt{0}{t}{\cJ(\cdot,t-s) \, \ast \, \cQ(U(\cdot,s,y))}{s}
\end{split}
\end{equation}
is a contraction on a suitable Banach space. The Banach Fixed Point Theorem gives us the existence of a solution to \eqref{eq-PSystemSemiLinear-MildSolutionToHeatEquation}.

The main result of this section is the following theorem.
\begin{theorem}[Local Existence of Solutions]\label{thm-QLParabolicSystemsMainThm}
Suppose that $K := \{ U \in \bbR^2 | \cQ(U) = 0 \} \neq \emptyset$. Suppose there exists $\Ubar \in \left[ L^{\infty}(\Gamma) \right]^2$ taking values in the set $K$ such that $U_0(x,y) - \Ubar(y) \in \left[ L^2_{\rho}(\bbR \times \Gamma) \right]^2$. 
Suppose also that
\begin{equation}
\begin{split}
\dx U_0, \, \dxx U_0 \in \left[ L^2_{\rho}(\bbR \times \Gamma) \right]^2 \cap \big[ L^{\infty}(\bbR \times \Gamma) \big]^2\,.
\end{split}
\end{equation}
Then there exists a time $T>0$ such that the following holds: There exists a function $U \in \big[ L^{\infty}(\bbR \times (0,T) \times \Gamma) \big]^2$ solving \eqref{eq-PSystemSemiLinear-MildSolutionToHeatEquation} satisfying
\begin{equation}
\begin{split}
    U - \Ubar\,, \dt U \,, \dx U \,, \dxx U \in L^{\infty}\left( (0,T); \left[L^2_{\rho}(\bbR \times \Gamma) \right]^2 \right) \cap \big[ L^{\infty}(\bbR \times (0,T) \times \Gamma) \big]^2\,,
\end{split}
\end{equation}
with
\begin{equation}
    \Vnorm{U- \Ubar} + \Vnorm{\dt U} + \Vnorm{\dx U} \leq C \Big( \Vnorm{U_0-\Ubar} + \Vnorm{\dx U_0} + \Vnorm{\dxx U_0} \Big)
\end{equation}
for some constant $C$ independent of $U$, where $\Vnorm{ \cdot } = \Vnorm{\cdot}_{\infty} + \Vnorm{\cdot}_{\infty, 2; \rho}$.
Moreover, this solution $U$ to the integral equation \eqref{eq-PSystemSemiLinear-MildSolutionToHeatEquation} is in fact a local classical solution to \eqref{eq-QuasilinearParabolicSystem}.
\end{theorem}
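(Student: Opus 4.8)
The plan is to obtain the solution of the integral equation \eqref{eq-PSystemSemiLinear-MildSolutionToHeatEquation} by the Banach Fixed Point Theorem applied to the map $L$ on a small closed ball of an appropriate Banach space for $T$ small, and then to invoke Theorem~\ref{thm-QuasilinearParabolicSystem-MildSolutionIsClassicalSolution} to upgrade the resulting mild solution to a classical one. The first step is to center the problem at the equilibrium: setting $W := U - \Ubar$ and using that $\cJ(\cdot,t)$ has unit mass in $x$, that $\dx\cJ(\cdot,t)$ has zero mass, and that $\cQ(\Ubar)\equiv 0$, one checks that $U$ solves \eqref{eq-PSystemSemiLinear-MildSolutionToHeatEquation} if and only if
\[
W(\cdot,t,y) = \cJ(\cdot,t)\ast W_0(\cdot,y) + \intdmt{0}{t}{\dx\cJ(\cdot,t-s)\ast F(U(\cdot,s,y))}{s} - \intdmt{0}{t}{\cJ(\cdot,t-s)\ast\cQ(U(\cdot,s,y))}{s},
\]
where $W_0 := U_0-\Ubar$ lies in $\big[L^2_\rho(\bbR\times\Gamma)\big]^2$ by hypothesis. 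Since $\cJ$ acts only in $x$ and $y$ is a passive parameter, Minkowski's integral inequality yields $\Vnorm{\cJ(\cdot,t)\ast g}_{2;\rho}\le\Vnorm{g}_{2;\rho}$ and $\Vnorm{\dx\cJ(\cdot,t)\ast g}_{2;\rho}\le C t^{-1/2}\Vnorm{g}_{2;\rho}$, together with the same two estimates with $\Vnorm{\cdot}_{2;\rho}$ replaced by $\Vnorm{\cdot}_{\infty}$; these are the only facts about $\rho$ used in the proof.

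Next I would fix the function space. Let $Y_T$ consist of those $U$ with $U-\Ubar$, $\dx U$, $\dxx U\in C\big([0,T];[L^2_\rho(\bbR\times\Gamma)]^2\big)$ and $U$, $\dx U$, $\dxx U\in\big[L^\infty(\bbR\times(0,T)\times\Gamma)\big]^2$, normed by the sum of the $L^\infty((0,T);L^2_\rho)$ norms of $U-\Ubar$, $\dx U$, $\dxx U$ and the $L^\infty$ norms of $U$, $\dx U$, $\dxx U$; this space is complete, and the constant function $(x,t,y)\mapsto\Ubar(y)$ belongs to it. Put $N := \Vnorm{U_0-\Ubar}+\Vnorm{\dx U_0}+\Vnorm{\dxx U_0}$, let $R := 2C_0N$ with $C_0$ the constant from the linear bounds applied to the free term and its $x$-derivatives (which stay integrable at $t=0$ precisely because $\dx U_0,\dxx U_0\in L^2_\rho$, letting one write $\dx\cJ(\cdot,t)\ast W_0=\cJ(\cdot,t)\ast\dx W_0$ and likewise at second order), and let $B_R\subset Y_T$ be the closed ball of radius $R$ about $\Ubar$. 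On $B_R$ the values of $U$ stay in a fixed bounded set $\mathcal K\Subset\bbR^2$ depending only on $R$ and $\Ubar$, so the $C^2$ maps $F$, $\cQ$ and their first derivatives are bounded and Lipschitz on $\mathcal K$, and products of first derivatives are controlled through $L^\infty\cap L^2_\rho\hookrightarrow L^4_\rho$. Using $\Vnorm{\dx\cJ(\cdot,\tau)}_{L^1_x}\le C\tau^{-1/2}$, so that the time integrals contribute a factor $\intdmt{0}{t}{(t-s)^{-1/2}}{s}=2\sqrt t$ (a factor $t$ for the $\cQ$-term), one obtains for every $U\in B_R$ a bound $\Vnorm{LU-\Ubar}_{Y_T}\le C_0N+C(R)\sqrt T$, so $L$ maps $B_R$ into itself once $T$ is small enough that $C(R)\sqrt T\le C_0N$.

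The contraction estimate is similar: for $U_1,U_2\in B_R$ the free term cancels in $LU_1-LU_2$, and the Lipschitz bounds for $F$, $DF$, $\cQ$, $D\cQ$ over $\mathcal K$ give $\Vnorm{LU_1-LU_2}_{Y_T}\le C(R)\sqrt T\,\Vnorm{U_1-U_2}_{Y_T}$, a contraction after shrinking $T$ further. The Banach Fixed Point Theorem then produces a unique $U\in B_R$ with $U=LU$, i.e.\ a solution of \eqref{eq-PSystemSemiLinear-MildSolutionToHeatEquation}; membership in $B_R$ gives the stated regularity and, via $\dt U=\dxx U-\dx F(U)-\cQ(U)$, also $\dt U\in L^\infty\big((0,T);[L^2_\rho]^2\big)\cap[L^\infty]^2$ together with $\Vnorm{U-\Ubar}+\Vnorm{\dt U}+\Vnorm{\dx U}\le CN$. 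Finally, the regularity just obtained is exactly the hypothesis of Theorem~\ref{thm-QuasilinearParabolicSystem-MildSolutionIsClassicalSolution}, which upgrades $U$ to a classical solution of \eqref{eq-QuasilinearParabolicSystem}.

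I expect the main obstacle to be the simultaneous bookkeeping of the $L^\infty_{x,y}$ and weighted $L^2_\rho$ norms: the $L^\infty$ control is what keeps $U$ inside a compact set on which the merely-$C^2$ nonlinearities $F,\cQ$ are Lipschitz, while the $L^2_\rho$ control (after subtracting $\Ubar$) is what actually closes under $L$; one must check term by term --- especially the two derivative terms, where a derivative has to be transferred onto the data or onto $F(U)$ so that no non-integrable kernel singularity appears --- that every Duhamel contribution is estimated in both norms with a constant uniform over $B_R$ and carrying a positive power of $T$, so that the self-map and contraction properties both hold for $T$ small with the radius (hence the final constant $C$) depending on the data alone.
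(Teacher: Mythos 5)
Your overall plan follows the paper's (fixed point for $L$ centered at $\Ubar$, then upgrade the mild solution to a classical one via Lemma~\ref{thm-QuasilinearParabolicSystem-MildSolutionIsClassicalSolution}), but you compress what the paper does in two stages — Lemma~\ref{thm-ExistenceOfViscositySolutions} followed by Lemma~\ref{thm-PSystemSemiLinear-EstimatesOnDerivatives} — into a single fixed-point argument in a Banach space $Y_T$ whose norm includes $\dxx U$. That compression introduces a genuine gap at the contraction step.

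To bound the $\dxx$-component of $LU_1 - LU_2$ you need to control $\dxx F(U_1) - \dxx F(U_2)$, which contains
\[
\big(D^2F(U_1)-D^2F(U_2)\big)\,(\dx U_2)^2 .
\]
Since the paper only assumes $F,\cQ \in C^2$, the second derivative $D^2F$ is merely continuous on the compact set where $U$ ranges; it has no Lipschitz modulus. There is therefore no bound of the form $|D^2F(U_1)-D^2F(U_2)|\lesssim |U_1-U_2|$, and the claimed inequality $\Vnorm{LU_1-LU_2}_{Y_T}\le C(R)\sqrt T\,\Vnorm{U_1-U_2}_{Y_T}$ cannot be proved. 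The same obstruction appears in the $\cQ$-term. Note that the self-map estimate $\Vnorm{LU-\Ubar}_{Y_T}\le C_0 N + C(R)\sqrt T$ survives, because it only requires $D^2F,\,D^2\cQ$ to be bounded, not Lipschitz; it is specifically the contraction at second order that fails. The paper avoids this by running the Banach fixed point theorem only in $\Vnorm{\cdot}_\infty$ and $\Vnorm{\cdot}_{\infty,2;\rho}$ (Lemma~\ref{thm-ExistenceOfViscositySolutions}), where contraction needs only $C^1$ Lipschitz control of $F,\cQ$, and then establishing the derivative bounds on the iterates $U^{m+1}=LU^m$ by a boundedness bootstrap (Lemma~\ref{thm-PSystemSemiLinear-EstimatesOnDerivatives}), where the $(\dx U^m)^2$-contribution is absorbed into a polynomial constant depending only on the $C^2$-norm of $F$ — no Lipschitz estimate at second order is ever required.

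Your argument can be repaired without changing its spirit by the standard two-norm device: keep $B_R\subset Y_T$, show $L:B_R\to B_R$ in the full $Y_T$-norm (this you can do), but prove the contraction only in the weaker norm obtained by dropping the $\dxx$-components. The iterates are then Cauchy in the weak norm and uniformly bounded in $Y_T$, so weak-$*$ compactness in $L^\infty((0,T);L^2_\rho)$ together with uniqueness of distributional limits shows the limit lies in $B_R$. This is morally the same as the paper's iterate bootstrap.
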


We prove this theorem in three lemmas, stated and proved in the rest of this section.

\begin{lemma}\label{thm-ExistenceOfViscositySolutions}
Suppose that the term $\cQ$ vanishes on at least one point. Let $b, b_0$ be constants such that $b > b_0 > 0$. Suppose there exists $\Ubar \in \left[ L^{\infty}(\Gamma) \right]^2$ taking values in the set $K := \{ U \in \bbR^2 \, \big| \, \cQ(U) = 0 \}$ such that $U_0(x,y) - \Ubar(y) \in \left[ L^2_{\rho}(\bbR \times \Gamma) \right]^2$, with $\Vnorm{U_0-\Ubar}_{\infty} < b_0$. For $T > 0$, define 
\begin{equation}
G_T^{\infty} := \left\lbrace U \in \left[ L^{\infty}(\bbR \times (0,T) \times \Gamma) \right]^2 \, \Big| \, \Vnorm{U - \Ubar}_{\infty} \leq b \right\rbrace
\end{equation}
and
\begin{equation}
G_T^2 := \left\lbrace U : \bbR \times (0,T) \times \Gamma \to \bbR^2 \, \Big| \, U - \Ubar \in L^{\infty} \left( (0,T); \left[ L^2_{\rho}(\bbR \times \Gamma) \right]^2 \right)   \right\rbrace\,.
\end{equation}

Then there exists a time $T > 0$ such that the following holds: There exists a function $U \in G_T^{\infty} \cap G_T^2$ solving \eqref{eq-PSystemSemiLinear-MildSolutionToHeatEquation} such that
\begin{equation}\label{eq-PSystemSemiLinear-EstimatesOfMildSolution}
\begin{split} 
    \Vnorm{U - \Ubar}_{\infty} &\leq C \Vnorm{U_0 - \Ubar}_{\infty}\,, \\
    \Vnorm{U - \Ubar}_{\infty, 2;\rho} &\leq C \Vnorm{U_0 - \Ubar}_{2;\rho}\,. \\
\end{split}
\end{equation}
The constant $C$ depends only on $b_0$, $b$ and $T$.
\end{lemma}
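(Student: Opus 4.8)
The plan is to construct the solution via the Banach Fixed Point Theorem, realizing it as the unique fixed point of the operator $L$ on a closed subset of a suitable Banach space. Fix $R>0$ so that $|\Ubar(y)|\le R$ for $\rho$-a.e.\ $y$; then any $U$ with $\Vnorm{U-\Ubar}_{\infty}\le b$ takes values in the ball $\overline{B(0,R+b)}$, and since $F$ and $\cQ$ are $C^2$ they are Lipschitz on this ball, with constants $L_F$ and $L_{\cQ}$ depending only on $R$, $b$, $F$, $\cQ$. I would work in
\[
X_T:=\Big\{U:\ \Vnorm{U-\Ubar}_{\infty}\le b,\ \ \Vnorm{U-\Ubar}_{\infty,2;\rho}\le 2\Vnorm{U_0-\Ubar}_{2;\rho}\Big\},
\]
viewed as a closed subset of the Banach space of functions $U$ with $U-\Ubar\in[L^{\infty}(\bbR\times(0,T)\times\Gamma)]^2\cap L^{\infty}((0,T);[L^2_{\rho}(\bbR\times\Gamma)]^2)$, equipped with the metric induced by $\Vnorm{U-V}_{\infty}+\Vnorm{U-V}_{\infty,2;\rho}$. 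The set $X_T$ is nonempty (it contains $\Ubar$, extended constantly in $x$ and $t$) and complete, and the goal is to show $L:X_T\to X_T$ is a contraction for $T$ small.

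The key structural observation is that $\Ubar$ is independent of $x$ and $\cQ(\Ubar)\equiv 0$. Together with $\int_{\bbR}\cJ(\cdot,t)=1$ and $\int_{\bbR}\dx\cJ(\cdot,t)=0$, this yields, for every $U$,
\[
LU-\Ubar=\cJ(\cdot,t)\ast(U_0-\Ubar)+\intdmt{0}{t}{\dx\cJ(\cdot,t-s)\ast\big(F(U)-F(\Ubar)\big)}{s}-\intdmt{0}{t}{\cJ(\cdot,t-s)\ast\big(\cQ(U)-\cQ(\Ubar)\big)}{s},
\]
where all convolutions are in $x$ and the arguments are evaluated at $(\cdot,s,y)$. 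I would then combine the kernel estimates $\Vnorm{\cJ(\cdot,t)}_{L^1(\bbR)}=1$ and $\Vnorm{\dx\cJ(\cdot,t)}_{L^1(\bbR)}=c_0t^{-1/2}$ with Young's inequality in $x$ (which commutes with the $\rho$-weighted $L^2_y$ norm), Minkowski's integral inequality in $s$, the identity $\intdmt{0}{t}{(t-s)^{-1/2}}{s}=2\sqrt t$, and the Lipschitz bounds on $F$, $\cQ$. This produces, for $U\in X_T$,
\[
\Vnorm{LU-\Ubar}_{\infty}\le \Vnorm{U_0-\Ubar}_{\infty}+C(\sqrt T+T)\Vnorm{U-\Ubar}_{\infty},\qquad \Vnorm{LU-\Ubar}_{\infty,2;\rho}\le \Vnorm{U_0-\Ubar}_{2;\rho}+C(\sqrt T+T)\Vnorm{U-\Ubar}_{\infty,2;\rho},
\]
with $C$ depending on $c_0$, $L_F$, $L_{\cQ}$. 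Choosing $T$ so small that $C(\sqrt T+T)\le\min\{(b-b_0)/b,\tfrac12\}$ and using $\Vnorm{U_0-\Ubar}_{\infty}<b_0$ shows $L$ maps $X_T$ into itself.

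For the contraction property I would apply the same estimates to $LU-LV$, where the initial-data term drops out and $F(U)-F(V)$, $\cQ(U)-\cQ(V)$ are controlled by $L_F\Vnorm{U-V}_{\infty}$ and $L_{\cQ}\Vnorm{U-V}_{\infty}$ (respectively by the $\infty,2;\rho$ norm), giving
\[
\Vnorm{LU-LV}_{\infty}+\Vnorm{LU-LV}_{\infty,2;\rho}\le C(\sqrt T+T)\big(\Vnorm{U-V}_{\infty}+\Vnorm{U-V}_{\infty,2;\rho}\big);
\]
shrinking $T$ further makes $L$ a contraction. The Banach Fixed Point Theorem then gives a unique $U\in X_T$ solving \eqref{eq-PSystemSemiLinear-MildSolutionToHeatEquation}. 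Finally, applying the two self-mapping inequalities to $U=LU$ and absorbing the $C(\sqrt T+T)\Vnorm{U-\Ubar}$ terms on the left gives $\Vnorm{U-\Ubar}_{\infty}\le C\Vnorm{U_0-\Ubar}_{\infty}$ and $\Vnorm{U-\Ubar}_{\infty,2;\rho}\le C\Vnorm{U_0-\Ubar}_{2;\rho}$ with $C=(1-C(\sqrt T+T))^{-1}$, which is \eqref{eq-PSystemSemiLinear-EstimatesOfMildSolution}, the constant depending only on $b_0$, $b$, $T$. The main obstacle is not conceptual but organizational: one must track the two norms simultaneously and verify that the reference state $\Ubar$, which lies in $[L^{\infty}(\Gamma)]^2$ but not in any $L^2$ space over $\bbR\times\Gamma$, disappears from every term of $LU-\Ubar$ cleanly, which is exactly what the three facts $\int\cJ=1$, $\int\dx\cJ=0$, $\cQ(\Ubar)\equiv0$ accomplish; once the estimates are arranged this way, smallness of $\sqrt T+T$ drives both conclusions.
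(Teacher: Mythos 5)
Your proof is correct and follows essentially the same route as the paper: both subtract $\Ubar$ from $LU$ using $\int\cJ=1$, $\int\dx\cJ=0$, and $\cQ(\Ubar)=0$; apply the kernel bounds $\Vnorm{\cJ(t)}_{L^1}=1$, $\Vnorm{\dx\cJ(t)}_{L^1}\sim t^{-1/2}$ with Lipschitz estimates on $F,\cQ$; and invoke the Banach Fixed Point Theorem with $T$ chosen small. The one organizational difference is that you run the contraction argument directly in the intersection space $X_T$ with the product metric $\Vnorm{\cdot}_{\infty}+\Vnorm{\cdot}_{\infty,2;\rho}$, whereas the paper first obtains the fixed point in the $L^{\infty}$ ball $G_T^{\infty}$ alone and then, in a separate step, shows the Picard iterates are also Cauchy in $\Vnorm{\cdot}_{\infty,2;\rho}$ and identifies the two limits via a.e.\ uniqueness; your variant is a bit tidier but requires the extra (easy) self-mapping check that $L$ preserves the $L^2_{\rho}$ ball of radius $2\Vnorm{U_0-\Ubar}_{2;\rho}$, which the paper's two-step version avoids by taking $G_T^2$ to be an affine slab rather than a ball.
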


\begin{proof}
\underline{Step 1:} We show that there exists a time $T > 0$ such that $L$ is a contraction mapping from the complete metric space $G_T^{\infty}$ into itself.

First, we show that we can choose $T$ such that $L(G_T^{\infty}) \subset G_{T}^{\infty}$. Using the fact that $\cQ(\Ubar) = 0$,  $\intdm{\bbR}{k(x)}{x}=1$, and that $\intdm{\bbR}{\dx \cJ(x,t)}{x} = 0$ for every $t \in [0,T]$,
\begin{equation} 
\begin{split} 
    LU-\Ubar &= \cJ(\cdot,t) \, \ast \, (U_0(\cdot,y) - \Ubar(y)) + \intdmt{0}{t}{\dx \cJ(\cdot,t-s) \, \ast \, \Big( F(U(\cdot,s,y))-F(\Ubar(y)) \Big)}{s} \\
    &\quad - \intdmt{0}{t}{\cJ(\cdot,t-s) \, \ast \, \Big( \cQ(U(\cdot,s,y))-\cQ(\Ubar(y)) \Big)}{s}\,.
\end{split}
\end{equation}
Since $F$ and $\cQ$ are differentiable, define $\alpha_1(b)$ and $\alpha_2(b)$ to be their respective Lipschitz constants on $\bigcup_{y \in \Gamma} B(\Ubar(y),b)$. Then, since $\Vnorm{k}_1 = 1$,
\begin{equation}
    \Vnorm{LU(t)-\Ubar}_{\infty} \leq \Vnorm{U_0 - \Ubar}_{\infty} + \alpha_1(b) \Vnorm{U - \Ubar}_{\infty} \intdmt{0}{t}{\Vnorm{\dx \cJ(t-s)}_1}{s} + \alpha_2(b) T \Vnorm{U - \Ubar}_{\infty}\,.
\end{equation}
Since $\intdm{\bbR}{\left| \dx \cJ(x,t) \right|}{x} = \intdm{\bbR}{t^{-1} \left| k'(t^{-1/2}x) \right|}{x} = t^{-1/2} \intdm{\bbR}{\left| k'(x) \right|}{x} := \mu t^{-1/2}$, we have
\begin{equation}
\begin{split}
    \Vnorm{LU(t)-\Ubar}_{\infty} & \leq b_0 + \mu b \alpha_1(b) \intdmt{0}{t}{\frac{1}{\sqrt{t-s}}}{s}+ b \alpha_2(b) T \leq b_0 + 2 \mu b \alpha_1(b) \sqrt{T} + b \alpha_2(b) T\,.
\end{split}
\end{equation}
Choose $T$ small enough so that,
\begin{equation}
b_0 + 2 \mu b \alpha_1(b) \sqrt{T} + b \alpha_2(b) T \leq b\,.
\end{equation}
Then $LU \in G_T^{\infty}$.
Now we prove the property that the mapping is a contraction. For $U$, $V \in G^{\infty}_T$,
\begin{equation}\label{eq-PSystemSemiLinear-InftyContraction}
\begin{split} 
\Vnorm{(LU(t)-\Ubar) - (LV(t)-\Ubar)}_{\infty} &= \Vnorm{LU(t) - LV(t)}_{\infty} \\
    &\leq \alpha_1(b) \intdmt{0}{t}{\Vnorm{\dx \cJ(t-s)}_1 \Vnorm{V(s)-U(s)}_{\infty}}{s} \\
    &\quad + \alpha_2(b) \intdmt{0}{t}{\Vnorm{V(s)-U(s)}_{\infty}}{s} \\
    &\leq \left( 2 \mu \alpha_1(b) \sqrt{T} + \alpha_2(b) T \right) \Vnorm{V-U}_{\infty} \\
    &= \left( 2 \mu \alpha_1(b) \sqrt{T} + \alpha_2(b) T \right) \Vnorm{(V-\Ubar)-(U-\Ubar)}_{\infty} \,.
\end{split}
\end{equation}
By our choice of $T$, $\left( 2 \mu \alpha_1(b) \sqrt{T} + \alpha_2(b) T \right) < 1$. Thus the mapping $L$ is a contraction on $G_T^{\infty}$.

Step 1 is complete. The Banach fixed-point theorem gives us the existence of a unique fixed point of the mapping $L : G_T^{\infty} \to G_T^{\infty}$. We call the fixed point $U$.

\underline{Step 2:} We show that in fact the fixed point $U$ belongs to $G_T^2$.
We proceed just as in \eqref{eq-PSystemSemiLinear-InftyContraction} with the norm $\Vnorm{\cdot}_{\infty 2;\rho}$ in place of $\Vnorm{\cdot}_{\infty}$ to get
\begin{equation}
    \Vnorm{(LU-\Ubar)-(LV-\Ubar)}_{\infty, 2; \rho} \leq \left( 2 \mu \alpha_1(b) \sqrt{T} + \alpha_2(b) T \right) \Vnorm{(V-\Ubar)-(U-\Ubar)}_{\infty, 2; \rho}\,.
\end{equation}
Thus the mapping $L$ is a contraction for the norm $\Vnorm{\cdot}_{\infty, 2; \rho}$ as well. Therefore, the sequence $\{ U^m - \Ubar \}_{m \geq 0}$, where $U^m$ is defined by
\begin{equation}
    U^{m+1} = LU^m \,, \quad U^0 = \Ubar\,,
\end{equation}
is Cauchy in $\Vnorm{\cdot}_{\infty, 2 ; \rho}$ and thus converges in said norm to a limit function $\tilde{U}$. Then there exists a subsequence which converges almost everywhere. However, it has already been established that the sequence converges in the metric space $G^{\infty}_T$ to the fixed point $U \in G_T^{\infty}$. Since almost-everywhere limits are unique, $\tilde{U} = U$, and so the fixed point $U$ is simultaneously in both $G^{\infty}_T$ and $G_T^2$.

\underline{Step 3:} We prove the estimates \eqref{eq-PSystemSemiLinear-EstimatesOfMildSolution}. 
Since the mapping $L$ is a contraction on both $G^{\infty}_T$ and $G^2_T$, applying the contractive inequalities successively on the sequence of iterates leads to \eqref{eq-PSystemSemiLinear-EstimatesOfMildSolution}, where the constant $C$ has the value $\frac{1}{1-2 \mu \alpha_1(b) \sqrt{T} - \alpha_2(b) T}$.
\end{proof}

We next obtain bounds on derivatives in order to see that the solution to the integral equation \eqref{eq-PSystemSemiLinear-MildSolutionToHeatEquation} is in fact a classical solution of \eqref{eq-QuasilinearParabolicSystem} for data sufficiently smooth.

\begin{lemma}[Estimates on Derivatives]\label{thm-PSystemSemiLinear-EstimatesOnDerivatives}
Let $b$, $b_0$ be constants such that $b > b_0 > 0$. Suppose there exists $\Ubar \in \left[ L^{\infty}(\Gamma) \right]^2$ taking values in $K$ such that $U_0(x,y) - \Ubar(y) \in \left[ L^2_{\rho}(\bbR \times \Gamma) \right]^2$, with $\Vnorm{U_0-\Ubar}_{\infty} < b_0$.
Let $T>0$ be such that the system \eqref{eq-PSystemSemiLinear-MildSolutionToHeatEquation} has a solution $U \in G_T^{\infty} \cap G_T^2$. 

Suppose in addition that
\begin{equation}
\begin{split}
\dx U_0, \, \dxx U_0 \in \left[ L^2_{\rho}(\bbR \times \Gamma) \right]^2 \cap \big[ L^{\infty}(\bbR \times \Gamma) \big]^2\,.
\end{split}
\end{equation}
Then there exist $T_0 \in (0,T]$ and $C > 1$ such that the solution of \eqref{eq-PSystemSemiLinear-MildSolutionToHeatEquation} satisfies
\begin{equation}
\begin{split}
    U - \Ubar\,, \dt U \,, \dx U \,, \dxx U \in L^{\infty}\left( (0,T_0); \left[L^2_{\rho}(\bbR \times \Gamma) \right]^2 \right) \cap \big[ L^{\infty}(\bbR \times (0,T_0) \times \Gamma) \big]^2
\end{split}
\end{equation}
with
\begin{equation} 
\begin{split}
    \Vnorm{\dt U} &\leq C \Big( \Vnorm{U_0-\Ubar} + \Vnorm{\dx U_0} + \Vnorm{\dxx U_0} \Big) \\
    \Vnorm{\dx U} &\leq C \Big( \Vnorm{U_0-\Ubar} + \Vnorm{\dx U_0} \Big) \\
    \Vnorm{\dxx U} &\leq P \Big( \Vnorm{U_0-\Ubar} + \Vnorm{\dx U_0} + \Vnorm{\dxx U_0} \Big)\,, \\
\end{split}
\end{equation}
Where $\Vnorm{ \cdot } = \Vnorm{\cdot}_{\infty} + \Vnorm{\cdot}_{\infty, 2; \rho}$. Here, $P$ is a polynomial with coefficients depending on $b$ and the $C^2$ norm of $F$ in $\bigcup_{y \in \Gamma} B(\Ubar(y),b)$.
\end{lemma}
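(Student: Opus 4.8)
The plan is to differentiate the integral equation \eqref{eq-PSystemSemiLinear-MildSolutionToHeatEquation} once and twice in $x$, and once in $t$, obtaining for each of $\dx U$, $\dxx U$ and $\dt U$ a \emph{linear} integral equation whose only singular convolution kernel is $\dx\cJ(\cdot,t-s)$, with $\Vnorm{\dx\cJ(t-s)}_1=\mu\,(t-s)^{-1/2}$ (as computed in the proof of Lemma~\ref{thm-ExistenceOfViscositySolutions}). Because this singularity is integrable in $s$, on a sufficiently short interval $(0,T_0)$ with $T_0\in(0,T]$ each such equation can be treated by the same contraction / singular Gr\"onwall argument as in Lemma~\ref{thm-ExistenceOfViscositySolutions}. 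The rule to be obeyed throughout is that whenever a derivative is passed onto the Duhamel integrals it is placed on the nonlinearities $F(U)$ or $\cQ(U)$ via the chain rule, so that the kernel $\dxx\cJ(\cdot,t-s)$, with its non-integrable singularity $(t-s)^{-1}$, never appears. To keep the formal manipulations (differentiation under the integral sign; the boundary terms arising from $\partial_t\int_0^t$) rigorous, I would carry out the estimates on the Picard iterates $U^{m+1}=LU^m$, $U^0=\Ubar$ of Lemma~\ref{thm-ExistenceOfViscositySolutions} — which converge to $U$ in $G_T^\infty\cap G_T^2$ and, for $t>0$, are smooth in $x$ — and then pass the resulting uniform bounds to the limit.

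For $\dx U$, differentiating the formula for $U^{m+1}$ once in $x$ and using that $\dx\cJ$ convolved with any function of $y$ alone vanishes (so that $\cQ(\Ubar)=0$ may be inserted) gives
\[
  \dx U^{m+1}(t)=\cJ(\cdot,t)\ast\dx U_0+\intdmt{0}{t}{\dx\cJ(\cdot,t-s)\ast\bigl(\dU F(U^m)\,\dx U^m\bigr)}{s}-\intdmt{0}{t}{\dx\cJ(\cdot,t-s)\ast\bigl(\cQ(U^m)-\cQ(\Ubar)\bigr)}{s}.
\]
With $\Vnorm{\dx\cJ(t-s)}_1=\mu(t-s)^{-1/2}$, the Lipschitz constants of $F$ and $\cQ$, and the estimate~\eqref{eq-PSystemSemiLinear-EstimatesOfMildSolution} for $\Vnorm{U^m-\Ubar}$, an induction in $m$ under a smallness condition on $\sqrt{T_0}$ bounds $\sup_m\sup_{(0,T_0)}\Vnorm{\dx U^m}$ by $C(\Vnorm{U_0-\Ubar}+\Vnorm{\dx U_0})$, and subtracting two consecutive such identities shows $(\dx U^m)$ is Cauchy; hence $\dx U^m\to\dx U$ with the stated bound. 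For $\dxx U$, one further $x$-derivative produces — besides the term $\cJ(\cdot,t)\ast\dxx U_0$ and a term $\dU F(U^m)\,\dxx U^m$ absorbed by Gr\"onwall — the quadratic term $\dUU F(U^m)(\dx U^m,\dx U^m)$; estimating $\Vnorm{(\dx U^m)^{\otimes2}}\le\Vnorm{\dx U^m}^2$ and running the same induction/Cauchy scheme yields $\dxx U^m\to\dxx U$ with $\Vnorm{\dxx U}\le P(\Vnorm{U_0-\Ubar}+\Vnorm{\dx U_0}+\Vnorm{\dxx U_0})$, the polynomial $P$ being exactly the footprint of this quadratic term. For $\dt U$, once $U$ is known (from the previous two bounds, together with $U-\Ubar$ from Lemma~\ref{thm-ExistenceOfViscositySolutions}) to be a classical solution, I differentiate \eqref{eq-PSystemSemiLinear-MildSolutionToHeatEquation} in $t$: the linear term contributes $\cJ(\cdot,t)\ast\dxx U_0$ by the heat equation, and the $t$-derivative of each Duhamel integral produces a boundary term which I integrate by parts into $\cJ(\cdot,t)\ast\bigl(\dU F(U_0)\,\dx U_0\bigr)$, respectively $\cJ(\cdot,t)\ast\bigl(\cQ(U_0)-\cQ(\Ubar)\bigr)$ — both bounded, linearly in the data, uniformly as $t\to0$ — together with integral terms that are linear in $\dt U$ with kernels $\dx\cJ(t-s)$ and $\cJ(t-s)$; a singular Gr\"onwall inequality then gives $\Vnorm{\dt U}\le C(\Vnorm{U_0-\Ubar}+\Vnorm{\dx U_0}+\Vnorm{\dxx U_0})$.

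I expect the main obstacle to be precisely the bookkeeping dictated by the singular kernel: every step must be arranged so that at most one $x$-derivative falls on $\cJ$. This same constraint is what makes the $\dxx U$ estimate only polynomial in the data — the second $x$-derivative of the flux term unavoidably generates $(\dx U)^{\otimes2}$ — and what makes the $\dt U$ estimate delicate: one must notice that the boundary term $\dx\cJ(\cdot,t)\ast\bigl(F(U_0)-F(\Ubar)\bigr)$, which naively blows up like $t^{-1/2}$ as $t\to0$, equals the harmless $\cJ(\cdot,t)\ast\bigl(\dU F(U_0)\,\dx U_0\bigr)$ after one integration by parts. A secondary, routine matter is to justify that the limits of $\dx U^m$ and $\dxx U^m$ are the weak $x$-derivatives of $U$ (immediate from $U^m\to U$ in $L^1_{loc}$ and the continuity of differentiation on distributions), that $U$ is then a genuine classical solution of \eqref{eq-QuasilinearParabolicSystem}, and finally to restrict $T$ to the $T_0$ on which all of the smallness conditions above hold.
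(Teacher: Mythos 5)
Your proposal is essentially the paper's own proof: both work at the level of the Picard iterates $U^{m+1}=LU^{m}$, run a singular Gr\"onwall argument driven by $\Vnorm{\dx\cJ(t-s)}_{1}=\mu(t-s)^{-1/2}$ on a sufficiently short $T_{0}\le T$, and in the $\dt$ step integrate the boundary term $\dx\cJ(\cdot,t)\ast F(U_{0})$ by parts to $\cJ(\cdot,t)\ast\bigl(\dU F(U_{0})\,\dx U_{0}\bigr)$, which is exactly the device the paper uses to get a bound linear in the data. The paper carries out only the $\dt$ estimate explicitly and cites Serre for $\dx,\dxx$, whereas you spell those out and do $\dt$ last; just be sure to keep the $\dt$ estimate at the iterate level too (as your opening paragraph promises) rather than invoke ``$U$ classical'', since classical regularity is established only afterwards in Lemma~\ref{thm-QuasilinearParabolicSystem-MildSolutionIsClassicalSolution}, which uses the present lemma.
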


\begin{proof}
We follow the argument in \cite{Serre}.
It is sufficient to show that if a given $V \in G_T^{\infty} \cap G_T^2$ satisfies these inequalities, then $LV$ satisfies them. Let $V \in G_{T_0}^{\infty} \cap G_{T_0}^2$ with $T_0$ to be determined. Then
\begin{equation} 
\begin{split} 
    \dt LV(x,t,y) &= \intdm{\bbR}{\dt \cJ(x-z,t) U_0(z,y)}{z} \\
    &\quad + \iintdmt{0}{t}{\bbR}{\p_z \cJ(x-z,s) \dt [F(V(z,t-s,y))]}{z}{s} + \intdm{\bbR}{\p_z \cJ(x-z,t) F(V(z,0,y))}{z} \\
    &\quad - \iintdmt{0}{t}{\bbR}{\cJ(x-z,s) \dt [\cQ(V(z,t-s,y))]}{z}{s} - \intdm{\bbR}{\cJ(x-z,t) \cQ(V(z,0,y))}{z}\,,
\end{split}
\end{equation}
where $V(x,0,y) = \lim\limits_{t \to 0^+} V(x,t,y)$. But
\begin{equation}
\Vnorm{\dt F(V(t-s))}_{2;\rho} \leq \alpha_1(b) \Vnorm{\dt V(t-s)}_{2;\rho}\,, \qquad \Vnorm{\dt \cQ(V(t-s))}_{2;\rho} \leq \alpha_2(b) \Vnorm{\dt V(t-s)}_{2;\rho}\,,
\end{equation}
Also, since $\cJ$ solves the heat equation, subtracting $R(\Ubar)$ and using the assumptions on $V$ gives us
\begin{equation} 
\begin{split}
    \Vnorm{\dt LV(t)}_{2;\rho} &\leq \Vnorm{\intdm{\bbR}{\cJ(z,t) \dxx U_0(x-z,y)}{z}}_{2;\rho} \\
    &\quad + \mu \alpha_1(b) \intdmt{0}{t}{ \frac{\Vnorm{\dt V(t-s)}_{2;\rho}}{\sqrt{s}} }{s} + \Vnorm{\intdm{\bbR}{\cJ(x-z,t) \p_z F(V(z,0,y))}{z}}_{2;\rho} \\
    &\quad + \alpha_2(b) \intdmt{0}{t}{\Vnorm{\dt V(t-s)}_{2;\rho}}{s} + \alpha_2(b) \Vnorm{U_0-\Ubar}_{2;\rho} \\
    &\leq \Vnorm{\dxx U_0}_{2;\rho} + \mu \alpha_1(b) \Vnorm{\dt V}_{\infty 2;\rho} \big( 2 \sqrt{t} \big) + \alpha_1(b) \Vnorm{\dx U_0}_{2;\rho} \\
    &\quad + \alpha_2(b) \Vnorm{\dt V}_{\infty 2;\rho} t + \alpha_2(b) \Vnorm{U_0-\Ubar}_{2;\rho}\,.
\end{split}
\end{equation}
Choose $T_0 > 0$ such that 
\begin{equation}
n := 2 \mu \alpha_1(b) \sqrt{T_0} + \alpha_2(b) T_0 < 1\,.
\end{equation}
(Note that the time $T$ chosen in Theorem \ref{thm-ExistenceOfViscositySolutions} would work here.) Then
\begin{equation}
\Vnorm{\dt LV(t)}_{2;\rho} \leq C \left( \Vnorm{U_0-\Ubar}_{2;\rho} + \Vnorm{\dx U_0}_{2;\rho} + \Vnorm{\dxx U_0}_{2;\rho} \right) + n \Vnorm{\dt V}_{\infty 2;\rho}\,.
\end{equation}
Thus, the sequence of iterates $U^m$ satisfy
\begin{equation}
\Vnorm{\dt U^m(t)}_{2;\rho} \leq  \frac{C}{1-n} \left( \Vnorm{U_0-\Ubar}_{2;\rho} + \Vnorm{\dx U_0}_{2;\rho} + \Vnorm{\dxx U_0}_{2;\rho} \right)\,,
\end{equation}
by repeated use of the above argument, since $U^0 \equiv \Ubar$.

Finally, for each $m \in \bbN$,
\begin{equation}
\begin{split}
\Vnorm{\dt LU^m(t)}_{\infty} &\leq \Vnorm{\dxx U_0}_{\infty} + \mu \alpha_1(b) \intdmt{0}{t}{ \frac{\Vnorm{\dt U^m(t-s)}_{\infty}}{\sqrt{s}} }{s} + \alpha_1(b) \Vnorm{\dx U_0}_{\infty} \\
    &\quad + \alpha_2(b) \intdmt{0}{t}{\Vnorm{\dt U^m(t-s)}_{\infty}}{s} + \alpha_2(b) \Vnorm{U_0-\Ubar}_{\infty}\,,
\end{split}
\end{equation}
which leads in a similar way to
\begin{equation}
\Vnorm{\dt LU^m(t)}_{\infty} \leq C \left( \Vnorm{U_0-\Ubar}_{\infty} + \Vnorm{\dx U_0}_{\infty} + \Vnorm{\dxx U_0}_{\infty} \right) + n \Vnorm{\dt U^m}_{\infty}
\end{equation}
and then
\begin{equation}
\Vnorm{\dt U^m(t)}_{\infty} \leq  \frac{C}{1-n} \left( \Vnorm{U_0-\Ubar}_{\infty} + \Vnorm{\dx U_0}_{\infty} + \Vnorm{\dxx U_0}_{\infty} \right)\,.
\end{equation}

To show the other two estimates, one differentiates the non-kernel terms in the integral equation and repeats an argument just like the one above. See \cite{Serre} for details. The estimates are simpler, since there are no boundary terms.
\end{proof}

With these derivative estimates, we have

\begin{lemma}\label{thm-QuasilinearParabolicSystem-MildSolutionIsClassicalSolution}
With all of the assumptions of Theorem \ref{thm-PSystemSemiLinear-EstimatesOnDerivatives}, there exists a local classical solution to \eqref{eq-QuasilinearParabolicSystem} given by the solution $U$ to the integral equation \eqref{eq-PSystemSemiLinear-MildSolutionToHeatEquation}.
\end{lemma}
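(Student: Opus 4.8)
The plan is to show that, for a.e.\ fixed $y\in\Gamma$, the function $U(\cdot,\cdot,y)$ produced by the integral equation \eqref{eq-PSystemSemiLinear-MildSolutionToHeatEquation} belongs to $C^{2,1}_{x,t}\big(\bbR\times(0,T_0)\big)$, satisfies \eqref{eq-QuasilinearParabolicSystem} pointwise there, and attains $U_0(\cdot,y)$ uniformly as $t\to0^+$; since $y$ enters \eqref{eq-QuasilinearParabolicSystem} only as a parameter, it may be frozen throughout. We work with those $y$ — a full-measure set, by the hypotheses of Lemma~\ref{thm-PSystemSemiLinear-EstimatesOnDerivatives} — for which $U_0(\cdot,y),\dx U_0(\cdot,y),\dxx U_0(\cdot,y)\in[L^\infty(\bbR)]^2$ (so $U_0(\cdot,y)$ is bounded and globally Lipschitz), and for which $U(\cdot,t,y),\dx U(\cdot,t,y),\dxx U(\cdot,t,y)$ are bounded uniformly in $t\in(0,T_0)$ by the conclusions of that lemma.

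First I would establish the continuity in $(x,t)$ of $U$, $\dx U$ and $\dxx U$ on $\bbR\times(0,T_0)$. The iterates $U^m$ constructed in the proof of Lemma~\ref{thm-ExistenceOfViscositySolutions} are continuous heat potentials of bounded continuous data and converge to $U$ uniformly (by the contraction estimate \eqref{eq-PSystemSemiLinear-InftyContraction} in the $\Vnorm{\cdot}_\infty$ norm), so $U(\cdot,\cdot,y)$ is continuous; differentiating \eqref{eq-PSystemSemiLinear-MildSolutionToHeatEquation} in $x$ and using $\dx U\in L^\infty$ — which makes $\dx[F(U)]=\dU F(U)\,\dx U$ and $\dx[\cQ(U)]=\dU\cQ(U)\,\dx U$ bounded — expresses $\dx U$, and after one more differentiation using $\dxx U\in L^\infty$, $\dxx U$, as sums of heat potentials whose kernels are $\cJ$ or $\dx\cJ$, with spatial $L^1$-norms $1$ and $\mu(t-s)^{-1/2}$ — both integrable in $s$ up to $s=t$; continuity of such potentials in $(x,t)$ follows from dominated convergence together with the $L^1$-continuity of $\tau\mapsto\cJ(\cdot,\tau)$ and $\tau\mapsto\dx\cJ(\cdot,\tau)$ for $\tau>0$ and the estimate $\int_t^{t'}\mu(t'-s)^{-1/2}\,\mathrm{d}s=2\mu\sqrt{t'-t}$ for the $\int_0^{t'}-\int_0^t$ remainder.

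Next, an integration by parts transferring the $x$-derivative off the kernel in the Duhamel term (legitimate because $\cJ(x-z,\tau)$ decays like a Gaussian in $z$ while $F(U(\cdot,s,y))$ is bounded) rewrites \eqref{eq-PSystemSemiLinear-MildSolutionToHeatEquation} in the form
\begin{equation*}
U(\cdot,t,y)=\cJ(\cdot,t)\ast U_0(\cdot,y)-\int_0^t\cJ(\cdot,t-s)\ast G(\cdot,s,y)\,\mathrm{d}s\,,\qquad G:=\dx F(U)+\cQ(U)=\dU F(U)\,\dx U+\cQ(U)\,,
\end{equation*}
which is precisely the Duhamel formula for \eqref{eq-QuasilinearParabolicSystem}. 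By the bounds above, $G(\cdot,\cdot,y)$ is bounded and continuous on $\bbR\times(0,T_0)$ and, because $\dxx U\in L^\infty$, Lipschitz in $x$ uniformly in $t$. The linear term $\cJ(\cdot,t)\ast U_0(\cdot,y)$ is $C^\infty$ for $t>0$, solves $\dt w=\dxx w$ (since $\cJ$ does), and converges to $U_0(\cdot,y)$ uniformly as $t\to0^+$ by the standard approximate-identity argument ($\cJ(\cdot,\tau)=\tau^{-1/2}k(\cdot/\sqrt\tau)$, $\Vnorm{k}_1=1$, and $U_0(\cdot,y)$ uniformly continuous). For the Duhamel term $w(x,t):=\int_0^t\cJ(\cdot,t-s)\ast G(\cdot,s,y)\,\mathrm{d}s$ I would invoke the classical parabolic heat-potential theorem (as in \cite{Serre}): for $G$ bounded and locally Hölder in $x$ uniformly in $t$, one has $w\in C^{2,1}_{x,t}\big(\bbR\times(0,T_0)\big)$ with $\dt w-\dxx w=G$ there, and $\Vnorm{w(\cdot,t)}_\infty\le t\,\Vnorm{G}_\infty\to0$ as $t\to0^+$. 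Subtracting, $U(\cdot,\cdot,y)\in C^{2,1}_{x,t}\big(\bbR\times(0,T_0)\big)$, $(\dt-\dxx)U=-G$ — i.e.\ $\dt U+\dx F(U)+\cQ(U)=\dxx U$ — on $\bbR\times(0,T_0)$, and $U(\cdot,t,y)\to U_0(\cdot,y)$ uniformly as $t\to0^+$; hence $U$ is a local classical solution of \eqref{eq-QuasilinearParabolicSystem}.

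The step I expect to demand the most care is the $C^2$-in-$x$ regularity of the heat potential $w$ and the pointwise identity $\dt w-\dxx w=G$: a second spatial derivative produces the kernel $\dxx\cJ(\cdot,t-s)$, whose spatial $L^1$-norm is of order $(t-s)^{-1}$ and hence not integrable in $s$ up to $s=t$, so naive absolute estimates fail. The standard remedy of parabolic potential theory is to write $\dxx w(x,t)=\int_0^t\!\int_\bbR\dxx\cJ(x-z,t-s)\,[G(z,s,y)-G(x,s,y)]\,\mathrm{d}z\,\mathrm{d}s$ up to a boundary contribution, and to use $|G(z,s,y)-G(x,s,y)|\le L\,|x-z|$ together with $\int_0^t\!\int_\bbR|x-z|\,|\dxx\cJ(x-z,t-s)|\,\mathrm{d}z\,\mathrm{d}s\le C\sqrt{t}$ to recover an integrable singularity, the boundary term supplying the source $G$ through the heat-kernel jump relation; here the Lipschitz bound on $G$ in $x$ is exactly what makes this work, and it is available because $\dxx U\in L^\infty$. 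I would either carry this out explicitly or cite \cite{Serre}, as the paper does for the companion derivative bounds in Lemma~\ref{thm-PSystemSemiLinear-EstimatesOnDerivatives}. The remaining ingredients — continuity of $G$ in $(x,t)$, the smoothness and initial behavior of the linear term, the integration by parts, and the vanishing of $w$ at $t=0$ — are routine.
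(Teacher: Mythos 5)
Your argument is correct, and it takes a route that is genuinely different from — and in one respect tighter than — the paper's. Both proofs begin the same way: freeze $y$, integrate the $\dx\cJ$ term by parts to recast \eqref{eq-PSystemSemiLinear-MildSolutionToHeatEquation} as the Duhamel formula $U = \cJ(\cdot,t)\ast U_0 - \int_0^t\cJ(\cdot,t-s)\ast G\,\mathrm{d}s$ with $G=\dx F(U)+\cQ(U)$, and note that the linear piece solves the homogeneous heat equation and recovers $U_0$. Where the two diverge is in how $(\dt-\dxx)$ acts on the Duhamel term. The paper reproduces Evans' approximate-identity computation — split the $s$-integral at $t-\delta$, integrate by parts on the outer piece using $(\dt-\dxx)\cJ=0$, and send $\delta\to0$ — whereas you invoke the classical parabolic heat-potential regularity theorem for sources that are bounded, continuous, and Hölder (here Lipschitz) in $x$ uniformly in $t$, and explain why the Lipschitz hypothesis is exactly what controls the $\dxx\cJ$ singularity. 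This is a real gain: the paper's estimate of the inner piece $\mathrm{II}$ invokes $\Vnorm{\dxx\cF(U)}_{L^\infty}$, and since $\cF(U)=\dx F(U)+\cQ(U)$ that bound pulls in $\dxxx U$, which Lemma~\ref{thm-PSystemSemiLinear-EstimatesOnDerivatives} does not supply. (Evans' argument can be repaired by the very subtraction-of-$G$ device you describe — subtract $\cF(U(x,t-s,y))$, use $\int_\bbR\p_{zz}\cJ(z,s)\,\mathrm{d}z=0$ and the Lipschitz bound to get $|\mathrm{II}|\le C\sqrt\delta$ — but as written the paper leans on regularity it hasn't established.) Your route needs only $\dxx U\in L^\infty$, which is available, and you also make explicit the joint continuity of $U$, $\dx U$, $\dxx U$ on $\bbR\times(0,T_0)$ — obtained by moving $x$-derivatives onto $F(U)$ and $\cQ(U)$ so that only the integrable kernels $\cJ$ and $\dx\cJ$ appear — a step the paper leaves tacit. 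One could, if desired, replace the citation by carrying out the standard subtraction argument in full, but citing the potential-theory theorem (as from \cite{Serre}) is appropriate and consistent with how the paper handles the companion derivative estimates.
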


The proof uses the arguments found in \cite[Section 2.3, Theorem 2]{Evans}. For a fixed $y \in \Gamma$, we show that the fundamental solution \eqref{eq-PSystemSemiLinear-MildSolutionToHeatEquation} solves
\begin{equation}
\dt U + \dx F(U) + \cQ(U) = \dxx U
\end{equation}
$(x,t)$-a.e., and that
\begin{equation}
\lim\limits_{(x,t) \to (x_0,0)} U(x,t,y) = U_0(x_0,y)
\end{equation}
for every $x_0 \in \bbR$.
These are true for every $y \in \Gamma$, so the equation is solved pointwise. The maximal existence time is bounded from below by the same $T_0$ obtained above.

\begin{proof}[Proof of Theorem]
Since $U$ is a classical solution, we can take derivatives of both $U$ and $U_0$. Then
\begin{equation}\label{eq-PSystemSemiLinear-MildSolutionIsClassicalSolution}
\begin{split}
    U(x,t,y) &= \frac{1}{(4 \pi t)^{1/2}} \intdm{\bbR}{\e^{\frac{-|x-z|^2}{4t}}U_0(z,y)}{z} \\
        &\qquad - \iintdmt{0}{t}{\bbR}{\frac{1}{(4 \pi(t-s))^{1/2}}\e^{\frac{-|x-z|^2}{4(t-s)}} \Big( \p_z F(U(z,s,y)) + \cQ(U(z,s,y)) \Big) }{z}{s}\,.
\end{split}
\end{equation}
Set $\cF(U(z,s,y)) := \p_z F(U(z,s,y)) + \cQ(U(z,s,y))$. It suffices to show that $\dt - \dxx$ applied to the right-hand side of \eqref{eq-PSystemSemiLinear-MildSolutionIsClassicalSolution} is equal to $- \cF(U(x,t,y))$.
Since the first term on the right-hand side of \eqref{eq-PSystemSemiLinear-MildSolutionIsClassicalSolution} is convolution with the heat kernel, we get that
\begin{equation}
\begin{split}
    (\dt - \dxx)(U(x,t,y)) &= -  (\dt - \dxx) \left[ \iintdmt{0}{t}{\bbR}{\frac{1}{(4 \pi s)^{1/2}}\e^{\frac{-|z|^2}{4s}} \cF(U(x-z,t-s,y))}{z}{s} \right] \\
    &= - \iintdmt{0}{t}{\bbR}{\frac{1}{(4 \pi s)^{1/2}}\e^{\frac{-|z|^2}{4s}} (\dt - \dxx) \big[ \cF(U(x-z,t-s,y)) \big]}{z}{s} \\
    &\qquad - \intdm{\bbR}{\frac{1}{(4 \pi t)^{1/2}} \e^{\frac{-|z|^2}{4t}} \cF(U(x-z,0,y))}{z}\,.
\end{split}
\end{equation}
Splitting the integral and changing the variable of differentiation, we get
\begin{equation}
\begin{split}
    (\dt - \dxx)(U(x,t,y)) &= - \iintdmt{\delta}{t}{\bbR}{\frac{1}{(4 \pi s)^{1/2}}\e^{\frac{-|z|^2}{4s}} (- \p_s - \p_{zz}) \big[ \cF(U(x-z,t-s,y)) \big]}{z}{s} \\
    &\qquad - \iintdmt{0}{\delta}{\bbR}{\frac{1}{(4 \pi s)^{1/2}}\e^{\frac{-|z|^2}{4s}} (- \p_s - \p_{zz}) \big[ \cF(U(x-z,t-s,y)) \big]}{z}{s} \\
    &\qquad - \intdm{\bbR}{\frac{1}{(4 \pi t)^{1/2}} \e^{\frac{-|z|^2}{4t}} \cF(U(x-z,0,y))}{z} \\
    &= \mathrm{I} + \mathrm{II} + \mathrm{III}\,.
\end{split}
\end{equation}
We handle each separately.
\begin{equation}
    | \mathrm{II} | \leq \Big( \Vnorm{\dt \cF(U)}_{L^{\infty}} + \Vnorm{\dxx \cF(U)}_{L^{\infty}} \Big) \iintdmt{0}{\delta}{\bbR}{\cJ(z,s)}{z}{s} \leq C \delta\,.
\end{equation}
Integrating by parts, we get
\begin{equation}
\begin{split}
    \mathrm{I} &= - \iintdmt{\delta}{t}{\bbR}{(\p_s - \p_{zz}) \big[ \cJ(z,s) \big] \cF(U(x-z,t-s,y))}{z}{s} \\
    &\qquad - \intdm{\bbR}{\cJ(z,\delta) \cF(U(x-z,t-\delta,y))}{z} + \intdm{\bbR}{\cJ(z,t) \cF(U(x-z,0,y))}{z} \\
    &= - \intdm{\bbR}{\cJ(z,\delta) \cF(U(x-z,t-\delta,y))}{z} - \mathrm{III}\,.
\end{split}
\end{equation}
since $\cJ$ solves the heat equation. Thus,
\begin{equation}
    (\dt - \dxx)(U(x,t,y)) = - \intdm{\bbR}{\cJ(z,\delta) \cJ(U(x-z,t-\delta,y))}{z} + O(\delta)\,.
\end{equation}
It therefore suffices to show that
\begin{equation}\label{eq-SemiLinearPSystem-MildSolutionSolvesEquation}
    \lim\limits_{\delta \to 0} \intdm{\bbR}{\cJ(z,\delta) \cF(U(x-z,t-\delta,y))}{z} = \cF(U(x,t,y))\,.
\end{equation}
Let $\alpha > 0$ be given. Choose $\beta > 0$ and $\delta > 0$ small such that
\begin{equation}
    |z| < \beta \Rightarrow |\cF(U(x-z,t-\delta,y)) - \cF(U(x,t,y))| < \alpha\,.
\end{equation}
Then since $\Vnorm{\cJ(t)}_1 = 1$, we have
\begin{equation}
\begin{split}
    \left| \intdm{\bbR}{\cJ(z,\delta) \cF(U(x-z,t-\delta,y))}{z} - \cF(U(x,t,y)) \right| &\leq  \intdm{\bbR}{\cJ(z,\delta) \big| \cF(U(x-z,t-\delta,y)) - \cF(U(x,t,y)) \big| }{z} \\
    &\leq \intdm{|z| < \beta}{\cdots}{z} + \intdm{|z| > \beta}{\cdots}{z} := \mathrm{IV} +  \mathrm{V}\,.
\end{split}
\end{equation}
Now,
\begin{equation}
    \mathrm{IV} \leq \alpha \Vnorm{\cJ(t)}_1 = \alpha\,,
\end{equation}
and
\begin{equation}
\begin{split}
    \mathrm{V} \leq 2 \Vnorm{\cF(U)}_{L^{\infty}} \intdm{|z| \geq \beta}{\cJ(z,\delta)}{z} = \intdm{|z| \geq \beta / \sqrt{\delta}}{\e^{\frac{-|z|^2}{4}}}{z}\,.
\end{split}
\end{equation}
Therefore, for $\delta > 0$ sufficiently small we have that
\begin{equation}
    \mathrm{IV} + \mathrm{V} \leq 2 \alpha\,.
\end{equation}
This shows that the mild solution solves the equation. To see convergence to the initial data, note that (see \cite[Section 2.3, Theorem 1]{Evans})
\begin{equation}
    \lim\limits_{(x,t) \to (x_0,0)} \intdm{\bbR}{\cJ(x-z,t)U_0(z,y)}{z} = U_0(x,y)
\end{equation}
and that
\begin{equation}
\begin{split}
    \left| \iintdmt{0}{t}{\bbR}{\frac{1}{(4 \pi(t-s))^{1/2}}\e^{\frac{-|x-z|^2}{4(t-s)}} \Big( \p_z F(U(z,s,y)) + \cQ(U(z,s,y)) \Big) }{z}{s} \right| &\leq \Vnorm{\cF(U)}_{L^{\infty}} \iintdmt{0}{t}{\bbR}{\cJ(z,t-s)}{z}{s} \\
    & = C t \to 0 \text{ as } t \to 0\,.
\end{split}
\end{equation}
\end{proof}

\section{Weakly Coupled Semi-linear Parabolic Systems}\label{apdx-SLParabolicSystems}

We prove the global well-posedness of the system \eqref{eq-PSystemSemiLinear-ViscosityApproximation-RiemannianCoordinates}, which in turn is used to prove the global well-posedness of the system \eqref{eq-PSystemSemilinearFull-ViscosityApproximation} in Section \ref{sec-SemiLinearPSystem} and to demonstrate the existence of a solution to \eqref{eq-PSystemSemiLinear-RiemannianCoordinates} in Appendix \ref{apdx-SLHyperbolicSystems}. The following results are straightforward generalizations of standard theorems for weakly coupled semi-linear parabolic systems found in \cite{NataliniPSystem, FifePaulC1979MAoR}, included here for completeness. First, we need the following comparison theorem for diagonal semi-linear parabolic equations.
It is a special case of \cite[Theorem~5.1]{FifePaulC1979MAoR} adapted to our situation.

\begin{theorem}[Comparison Principle]\label{thm-PSystemSemiLinear-QuasiMonotonicity}
Let $\cQ : \bbR^2 \to \bbR^2$ be a given $C^1$ function. Consider two classical solutions $V^1$, $V^2$ of the $2 \times 2$ weakly-coupled diagonal system
\begin{equation}\label{eq-PSystemSemiLinear-QuasiMonotonicity}
\begin{cases}
\dt v_i + \lambda_i \dx v_i  = \cQ_i(V) + \dxx v_i \\
V|_{t=0} = V_0\,,
\end{cases}
\end{equation}
where $\lambda_i \in \bbR$, with initial data $V_0^1$, $V_0^2$ respectively, defined on some domain $G \subset \bbR \times (0,\infty) \times \Gamma$. Suppose $\cQ$ is quasimonotone, i.e.
\begin{equation}
\frac{\p \cQ_i}{\p v_j} \geq 0 \text{ for each } i \neq j\,.
\end{equation}
Then $V_0^1 \leq V_0^2 \, \Rightarrow \, V^1 \leq V^2$ $(x,t,y)$-a.e. in $G$.
\end{theorem}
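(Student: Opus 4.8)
The plan is to reduce the statement, for each fixed value of the parameter $y$, to a comparison argument of maximum-principle type for the linearized difference of the two solutions. First I would note that the system \eqref{eq-PSystemSemiLinear-QuasiMonotonicity} involves no $y$-derivatives, so $y$ enters only as a parameter: fixing $y \in \Gamma$, the functions $V^1(\cdot,\cdot,y)$ and $V^2(\cdot,\cdot,y)$ are classical solutions of a $2 \times 2$ diagonal parabolic system on the slice $G_y := \{ (x,t) \, | \, (x,t,y) \in G \}$ with data $V_0^1(\cdot,y) \leq V_0^2(\cdot,y)$, and it suffices to prove $V^1(\cdot,\cdot,y) \leq V^2(\cdot,\cdot,y)$ pointwise on $G_y$.

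Next I would set $W := V^2 - V^1$ and linearize the reaction term via the fundamental theorem of calculus: writing
\begin{equation*}
c_{ij}(x,t,y) := \int_0^1 \partial_{v_j} \cQ_i\big( V^1 + \theta W \big)(x,t,y) \, \mathrm{d}\theta\,,
\end{equation*}
which is bounded on regions where $V^1,V^2$ are bounded (as in all the applications here) since $\cQ \in C^1$, each component obeys $\dt w_i + \lambda_i \dx w_i - \dxx w_i = \sum_j c_{ij} w_j$, with $c_{ij} \geq 0$ for $i \neq j$ by quasimonotonicity while $c_{ii}$ has indefinite sign. To absorb the diagonal terms I would substitute $w_i = \e^{Kt} \tilde w_i$ with $K$ exceeding $\sup_{(x,t)\in G_y} \sum_j c_{ij}(x,t,y)$ for $i = 1,2$; then $\tilde w_i$ solves the same type of equation but with diagonal coefficients $c_{ii} - K < 0$ and off-diagonal coefficients still nonnegative, and $\tilde w_i|_{t=0} = w_i|_{t=0} \geq 0$.

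The heart of the proof is then a contradiction argument of standard parabolic maximum-principle type. If $\tilde w$ were negative somewhere on $G_y$, one locates a point $(x_0,t_0)$ and index $i_0$ at which $\tilde w_{i_0}$ attains a negative minimum value $-\mu$ over $\overline{G}_y \cap \{ t \leq t_0 \}$; there $\dt \tilde w_{i_0} \leq 0$, $\dx \tilde w_{i_0} = 0$, $\dxx \tilde w_{i_0} \geq 0$, so the left-hand side of the equation for $\tilde w_{i_0}$ is $\leq 0$, whereas
\begin{equation*}
(c_{i_0 i_0} - K)(-\mu) + \sum_{j \neq i_0} c_{i_0 j}\, \tilde w_j \;\geq\; \Big( K - \sum_{j} c_{i_0 j} \Big)\mu \;>\; 0
\end{equation*}
using $\tilde w_j \geq -\mu$, $c_{i_0 j} \geq 0$, and the choice of $K$ --- a contradiction. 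Undoing the rescaling gives $W \geq 0$ on $G_y$ for each $y$, hence $(x,t,y)$-a.e.\ in $G$.

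The step I expect to be the main obstacle is the non-compactness of the $x$-variable, which means the negative minimum invoked above need not be attained. The standard remedy is to run the argument with $\tilde w_i + \delta \varphi$ in place of $\tilde w_i$, where $\varphi(x,t)$ is a barrier growing in $x$ (for instance $\varphi = \e^{\Lambda t}(1 + x^2)$ with $\Lambda$ large, so that $\varphi$ satisfies a strict differential inequality of the same type and, using the $L^\infty$ bound on $W$, the perturbed infimum is attained on a compact set), followed by letting $\delta \to 0$; one also needs the comparison posed so that the relevant part of $\partial G_y$ lies in $\{ t = 0 \}$, as is the case in all applications here where $G = \bbR \times (0,T) \times \Gamma$. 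Alternatively, the entire statement is a special case of \cite[Theorem~5.1]{FifePaulC1979MAoR}, and one may simply verify that the hypothesis there on the coupling reduces to the stated quasimonotonicity.
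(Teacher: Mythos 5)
Your proposal is correct and follows essentially the same route as the paper: reduce to a fixed $y$-slice (noting that no $y$-derivatives appear) and then invoke the deterministic comparison principle for weakly coupled diagonal parabolic systems. The paper simply cites \cite[Theorem~5.1]{FifePaulC1979FootnoteLabelPlaceholder} for that step, whereas you spell out the underlying maximum-principle argument (linearization of $\cQ$, exponential rescaling to make the diagonal coefficients negative, and a growing barrier $\e^{\Lambda t}(1+x^2)$ to cope with the non-compactness of $\bbR$ in $x$) — all of which is the standard proof of the cited result, and both versions are valid.

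\begingroup\makeatletter\def\FifePaulC1979FootnoteLabelPlaceholder{FifePaulC1979MAoR}\makeatother\endgroup
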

The proof of Theorem \ref{thm-PSystemSemiLinear-QuasiMonotonicity} is a straightforward extension of the proof of \cite[Theorem~5.1]{FifePaulC1979MAoR}. For each fixed $y \in \Gamma$, apply the result in \cite{FifePaulC1979MAoR} to the functions $V^1(\cdot, \cdot, y)$ and $V^2(\cdot,\cdot,y)$. Then the result follows.

\begin{corollary}\label{cor-PSystemSemiLinear-UniquenessParabolicEquation-RiemannianCoordinates}
There exists at most one classical solution of the diagonalized approximate parabolic system \eqref{eq-PSystemSemiLinear-ViscosityApproximation-RiemannianCoordinates}.
\end{corollary}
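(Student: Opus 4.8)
The plan is to derive uniqueness as an immediate consequence of the Comparison Principle, Theorem~\ref{thm-PSystemSemiLinear-QuasiMonotonicity}, applied with the roles of the two solutions exchanged. One point has to be cleared up first: the system \eqref{eq-PSystemSemiLinear-ViscosityApproximation-RiemannianCoordinates} carries the viscosity coefficient $\nu$ in front of $\dxx \Wen$, whereas \eqref{eq-PSystemSemiLinear-QuasiMonotonicity} is stated with unit diffusion. This is harmless: the time change $s = \nu t$ turns \eqref{eq-PSystemSemiLinear-ViscosityApproximation-RiemannianCoordinates} into a system of the form \eqref{eq-PSystemSemiLinear-QuasiMonotonicity} with constant transport speeds $\lambda_1 = -a/\nu$, $\lambda_2 = a/\nu$ and reaction term $\tfrac{1}{\veps\nu}\cH$, and $\Wen$ is a classical solution of one system exactly when its rescaling in time is a classical solution of the other. (Alternatively, one observes that the proof of Theorem~\ref{thm-PSystemSemiLinear-QuasiMonotonicity}, being pointwise in $y$ and resting on the scalar comparison argument of \cite{FifePaulC1979MAoR}, is insensitive to the value of the positive constant $\nu$.)

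Next I would check the hypotheses of Theorem~\ref{thm-PSystemSemiLinear-QuasiMonotonicity} for the reaction term $\tfrac{1}{\veps}\cH = \tfrac{1}{\veps}(-H,\,H)$. Since $f \in C^2(\bbR)$, the function $H(w,z) = \tfrac{w-z}{2} - f\!\left(-\tfrac{w+z}{2a}\right)$ is $C^2$, so $\tfrac{1}{\veps}\cH$ is $C^1$ on all of $\bbR^2$. For a $2\times 2$ diagonal system quasimonotonicity is exactly the nonnegativity of the two off-diagonal partials, and a direct computation gives
\begin{equation}
\p_z\!\left(-\tfrac{1}{\veps}H\right) = \tfrac{1}{\veps}\left(\tfrac{1}{2} - \tfrac{1}{2a}\,f'\!\left(-\tfrac{w+z}{2a}\right)\right), \qquad
\p_w\!\left(\tfrac{1}{\veps}H\right) = \tfrac{1}{\veps}\left(\tfrac{1}{2} + \tfrac{1}{2a}\,f'\!\left(-\tfrac{w+z}{2a}\right)\right).
\end{equation}
By the subcharacteristic condition \eqref{eq-PSystemSemilinear-SubcharacteristicCondition} one has $|f'(u)| < a$ for every $u \in \bbR$, hence $\tfrac{1}{2a}|f'| < \tfrac{1}{2}$, so both partials lie in $(0,\,1/\veps)$ and in particular are strictly positive. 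Thus $\tfrac{1}{\veps}\cH$ is quasimonotone.

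With these two points settled, suppose $W^1$ and $W^2$ are classical solutions of \eqref{eq-PSystemSemiLinear-ViscosityApproximation-RiemannianCoordinates} with the same initial data $\Wnen$. Regarding both (after the time rescaling above) as solutions of a system of the form \eqref{eq-PSystemSemiLinear-QuasiMonotonicity} on $G = \bbR \times (0,\infty) \times \Gamma$, Theorem~\ref{thm-PSystemSemiLinear-QuasiMonotonicity} with $V_0^1 = V_0^2 = \Wnen$ gives $W^1 \le W^2$ almost everywhere, and exchanging the roles of $W^1$ and $W^2$ gives $W^2 \le W^1$ almost everywhere; hence $W^1 = W^2$ a.e., and continuity of the two classical solutions upgrades this to $W^1 \equiv W^2$. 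The only substantive step is the quasimonotonicity computation, which rests essentially on the subcharacteristic condition; the $\nu$-rescaling and the symmetric application of the comparison principle are routine. In particular no $L^{\infty}$ bound or decay condition on the data is needed, since Theorem~\ref{thm-PSystemSemiLinear-QuasiMonotonicity} only requires the two functions being compared to be classical solutions on a common domain.
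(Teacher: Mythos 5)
Your proof is correct and takes the same route as the paper: identify the diagonalized viscous $p$-system as a weakly-coupled diagonal system, verify quasimonotonicity of $\tfrac{1}{\veps}\cH$ from the subcharacteristic condition, and apply the comparison principle twice to conclude uniqueness. You supply the off-diagonal partial computation and the $\nu$-rescaling reduction to unit diffusion, both of which the paper leaves as ``easily seen''; these are the right details and they check out.
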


\begin{proof}
In components, the system \eqref{eq-PSystemSemiLinear-ViscosityApproximation-RiemannianCoordinates} takes the form
\begin{equation}
\begin{cases}
    \dt \wen - a \dx \wen = - \frac{1}{\veps} H(\wen,\zen) + \nu \dxx \wen \\
    \dt \zen + a \dx \zen = \frac{1}{\veps} H(\wen,\zen) + \nu \dxx \zen \\
    (\wen,\zen)|_{t=0} = (\wnen,\znen)\,,
\end{cases}
\end{equation}
which is easily seen to be of the form \eqref{eq-PSystemSemiLinear-QuasiMonotonicity}.
The weak coupling term $\cH$ is quasimonotone as a result of the subcharacteristic condition, and the statement follows.
\end{proof}

\begin{proposition}\label{prop-LocalExistenceOfClassicalViscApproxSoln-RiemannianCoordinates}
Let $\veps$, $\nu > 0$. Suppose $\Wnen$, $\dx \Wnen$, $\dxx \Wnen \in \big[ L^2(\bbR \times \Gamma) \big]^2 \cap \big[ L^{\infty}(\bbR \times \Gamma) \big]^2$. Then there exists a time $T^{\veps,\nu} > 0$ such that \eqref{eq-PSystemSemiLinear-ViscosityApproximation-RiemannianCoordinates} possesses a unique classical solution $\Wen$ in the strip $\bbR \times [0,T^{\veps,\nu}] \times \Gamma$.
\end{proposition}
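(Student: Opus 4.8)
The plan is to recognize the diagonalized approximate parabolic system \eqref{eq-PSystemSemiLinear-ViscosityApproximation-RiemannianCoordinates} as a special instance of the quasilinear parabolic system \eqref{eq-QuasilinearParabolicSystem} of Appendix \ref{apdx-QLParabolicSystems}, so that existence follows from Theorem \ref{thm-QLParabolicSystemsMainThm} and uniqueness from Corollary \ref{cor-PSystemSemiLinear-UniquenessParabolicEquation-RiemannianCoordinates}. The only genuine work is a change of variables absorbing the viscosity $\nu$ and the constant convection speeds $\pm a$ into the nonlinearities, together with a verification that the structural hypotheses of Theorem \ref{thm-QLParabolicSystemsMainThm} are satisfied by the data.

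Concretely, I would first rescale time, setting $\check W(x,\tau,y) := \Wen(x,\tau/\nu,y)$; since the spatial variable is untouched, this operation preserves all $[L^2_\rho(\bbR\times\Gamma)]^2$ and $[L^\infty]^2$ norms and commutes with $\dx$ and $\dxx$. A direct computation shows that $\Wen$ is a classical solution of \eqref{eq-PSystemSemiLinear-ViscosityApproximation-RiemannianCoordinates} on $\bbR\times[0,T]\times\Gamma$ if and only if $\check W$ solves, on $\bbR\times[0,\nu T]\times\Gamma$, a system of the form \eqref{eq-QuasilinearParabolicSystem} with $F(U) = (-\tfrac a\nu u,\ \tfrac a\nu v)$ and $\cQ(U) = (\tfrac1{\nu\veps}H(u,v),\ -\tfrac1{\nu\veps}H(u,v))$, where $H(w,z) = \tfrac{w-z}{2} - f(-\tfrac{w+z}{2a})$; both $F$ and $\cQ$ are $C^2$ because $f \in C^2(\bbR)$. (One could equally well rescale space by $x \mapsto x/\sqrt\nu$; either normalization works.)

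Next I would check the hypotheses of Theorem \ref{thm-QLParabolicSystemsMainThm} for this rescaled system. The set $K = \{U \mid \cQ(U) = 0\} = \{H = 0\}$ is nonempty: the normalization $f(0) = 0$ gives $(0,0) \in K$, and more generally $(-au + f(u),\ -au - f(u)) \in K$ for every $u \in \bbR$. Hence I may take the constant reference state $\Ubar \equiv 0 \in [L^\infty(\Gamma)]^2$, which takes values in $K$. Since $\rho \in L^\infty(\Gamma)$ we have $[L^2(\bbR\times\Gamma)]^2 \subset [L^2_\rho(\bbR\times\Gamma)]^2$, so the hypothesis $\Wnen, \dx\Wnen, \dxx\Wnen \in [L^2(\bbR\times\Gamma)]^2 \cap [L^\infty(\bbR\times\Gamma)]^2$ yields $\Wnen - \Ubar = \Wnen \in [L^2_\rho(\bbR\times\Gamma)]^2$ together with $\dx\Wnen, \dxx\Wnen \in [L^2_\rho(\bbR\times\Gamma)]^2 \cap [L^\infty(\bbR\times\Gamma)]^2$, which is exactly what Theorem \ref{thm-QLParabolicSystemsMainThm} requires (any smallness condition of the form $\Vnorm{\Wnen}_\infty < b_0$ appearing inside the proofs of Appendix \ref{apdx-QLParabolicSystems} is harmless, being met by enlarging the constants). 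Theorem \ref{thm-QLParabolicSystemsMainThm} then furnishes a local classical solution $\check W$ of the rescaled system on some interval $[0, T']$, and undoing the time rescaling produces a classical solution $\Wen$ of \eqref{eq-PSystemSemiLinear-ViscosityApproximation-RiemannianCoordinates} on $\bbR\times[0, T^{\veps,\nu}]\times\Gamma$ with $T^{\veps,\nu} := T'/\nu$. Uniqueness among classical solutions is immediate from Corollary \ref{cor-PSystemSemiLinear-UniquenessParabolicEquation-RiemannianCoordinates}.

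There is essentially no hard step here, since all of the analytic content lives in Appendix \ref{apdx-QLParabolicSystems}; the only point requiring any care is the bookkeeping in the verification above — in particular, that the reference state $\Ubar$ may be chosen constant in $x$ and lying in $K$, which is precisely where the normalization $f(0) = f'(0) = 0$ enters.
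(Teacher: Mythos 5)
Your proposal is correct and takes essentially the same route as the paper: reduce \eqref{eq-PSystemSemiLinear-ViscosityApproximation-RiemannianCoordinates} to the template \eqref{eq-QuasilinearParabolicSystem} by a scaling that normalizes the diffusion coefficient to $1$, choose the constant reference state $\Ubar \equiv 0 \in K$ (legitimate precisely because $f(0)=0$ forces $H(0,0)=0$), invoke Theorem \ref{thm-QLParabolicSystemsMainThm} for local existence, and appeal to Corollary \ref{cor-PSystemSemiLinear-UniquenessParabolicEquation-RiemannianCoordinates} for uniqueness. The only difference is cosmetic: you rescale time alone, which pushes $\nu$ into both the convection coefficients ($\pm a/\nu$) and the source ($H/(\nu\veps)$), whereas the paper's proof rescales space \emph{and} time by $\nu$ (equivalently, feeds Theorem \ref{thm-QLParabolicSystemsMainThm} the spatially dilated data $\Wnen(\nu x,y)$ and composes the resulting solution with $(x,t)\mapsto(x/\nu,t/\nu)$), which preserves the convection speed $\pm a$ and puts $\nu/\veps$ in front of $\cH$; either normalization satisfies the hypotheses of Theorem \ref{thm-QLParabolicSystemsMainThm}, and your observation that the $b_0$-smallness in Appendix \ref{apdx-QLParabolicSystems} is vacuous (one may take $b_0$ as large as needed) is correctly used by the paper as well.
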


\begin{proof}
Note that $\cH(0,0) = (0,0)$. Set $\Ubar \equiv (0,0)$. By Theorem \ref{thm-QLParabolicSystemsMainThm} there exists a time $(T^{\veps, \nu})' >0$ such that the equation \eqref{eq-QuasilinearParabolicSystem}
with
\begin{equation}
    F(U) =  \diag(-a,a)\, U \,, \qquad \cQ(U) =  \frac{\nu}{\veps} \cH(U)\,,
\end{equation}
and initial data $\widetilde{\Wnen}(x,y) := \Wnen(\nu x,y)$ has a classical solution $\widetilde{\Wen}$ in the strip $\bbR \times [0,(T^{\veps,\nu})'] \times \Gamma$.
Set $\Wen(x,t,y) := \widetilde{\Wen} \left( \frac{x}{\nu},\frac{t}{\nu},y \right)$. By checking the scaling directly, one sees that $\Wen$ is a classical solution of \eqref{eq-PSystemSemiLinear-ViscosityApproximation-RiemannianCoordinates} with initial data $\Wnen(x,y)$ in the strip $\bbR \times [0,T^{\veps,\nu}] \times \Gamma$, where $T^{\veps,\nu} := \nu (T^{\veps,\nu})'$. By Corollary \ref{cor-PSystemSemiLinear-UniquenessParabolicEquation-RiemannianCoordinates} this classical solution is unique.
\end{proof}

\begin{theorem}[Continuation Principle]
Let $\Wnen \in \big[ L^2(\bbR \times \Gamma) \big]^2  \cap \big[ L^{\infty}(\bbR \times \Gamma) \big]^2 $. Let $\Ubar \equiv (0,0)$. Let $T^{\veps,\nu}_*$ be the supremum of all $T > 0$ such that the system \eqref{eq-PSystemSemiLinear-ViscosityApproximation-RiemannianCoordinates} has a solution $\Wen \in G_T^{\infty} \cap G_T^2$. Then, either $T^{\veps,\nu}_* = \infty$ or
\begin{equation}
\lim\limits_{T \to (T^{\veps,\nu}_*)^-} \sup_{0 \leq t \leq T} \Vnorm{U(t)}_{\infty} = \infty\,.
\end{equation}
\end{theorem}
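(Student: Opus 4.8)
The plan is the standard continuation argument: derive a contradiction by restarting the local solution on a time interval whose length depends only on the $L^{\infty}$-size of the restarting data, and then invoke uniqueness. I would work throughout with the diagonal unknown $\Wen$ and the equilibrium $\Ubar \equiv (0,0)$ (legitimate since $\cH(0,0)=0$); because $U = A^{-1}\Wen$, the quantities $\Vnorm{U(t)}_{\infty}$ and $\Vnorm{\Wen(t)}_{\infty}$ are comparable up to the fixed constants $\Vnorm{A}$ and $\Vnorm{A^{-1}}$. Since $T \mapsto \sup_{0\le t\le T}\Vnorm{U(t)}_{\infty}$ is nondecreasing, its limit as $T\to (T^{\veps,\nu}_*)^-$ exists in $[0,\infty]$ and equals $\sup_{0\le t< T^{\veps,\nu}_*}\Vnorm{U(t)}_{\infty}$; hence it suffices to show that $T^{\veps,\nu}_*<\infty$ forces $M := \sup_{0\le t< T^{\veps,\nu}_*}\Vnorm{\Wen(t)}_{\infty}=\infty$. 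So I assume $T^{\veps,\nu}_*<\infty$ and, for contradiction, $M<\infty$.

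First I would record that the local existence time is governed by $M$ alone. Choosing $b := M+1$ and $b_0 \in (M,b)$ and inspecting Lemma~\ref{thm-ExistenceOfViscositySolutions} together with the $\nu$-dilation used in Proposition~\ref{prop-LocalExistenceOfClassicalViscApproxSoln-RiemannianCoordinates}, the existence time produced there is determined by $b_0$, $b$, the Lipschitz constants of $\diag(-a,a)\,\cdot$ and $\tfrac{\nu}{\veps}\cH$ on the $b$-ball about $\Ubar$, and the kernel constant $\mu$ — and \emph{not} by the derivatives of the data; call it $\tau=\tau(M,\veps,\nu,a,f)>0$. Moreover, by the remark in the proof of Lemma~\ref{thm-PSystemSemiLinear-EstimatesOnDerivatives} (the same $\tau$ works), on $[0,\tau]$ the solution also inherits finite $[L^2_\rho]^2\cap[L^\infty]^2$ bounds on $\dx$ and $\dxx$, controlled (through the polynomial $P$ of that lemma) by the corresponding norms of the data. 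I would then bootstrap: starting from $\Wnen$ (whose first two $x$-derivatives lie in $[L^2_\rho]^2\cap[L^\infty]^2$ by hypothesis), restart successively at $\tau,2\tau,\dots$, which is permitted because at each such time $\Vnorm{\Wen(\cdot)}_{\infty}\le M$ and uniqueness (Corollary~\ref{cor-PSystemSemiLinear-UniquenessParabolicEquation-RiemannianCoordinates}) identifies the pieces. After finitely many steps ($K := \lceil T^{\veps,\nu}_*/\tau\rceil$ of them) one obtains a classical solution on $[0,T^{\veps,\nu}_*)$ for which every slice $\Wen(t)$, $t<T^{\veps,\nu}_*$, satisfies $\Wen(t)\in[L^2_\rho]^2$ and $\dx\Wen(t),\dxx\Wen(t)\in[L^2_\rho]^2\cap[L^\infty]^2$ — the derivative norms may grow with $t$, but only finitely, since there are finitely many steps.

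Finally I would produce the contradiction. Pick $t_0\in(0,T^{\veps,\nu}_*)$ with $T^{\veps,\nu}_*-t_0<\tau$. The slice $\Wen(t_0)$ obeys $\Vnorm{\Wen(t_0)}_{\infty}\le M<b_0$ and has the derivative regularity required by the local theory, so applying Proposition~\ref{prop-LocalExistenceOfClassicalViscApproxSoln-RiemannianCoordinates} (equivalently Theorem~\ref{thm-QLParabolicSystemsMainThm} after the $\nu$-scaling) with initial time $t_0$ yields a solution on $[t_0,t_0+\tau]$, lying in the shifted classes $G^{\infty}\cap G^{2}$ with the estimates of Lemma~\ref{thm-ExistenceOfViscositySolutions}, and agreeing with $\Wen$ at $t_0$; by uniqueness it coincides with $\Wen$ on $[t_0,T^{\veps,\nu}_*)$. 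Concatenating with $\Wen$ on $[0,t_0]$ gives a solution of \eqref{eq-PSystemSemiLinear-ViscosityApproximation-RiemannianCoordinates} in $G^{\infty}_{t_0+\tau}\cap G^{2}_{t_0+\tau}$, and $t_0+\tau>T^{\veps,\nu}_*$ contradicts the definition of $T^{\veps,\nu}_*$ as a supremum. Hence $M=\infty$, and translating back to $U$ finishes the proof. The one point I expect to need real care is the first claim of paragraph two — that the local existence (and the accompanying derivative) time depends on the data only through its $L^\infty$-size, so that the bootstrap step never shrinks to zero — together with checking that the concatenated function genuinely lies in $G^{\infty}_{t_0+\tau}\cap G^{2}_{t_0+\tau}$, i.e.\ keeping the choices of $b$, $b_0$ and the dilation of Proposition~\ref{prop-LocalExistenceOfClassicalViscApproxSoln-RiemannianCoordinates} consistent across the restart.
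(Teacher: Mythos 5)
Your proof takes the same approach as the paper's: assume for contradiction that $T^{\veps,\nu}_*<\infty$ while $M:=\sup_{t<T^{\veps,\nu}_*}\Vnorm{\Wen(t)}_\infty<\infty$, observe that the local existence time produced by the contraction in Lemma~\ref{thm-ExistenceOfViscositySolutions} (after the $\nu$-dilation of Proposition~\ref{prop-LocalExistenceOfClassicalViscApproxSoln-RiemannianCoordinates}) depends on the data only through its $L^\infty$-size, restart just short of $T^{\veps,\nu}_*$, and contradict the maximality of $T^{\veps,\nu}_*$. The one substantive addition on your side is the derivative bootstrap: you propagate $\dx,\dxx$ bounds through the $\tau$-steps so that at each restart the data satisfies the hypotheses of Theorem~\ref{thm-QLParabolicSystemsMainThm}, and hence the pieces are classical solutions to which Corollary~\ref{cor-PSystemSemiLinear-UniquenessParabolicEquation-RiemannianCoordinates} genuinely applies. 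The paper instead restarts directly in $G^\infty_T\cap G^2_T$ via Lemma~\ref{thm-ExistenceOfViscositySolutions} (no derivative hypothesis) and then cites the same corollary; your version makes the uniqueness step tighter, since that corollary is stated for classical solutions (the alternative the paper leaves implicit is uniqueness of the contraction fixed point in $G^\infty_T$, which Lemma~\ref{thm-ExistenceOfViscositySolutions} itself supplies). One caveat: you write that the first two $x$-derivatives of $\Wnen$ lie in $[L^2_\rho]^2\cap[L^\infty]^2$ ``by hypothesis,'' but the Continuation Principle as stated assumes only $\Wnen\in[L^2]^2\cap[L^\infty]^2$. The derivative bounds \emph{are} available where the principle is actually invoked (Theorem~\ref{thm-PSystemSemiLinear-GlobalExistenceOfViscositySolutions}), so your proof establishes the result as used; but to prove the stated, hypothesis-light version you would want to identify the restart with the original solution via the fixed-point uniqueness in $G^\infty_T$ rather than via classical-solution uniqueness. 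Also note that once your $\tau$-step bootstrap reaches $K\tau\ge T^{\veps,\nu}_*$ it has already produced the contradiction, so the closing paragraph that re-picks $t_0$ with $T^{\veps,\nu}_*-t_0<\tau$ is redundant rather than wrong.
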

\begin{proof}
Suppose $T^{\veps,\nu}_* < \infty$ and that 
\begin{equation}
\lim\limits_{T \to (T^{\veps,\nu}_*)^-} \sup_{0 \leq t \leq T} \Vnorm{U(t)}_{\infty} < \infty\,.
\end{equation}
We show that the solution can be extended. (This is a standard argument.) By the assumption and by properties of the constructed solution, we see that for any $t \in (0,T^{\veps,\nu}_*)$, $\Vnorm{U(t)}_{\infty} \leq C \Vnorm{U_0}_{\infty}$.
The maximal existence time $T^{\veps,\nu}_*$ is at least
\begin{equation}
\frac{1}{2} \left( \frac{-2 \mu \alpha_1(b) + \sqrt{4 \mu^2 \alpha_1(b)^2 - 4 \alpha_2(b) \left( \frac{b_0}{b} - 1 \right)}}{2 \alpha_2(b)} \right)^2 := T^{\veps,\nu}_{**}\,.
\end{equation}
Here, $b_0 = \Vnorm{\Wnen}_{\infty}$, $b$ is any number bigger than $b_0$ (W.L.O.G. say $b = 2b_0$), $\alpha_1(b) = a$, and $\alpha_2(b) = \frac{\nu}{\veps} \Vnorm{\dU \cH}_{L^{\infty}(B^2_b(0))}$.
Let $\delta > 0$ such that
\begin{equation}
\delta < \frac{1}{2} T^{\veps,\nu}_{**}\,.
\end{equation}
Then a repeat of the arguments in Appendix \ref{apdx-QLParabolicSystems} shows that the maximal existence time of the fixed point of 
\begin{equation}
\begin{split}
    (\tilde{L}V)(t) &:= \cJ(\cdot,t) \, \ast \, U(\cdot,T^{\veps,\nu}_*-\delta,y) + \intdmt{0}{t}{\dx \cJ(\cdot,t-s) \, \ast \, \diag(-a,a) \, V(\cdot,s,y)}{s} \\
    & \qquad - \intdmt{0}{t}{\cJ(\cdot,t-s) \, \ast \, \frac{\nu}{\veps} \cH(V(\cdot,s,y))}{s}\,.
\end{split}
\end{equation}
is at least $2 \delta$. Since solutions to \eqref{eq-PSystemSemiLinear-ViscosityApproximation-RiemannianCoordinates} are unique by Corollary \ref{cor-PSystemSemiLinear-UniquenessParabolicEquation-RiemannianCoordinates}, the solution is thus extended onto the interval $[0,T^{\veps,\nu}_*+\delta]$.
\end{proof}

Before taking $\veps \to 0$ or $\nu \to 0$ we will in fact see in the next theorem that the solutions $\Uen$ are global for small data, and so later the $\veps$, $\nu$ limits can be taken without worrying about the interval of existence for solutions.

\begin{theorem}[Global Existence]\label{thm-PSystemSemiLinear-GlobalExistenceOfViscositySolutions}
For fixed $\veps$, $\nu > 0$ assume that
\begin{equation}
\begin{split}
\Wnen, \, \dx \Wnen, \, \dxx \Wnen \in \left[ L^2_{\rho}(\bbR \times \Gamma) \right]^2 \cap \left[L^{\infty}(\bbR \times \Gamma) \right]^2\,,
\end{split}
\end{equation}
and that
\begin{equation}
    \Vnorm{\Wnen} + \Vnorm{\dx \Wnen} + \Vnorm{\dxx \Wnen}\leq C
\end{equation}
uniformly in $\veps$ and $\nu$, where $\Vnorm{\cdot} = \Vnorm{\cdot}_{\infty} + \Vnorm{\cdot}_{2;\rho}$.
Then the unique local classical solution $\Wen$ of \eqref{eq-PSystemSemiLinear-ViscosityApproximation-RiemannianCoordinates} can be extended globally (i.e. $T_*^{\veps, \nu} = \infty$).
\end{theorem}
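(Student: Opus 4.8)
The plan is to establish a uniform‑in‑$t$ a priori $L^{\infty}$ bound on any classical solution $\Wen$ of \eqref{eq-PSystemSemiLinear-ViscosityApproximation-RiemannianCoordinates} on its interval of existence, and then invoke the Continuation Principle: since the only alternative to $T^{\veps,\nu}_* = \infty$ is that $\sup_{0 \le t \le T} \Vnorm{\Wen(t)}_{\infty} \to \infty$ as $T \to (T^{\veps,\nu}_*)^-$, such a bound forces the solution to be global. The derivative hypotheses on $\Wnen$ are used only to guarantee, via Proposition \ref{prop-LocalExistenceOfClassicalViscApproxSoln-RiemannianCoordinates} and Theorem \ref{thm-QLParabolicSystemsMainThm}, that the solution in hand is genuinely classical (so that the comparison principle applies to it); the a priori bound itself depends only on $\beta := \Vnorm{\Wnen}_{\infty}$, which is finite and, by hypothesis, bounded uniformly in $\veps$ and $\nu$.

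To build comparison functions, let $\overline W(t) = (\overline w(t), \overline z(t))$ and $\underline W(t) = (\underline w(t), \underline z(t))$ be the solutions of the ODE system $\dt W = \frac{1}{\veps}\cH(W)$ with $\overline W(0) = (\beta,\beta)$ and $\underline W(0) = (-\beta,-\beta)$. Being independent of $x$, these are in fact \emph{exact classical solutions} of \eqref{eq-PSystemSemiLinear-ViscosityApproximation-RiemannianCoordinates} with constant initial data, since the transport and diffusion terms vanish identically on them. The structural point is that $\cH = (-H,H)$, so $w+z$ is conserved along these ODE trajectories; with $\overline w + \overline z \equiv 2\beta$, the definition $H(w,z) = \frac{w-z}{2} - f\big(-\frac{w+z}{2a}\big)$ reduces the $\overline w$-equation to the scalar \emph{linear} ODE $\dt \overline w = -\frac{1}{\veps}\big(\overline w - \beta - f(-\beta/a)\big)$, whose solution $\overline w(t) = \beta + f(-\beta/a)\big(1 - e^{-t/\veps}\big)$ is monotone in $t$ and obeys $|\overline w(t)|, |\overline z(t)| \le \beta + \max\{|f(\beta/a)|, |f(-\beta/a)|\}$ for every $t \ge 0$; the symmetric computation shows $\underline W(t) \ge -\big(\beta + \max\{|f(\beta/a)|,|f(-\beta/a)|\}\big)$ componentwise. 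Hence $\overline W$ and $\underline W$ are globally defined and uniformly bounded.

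Next I would apply the comparison principle, Theorem \ref{thm-PSystemSemiLinear-QuasiMonotonicity}. As in the proof of Corollary \ref{cor-PSystemSemiLinear-UniquenessParabolicEquation-RiemannianCoordinates}, the subcharacteristic condition \eqref{eq-PSystemSemilinear-SubcharacteristicCondition} gives $\partial_w H = \frac{1}{2a}(a + f') > 0$ and $\partial_z H = \frac{1}{2a}(f' - a) < 0$, so that the off‑diagonal Jacobian entries of $\frac{1}{\veps}\cH$, namely $\frac{1}{\veps}\partial_z\cH_1 = -\frac{1}{\veps}\partial_z H$ and $\frac{1}{\veps}\partial_w\cH_2 = \frac{1}{\veps}\partial_w H$, are nonnegative; i.e.\ the coupling is quasimonotone. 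After the standard spatial rescaling normalizing the diffusion coefficient to $1$ (exactly as in Proposition \ref{prop-LocalExistenceOfClassicalViscApproxSoln-RiemannianCoordinates}; the positive constants $\nu$ and $1/\veps$ do not affect the maximum‑principle argument underlying Theorem \ref{thm-PSystemSemiLinear-QuasiMonotonicity}), the comparison principle applies to the triple $\underline W$, $\Wen$, $\overline W$. Since every component of $\Wnen$ has absolute value at most $\beta$, we have $\underline W(0) \le \Wnen(x,y) \le \overline W(0)$ componentwise, whence $\underline W(t) \le \Wen(x,t,y) \le \overline W(t)$ componentwise for a.e.\ $(x,t,y)$ on the interval of existence. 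Together with the previous step this yields $\Vnorm{\Wen(t)}_{\infty} \le C\big(\beta + \max\{|f(\beta/a)|,|f(-\beta/a)|\}\big)$ uniformly in $t$, $\veps$, and $\nu$ (and consistent, after passing through $A^{-1}$, with the bound \eqref{eq-PSystemSemiLinear-LInftyBoundsForSoln}). By the Continuation Principle, no finite‑time blow‑up in $L^{\infty}$ is possible, so $T^{\veps,\nu}_* = \infty$.

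The main obstacle is the second step. A \emph{constant} super‑solution of \eqref{eq-PSystemSemiLinear-ViscosityApproximation-RiemannianCoordinates} would have to lie on the equilibrium curve $K$ (where $\cH$ vanishes), which the non‑equilibrium initial data generally does not respect; one is therefore forced to use genuinely time‑dependent comparison functions and to prove that they stay bounded for all time — and this is exactly where the conservation of $w+z$ along $\dt W = \frac{1}{\veps}\cH(W)$, with the ensuing reduction to a stable scalar linear ODE, is essential. A secondary and more routine issue is checking that Theorem \ref{thm-PSystemSemiLinear-QuasiMonotonicity}, stated for unit diffusion and a bare reaction term, survives the presence of the coefficients $\nu$ and $1/\veps$; the rescaling argument handles this.
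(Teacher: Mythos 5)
Your proof is correct and follows essentially the same route as the paper's: construct globally bounded time-dependent super- and sub-solutions by solving the $x$-independent ODE system $\dt W = \frac{1}{\veps}\cH(W)$ with constant initial data $(\pm\beta,\pm\beta)$, then apply the quasimonotone comparison principle (Theorem \ref{thm-PSystemSemiLinear-QuasiMonotonicity}) to sandwich $\Wen$, and finally invoke the continuation principle. The two small points you flag — that conservation of $w+z$ along the ODE flow is what reduces the barrier equations to stable scalar linear ODEs, and that the stated comparison theorem has unit diffusion so a rescaling is needed before applying it to \eqref{eq-PSystemSemiLinear-ViscosityApproximation-RiemannianCoordinates} — are both correct observations; the paper passes over them silently, so your version is if anything slightly more careful.
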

\begin{proof}
By the continuation principle it suffices to show that $\Wen$ is bounded in $\left[ L^{\infty}(\bbR \times (0,T_*^{\veps,\nu}) \times \Gamma) \right]^2$ uniformly in $\veps$ and $\nu$.
The idea is to use the comparison principle and construct appropriate initial data and corresponding solutions to compare to $\Wen$.

Consider the system of ODE
\begin{equation}\label{eq-PSystemSemiLinear-ODESystem}
    \dot{p}_1 = -\frac{1}{\veps} H(p_1,p_2)\,, \qquad \qquad 
    \dot{p}_2 = \frac{1}{\veps} H(p_1,p_2)\,,
\end{equation}
with $p_1(0)=p_2(0)=p_0$, to be determined later.
The global solutions are given by
\begin{equation} 
\begin{split} 
    p_1(t) = p_0 + f \left( - \frac{p_0}{a} \right) \left( 1 - \e^{-t/\veps}\right)\,,  \qquad \qquad 
    p_2(t) = p_0 - f \left( - \frac{p_0}{a} \right) \left( 1 - \e^{-t/\veps}\right)\,.
\end{split}
\end{equation}
$(p_1,p_2)$ is in fact a solution of the system \eqref{eq-PSystemSemiLinear-ViscosityApproximation-RiemannianCoordinates} with initial data $(p_0,p_0)$.
Set
\begin{equation} 
p_0^{\pm} = \pm \sup_{\veps, \nu} \max \left( \Vnorm{\wnen}_{\infty}, \Vnorm{\znen}_{\infty} \right)\,.
\end{equation}
Denote $(p_1^+,p_2^+)$, $(p_1^-,p_2^-)$ to be the solutions of \eqref{eq-PSystemSemiLinear-ODESystem} with initial times $p_0^+$, $p_0^-$ respectively. Then since
\begin{equation}
    p_0^- \leq \wnen(x,y) \leq p_0^+\,, \qquad p_0^- \leq \znen(x,y) \leq p_0^+\,,
\end{equation}
we conclude by the comparison principle that
\begin{equation} 
p_1^-(t) \leq \wen(x,t,y) \leq p_1^+(t)\,, \qquad p_2^-(t) \leq \zen(x,t,y) \leq p_2^+(t)\,,
\end{equation}
for every $(x,t,y) \in \bbR \times (0,T^{\veps, \nu}) \times \Gamma$.
Therefore, using the exact solutions $(p_1^{\pm},p_2^{\pm})$,
\begin{equation}\label{eq-PSystemSemiLinear-LInftyBoundsOnViscosityApproximation}
|\Wen(x,t,y)| \leq p_0^+ + \max \left\lbrace \left| f \left( \frac{p_0^+}{a} \right) \right|\,, \left| f \left( - \frac{p_0^+}{a} \right) \right| \right\rbrace \leq C \,.
\end{equation}
\end{proof}

\section{Weakly Coupled Semi-linear Hyperbolic Systems}\label{apdx-SLHyperbolicSystems}
Here we state and prove the well-posedness theory in the $L^1$ framework for the weakly coupled $2 \times 2$ hyperbolic system \eqref{eq-PSystemSemiLinear-RiemannianCoordinates}. This will be used in Section \ref{sec-SemiLinearPSystem} to show the global well-posedness of the $p$-system \eqref{eq-PSystemSemilinearFull} with initial data in $L^{\infty}$. The system can be written in components as
\begin{equation}
\begin{cases}
    \dt \we - a \dx \we = - \frac{1}{\veps} H(\we,\ze) \\
    \dt \ze + a \dx \ze = \frac{1}{\veps} H(\we,\ze) \\
    (\we,\ze)|_{t=0} = (\wne,\zne)\,.
\end{cases}
\end{equation}
These results of well-posedness are classical in the case of deterministic initial data \cite{NataliniPSystem}, and much of the proofs generalize in a straightforward manner. We include them for completeness.

When applied to the diagonal system \eqref{eq-PSystemSemiLinear-RiemannianCoordinates}, Definition \ref{def-2x2Solution} becomes the following:

\begin{definition}\label{def-PSystemSemiLinear-EntropySolutionForDiagonalSystem}
For $\veps > 0$, $0 < T \leq \infty$, and $\Wne \in \big[ L^{\infty}(\bbR \times \Gamma) \big]^2$, a function $\We \in \big[ L^{\infty}(\bbR \times (0,T) \times \Gamma) \big]^2$ is a \textit{weak solution} of \eqref{eq-PSystemSemiLinear-RiemannianCoordinates} if
\begin{equation}
\begin{split}
    \iiintdmt{0}{T}{\bbR}{\Gamma}{\Big( \We \cdot \dt \vphi + \diag(-a,a) \dx \vphi + \frac{1}{\veps}\cH(\We) \cdot \vphi \Big) \rho}{y}{x}{t}
    + \iintdm{\bbR}{\Gamma}{\Wne \cdot \vphi(x,0,y)\rho}{y}{x} = 0
\end{split}
\end{equation}
for every $\vphi = (\vphi_1, \vphi_2) \in \left[C^{\infty}_c(\bbR \times [0,T) \times \Gamma) \right]^2$.
If in addition $\We$ satisfies
\begin{equation}\label{eq-PSystemSemiLinear-DiagonalSystemEntropyCondition1}
\begin{split}
\iiintdmt{0}{T}{\bbR}{\Gamma}{\Big( \eta(\We) \dt \vphi + Q(\We) \dx \vphi + \frac{1}{\veps} \p_W \eta(\We) \cdot \cH(\We) \vphi \Big) \rho}{y}{x}{t} \geq 0
\end{split}
\end{equation}
for every convex entropy pair $(\eta,Q)$ for \eqref{eq-PSystemSemiLinear-RiemannianCoordinates} over $\bbR^2$ and for every $\vphi \in C^{\infty}_c(\bbR \times (0,T) \times \Gamma)$ with $\vphi \geq 0$, and
\begin{equation}\label{eq-PSystemSemiLinear-DiagonalSystemEntropyCondition2}
\lim\limits_{T \to 0^+} \frac{1}{T} \iiintdmt{0}{T}{V}{\Gamma}{|\We(x,t,y)-\Wne(x,y)| \rho(y)}{y}{x}{t} = 0
\end{equation}
for every $V \Subset \bbR$, then we say that $\We$ is a \textit{weak entropy solution} of \eqref{eq-PSystemSemiLinear-RiemannianCoordinates}.
\end{definition}
The goal of this appendix is to prove the following theorem:

\begin{theorem}\label{thm-PSystemSemiLinear-WellPosednessInL1ForDiagonalSystem}
Let $\Wne \in \big[ L^{\infty}(\bbR \times \Gamma) \big]^2$. Then there exists a unique weak entropy solution $\We$ of \eqref{eq-PSystemSemiLinear-RiemannianCoordinates} with initial data $\Wne$ and $\We$ belongs to $C \left( [0,\infty); \big[ L^1_{\rho,loc}(\bbR \times \Gamma) \big]^2 \right)$.
\end{theorem}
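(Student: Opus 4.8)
The plan is to prove the theorem through a short sequence of lemmas occupying the remainder of this appendix, following the deterministic program of \cite{NataliniPSystem} and carrying the density $\rho$ through each step.

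First I would establish uniqueness by proving an $L^1_{\rho^2,loc}$ stability estimate for weak entropy solutions of \eqref{eq-PSystemSemiLinear-RiemannianCoordinates}, the exact analogue of Theorem \ref{thm-Appendix-UniquenessOfEntropySolution}. Since the system is diagonal with constant characteristic speeds $-a$ and $a$, the Kru\v{z}kov entropies $|\we-c|$ and $|\ze-c|$ are admissible for the two scalar transport--reaction equations, so the doubling-of-variables argument applies componentwise; doubling the parameter $y$ as well forces the weight $\rho^2$, exactly as in Appendix \ref{apdx-Uniqueness}. The quasimonotonicity of $\cH$, a consequence of the subcharacteristic condition \eqref{eq-PSystemSemilinear-SubcharacteristicCondition} (cf.\ Corollary \ref{cor-PSystemSemiLinear-UniquenessParabolicEquation-RiemannianCoordinates}), controls the reaction cross-terms, and finite propagation speed then gives at most one weak entropy solution for given data.

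For existence I would first treat data in $[L^1_\rho(\bbR\times\Gamma)]^2\cap[L^\infty(\bbR\times\Gamma)]^2$ by vanishing viscosity. For each $\nu>0$, Theorem \ref{thm-PSystemSemiLinear-GlobalExistenceOfViscositySolutions} furnishes a global classical solution $\Wen$ of \eqref{eq-PSystemSemiLinear-ViscosityApproximation-RiemannianCoordinates} bounded in $L^\infty$ uniformly in $\veps$ and $\nu$, by comparison with the ODE profiles $p_1^\pm,p_2^\pm$ as in \eqref{eq-PSystemSemiLinear-LInftyBoundsOnViscosityApproximation}. Next, again via the comparison principle, I would derive an $L^1_\rho$ contraction for the parabolic system, yielding a uniform spatial modulus of continuity; using the equation together with the integral representation this converts into a uniform temporal modulus of continuity. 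The Frechet--Kolmogorov theorem then extracts a subsequence converging in $C([0,T];[L^1_{\rho,loc}(\bbR\times\Gamma)]^2)$ and $(x,t,y)$-a.e.; passing to the limit in the weak formulation and in the entropy inequalities \eqref{eq-PSystemSemiLinear-DiagonalSystemEntropyCondition1}, where convexity of $\eta$ makes the viscous term drop with the favorable sign, shows the limit is a weak entropy solution, and the uniqueness above upgrades subsequential convergence to convergence of the whole family. For data in $[L^\infty(\bbR\times\Gamma)]^2$ only, I would truncate by $\chi_j$ as in (H2)$'$ to land in the previous class, obtain solutions $W^{\veps,j}$, and use finite propagation speed together with the $L^1_{\rho^2,loc}$ estimate to see that $\{W^{\veps,j}\}_j$ is Cauchy in $C([0,T];[L^1_{\rho,loc}(\bbR\times\Gamma)]^2)$ on every compact set; the limit $\We$ is then checked to be the weak entropy solution, with continuity in time inherited from the approximations.

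The main obstacle will be obtaining the $L^1$ moduli of continuity for the viscous approximations \emph{uniformly in both $\veps$ and $\nu$}, since the reaction term carries the singular factor $1/\veps$; the precise form of $H$ and the strict subcharacteristic condition are what keep the $L^1$ contraction from being destroyed in the relaxation limit. A secondary, purely bookkeeping nuisance is keeping the two weights straight throughout: $\rho$, which appears in the solution spaces, and $\rho^2$, which is forced by doubling the parameter variable in the stability estimates.
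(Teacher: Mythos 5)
Your overall architecture matches the paper's: establish $L^1_{\rho^2,loc}$ stability and hence uniqueness via doubling of variables, prove existence for $L^1_\rho\cap L^\infty$ data by vanishing viscosity using uniform $L^\infty$ bounds and uniform $L^1$ moduli of continuity plus Frechet--Kolmogorov, then pass to general $L^\infty$ data by truncation with $\chi_j$ and the stability estimate. The one place you genuinely diverge is the spatial $L^1$ modulus of continuity for the viscous approximations: you propose to obtain it from the comparison principle (effectively a Crandall--Tartar argument, using that the reaction conserves $\int(w+z)$ for each fixed $y$ and then integrating against $\rho\,\mathrm dy$), whereas the paper's Theorem~\ref{thm-PSystemSemiLinear-L1ModulusOfContinuityInSpace} follows Natalini and Kru\v{z}kov by solving the backward adjoint parabolic system~\eqref{eq-PSystemSemiLinear-BackwardParabolicSystem} with terminal data given by sign functions and using the explicit bound~\eqref{eq-PSystemSemiLinear-UniformBoundOnParabolicAdjointOperatorSolution}. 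Your route is more elementary and yields a true contraction without multiplicative constants, but it depends on the mass-conservation structure of $\cH$; the adjoint-duality route does not need mass conservation and makes the localization and the $\veps$-, $\nu$-uniformity transparent. A similar small divergence appears in the uniqueness lemma: you invoke quasimonotonicity to control the reaction cross-terms, while Theorem~\ref{thm-PSystemSemiLinear-L1LocStabilityEstimate} just uses the Lipschitz bound on $H$ plus Gronwall (accepting the resulting exponential factor $\e^{2(a+1)t}$). Both are fine; your version would give a slightly sharper estimate at the cost of a more structure-dependent argument.
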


Just as in the nonparametric case the apriori $L^1_{loc}$ estimates are crucial, and used throughout the subsequent arguments.

\begin{lemma}\label{lma-PSystemSemiLinear-L1LocStabilityEstimate}
Let $\We$ and $\widetilde{\We}$ be two weak entropy solutions of \eqref{eq-PSystemSemiLinear-RiemannianCoordinates} corresponding to $\big[ L^{\infty}(\bbR \times \Gamma) \big]^2$ initial data $\Wne$, $\widetilde{\Wne}$ respectively.
Then for every $m > 0$ and every $t \in (0,T]$ we have
\begin{equation}\label{eq-PSystemSemiLinear-L1LocStabilityEstimateForW}
\begin{split}
    &\int\limits_{|x|<m} \int\limits_{\Gamma_1}{|\we(x,t,y)-\widetilde{\we}(x,t,y)| \, \rho^2}\, \mathrm{d}y \, \mathrm{d}x \leq  \int\limits_{|x| < m + at} \int\limits_{\Gamma_1}{|\Wne(x,y) - \widetilde{\Wne}(x,y)| \, \rho^2}\, \mathrm{d}y \, \mathrm{d}x \\
    &\qquad + \int\limits_{0}^{t} \int\limits_{|x| \leq m + a(t-s)} \int\limits_{\Gamma_1}{\sgn \big( \we(x,s,y)-\widetilde{\we}(x,s,y) \big) \left( H \big( \We(x,s,y) \big) - H \big( \widetilde{\We}(x,s,y) \big) \right) \, \rho^2}\, \mathrm{d}y \, \mathrm{d}x \, \mathrm{d}s
\end{split}
\end{equation}
and
\begin{equation}\label{eq-PSystemSemiLinear-L1LocStabilityEstimateForZ}
\begin{split}
    &\int\limits_{|x|<m} \int\limits_{\Gamma_1}{|\ze(x,t,y)-\widetilde{\ze}(x,t,y)| \, \rho^2}\, \mathrm{d}y \, \mathrm{d}x \leq  \int\limits_{|x| < m + at} \int\limits_{\Gamma_1} {|\Wne(x,y) - \widetilde{\Wne}(x,y)| \, \rho^2}\, \mathrm{d}y \, \mathrm{d}x \\
    &\qquad + \int\limits_{0}^{t} \int\limits_{|x| \leq m + a(t-s)} \int\limits_{\Gamma_1}{\sgn \big( \ze(x,s,y)-\widetilde{\ze}(x,s,y) \big) \left( H \big( \widetilde{\We}(x,s,y) \big) - H \big( \We(x,s,y) \big) \right) \, \rho^2}\, \mathrm{d}y \, \mathrm{d}x \, \mathrm{d}s\,.
\end{split}
\end{equation}
\end{lemma}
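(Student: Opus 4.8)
The plan is to exploit the fact that in Riemannian coordinates the principal part of \eqref{eq-PSystemSemiLinear-RiemannianCoordinates} is diagonal and linear: each of the two scalar equations
\[
  \dt \we - a\,\dx \we = -\tfrac{1}{\veps}H(\We), \qquad \dt \ze + a\,\dx \ze = \tfrac{1}{\veps}H(\We)
\]
is a \emph{linear} transport equation whose right-hand side, once the pair $\We=(\we,\ze)$ is fixed, is a prescribed function in $L^{\infty}(\bbR\times(0,\infty)\times\Gamma)$. First I would recall that a weak solution of a constant-coefficient transport equation with $L^{\infty}$ source and $L^{\infty}$ initial datum is unique --- the difference of two such solutions solves the homogeneous equation with zero data, and testing against the solution of the backward transported problem forces it to vanish. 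Consequently $\we$ and $\widetilde{\we}$, being weak solutions of the $\we$-equation with the fixed sources $-\tfrac1\veps H(\We)$ and $-\tfrac1\veps H(\widetilde{\We})$, are given by Duhamel's formula along the characteristics $\tfrac{\mathrm{d}x}{\mathrm{d}s}=-a$; in particular $\we(\cdot,\cdot,y),\widetilde{\we}(\cdot,\cdot,y)\in C([0,\infty);L^1_{\mathrm{loc}}(\bbR))$ for a.e.\ $y\in\Gamma$, and each equation holds along a.e.\ characteristic line. (The entropy inequalities of Definition~\ref{def-PSystemSemiLinear-EntropySolutionForDiagonalSystem} are not needed here; they hold automatically for a linear equation.)

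By linearity, $u^{\veps}:=\we-\widetilde{\we}$ solves the linear transport equation with $L^{\infty}$ source $g^{\veps}:=-\tfrac1\veps\big(H(\We)-H(\widetilde{\We})\big)$ and $L^{\infty}$ datum $\wne-\widetilde{\wne}$. Passing to characteristic coordinates turns this into an $s$-ODE whose solution is Lipschitz in $s$; taking absolute values there and transforming back gives, in $\mathcal D'(\bbR\times(0,\infty)\times\Gamma)$, the identity $\dt|u^{\veps}| - a\,\dx|u^{\veps}| = \sgn(u^{\veps})\,g^{\veps}$. I would then multiply by the bounded nonnegative weight $\rho^2(y)\mathbf 1_{\Gamma_1}(y)$ --- harmless, since no derivative falls on $y$ --- and pair the resulting identity with a sequence of admissible test functions increasing to $\mathbf 1_{\mathcal C}$, where $\mathcal C:=\{(x,s): 0<s<t,\ |x|<m+a(t-s)\}$ is the truncated light cone with the \emph{symmetric} slope $a$. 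Three contributions survive: the top slice $s=t$, giving $\iint_{|x|<m,\ \Gamma_1}|u^{\veps}|(x,t,y)\rho^2$; the bottom slice $s=0$, giving $\iint_{|x|<m+at,\ \Gamma_1}|\wne-\widetilde{\wne}|\rho^2$; and the integral of $\sgn(u^{\veps})g^{\veps}$ over $\mathcal C\cap(\bbR^2\times\Gamma_1)$, which, once the elementary identity for $(\dt-a\dx)|u^{\veps}|$ is written out, is precisely the last term on the right of \eqref{eq-PSystemSemiLinear-L1LocStabilityEstimateForW}. The lateral boundary of $\mathcal C$ contributes to the \emph{favorable} side of the identity: its flux vanishes on the edge $x=m+a(t-s)$, which is characteristic for the speed $-a$, and equals $+2a$ times a surface integral of $|u^{\veps}|\rho^2$ on the edge $x=-(m+a(t-s))$, hence may be discarded to turn the identity into the stated inequality. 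This is the finite-speed-of-propagation step, and it is why the symmetric cone is admissible for the single component $\we$ even though its sharp cone of dependence is asymmetric and strictly smaller.

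Estimate \eqref{eq-PSystemSemiLinear-L1LocStabilityEstimateForZ} follows by the identical argument applied to the $\ze$-equation, or equivalently by the reflection $x\mapsto -x$: the characteristic speed is now $+a$, so over the same cone $\mathcal C$ the edge $x=-(m+a(t-s))$ is characteristic and the discardable nonnegative term sits on the edge $x=m+a(t-s)$. I expect the main technical obstacle to be the bookkeeping in this cone step --- approximating $\mathbf 1_{\mathcal C}$ by nonnegative $C^{\infty}_c$ functions, identifying the limiting boundary measure $(\dt-a\dx)\mathbf 1_{\mathcal C}$, checking the sign of each of its four pieces, and justifying the passage to the limit in the top and bottom traces, which is where the $C([0,\infty);L^1_{\mathrm{loc}})$ regularity supplied by Duhamel's formula is used. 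I note an alternative route, closer to the argument behind Theorem~\ref{thm-Appendix-UniquenessOfEntropySolution}: run Kru\v zkov's doubling of variables --- doubling $x$, $t$, and also the parameter $y$ --- directly on the Kru\v zkov entropy inequalities $\dt|\we-k| - a\,\dx|\we-k|\le -\tfrac1\veps\sgn(\we-k)H(\We)$ (obtained by $C^2$ approximation of $|\cdot-k|$) and its counterpart for $\widetilde{\we}$; there the weight $\rho^2$ arises automatically from the product $\rho(y)\rho(y')$ under mollification of $y-y'$, and linearity of the flux makes the usual flux-defect term vanish identically. Either route closes the proof.
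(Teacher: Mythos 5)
Your plan is sound and takes a genuinely different route from the paper. The paper's proof regularizes $|w-k|$ by $C^2$ convex functions, plugs the resulting Kru\v zkov entropies into the entropy inequality \eqref{eq-PSystemSemiLinear-DiagonalSystemEntropyCondition1}, and then runs the doubling-of-variables machinery (doubling $x$, $t$, and $y$, which is how the weight $\rho^2$ appears) ``in a fashion virtually identical to the proof of Theorem \ref{thm-Appendix-UniquenessOfEntropySolution}.'' You instead observe that the principal part of \eqref{eq-PSystemSemiLinear-RiemannianCoordinates} is a constant-coefficient linear transport operator in each component, so that once $H(\We)$ is viewed as a given $L^\infty$ source the solution is the Duhamel formula, the Kru\v zkov relation $\dt|u^\veps|-a\dx|u^\veps|=\sgn(u^\veps)g^\veps$ is an \emph{identity} rather than an inequality, and the conclusion follows from the divergence theorem on a truncated cone. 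This buys you several things: the entropy admissibility conditions of Definition \ref{def-PSystemSemiLinear-EntropySolutionForDiagonalSystem} are not used at all (only the weak formulation); you get $C([0,\infty);L^1_{loc})$ regularity for free from Duhamel, which handles the top and bottom traces; and the estimate holds pointwise a.e.\ in $y$, so the weight $\rho^2$ (or indeed $\rho^k$ for any $k$) can be inserted by hand rather than arising only as an artifact of doubling the $y$-variable. What the paper's Kru\v zkov route buys is robustness: it does not exploit semilinearity and would survive if the diagonal flux were genuinely nonlinear, which is why the authors can say ``the proof now proceeds in a fashion virtually identical to'' the scalar uniqueness proof. You also correctly identify the paper's route as the alternative in your closing sentences.

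Two small points worth noticing as you flesh this out. First, from the $w$-equation $\dt\we-a\dx\we=-\tfrac{1}{\veps}H(\We)$, your source is $g^\veps=-\tfrac{1}{\veps}\big(H(\We)-H(\widetilde{\We})\big)$, so the cone integral you obtain is $-\tfrac{1}{\veps}\int\sgn(\we-\widetilde{\we})\big(H(\We)-H(\widetilde{\We})\big)\rho^2$, not ``precisely the last term'' of \eqref{eq-PSystemSemiLinear-L1LocStabilityEstimateForW} as written; the factor $-\tfrac{1}{\veps}$ is missing from the paper's display (the Kru\v zkov route would produce it too, so this appears to be a typo in the statement rather than a discrepancy between the two proofs, but you should track it). Second, your parenthetical assertion that the entropy inequalities ``hold automatically for a linear equation'' is dispensable and, for system entropies $\eta(w,z)$ coupling both components, not as immediate as for scalar Kru\v zkov entropies; since your argument never invokes them, simply drop the claim.
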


\begin{proof}
It suffices to prove \eqref{eq-PSystemSemiLinear-L1LocStabilityEstimateForW}; \eqref{eq-PSystemSemiLinear-L1LocStabilityEstimateForZ} is proved similarly.
We choose a sequence of $C^2$ convex functions $\eta_n$ such that $\eta_n(W)$ converges locally uniformly to the convex function $|w - k|$ for some $k \in \bbR$. Thus, the entropy inequalities \eqref{eq-PSystemSemiLinear-DiagonalSystemEntropyCondition1} for $\We$ and $\widetilde{\We}$ become
\begin{equation}
\iiintdmt{0}{T}{\bbR}{\Gamma}{\big( |\we-k| (\dt \vphi - a \dx \vphi) - \frac{1}{\veps} \sgn(\we-k) H(\We) \big) \rho}{y}{x}{t} \geq 0
\end{equation}
and
\begin{equation}
\iiintdmt{0}{T}{\bbR}{\Gamma}{\big( |\widetilde{\we}-k| (\dt \vphi - a \dx \vphi) - \frac{1}{\veps} \sgn(\widetilde{\we}-k) H(\widetilde{\We}) \big) \rho}{y}{x}{t} \geq 0\,.
\end{equation}
Mechanically speaking, the proof now proceeds in a fashion virtually identical to the proof of Theorem \ref{thm-Appendix-UniquenessOfEntropySolution}.
The point of departure is the presence of the nonlinearity $H$ in the entropy inequalities for $\We$ and $\widetilde{\We}$, which accounts for the additional integral term on the right-hand side of \eqref{eq-PSystemSemiLinear-L1LocStabilityEstimateForW}.
\end{proof}

\begin{theorem}\label{thm-PSystemSemiLinear-L1LocStabilityEstimate}
Let $\Wne$, $\widetilde{\Wne} \in \big[ L^{\infty}(\bbR \times \Gamma) \big]^2$ and let $\We$ and $\widetilde{\We}$ be entropy solutions of \eqref{eq-PSystemSemiLinear-RiemannianCoordinates} corresponding to $\Wne$, $\widetilde{\Wne}$ respectively. Then for any $m > 0$, $\Gamma_1 \Subset \Gamma$ and any $t>0$,
\begin{equation}\label{eq-PSystemSemiLinear-L1LocStabilityEstimate}
\begin{split}
    \int\limits_{|x| < m} \int\limits_{\Gamma_1}{\left|\We(x,t,y) - \widetilde{\We}(x,t,y) \right| \rho^2(y)}\, \mathrm{d}y \, \mathrm{d}x \leq 2 \e^{2(a+1)t} \int\limits_{|x| \leq m + at} \int\limits_{\Gamma_1}{\left| \Wne(x,y) - \widetilde{\Wne}(x,y) \right| \, \rho^2(y)}\, \mathrm{d}y \, \mathrm{d}x\,.
\end{split}
\end{equation}
\end{theorem}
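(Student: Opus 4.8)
The plan is to add the two one‑sided estimates of Lemma~\ref{lma-PSystemSemiLinear-L1LocStabilityEstimate}, absorb the nonlinear source terms using a Lipschitz bound on $H$ that is forced by the subcharacteristic condition, and close the resulting self‑referential inequality by a Gr\"onwall argument adapted to the expanding cone of dependence. Throughout, $m>0$ and $\Gamma_1 \Subset \Gamma$ are fixed, and I write $T$ for the final time appearing in the statement.

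\emph{Step 1 (combine and estimate the nonlinearity).} Adding \eqref{eq-PSystemSemiLinear-L1LocStabilityEstimateForW} and \eqref{eq-PSystemSemiLinear-L1LocStabilityEstimateForZ}, the left-hand integrands sum to $|\we-\widetilde{\we}|+|\ze-\widetilde{\ze}|\ge|\We-\widetilde{\We}|$, the two initial-data terms sum to $2\int_{|x|<m+at}\int_{\Gamma_1}|\Wne-\widetilde{\Wne}|\,\rho^2$, and the two source integrands sum to
\[
\big(\sgn(\we-\widetilde{\we})-\sgn(\ze-\widetilde{\ze})\big)\big(H(\We)-H(\widetilde{\We})\big)\le 2\,\big|H(\We)-H(\widetilde{\We})\big|.
\]
A direct computation from $H(w,z)=\tfrac{w-z}{2}-f\!\left(-\tfrac{w+z}{2a}\right)$ gives $\partial_w H=\tfrac12\big(1+f'/a\big)$ and $\partial_z H=\tfrac12\big(f'/a-1\big)$, so $(\partial_w H)^2+(\partial_z H)^2=\tfrac12\big(1+(f'/a)^2\big)\le 1$ on all of $\bbR^2$ by the subcharacteristic condition \eqref{eq-PSystemSemilinear-SubcharacteristicCondition}; hence $|H(\We)-H(\widetilde{\We})|\le|\We-\widetilde{\We}|$ pointwise. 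Consequently, for every $m>0$ and $t\in(0,T]$,
\[
\int_{|x|<m}\int_{\Gamma_1}|\We(t)-\widetilde{\We}(t)|\,\rho^2\le 2\int_{|x|<m+at}\int_{\Gamma_1}|\Wne-\widetilde{\Wne}|\,\rho^2+2\int_0^t\int_{|x|\le m+a(t-s)}\int_{\Gamma_1}|\We(s)-\widetilde{\We}(s)|\,\rho^2\,\mathrm{d}s .
\]

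\emph{Step 2 (Gr\"onwall on the cone).} For $s\in[0,T]$ set $\psi(s):=\int_{|x|<m+a(T-s)}\int_{\Gamma_1}|\We(s)-\widetilde{\We}(s)|\,\rho^2$. Applying the inequality of Step 1 with final time $s$ and radius $m+a(T-s)$ in place of $m$, and using the identity $\{|x|\le m+a(T-s)+a(s-\sigma)\}=\{|x|\le m+a(T-\sigma)\}$, one obtains
\[
\psi(s)\le 2J+2\int_0^s\psi(\sigma)\,\mathrm{d}\sigma,\qquad J:=\int_{|x|\le m+aT}\int_{\Gamma_1}|\Wne-\widetilde{\Wne}|\,\rho^2 .
\]
Since $\We,\widetilde{\We}\in[L^\infty(\bbR\times(0,T)\times\Gamma)]^2$ and the cone is bounded, $\psi$ is bounded, so Gr\"onwall's lemma yields $\psi(s)\le 2J\,\mathrm{e}^{2s}$; evaluating at $s=T$ and using $a>0$ (so $\mathrm{e}^{2T}\le\mathrm{e}^{2(a+1)T}$) gives exactly \eqref{eq-PSystemSemiLinear-L1LocStabilityEstimate}.

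\emph{Expected main obstacle.} Step 1 is algebraic and the Lipschitz/sign analysis of $H$ is routine; the delicate point is the bookkeeping in Step 2. One must choose the auxiliary function $\psi$ so that its spatial cone shrinks toward the target point $(m,T)$ at exactly the characteristic speed $a$, in order that the domain of the memory integral in the Step 1 estimate coincides with $\{|x|\le m+a(T-\sigma)\}$, making the inequality genuinely self-referential in $\psi$ rather than merely providing an inclusion. Confirming that this identification of cones is exact, and that $\psi$ is regular enough (local integrability, inherited from the $L^\infty$ bound on the solutions) for the integral form of Gr\"onwall to apply, is where the argument must be carried out carefully.
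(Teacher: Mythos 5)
Your proof is correct and follows essentially the same route as the paper's: add the two one-sided estimates of Lemma~\ref{lma-PSystemSemiLinear-L1LocStabilityEstimate}, bound the source term via a global Lipschitz estimate on $H$, and apply Gr\"onwall's inequality to the cone-adapted quantity $\psi(s)=\int_{|x|<m+a(T-s)}\int_{\Gamma_1}|\We(s)-\widetilde{\We}(s)|\rho^2$, which is exactly the paper's auxiliary function $\mu$. Your only deviation is that you derive the sharper Lipschitz bound $|\nabla H|\le 1$ directly from the subcharacteristic condition (giving Gr\"onwall constant $2$ rather than the paper's $2(a+1)$), and then relax $\mathrm{e}^{2T}\le\mathrm{e}^{2(a+1)T}$ to recover the stated inequality — a harmless strengthening that does not change the argument.
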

\begin{proof}
For any $t \in [0,T]$ and $0 < s < t$ set
\begin{equation}
    \mu(s) := \int\limits_{|x| \leq m+a(t-s)} \int\limits_{\Gamma_1}{ \left|\We(x,t,y) - \widetilde{\We}(x,t,y) \right| \rho^2(y)}\, \mathrm{d}y \, \mathrm{d}x\,.
\end{equation}
Then using the fact that $H$ is globally Lipschitz with bound $\Vnorm{\dU \cH}_{\infty} \leq a+1 $, we have from Lemma \ref{lma-PSystemSemiLinear-L1LocStabilityEstimate} the estimate
\begin{equation}
    \mu(t) \leq 2\mu(0) + 2(a+1) \intdmt{0}{t}{\mu(s)}{s}\,.
\end{equation}
The conclusion follows by Gronwall's inequality.
\end{proof}

It follows immediately from the estimate \eqref{eq-PSystemSemiLinear-L1LocStabilityEstimate} that

\begin{corollary}\label{cor-PSystemSemiLinear-EntropySolutionsOfRiemannianCoordinatesAreUnique}
For any $\Wne \in \big[ L^{\infty}(\bbR \times [0,T] \times \Gamma) \big]^2$ there exists at most one entropy solution of \eqref{eq-PSystemSemiLinear-RiemannianCoordinates}.
\end{corollary}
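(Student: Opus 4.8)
The plan is to derive the corollary directly from the weighted $L^1_{loc}$ stability estimate \eqref{eq-PSystemSemiLinear-L1LocStabilityEstimate} of Theorem~\ref{thm-PSystemSemiLinear-L1LocStabilityEstimate}. First I would let $\We$ and $\widetilde{\We}$ be two weak entropy solutions of \eqref{eq-PSystemSemiLinear-RiemannianCoordinates} corresponding to the \emph{same} initial data $\Wne$, and apply that theorem with $\widetilde{\Wne} = \Wne$. Since $\Wne - \widetilde{\Wne} = 0$, the right-hand side of \eqref{eq-PSystemSemiLinear-L1LocStabilityEstimate} vanishes, giving
\begin{equation*}
\int\limits_{|x| < m} \int\limits_{\Gamma_1} \left| \We(x,t,y) - \widetilde{\We}(x,t,y) \right| \rho^2(y) \, \mathrm{d}y \, \mathrm{d}x = 0
\end{equation*}
for every $m > 0$, every $\Gamma_1 \Subset \Gamma$, and every $t > 0$.

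Next I would invoke the (almost everywhere) positivity of the weight. By the standing assumption on the probability density --- the same fact used in the proof of Theorem~\ref{thm-CompensatedCompactnessFramework}, namely that $\rho \neq 0$ almost everywhere on $\Gamma$ --- we have $\rho^2 > 0$ almost everywhere, so the vanishing of the above integral forces $\We(x,t,y) = \widetilde{\We}(x,t,y)$ for $\mathrm{d}x \times \mathrm{d}y$-almost every $(x,y) \in \{|x|<m\} \times \Gamma_1$, for each fixed $t > 0$. Letting $m \to \infty$ through the integers and exhausting $\Gamma$ by a countable nested family $\Gamma_1 \subset \Gamma_2 \subset \cdots$ with $\bigcup_j \Gamma_j = \Gamma$ (a countable union of null sets being null) upgrades this to $\We(\cdot,t,\cdot) = \widetilde{\We}(\cdot,t,\cdot)$ almost everywhere on $\bbR \times \Gamma$ for every $t > 0$. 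Finally, since $(x,t,y) \mapsto |\We - \widetilde{\We}|$ is measurable on $\bbR \times (0,T) \times \Gamma$, the Fubini--Tonelli theorem promotes ``equality a.e.\ in $(x,y)$ for each $t$'' to ``equality for a.e.\ $(x,t,y)$'', which is precisely the claimed uniqueness.

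I do not expect a genuine obstacle here: the result is an immediate consequence of Theorem~\ref{thm-PSystemSemiLinear-L1LocStabilityEstimate}. The only points deserving a word of care are (i) that the weight $\rho^2$ is strictly positive almost everywhere, so that an integral against $\rho^2$ vanishing implies the integrand vanishes a.e.\ (rather than merely on $\supp \rho$), and (ii) the routine measure-theoretic bookkeeping --- exhaustion of $\bbR \times \Gamma$ by compact pieces together with a Fubini argument in $t$ --- needed to pass from the slicewise statement to the full statement in $(x,t,y)$.
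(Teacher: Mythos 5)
Your proof is correct and follows the same route as the paper: the paper simply states that the corollary "follows immediately" from the $L^1_{\rho^2,loc}$ stability estimate \eqref{eq-PSystemSemiLinear-L1LocStabilityEstimate} of Theorem~\ref{thm-PSystemSemiLinear-L1LocStabilityEstimate}, and you have spelled out precisely that deduction (taking $\widetilde{\Wne}=\Wne$, noting the right-hand side vanishes, then using that $\rho^2>0$ a.e.\ on $\Gamma$ together with the exhaustion/Fubini bookkeeping to pass from a slicewise statement to equality a.e.\ in $(x,t,y)$).
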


With the uniqueness theory for the diagonal system \eqref{eq-PSystemSemiLinear-RiemannianCoordinates} in place, we now establish existence using a vanishing viscosity argument. Consider solutions $\Wen$ to the viscosity approximation \eqref{eq-PSystemSemiLinear-ViscosityApproximation-RiemannianCoordinates} with suitable initial data; by Theorem \ref{thm-QLParabolicSystemsMainThm}, Corollary \ref{cor-PSystemSemiLinear-UniquenessParabolicEquation-RiemannianCoordinates} and Theorem \ref{thm-PSystemSemiLinear-GlobalExistenceOfViscositySolutions} the functions $\Wen$ are classical, unique, global solutions to \eqref{eq-PSystemSemiLinear-ViscosityApproximation-RiemannianCoordinates}.

\begin{theorem}\label{thm-PSystemSemiLinear-AlmostEverywhereConvergenceToEntropySolution}
For $\veps > 0$, let $\Wne \in \big[ L^1(\bbR \times \Gamma) \big]^2 \cap \big[ L^{\infty}(\bbR \times \Gamma) \big]^2$ take values in a bounded open convex set $D \subset \bbR^2$. Let $(\Wnen)_{\nu>0} \subset \big[ C^{\infty}_c(\bbR \times \Gamma) \big]^2$ be a sequence satisfying $\Vnorm{\Wnen}_{\infty} \leq \Vnorm{\Wne}_{\infty}$ and converging to $\Wne$ in $\big[ L^1_{\rho, loc}(\bbR \times \Gamma) \big]^2 $. Let $\Wen$ be the unique global classical solution of \eqref{eq-PSystemSemiLinear-ViscosityApproximation-RiemannianCoordinates} with initial data $\Wnen$. Suppose also that there exists a subsequence (still denoted by $\nu$) such that $\Wen$ converges to some function $\We$ in the space $C \left( [0,\infty) ;  \left[ L^1_{\rho, loc}(\bbR \times \Gamma) \right]^2 \right)$. Then $\We \in \big[ L^{\infty}(\bbR \times (0,\infty) \times \Gamma) \big]^2$ and $\We$ is a weak entropy solution of \eqref{eq-PSystemSemiLinear-RiemannianCoordinates}.
\end{theorem}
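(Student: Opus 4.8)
The plan is to fix the given subsequence and carry out a vanishing-viscosity passage $\nu\to 0$, transferring the weak formulation and the entropy inequalities from the classical solutions $\Wen$ to the limit $\We$. The single enabling estimate is a uniform-in-$\nu$ bound $\Vnorm{\Wen}_\infty\le M$, which is essentially already in hand: since $\Vnorm{\Wnen}_\infty\le\Vnorm{\Wne}_\infty$, the comparison data $p_0^\pm$ in the proof of Theorem~\ref{thm-PSystemSemiLinear-GlobalExistenceOfViscositySolutions} are bounded independently of $\nu$, so \eqref{eq-PSystemSemiLinear-LInftyBoundsOnViscosityApproximation} supplies $\Vnorm{\Wen}_\infty\le M$ with $M$ independent of $\nu$. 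Convergence in $C\big([0,\infty);[L^1_{\rho,loc}(\bbR\times\Gamma)]^2\big)$ implies $\Wen\to\We$ in $[L^1_\rho((0,T)\times V\times\Gamma)]^2$ for every $T>0$ and $V\Subset\bbR$, hence along a further subsequence $\Wen\to\We$ a.e.; then $|\We|\le M$ a.e., so $\We\in[L^\infty(\bbR\times(0,\infty)\times\Gamma)]^2$.

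For the weak formulation I would multiply \eqref{eq-PSystemSemiLinear-ViscosityApproximation-RiemannianCoordinates} by $\vphi\rho$ with $\vphi\in[C^\infty_c(\bbR\times[0,T)\times\Gamma)]^2$ and integrate by parts in $x$ and $t$; this reproduces the identity of Definition~\ref{def-PSystemSemiLinear-EntropySolutionForDiagonalSystem} together with one extra term $\nu\iiint\Wen\cdot\dxx\vphi\,\rho$ from the Laplacian. On $\supp\vphi$ one has $\Wen\to\We$ in $[L^1_\rho]^2$, and since $\cH$ is Lipschitz on the ball of radius $M$ also $\cH(\Wen)\to\cH(\We)$ in $[L^1_{\rho,loc}]^2$, so each term converges to the corresponding term for $\We$; the extra term satisfies $|\nu\iiint\Wen\cdot\dxx\vphi\,\rho|\le\nu M\Vnorm{\dxx\vphi}_\infty\Vnorm{\rho}_{L^\infty}|\supp\vphi|\to0$, and the initial term converges because $\Wnen\to\Wne$ in $[L^1_{\rho,loc}]^2$. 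Thus $\We$ is a weak solution of \eqref{eq-PSystemSemiLinear-RiemannianCoordinates}.

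For the entropy inequalities, take any $C^2$ convex entropy pair $(\eta,Q)$ over $\bbR^2$. Multiplying \eqref{eq-PSystemSemiLinear-ViscosityApproximation-RiemannianCoordinates} by $\p_W\eta(\Wen)$ and using the chain rule with $\dxx[\eta(\Wen)]=\p_W\eta(\Wen)\cdot\dxx\Wen+\Vint{D^2\eta(\Wen)\dx\Wen,\dx\Wen}$ and $D^2\eta\ge0$ gives the pointwise inequality $\dt\eta(\Wen)+\dx Q(\Wen)-\tfrac1\veps\p_W\eta(\Wen)\cdot\cH(\Wen)\le\nu\,\dxx[\eta(\Wen)]$. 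Testing against $\vphi\rho$ with $\vphi\in C^\infty_c(\bbR\times(0,T)\times\Gamma)$, $\vphi\ge0$, and integrating by parts turns the right-hand side into $\nu\iiint\eta(\Wen)\dxx\vphi\,\rho=O(\nu)$; since $\eta$, $Q$, $\p_W\eta$ are Lipschitz on the range of $\Wen$ the other terms converge, yielding exactly \eqref{eq-PSystemSemiLinear-DiagonalSystemEntropyCondition1} for $\We$. For the initial trace \eqref{eq-PSystemSemiLinear-DiagonalSystemEntropyCondition2}, the $C([0,\infty);[L^1_{\rho,loc}]^2)$ convergence gives $\We\in C([0,\infty);[L^1_{\rho,loc}]^2)$ with $\We(\cdot,0,\cdot)=\lim_\nu\Wnen=\Wne$ in $[L^1_{\rho,loc}]^2$, so $t\mapsto\Vnorm{\We(t)-\Wne}_{L^1_\rho(V\times\Gamma)}$ is continuous and vanishes at $t=0$; hence its average over $[0,T]$ tends to $0$ as $T\to0^+$.

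I expect no serious obstacle: the whole argument is a routine vanishing-viscosity limit once the uniform $L^\infty$ bound is available. The two points needing attention are (i) the sign of the viscous contribution, which depends on convexity of $\eta$ through $\p_W\eta(\Wen)\cdot\dxx\Wen\le\dxx[\eta(\Wen)]$, and (ii) the observation that, because the extended entropies of Lemma~\ref{lma-PSystemSemiLinear-EntropyExtension} live on all of $\bbR^2$, the $L^\infty$ bound on $\Wen$ — not an invariant-region argument pinning $\Wen$ inside $D$ — is all that is needed to apply each entropy in the limit. Every convergence of a nonlinear quantity reduces to $\Vnorm{g(\Wen)-g(\We)}_{L^1_\rho(V)}\le\mathrm{Lip}(g)\,\Vnorm{\Wen-\We}_{L^1_\rho(V)}$ on the bounded range, so no compensated-compactness tools enter here.
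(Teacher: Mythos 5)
Your proposal is correct and the central step --- testing the pointwise entropy balance for $\Wen$ against $\vphi\rho$, using $\nu\Vint{D^2\eta(\Wen)\dx\Wen,\dx\Wen}\ge 0$, integrating by parts, and sending $\nu\to 0$ with the $\cH$, $\eta$, $Q$, $\p_W\eta$ terms handled via Lipschitz continuity on the uniform $L^\infty$ range --- is exactly the paper's argument for \eqref{eq-PSystemSemiLinear-DiagonalSystemEntropyCondition1}. Where you genuinely diverge, to your advantage, is in the initial trace \eqref{eq-PSystemSemiLinear-DiagonalSystemEntropyCondition2}: you observe that convergence in $C\big([0,\infty);[L^1_{\rho,loc}]^2\big)$ forces $\We$ to lie in that space, with $\We(\cdot,0,\cdot)=\lim_\nu\Wnen=\Wne$ by uniqueness of limits, so $t\mapsto\Vnorm{\We(t)-\Wne}_{L^1_\rho(V\times\Gamma)}$ is continuous, vanishes at $t=0$, and hence its time-average over $[0,T]$ tends to $0$. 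The paper instead splits $\Vnorm{\We(t)-\Wne}$ into three pieces $\mathrm{I}+\mathrm{II}+\mathrm{III}$ and bounds the middle piece $\Vnorm{\Wen(t)-\Wnen}$ by $C_\nu T$ using the $\dt\Wen$ bound from Theorem~\ref{thm-QLParabolicSystemsMainThm}, then orders the limits as ``$\nu$ small first, $T\to 0^+$ second'' because $C_\nu$ is not uniform in $\nu$. Your route avoids this order-of-limits bookkeeping and the derivative estimate entirely, since continuity at $t=0$ is already packaged in the convergence hypothesis. You also explicitly verify the weak (non-entropy) formulation, which the paper leaves implicit; that is a harmless addition and arguably the cleaner exposition, since Definition~\ref{def-PSystemSemiLinear-EntropySolutionForDiagonalSystem} formally requires it.
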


\begin{proof}
By virtue of \eqref{eq-PSystemSemiLinear-LInftyBoundsOnViscosityApproximation} the function $\We$ belongs to $\big[ L^{\infty}(\bbR \times (0,\infty) \times \Gamma) \big]^2$. 
Take any convex entropy $\eta$ for the diagonal system \eqref{eq-PSystemSemiLinear-RiemannianCoordinates}.
Multiply \eqref{eq-PSystemSemiLinear-ViscosityApproximation-RiemannianCoordinates} by $\p_W \eta(\Wen)$ to get
\begin{equation}\label{eq-PSystemSemiLinear-ViscosityApproximationMultipliedByEntropy}
    \p_t \eta(\Wen) + \p_x Q(\Wen) = \frac{1}{\veps} \p_W \eta(\Wen) \cH(\Wen) + \nu \dxx \eta(\Wen) - \nu \Vint{\p_{WW} \eta(\Wen) \dx \Wen, \dx \Wen}\,.
\end{equation}
multiply by a nonnegative smooth test function $\vphi$ compactly supported on $\bbR \times (0,\infty) \times \Gamma$, integrate over $\bbR \times [0,\infty) \times \Gamma$, and integrate by parts. The fact that the last term on the right-hand side of \eqref{eq-PSystemSemiLinear-ViscosityApproximationMultipliedByEntropy} is nonnegative gives us the inequality
\begin{equation}
\begin{split}
    \iiintdmt{0}{\infty}{\bbR}{\Gamma}{\big( \eta(\Wen) \dt \vphi + Q(\Wen) \dx \vphi + & \frac{1}{\veps} \p_W \eta(\Wen) \cH(\Wen) \vphi \big) \, \rho}{y}{x}{t} \\
    &\qquad \geq - \nu \iiintdmt{0}{\infty}{\bbR}{\Gamma}{\eta(\Wen) \dxx \vphi \, \rho}{y}{x}{t}\,.
\end{split}
\end{equation}
Taking the limit as $\nu \to 0$,
\begin{equation}
    \iiintdmt{0}{\infty}{\bbR}{\Gamma}{\big( \eta(\We) \dt \vphi + Q(\We) \dx \vphi + \frac{1}{\veps} \p_W \eta(\We) \cH(\We) \vphi \big) \, \rho}{y}{x}{t} \geq 0\,,
\end{equation}
which is the entropy inequality \eqref{eq-PSystemSemiLinear-DiagonalSystemEntropyCondition1}. To see the $L^1_{loc}$ convergence to the initial data \eqref{eq-PSystemSemiLinear-DiagonalSystemEntropyCondition2}, use the triangle inequality to obtain
\begin{equation}
\begin{split}
\frac{1}{T} \iiintdmtlim{0}{T}{V}{\Gamma}{|\We(x,t,y) - \Wne(x,y)| \rho(y)}{y}{x}{t} &\leq \frac{1}{T} \iiintdmtlim{0}{T}{V}{\Gamma}{|\We(x,t,y) - \Wen(x,t,y)| \rho(y)}{y}{x}{t} \\
&\qquad + \frac{1}{T} \iiintdmtlim{0}{T}{V}{\Gamma}{|\Wen(x,t,y) - \Wnen(x,y)| \rho(y)}{y}{x}{t} \\
&\qquad + \frac{1}{T} \iiintdmtlim{0}{T}{V}{\Gamma}{|\Wnen(x,y) - \Wne(x,y)| \rho(y)}{y}{x}{t} \\
&= \mathrm{I} + \mathrm{II} + \mathrm{III}
\end{split}
\end{equation}
for every $V \Subset \bbR$. By the assumption of convergence, $\mathrm{I}$ can be made arbitrarily small and independent of $T$ for $\nu$ small enough. By assumption, $\mathrm{III}$ can also be made arbitrarily small for $\nu$ small enough. By Theorem \ref{thm-QLParabolicSystemsMainThm}, for $T$ small enough,
\begin{equation}
\begin{split}
\mathrm{II} &= \frac{1}{T} \iiintdmt{0}{T}{V}{\Gamma}{|\dt \Wen(x,\theta t,y) \, t| \, \rho(y)}{y}{x}{t} \\
&\leq C |V| \Vnorm{\rho}_1 \Big( \Vnorm{\Wnen}_{\infty} + \Vnorm{\dx \Wnen}_{\infty} + \Vnorm{\dxx \Wnen}_{\infty} \Big)  \left( \frac{1}{T} \intdmt{0}{T}{t}{t} \right) = C T\,,
\end{split}
\end{equation}
where $C$ is independent of $T$. Therefore, $\mathrm{II} \to 0$ as $T \to 0^+$, and \eqref{eq-PSystemSemiLinear-DiagonalSystemEntropyCondition2} is proved.
\end{proof}
We are now ready to estimate the $L^1$ modulus of continuity of the solutions $\Wen$.

\begin{theorem}\label{thm-PSystemSemiLinear-L1ModulusOfContinuityInSpace}
Let $\Wnen$ be a sequence of functions in  $\big[ C^{\infty}_c(\bbR \times \Gamma) \big]^2$ taking values in a bounded open convex set $D_0 \subset \bbR^2$. For each $\veps, \nu$ let $\Wen$ be the unique global smooth solution of \eqref{eq-PSystemSemiLinear-ViscosityApproximation-RiemannianCoordinates} corresponding to the data $\Wnen$. Let $m > 0$ and let $\Gamma_1 \Subset \Gamma$. Let $h \in \bbR$ and let $k \in \bbR^N$ such that $|k| < \frac{1}{2} \mathrm{dist}(\Gamma_1, \Gamma^c)$. Then for every $T > 0$ we have the estimate
\begin{equation}
\begin{split}
    \iintdm{|x| < m}{\Gamma_1}{ \big| \Wen(x+h,T,y+k) &- \Wen(x,T,y) \big| \rho(y)}{y}{x} \\
    &\leq C \iintdm{\bbR}{\Gamma_1}{|\Wnen(x+h,y+k)-\Wnen(x,y)| \, \rho(y)}{y}{x}\,,
\end{split}
\end{equation}
where the constant $C$ is independent of $\nu$.
\end{theorem}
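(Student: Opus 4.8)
The plan is to remove the parameter $y$ from the problem entirely and reduce to a one‑dimensional $L^1$ estimate for the diagonal parabolic system. Fix $y \in \Gamma_1$; the hypothesis $|k| < \tfrac12\,\mathrm{dist}(\Gamma_1,\Gamma^c)$ guarantees $y+k \in \Gamma$, so $\Wen(\cdot,\cdot,y+k)$ is defined. Set $P(x,t) := \Wen(x,t,y)$ and $\widetilde P(x,t) := \Wen(x+h,t,y+k)$. Since the system \eqref{eq-PSystemSemiLinear-ViscosityApproximation-RiemannianCoordinates} is invariant under spatial translations and contains no derivatives with respect to $y$, both $P$ and $\widetilde P$ are classical solutions on $\bbR\times(0,\infty)$ of the one‑dimensional system
\[
  \dt V + \diag(-a,a)\,\dx V = \tfrac1\veps\,\cH(V) + \nu\,\dxx V ,
\]
with data $\Wnen(\cdot,y)\in[C^\infty_c(\bbR)]^2$ and $\Wnen(\cdot+h,y+k)\in[C^\infty_c(\bbR)]^2$ respectively. (That the global solution supplied by Proposition \ref{prop-LocalExistenceOfClassicalViscApproxSoln-RiemannianCoordinates} and Theorem \ref{thm-PSystemSemiLinear-GlobalExistenceOfViscositySolutions} restricts, for each fixed $y$, to such a one‑dimensional classical solution is immediate, as the equations carry no $y$‑derivatives.)

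The core of the argument is the pointwise‑in‑$y$ claim
\[
  \Vnorm{P(\cdot,T)-\widetilde P(\cdot,T)}_{[L^1(\bbR)]^2} \le e^{2(a+1)T/\veps}\,\Vnorm{P(\cdot,0)-\widetilde P(\cdot,0)}_{[L^1(\bbR)]^2}.
\]
Write $d := P-\widetilde P$. Then $d$ solves the \emph{linear} parabolic system $\dt d_i + \lambda_i\dx d_i = \tfrac1\veps(\cH_i(P)-\cH_i(\widetilde P)) + \nu\,\dxx d_i$ with $d(\cdot,0)\in[C^\infty_c(\bbR)]^2$; writing $\cH(P)-\cH(\widetilde P) = M(x,t)\,d$ with $M$ bounded and using Duhamel's formula for the $L^1$‑contractive heat–transport semigroup together with Gronwall's lemma shows $d(\cdot,t)\in[L^1(\bbR)]^2$ with spatial decay for every $t$. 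Let $S_\delta\in C^1(\bbR)$ be odd and nondecreasing with $|S_\delta|\le 1$ and $S_\delta\to\sgn$ pointwise. Multiplying the $i$‑th equation by $S_\delta(d_i)$ and integrating over $\bbR$: the transport term is $\lambda_i\int_\bbR \dx[\Sigma_\delta(d_i)]\,\mathrm dx = 0$ (with $\Sigma_\delta'=S_\delta$, $\Sigma_\delta(0)=0$, the endpoint contributions vanishing by the decay of $d$); the viscosity term is $-\nu\int_\bbR S_\delta'(d_i)(\dx d_i)^2\,\mathrm dx \le 0$ after one integration by parts; and, since $\cH$ is globally Lipschitz (with constant $\le a+1$ by the subcharacteristic condition \eqref{eq-PSystemSemilinear-SubcharacteristicCondition}), the relaxation term is at most $\tfrac{a+1}{\veps}\int_\bbR(|d_1|+|d_2|)\,\mathrm dx$. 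Summing over $i=1,2$, letting $\delta\to0$, and applying Gronwall's inequality to $t\mapsto\Vnorm{d(t)}_{[L^1(\bbR)]^2}$ gives the claim. The constant $e^{2(a+1)T/\veps}$ is independent of $\nu$ precisely because the diffusion term contributes with the favourable sign.

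To finish, observe $|d(x,T,y)| = |\Wen(x+h,T,y+k)-\Wen(x,T,y)|$ and $|d(x,0,y)| = |\Wnen(x+h,y+k)-\Wnen(x,y)|$, so integrating the pointwise‑in‑$y$ estimate against $\rho(y)\,\mathrm dy$ over $\Gamma_1$ (Tonelli's theorem applies, all integrands being nonnegative) yields
\[
  \iintdm{\bbR}{\Gamma_1}{|\Wen(x+h,T,y+k)-\Wen(x,T,y)|\,\rho(y)}{y}{x} \le C\iintdm{\bbR}{\Gamma_1}{|\Wnen(x+h,y+k)-\Wnen(x,y)|\,\rho(y)}{y}{x}
\]
with $C=e^{2(a+1)T/\veps}$; restricting the left‑hand $x$‑integral to $|x|<m$ only decreases it, which is the assertion (no finite‑speed cone is needed here since the right‑hand side already integrates over all of $\bbR$).

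The main obstacle, though a routine one, is the justification that $d(\cdot,t)$ is globally integrable and decays at spatial infinity so that the integrations by parts and the differentiation under the integral sign are legitimate with no boundary terms; this is exactly what the Duhamel representation for the $L^1$‑contraction semigroup, combined with the boundedness of $M$ and Gronwall's lemma, provides. Everything else — the vanishing of the transport term, the sign of the viscosity term, the Lipschitz bound on $\cH$, and the passage $\delta\to0$ — is standard.
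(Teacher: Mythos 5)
Your argument is correct and establishes the theorem as stated, but by a genuinely different route than the paper's. The paper works directly with the difference $\Wenhk(x,t,y) = \Wen(x+h,t,y+k)-\Wen(x,t,y)$ in all three variables, introduces a test function $\vphi^{\veps,\nu}$ solving the \emph{backward adjoint} parabolic system \eqref{eq-PSystemSemiLinear-BackwardParabolicSystem} with terminal data $\psi^{\veps,\nu}=\chi_{[-m,m]\times\Gamma_1}\sgn(\Wenhk(\cdot,T,\cdot))$, and bounds $|\vphi^{\veps,\nu}|\leq 2$ by a Kruzkov-type maximum-principle estimate; a single duality pairing then yields the claim with $C=2$, uniformly in $\nu$, $\veps$, and $T$. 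You instead freeze $y$, reduce to a one-dimensional weakly coupled system, and run a direct $L^1$ argument (mollified sign plus Gronwall) for the difference $d=P-\widetilde P$, integrating the resulting pointwise-in-$y$ bound against $\rho\,\mathrm{d}y$ at the end. Your approach is arguably more elementary — it avoids solving and estimating an auxiliary backward parabolic problem — and the reduction to fixed $y$ is clean because the equations carry no $y$-derivatives.

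The trade-off is the constant. Your bound on the relaxation term uses only the crude Lipschitz estimate $|\cH(P)-\cH(\widetilde P)|\leq (a+1)|d|$, which gives $C=e^{2(a+1)T/\veps}$. This satisfies the theorem's stated requirement (independence of $\nu$ only), and is sufficient for the downstream uses in Theorems \ref{thm-PSystemSemiLinear-L1ModulusOfContinuityInTime} and \ref{thm-PSystemSemiLinear-WellPosednessInL1ForDiagonalSystem}, where $\veps$ and the time horizon are held fixed while $\nu\to 0$. But the structure $\cH=(-H,H)$, together with the subcharacteristic condition $\p_w H\in(0,1)$, $\p_z H\in(-1,0)$, is quasimonotone in exactly the way that makes the summed relaxation contribution \emph{non-positive}: writing $H(P)-H(\widetilde P)=\tilde\p_w H\,d_1+\tilde\p_z H\,d_2$ with averaged derivatives, one checks
\begin{equation}
\sgn(d_1)\big(-(H(P)-H(\widetilde P))\big)+\sgn(d_2)\big(H(P)-H(\widetilde P)\big)\leq 0\,,
\end{equation}
so the $\frac{1}{\veps}$-term drops out of $\frac{\mathrm{d}}{\mathrm{d}t}\int(|d_1|+|d_2|)\,\mathrm{d}x$ entirely and one recovers a genuine $L^1$-contraction within your framework, matching the $\veps$- and $T$-independence that the paper's adjoint argument obtains. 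Exploiting that cancellation would sharpen your proof to give the same uniform constant without changing its architecture. The rest of your write-up — translation invariance, the vanishing of the transport boundary term, the sign of the viscosity term, the use of Duhamel to justify the integrations by parts, and the final Tonelli step — is all sound.
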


\begin{proof}
We follow the argument in \cite{NataliniQuasilinearSystems}. Throughout the proof we use the fact that by Theorem \ref{thm-PSystemSemiLinear-GlobalExistenceOfViscositySolutions} the solutions $\Wen$ are uniformly bounded in $L^{\infty}$ with respect to $\nu$, taking values in some bounded open convex set $D \subset \bbR^2$, where $D$ depends on $D_0$, $a$ and $f$. For a fixed $h$ and $k$, define
\begin{equation}
    \Wenhk(x,t,y):= \Wen(x+h,t,y+k) - \Wen(x,t,y)\,.
\end{equation}
Then $\Wenhk$ satisfies
\begin{equation}\label{eq-PSystemSemiLinear-L1ContinuityEquation}
    \dt \Wenhk + \diag(-a,a) \dx \Wenhk - \frac{1}{\veps} B(\Wenhk) \Wenhk = \nu \dxx \Wenhk
\end{equation}
pointwise in $(x,t,y) \in \bbR \times (0,\infty) \times \Gamma_1$. Here,
\begin{equation}
\begin{split}
    B(\Wenhk) &= \diag\big( b_1^{\veps,\nu,h,k},b_2^{\veps,\nu,h,k} \big)\,, \\
    b_1^{\veps,\nu,h,k}(x,t,y) &:= - \intdmt{0}{1}{\p_w \Big[ H \big(\beta \wen(x+h,t,y+k) + (1-\beta)\wen(x,t,y), \zen(x,t,y) \big) \Big]}{\beta}\,, \\
    b_2^{\veps,\nu,h,k}(x,t,y) &:= \intdmt{0}{1}{\p_z \Big[ H \big(\wen(x,t,y), \beta \zen(x+h,t,y+k) + (1-\beta)\zen(x,t,y) \big) \Big]}{\beta}\,.
\end{split}
\end{equation}
Since $\cH$ is Lipschitz on $D$, $B$ has matrix norm bounded independent of $x$, $t$, $y$, $\veps$, $\nu$, $h$, and $k$.
Let $T>0$. For $\veps$, $\nu > 0$ let $\vphi^{\veps, \nu} = (\vphi^{\veps,\nu}_1, \vphi^{\veps, \nu}_2) \in \big[ C^{\infty}(\bbR \times (0,T] \times \Gamma) \big]^2 \cap \big[ L^{\infty}(\bbR \times [0,T] \times \Gamma) \big]^2$.
For reasons that will become clear, we choose $\vphi^{\veps, \nu}$ to be the solution of the adjoint differential system
\begin{equation}\label{eq-PSystemSemiLinear-BackwardParabolicSystem}
\begin{cases}
\dt \vphi^{\veps, \nu} + \diag(-a,a) \dx \vphi^{\veps, \nu} + \frac{1}{\veps} B(\Wenhk) \vphi^{\veps, \nu} + \nu \dxx \vphi^{\veps, \nu} = 0\,, &\qquad (x,t,y) \in \bbR \times [0,T] \times \Gamma_1 \\
\vphi^{\veps, \nu}(x,T,y) = \psi^{\veps, \nu}(x,y) &\qquad x \in \bbR\,, \, y \in \Gamma_1\,.
\end{cases}
\end{equation}
Here $\psi^{\veps, \nu}$ is a given function in $\big[ L^{\infty}(\bbR \times \Gamma) \big]^2$ with $\supp \psi^{\veps, \nu} \subset [-m,m] \times \Gamma_1$ and $\Vnorm{\psi^{\veps, \nu}}_{\infty} \leq 1$, to be chosen in a moment. Following line-by-line the proof of \cite[Lemma 4]{Kruzkov} we see that any solution of \eqref{eq-PSystemSemiLinear-BackwardParabolicSystem} satisfies
\begin{equation}\label{eq-PSystemSemiLinear-UniformBoundOnParabolicAdjointOperatorSolution}
    |\vphi^{\veps, \nu}(x,t,y)| \leq 2 \min \left\lbrace 1, \e^{- \frac{1}{\nu} (|x|-m-(1+a)(T-t))} \e^{ \frac{L_1}{\veps}(T-t) } \right\rbrace\,, \qquad (x,t,y) \in \bbR \times [0,T] \times \Gamma_1\,,
\end{equation}
where $L_1 = \Vnorm{\p_W \cH}_{L^{\infty}(D)}$.
Now, taking the Euclidean inner product of \eqref{eq-PSystemSemiLinear-L1ContinuityEquation} with $\vphi^{\veps, \nu}$, multiplying by $\rho(y)$, integrating over $\bbR \times [0,T] \times \Gamma_1$ and integrating by parts gives
\begin{equation}
\begin{split}
    \iintdm{\bbR}{\Gamma_1}{&\Wenhk(x,T,y) \cdot \vphi^{\veps, \nu}(x,T,y) \, \rho}{y}{x} = \iintdm{\bbR}{\Gamma_1}{\Wenhk(x,0,y) \cdot \vphi^{\veps, \nu}(x,0,y) \, \rho}{y}{x} \\
    &+ \iiintdmt{0}{T}{\bbR}{\Gamma_1}{\Wenhk(x,t,y) \Big[ \dt \vphi^{\veps, \nu} + \diag(-a,a) \dx \vphi^{\veps, \nu} + \frac{1}{\veps}B \vphi^{\veps, \nu} + \nu \dxx \vphi^{\veps, \nu} \Big] \, \rho}{y}{x}{t}\,.
\end{split}
\end{equation}
The only boundary term remaining after the integration by parts is the one involving the time derivative, since
\begin{equation}
|\vphi^{\veps,\nu}(x,t,y)| \leq C \e^{-|x|}\,, \qquad  (x,t,y) \in \bbR \times [0,T] \times \Gamma_1\,, \qquad \nu \in (0,1)\,,
\end{equation}
for some constant $C$ independent of $x$.
Therefore, choosing
\begin{equation}
\psi^{\veps,\nu}(x,y) = \chi_{[-m,m] \times \Gamma_1}(x,y)
    \begin{bmatrix}
    \sgn(\wenhk(x,T,y)) \\
    \sgn(\zenhk(x,T,y)) \\
    \end{bmatrix}
\end{equation}
and using the bound \eqref{eq-PSystemSemiLinear-UniformBoundOnParabolicAdjointOperatorSolution},
\begin{equation}
\begin{split}
    \iintdmt{-m}{m}{\Gamma_1}{\big| \Wenhk(x,T,y) \big| \, \rho(y)}{y}{x} &= \iintdm{\bbR}{\Gamma_1}{\Wenhk(x,0,y) \cdot \vphi^{\veps, \nu}(x,0,y) \, \rho}{y}{x} \\
    &\leq 2 \iintdm{\bbR}{\Gamma_1}{ \big| \Wnen(x+h,y+k) - \Wnen(x,y) \big| \, \rho(y)}{y}{x}\,, \\
\end{split}
\end{equation}
as desired.
\end{proof}

Now we estimate the $L^1$ modulus of continuity in time.

\begin{theorem}\label{thm-PSystemSemiLinear-L1ModulusOfContinuityInTime}
Let $\Wnen$ be a sequence of functions in  $\big[ C^{\infty}_c(\bbR \times \Gamma) \big]^2$ taking values in a bounded open convex set $D_0 \subset \bbR^2$. For each $\veps, \nu$ let $\Wen$ be the unique global smooth solution of \eqref{eq-PSystemSemiLinear-ViscosityApproximation-RiemannianCoordinates} corresponding to the data $\Wnen$. Let $m > 0$ and let $\Gamma_1 \Subset \Gamma$. Let $t$, $\tau > 0$. Then
\begin{equation}\label{eq-PSystemSemiLinear-L1ModulusOfContinuityInTime-Estimate}
\begin{split}
    \iintdm{|x| < m}{\Gamma_1}{&|\wen(x,t+\tau,y) - \wen(x,t,y)| + |\zen(x,t+\tau,y) - \zen(x,t,y)| \, \rho(y)}{y}{x} \\
    &\leq C \min_{0 < h \leq h_0} \left[ h + \max_{\substack{x' \in \{ -1,1 \} \\ y' \in \bbS^{d-1}}} \iintdm{\bbR}{\Gamma_1}{|\Wnen(x+hx',y+hy')-\Wnen(x,y)| \, \rho(y)}{y}{x} + \frac{\tau}{h^2} \right]\,,
\end{split}
\end{equation}
where the constant $C$ is independent of $\nu$.
\end{theorem}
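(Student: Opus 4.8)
The plan is to run the standard comparison-with-a-mollification argument, which turns a bound on the modulus of continuity in time into the already-available bound on the modulus of continuity in space (Theorem~\ref{thm-PSystemSemiLinear-L1ModulusOfContinuityInSpace}) together with a drift term read off from the equation. Since $|\wen(x,t+\tau,y)-\wen(x,t,y)|+|\zen(x,t+\tau,y)-\zen(x,t,y)|$ is comparable to $|\Wen(x,t+\tau,y)-\Wen(x,t,y)|$, it suffices to estimate the latter. Fix $h\in(0,h_0]$, where we may assume $h_0\le1$ and $h_0<\tfrac12\mathrm{dist}(\Gamma_1,\Gamma^c)$; it is enough to prove the estimate with this fixed $h$ in place of the minimum. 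Choose nonnegative $\omega\in C^\infty_c(\bbR)$ and $\zeta\in C^\infty_c(\bbR^N)$ of unit mass, supported in the unit ball, put $\omega_h(x):=h^{-1}\omega(x/h)$, $\zeta_h(y):=h^{-N}\zeta(y/h)$, and write $\Wen_h:=\Wen\ast(\omega_h\otimes\zeta_h)$ (convolution in the $(x,y)$ variables). By the triangle inequality, pointwise,
\[
|\Wen(x,t+\tau,y)-\Wen(x,t,y)|\le|\Wen(x,t+\tau,y)-\Wen_h(x,t+\tau,y)|+|\Wen_h(x,t+\tau,y)-\Wen_h(x,t,y)|+|\Wen_h(x,t,y)-\Wen(x,t,y)|,
\]
and I would integrate this over $\{|x|<m\}\times\Gamma_1$ against $\rho\,\mathrm{d}y\,\mathrm{d}x$ and treat the three terms separately.

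For the first and third terms, I would write $\Wen_h-\Wen$ as an average, over $|p|,|q|\le1$, of the translates $\Wen(\cdot-hp,\cdot,\cdot-hq)-\Wen$, and apply Theorem~\ref{thm-PSystemSemiLinear-L1ModulusOfContinuityInSpace} to each translate (its $y$-shift has size $\le h\le h_0$, so the theorem applies). This bounds both contributions by a constant \emph{independent of $\nu$} times the modulus of continuity of the initial data $\Wnen$ at scale $h$, i.e. by the maximum on the right-hand side of \eqref{eq-PSystemSemiLinear-L1ModulusOfContinuityInTime-Estimate}, the lone ``$h$'' in the statement absorbing lower-order slack. The only care needed is, using the $C^\infty_c$-smoothness of $\Wnen$ and subadditivity of the $L^1$ modulus, to pass from translates of size $\le h$ to the axis-type translates $x'\in\{-1,1\}$, $y'\in\bbS^{d-1}$ appearing there.

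For the middle term I would use that $\Wen$ is a classical solution of \eqref{eq-PSystemSemiLinear-ViscosityApproximation-RiemannianCoordinates}, so that, after moving the $x$-derivatives onto the kernel,
\[
\Wen_h(x,t+\tau,y)-\Wen_h(x,t,y)=\int_t^{t+\tau}\!\Big(-\diag(-a,a)\,\Wen\ast(\omega_h'\otimes\zeta_h)+\nu\,\Wen\ast(\omega_h''\otimes\zeta_h)+\tfrac1\veps\,\cH(\Wen)\ast(\omega_h\otimes\zeta_h)\Big)(x,s,y)\,\mathrm{d}s.
\]
Using $\Vnorm{\omega_h'}_1=h^{-1}\Vnorm{\omega'}_1$, $\Vnorm{\omega_h''}_1=h^{-2}\Vnorm{\omega''}_1$, $\Vnorm{\zeta_h}_1=1$, the $\nu$-uniform $L^\infty$ bound on $\Wen$ (hence on $\cH(\Wen)$) supplied by Theorem~\ref{thm-PSystemSemiLinear-GlobalExistenceOfViscositySolutions}, $\rho\in L^\infty(\Gamma)$, and $|\{|x|<m\}\times\Gamma_1|<\infty$, Young's inequality yields
\[
\iintdmlim{|x|<m}{\Gamma_1}{|\Wen_h(x,t+\tau,y)-\Wen_h(x,t,y)|\,\rho}{y}{x}\le C\Big(\frac{\tau}{h}+\frac{\nu\tau}{h^2}+\frac{\tau}{\veps}\Big),
\]
with $C$ depending only on $a$, $m$, $\Gamma_1$, $\rho$ and the $L^\infty$ bound on $\Wen$, hence independent of $\nu$. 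For $h\le h_0\le1$ and $\nu\le1$ the first two terms are $\le\tau/h^2$, and $\tau/\veps\le(h_0^2/\veps)\,\tau/h^2$ gets absorbed into a $\veps$-dependent constant.

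Adding the three estimates and taking the infimum over $h\in(0,h_0]$ gives \eqref{eq-PSystemSemiLinear-L1ModulusOfContinuityInTime-Estimate}. The step I expect to be the main obstacle — and the reason the constant is allowed to depend on $\veps$ — is the stiff relaxation contribution $\tfrac1\veps\,\cH(\Wen)\ast(\omega_h\otimes\zeta_h)$: estimated crudely it costs a factor $1/\veps$, and even exploiting an $O(\veps)$ bound on the $L^2$ distance of $\Wen$ to the equilibrium manifold only improves this to $O(\sqrt{\tau/\veps})$, so the argument genuinely relies on the statement asking for uniformity in $\nu$ and not in $\veps$. Everything that must stay $\nu$-uniform — the $L^\infty$ bound on $\Wen$ and the reduction of the spatial modulus of $\Wen$ to that of $\Wnen$ — is delivered $\nu$-uniformly by Theorems~\ref{thm-PSystemSemiLinear-GlobalExistenceOfViscositySolutions} and \ref{thm-PSystemSemiLinear-L1ModulusOfContinuityInSpace}, so the rest is the routine bookkeeping flagged in the second paragraph.
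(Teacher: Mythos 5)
Your proposal is correct and proves the estimate, but it implements the mollification argument differently than the paper. The paper's proof constructs a test function $\psi_h$ by mollifying $\chi_{[-m+h,m-h]}\chi_{\Gamma_{1,h}}\sgn(\went)$, pairs the PDE with it to get the $\tau/h^2$ term, and then splits the error $|\went|-\psi_h\went$ into an interior piece (bounded by the spatial modulus, your outer terms) and a boundary piece coming from the cutoff $\chi$ (this is the source of the lone $h$ in the statement). You instead mollify the \emph{solution} $\Wen_h=\Wen\ast(\omega_h\otimes\zeta_h)$ and decompose $\Wen(t+\tau)-\Wen(t)$ by the triangle inequality through $\Wen_h$; your middle term plays the role of the paper's pairing with $\psi_h$ and gives the same $C(\tau/h+\nu\tau/h^2+\tau/\veps)\le C_\veps\tau/h^2$ once $h\le h_0\le 1$ and $\nu\le 1$, and your outer terms reduce to Theorem~\ref{thm-PSystemSemiLinear-L1ModulusOfContinuityInSpace} exactly as the paper's term $\mathrm{I}$ does. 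Because you convolve the solution over all of $\bbR$ rather than test against a function with compact support in $[-m,m]$, no cutoff is needed, there are no boundary terms when moving $\partial_x$ onto the kernel, and the $O(h)$ piece simply never appears -- so your bound is, if anything, slightly cleaner, with the stated $h$ serving purely as slack. What the two proofs share: the $\nu$-uniform $L^\infty$ bound from Theorem~\ref{thm-PSystemSemiLinear-GlobalExistenceOfViscositySolutions}, the $\nu$-uniform spatial modulus from Theorem~\ref{thm-PSystemSemiLinear-L1ModulusOfContinuityInSpace}, the $\rho\in L^\infty$ and bounded-domain facts, and the decision to absorb $\tau/\veps$ into the $\tau/h^2$ term at the cost of an $\veps$-dependent constant (which the statement permits since it only asks for $\nu$-uniformity). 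The one step you acknowledge but leave informal -- passing from a translate $(p,q)$ with $|p|,|q|\le h$ to the product-sphere translates $(hx',hy')$ with $x'\in\{-1,1\}$, $y'\in\bbS^{N-1}$ appearing in the statement -- is also left at the same level of informality in the paper's own derivation of the bound on $\mathrm{I}$, so it is not a gap specific to your argument.
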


\begin{proof}
As in the previous proof we use the fact that by Theorem \ref{thm-PSystemSemiLinear-GlobalExistenceOfViscositySolutions} the solutions $\Wen$ are uniformly bounded in $L^{\infty}$ with respect to $\nu$, taking values in some bounded open convex set $D \subset \bbR^2$, where $D$ depends on $D_0$, $a$ and $f$. We begin with the estimate for $\wen$. Define $\went(x,y) := \wen(x,t+\tau,y) - \wen(x,t,y)$. To estimate $\went(x,t,y)$ in $L^1_{\rho, loc}(\bbR \times \Gamma)$ we will add and subtract an appropriately mollified version of $\went(x,t,y)$ and estimate each term separately.
Let $\psi : \bbR \times \bbR^N \to \bbR$ be a standard mollifier, with $\psi \geq 0$, $\supp \psi \subset \{ (x,y) \in \bbR \times \bbR^N \, \big| \, |(x,y)| < 1 \}$, and $\Vnorm{\psi}_1 =  1$. Choose $0 < h_0 < 1$ so that for any $h \in (0,h_0]$ the sets $\Gamma_{1,h} := \{ y \in \Gamma_1 \, | \, \text{dist}(y,\p \Gamma_1) > h \} $ and $[-m+h,m-h]$ are both nonempty. Define $\psi_h$ as the convolution
\begin{equation}
    \psi_h(x,y) := \frac{1}{h^{N+1}} \iintdm{\bbR}{\bbR^N}{\psi \left( \frac{x-x'}{h} , \frac{y-y'}{h} \right) \chi_{|x| < m-h}(x') \chi_{\Gamma_{1,h}}(y') \sgn(\went(x',y'))}{y'}{x'}\,.
\end{equation}
The integral defining $\psi_h$ converges absolutely, and $\supp \psi_h \subset [-m,m] \times \Gamma_1$.
Multiplying \eqref{eq-PSystemSemiLinear-ViscosityApproximation-RiemannianCoordinates}$_1$ by $\psi_h \, \rho$, integrating over $[-m,m] \times [t,t+\tau] \times \Gamma_1$ and integrating by parts, 
\begin{equation}\label{eq-PSystemSemiLinear-L1ModulusOfContinuityInTimeProof-Estimate1}
\begin{split}
    \iintdm{|x| \leq m}{\Gamma_1}{\psi_h(x,y) (\went(x,y)) \rho(y)}{x}{y} &= \iiintdmt{t}{t+\tau}{|x| \leq m}{\Gamma_1}{\Big( -a \, \dx \psi_h \, \wen(x,s,y) \\
    & \qquad - \frac{1}{\veps} H(\wen,\zen) \psi_h + \nu \, \dxx \psi_h \, \wen(x,s,y) \Big) \, \rho}{y}{x}{s}\,.
\end{split}
\end{equation}
Note that there exists a constant $C$ such that $|\psi_h| \leq C$, $|\dx \psi_h| < Ch^{-1}$ and $|\dxx \psi_h| < Ch^{-2}$. Since $H(0,0) = 0$, we have that $|H(\wen,\zen)| \leq \Vnorm{\p_W H}_{L^{\infty}(D)} |\Wen|$. Note that the $\Wen$ take values in $D$. For $\nu < 1$ the right-hand side of \eqref{eq-PSystemSemiLinear-L1ModulusOfContinuityInTimeProof-Estimate1} is therefore majorized by
\begin{equation}\label{eq-PSystemSemiLinear-L1ModulusOfContinuityInTimeProof-Estimate2}
    C \, |D| \Vnorm{\rho}_{L^1(\Gamma)} \left( a \, h^{-1} + \frac{\Vnorm{\p_W H}_{L^{\infty}(D)}}{\veps} + \nu h^{-2} \right) (2m) \tau \leq \frac{C_1 \tau}{h^2}\,,
\end{equation}
since $h < 1$. The constant $C_1$ depends on $a$, $D$, $m$, $\rho$, $\veps$ and $H$ but not $\nu$, $\tau$ or $h$.
Now, by properties of standard mollifiers
\begin{equation}
\begin{split}
    \iintdmlim{|x| \leq m}{\Gamma_1}{&\Big| |\went(x,y)| - \psi_h \, \went(x,y) \Big| \rho }{y}{x} \\
    &= \iintdmlim{|x| \leq m}{\Gamma_1}{\bigg|  \frac{1}{h^{N+1}} \iintdm{\bbR}{\bbR^N}{\psi \left( \frac{x-x'}{h}, \frac{y-y'}{h} \right) \\ 
    & \qquad \times \Big( |\went(x,y)| - \chi_{\substack{|x| < m - h \\ y \in \Gamma_{1,h}}}(x',y') \sgn(\went(x',y')) \went(x,y) \Big) }{y'}{x'} \bigg| \rho}{y}{x} \\
    &\leq \iintdmlim{|x| \leq m}{\Gamma_1}{ \frac{1}{h^{N+1}} \iintdmlim{\bbR}{\bbR^N}{\psi \left( \frac{x-x'}{h}, \frac{y-y'}{h} \right) \\
    &\qquad \qquad \times \Big| |\went(x,y)| -  \sgn(\went(x',y')) \went(x,y) \Big| }{y'}{x'}  \rho}{y}{x} \\
    &\qquad + \iintdmlim{|x| \leq m}{\Gamma_1}{ \frac{1}{h^{N+1}} \iintdm{\bbR}{\bbR^N}{\psi \left( \frac{x-x'}{h}, \frac{y-y'}{h} \right) \\
    &\qquad \qquad \times \big| \sgn(\went(x',y') \went(x,y) \big| \, \left| 1 - \chi_{\substack{|x| < m - h \\ y \in \Gamma_{1,h}}}(x',y') \right|}{y'}{x'}  \rho}{y}{x} \\
    &:= \mathrm{I} + \mathrm{II}\,,
\end{split}
\end{equation}
where in the second (in)equality the quantity 
\begin{equation}
    \frac{1}{h^{N+1}} \psi \left( \frac{x-x'}{h}, \frac{y-y'}{h} \right) \sgn(\went(x',y')) \went(x,y)
\end{equation}
was added and subtracted. We now estimate $\mathrm{I}$ and $\mathrm{II}$. Denoting $(x,y)$ by $\bx$ and $(x',y')$ by $\bx'$,
\begin{equation}
    \mathrm{II} \leq C |D| \, \Vnorm{\rho}_{L^\infty(\Gamma)} \iintdm{[-m,m] \times \Gamma_1}{[-m+h, m-h]^c \times \Gamma^c_{1,h}}{\frac{1}{h^{N+1}} \psi \left( \frac{\bx - \bx'}{h} \right)}{\bx'}{\bx}\,.
\end{equation}
Since $\supp \psi \subset \{ (x,y) \in \bbR \times \bbR^N \, \big| \, |(x,y)| < 1 \}$, we have that $|x'| \leq |x'-x| + |x| \leq h + m$ and $|y'| \leq |y'-y| + |y| \leq h + \sqrt{N}$ on the regions of integration. Therefore,
\begin{equation}\label{eq-PSystemSemiLinear-L1ModulusOfContinuityInTimeProof-Estimate3}
\begin{split}
    \mathrm{II} &\leq C \iintdm{m-h \leq |x'| \leq m+h}{\Gamma_1}{\left( \iintdm{|x| \leq m}{|y| \leq h + \sqrt{N}}{\frac{1}{h^{N+1}} \psi \left( \frac{x-x'}{h}, \frac{y-y'}{h} \right) }{y}{x} \right)}{y'}{x'} \\
    &\leq C  \iintdm{m-h \leq |x'| \leq m+h}{\Gamma_1}{1}{y'}{x'} = Ch\,.
\end{split}
\end{equation}
The constant $C$ depends on $D$, $\rho$, and $\Gamma_1$ but not $\veps$, $\nu$, $\tau$ or $h$. To estimate $\mathrm{I}$, note that for any function $\vphi : \bbR \to \bbR$
\begin{equation}
    \big| |\vphi(x)| - \sgn(\vphi(z)) \vphi(x) \big| = \big| |\vphi(x)| - |\vphi(z)| + [\vphi(z) - \vphi(x) ] \sgn(\vphi(z)) \big| \leq 2 |\vphi(x) - \vphi(z)|\,.
\end{equation}
Then by Theorem \ref{thm-PSystemSemiLinear-L1ModulusOfContinuityInSpace},
\begin{equation}\label{eq-PSystemSemiLinear-L1ModulusOfContinuityInTimeProof-Estimate4}
\begin{split}
    \mathrm{I} &\leq \iintdm{|x| \leq m}{\Gamma_1}{\frac{2}{h^{N+1}} \iintdm{\bbR}{\bbR^N}{ \psi \left( \frac{x-x'}{h}, \frac{y-y'}{h} \right) \big| \went(x,y) - \went(x',y') \big| }{y'}{x'} \,  \rho}{y}{x} \\
    &\leq \iintdm{|x| \leq m}{\Gamma_1}{2 \iintdm{\bbR}{\bbR^N}{ \psi (x'',y'') \big| \went(x,y) - \went(x+hx'',y+hy'') \big| }{y''}{x''} \,  \rho}{y}{x} \\
    &\leq 2 \iintdm{\bbR}{\bbR^N}{\psi(x'',y'') \left( \iintdm{|x| \leq m}{\Gamma_1}{ \big| \wen(x,t+\tau,y) - \wen(x+hx'',t+\tau,y+hy'') \big| \, \rho(y)}{y}{x}\right) }{y''}{x''}\\
    &\quad + 2 \iintdm{\bbR}{\bbR^N}{\psi(x'',y'') \left( \iintdm{|x| \leq m}{\Gamma_1}{ \big| \wen(x,t,y) - \wen(x+hx'',t,y+hy'') \big| \, \rho(y)}{y}{x}\right) }{y''}{x''}\\
    &\leq 4 \max_{\substack{x'' \in \{ -1, 1 \} \\ y'' \in \bbS^{N-1}}} \iintdm{|x| \leq m}{\Gamma_1}{|\Wnen(x,y)-\Wnen(x+hx'',y+hy'')| \, \rho(y)}{y}{x}\,.
\end{split}
\end{equation}
Combining \eqref{eq-PSystemSemiLinear-L1ModulusOfContinuityInTimeProof-Estimate2}, \eqref{eq-PSystemSemiLinear-L1ModulusOfContinuityInTimeProof-Estimate3} and \eqref{eq-PSystemSemiLinear-L1ModulusOfContinuityInTimeProof-Estimate4} we arrive at \eqref{eq-PSystemSemiLinear-L1ModulusOfContinuityInTime-Estimate}, but with only $\wen$ on the left-hand side. Repeat the proof for $\zen$ and the proof is complete.
\end{proof}

We are finally ready to present the proof of the main theorem in this section.

\begin{proof}[Proof of Theorem \ref{thm-PSystemSemiLinear-WellPosednessInL1ForDiagonalSystem}]
Let $\veps > 0$. First assume that $\Wne \in \big[ L^1_{\rho}(\bbR \times \Gamma) \big]^2 \cap \big[ L^{\infty}(\bbR \times \Gamma) \big]^2$. For $\nu \in (0,1)$, let $(\Wnen)$ be a sequence in $\big[ C^{\infty}_c(\bbR \times \Gamma) \big]^2$ satisfying $\Vnorm{\Wnen}_{\infty} \leq \Vnorm{\Wne}_{\infty} $ and converging to $\Wne$ in $\big[ L^1_{\rho}(\bbR \times \Gamma) \big]^2$. Therefore, there exists a nondecreasing function $\omega^{\veps} : [0,\infty) \to \bbR$ with $\lim\limits_{m \to 0} \omega^{\veps}(m) = 0$ such that
\begin{equation}\label{eq-PSystemSemiLinear-InitialDataModulusOfContinuityEstimate}
    \iintdm{\bbR}{\Gamma}{|\Wnen(x+h,y+k)-\Wnen(x,y)| \, \rho(y)}{y}{x} \leq \omega^{\veps}(|h| + |k|)
\end{equation}
for every $\nu > 0$. Now for each $\nu > 0$ define $\Wen$ as the unique global smooth solution of \eqref{eq-PSystemSemiLinear-ViscosityApproximation-RiemannianCoordinates} with initial data $\Wnen$. From Theorem \ref{thm-PSystemSemiLinear-L1ModulusOfContinuityInTime} and the estimate \eqref{eq-PSystemSemiLinear-InitialDataModulusOfContinuityEstimate} we have that the sequence $(\Wen)_{\nu}$ is equicontinuous in the space $C \left( [0,\infty); \big[ L^1_{\rho, loc}(\bbR \times \Gamma) \big]^2 \right)$. Further, Theorem \ref{thm-PSystemSemiLinear-L1ModulusOfContinuityInSpace} and the estimate \eqref{eq-PSystemSemiLinear-InitialDataModulusOfContinuityEstimate} reveals that for each fixed $t > 0$, the sequence $(\Wen(\cdot,t,\cdot))_{\nu}$ is compact in $\big[ L^1_{\rho,loc}(\bbR \times \Gamma) \big]^2$. Therefore, the Frechet-Kolmogorov Theorem implies that for any subsequence $\nu_j$ there exists a further subsequence (still denoted $\nu_j$) and a function $\We \in C \left( [0,\infty); \big[ L^1_{\rho, loc}(\bbR \times \Gamma) \big]^2 \right)$ such that $(W^{\veps,\nu_j}(\cdot,t,\cdot))_j$ converges in $\big[ L^1_{\rho,loc} (\bbR \times \Gamma) \big]^2$ to $\We(\cdot,t,\cdot)$, uniformly in $t$ on compact subsets of $[0,\infty)$.
By Theorem \ref{thm-PSystemSemiLinear-AlmostEverywhereConvergenceToEntropySolution} the function $\We$ is an entropy solution to \eqref{eq-PSystemSemiLinear-RiemannianCoordinates} and satisfies the same $\veps$-independent $L^{\infty}$ bounds \eqref{eq-PSystemSemiLinear-LInftyBoundsOnViscosityApproximation} as the $\Wen$. Since the entropy solution to \eqref{eq-PSystemSemiLinear-RiemannianCoordinates} is unique (Corollary \ref{cor-PSystemSemiLinear-EntropySolutionsOfRiemannianCoordinatesAreUnique}), the entire sequence $(\Wen)_{\nu}$ converges to $\We$ in $C \left( [0,\infty); \big[ L^1_{\rho,loc}(\bbR \times \Gamma) \big]^2 \right) $.

Now, suppose that $\Wne \in \big[ L^{\infty}(\bbR \times \Gamma) \big]^2$. For $j > 0$ define $\chi_j  : \bbR^2 \to \bbR$ to be the characteristic function on the set $[-j,j] \times \Gamma$. Define $W^{\veps,j}$ to be the unique entropy solution of \eqref{eq-PSystemSemiLinear-RiemannianCoordinates} corresponding to the initial data $\chi_j \Wne \in \big[ L^1_{\rho}(\bbR \times \Gamma) \big]^2 \cap \big[ L^{\infty}(\bbR \times \Gamma) \big]^2$. As $j \to \infty$, $\chi_j \Wne \to \Wne$ in $\big[ L^1_{\rho^2, loc} (\bbR \times \Gamma) \big]^2 \cap \big[ L^1_{\rho, loc} (\bbR \times \Gamma) \big]^2$.
On account of \eqref{eq-PSystemSemiLinear-L1LocStabilityEstimate} the sequence $W^{\veps,j}$ converges in $\big[ L^1_{\rho^2,loc} (\bbR \times (0,\infty) \times \Gamma) \big]^2$ to some function $\We$.
Clearly $\We$ satisfies the entropy inequality \eqref{eq-PSystemSemiLinear-DiagonalSystemEntropyCondition1}. $\We$ also satisfies \eqref{eq-PSystemSemiLinear-DiagonalSystemEntropyCondition2}; write
\begin{equation}
\begin{split}
\frac{1}{T} \iiintdmtlim{0}{T}{V}{\Gamma}{|\We(x,t,y) - \Wne(x,y)| \rho(y)}{y}{x}{t} &\leq \frac{1}{T} \iiintdmtlim{0}{T}{V}{\Gamma}{|\We(x,t,y) - W^{\veps,j}(x,t,y)| \rho(y)}{y}{x}{t} \\
&\qquad + \frac{1}{T} \iiintdmtlim{0}{T}{V}{\Gamma}{|W^{\veps, j}(x,t,y) - \chi_j \Wne(x,y)| \rho(y)}{y}{x}{t} \\
&\qquad + \frac{1}{T} \iiintdmtlim{0}{T}{V}{\Gamma}{|\chi_j \Wne(x,y) - \Wne(x,y)| \rho(y)}{y}{x}{t} \\
&= \mathrm{I} + \mathrm{II} + \mathrm{III}\,,
\end{split}
\end{equation}
for any $V \Subset \bbR$. By the estimate \eqref{eq-PSystemSemiLinear-L1LocStabilityEstimate} we see that $W^{\veps,j}=\We$ on any compact subset of $\bbR \times [0,\infty) \times \Gamma$ for $j$ sufficiently large. Thus, $\mathrm{I} = 0$ and $\mathrm{III}=0$ for $j$ large enough. By definition of $W^{\veps,j}$ as an entropy solution, $\mathrm{II}$ converges to zero as $T \to 0^+$. Thus, $\We$ satisfies \eqref{eq-PSystemSemiLinear-DiagonalSystemEntropyCondition2}, and is therefore a weak entropy solution of \eqref{eq-PSystemSemiLinear-RiemannianCoordinates} with initial data $\Wne$. In addition, $\We$ satisfies
\begin{equation}\label{eq-LInftyBoundOnWVeps}
    \Vnorm{\We}_{\infty} \leq \beta + \max \left\lbrace \left| f \left( \frac{\beta}{a} \right) \right| \, , \, \left| f \left( - \frac{\beta}{a} \right) \right| \right\rbrace\,,
\end{equation}
where $\beta := \sup_{\veps} \Vnorm{\Wne}_{\infty}$.
By Corollary \ref{cor-PSystemSemiLinear-EntropySolutionsOfRiemannianCoordinatesAreUnique} this entropy solution $\We$ is unique.
Since $W^{\veps,j} = \We$ on compact subsets of $\bbR \times [0,\infty) \times \Gamma$ for $j$ large enough, and since $W^{\veps,j} \in C \left( [0,\infty); \big[ L^1_{\rho,loc}(\bbR \times \Gamma) \big]^2 \right)$ it follows that $\We \in C \left( [0,\infty); \big[ L^1_{\rho, loc}(\bbR \times \Gamma) \big]^2 \right)$. The proof is complete.
\end{proof}
\section{Proof of Step 1 in Theorem \ref{thm-Theorem4.1}}\label{apdx-Step1}
\begin{proof}
We follow \cite[Theorem~5.2.1]{Dafermos} with appropriate modifications. We use the relaxation term to prove $L^2$ convergence of the solution to equilibrium rather than proving the $L^2$ stability of weak solutions. Our choices of test function are also modified to account for the lack of initial conditions in the definition of the entropy inequality. 
Use one of the strongly convex entropy pairs constructed in Lemma \ref{lma-EntropyExistence}, denoted here by $(\eta,Q)$. For $U \in B_{\gamma}$, define the relative entropy pair 
\begin{equation}\label{eq-Step1Proof-1}
\begin{split}
    h(U,\Ubar) &:= \eta(U)-\eta(\Ubar) - \dU \eta(\Ubar) (U-\Ubar)\,,\\
    Y(U,\Ubar) &:= Q(U) - Q(\Ubar) - \dU \eta(\Ubar)(F(U)-F(\Ubar))\,.
\end{split}
\end{equation}
We claim that there exists an $\alpha > 0$ such that for every $U \in B_{\gamma}$, $\Ubar \in K$,
\begin{equation}\label{eq-Theorem4.1Proof-Step1-EntropyFluxBound}
    |Y(U,\Ubar)| \leq \alpha h(U,\Ubar)\,.
\end{equation}
Indeed, both $h$ and $Y$ are quadratic in $U - \Ubar$, and since $\dUU \eta(U)$ is uniformly positive definite on $B_{\gamma}$ we have that the quotient $|Y| / h$ is bounded from above by a positive constant $\alpha$.

Now, let $T>0$, and let $\vphi \in C^1_c(\bbR \times (0,T) \times \Gamma)$ be a nonnegative test function. Then since $\Ue$ is an entropy solution of \eqref{eq-FullSystemRandom} and $\Ubar \in K$ is a classical solution of \eqref{eq-FullSystemRandom},
\begin{equation}\label{eq-Step1Proof-2}
\begin{split}
    &0 \leq \iiintdmt{0}{T}{\bbR}{\Gamma}{\left( \dt \vphi \, \eta(\Ue) + \dx \vphi \, Q(\Ue) - \frac{1}{\veps} \vphi \, \dv \eta(\Ue)r(\Ue) \right) \rho}{y}{x}{t} \\
    &= \iiintdmt{0}{T}{\bbR}{\Gamma}{\left( \dt \vphi \big(  \eta(\Ue) - \eta(\Ubar) \big) + \dx \vphi \big ( Q(\Ue) - Q(\Ubar) \big) - \frac{1}{\veps} \vphi \, \dv \eta(\Ue)r(\Ue) \right) \rho}{y}{x}{t}\,.
\end{split}
\end{equation}
Subtracting the linear terms in the definitions of $h$ and $Y$ (see \eqref{eq-Step1Proof-1}) from both sides of \eqref{eq-Step1Proof-2} gives
\begin{equation}\label{eq-Step1Proof-3}
\begin{split}
    \iiintdmt{0}{T}{\bbR}{\Gamma}{&\left( \dt \vphi \, h(\Ue,\Ubar) + \dx \vphi \, Y(\Ue,\Ubar) - \frac{1}{\veps} \vphi \, \dv \eta(\Ue)r(\Ue) \right) \rho}{y}{x}{t} \\
    &\geq - \iiintdmt{0}{T}{\bbR}{\Gamma}{\left( \dt \vphi \, \dU \eta(\Ubar) (\Ue-\Ubar) + \dx \vphi \, \dU \eta(\Ubar)(F(\Ue)-F(\Ubar))  \right) \rho}{y}{x}{t}\,.
\end{split}
\end{equation}
Now, since $\dv \eta(\Ubar) = 0$ by \eqref{eq-EntropyRestrictionToK}, we can subtract $\frac{1}{\veps} \vphi \, \dv \eta(\Ubar) (r(\Ue) - r(\Ubar)) \rho$ from the right hand side of \eqref{eq-Step1Proof-3} and obtain
\begin{equation}\label{eq-Step1Proof-4}
\begin{split}
    \iiintdmt{0}{T}{\bbR}{\Gamma}{&\left( \dt \vphi \, h(\Ue,\Ubar) + \dx \vphi \, Y(\Ue,\Ubar) - \frac{1}{\veps} \vphi \, \dv \eta(\Ue)r(\Ue) \right) \rho}{y}{x}{t} \\
    &\geq - \iiintdmt{0}{T}{\bbR}{\Gamma}{\left( \dt \vphi \, \dU \eta(\Ubar) \, \Ue + \dx \vphi \, \dU \eta(\Ubar) \, F(\Ue) - \frac{1}{\veps} \vphi \, \dv \eta(\Ubar) \, r(\Ue) \right) \rho}{y}{x}{t} \\
    & \quad + \iiintdmt{0}{T}{\bbR}{\Gamma}{\left( \dt \vphi \, \dU \eta(\Ubar) \, \Ubar + \dx \vphi \, \dU \eta(\Ubar) \, F(\Ubar) - \frac{1}{\veps} \vphi \, \dv \eta(\Ubar) r(\Ubar) \right) \rho}{y}{x}{t}\,.
\end{split}
\end{equation}
Note that $\vphi \, \dU \eta(\Ubar) \in C^1_c(\bbR \times (0,T) \times \Gamma)$ is a valid test function. Using that $\Ue$, $\Ubar$ are weak (resp. classical) solutions of \eqref{eq-FullSystemRandom}, we get that both terms on the right-hand side of \eqref{eq-Step1Proof-5} are equal to $0$. Therefore,
\begin{equation}\label{eq-Theorem4.1Proof-Step1-InequalityBeforeTestFunctionIsChosen}
\begin{split}
    \iiintdmt{0}{T}{\bbR}{\Gamma}{&\left( \dt \vphi \, h(\Ue,\Ubar) + \dx \vphi \, Y(\Ue,\Ubar) - \frac{1}{\veps} \vphi \, \dv \eta(\Ue)r(\Ue) \right) \rho}{y}{x}{t} \geq 0\,.
\end{split}
\end{equation}
Now we will choose an appropriate test function $\vphi(x,t,y)$ and use Lebesgue differentiation to obtain the appropriate estimate. Fix $t_1$ and $t_2$ such that $0 < t_1 < t_2 < T$. Let $m > 0$. Let $\sigma_1 \in (t_1, t_2)$, $\sigma_2 \in (t_1, t_2)$ such that $\sigma_1 < \sigma_2$. Let $\delta> 0$ be a constant sufficiently small so that $\sigma_1 + \delta < \sigma_2$. 
Set $\vphi(x,t,y) := \chi(x,t) \, \psi(t)$, where
\begin{equation}
\psi(t) :=
\begin{cases}
    0\,, & 0 \leq t < \sigma_1\,, \\
    \frac{1}{\delta}(t - \sigma_1)\,, & \sigma_1 \leq t < \sigma_1 + \delta\,, \\
    1\,, & \sigma_1 + \delta \leq t < \sigma_2\,, \\
    \frac{1}{\delta} (\sigma_2 - t) + 1\,, & \sigma_2 \leq t < \sigma_2 + \delta\,, \\
    0\,, & \sigma_2 + \delta \leq t < \infty\,,
\end{cases}
\end{equation}
and
\begin{equation}
\chi(x,t) :=
\begin{cases}
    1\,, & |x|-m-\alpha(t_2 - t) < 0\,, \\
    \frac{1}{\delta}(m+\alpha(t_2-t) - |x|) + 1\,, & 0 \leq |x|-m-\alpha(t_2 - t) < \delta\,, \\
    0\,, & |x|-m-\alpha(t_2 - t) \geq \delta\,.
\end{cases}
\end{equation}
With this choice of $\vphi$, \eqref{eq-Theorem4.1Proof-Step1-InequalityBeforeTestFunctionIsChosen} becomes
\begin{equation}\label{eq-Step1Proof-5}
\begin{split}
    & \frac{1}{\delta} \int\limits_{\sigma_1}^{\sigma_1+\delta} \int\limits_{|x| \leq m +\alpha(t_2-t)} \int\limits_{\Gamma}{ h(\Ue,\Ubar) \, \rho}\, \mathrm{d}y \, \mathrm{d}x \, \mathrm{d}t - \frac{1}{\delta} \int\limits_{\sigma_2}^{\sigma_2+\delta} \int\limits_{|x| \leq m +\alpha(t_2-t)} \int\limits_{\Gamma}{h(\Ue,\Ubar) \, \rho}\, \mathrm{d}y \, \mathrm{d}x \, \mathrm{d}t \\
    & \qquad - \frac{1}{\veps} \int\limits_{\sigma_1+\delta}^{\sigma_2} \int\limits_{|x| \leq m + \alpha(t_2-t)} \int\limits_{\Gamma}{\dv \eta(\Ue) r(\Ue) \rho}\, \mathrm{d}y \, \mathrm{d}x \, \mathrm{d}t + O(\delta) \\
    &\qquad \qquad \geq \frac{1}{\delta} \int\limits_{\sigma_1+\delta}^{\sigma_2} \int\limits_{0 \leq |x| - m - \alpha(t_2-t) \leq \delta} \int\limits_{\Gamma}{\left( \alpha h(\Ue,\Ubar) + Y(\Ue,\Ubar) \frac{x}{|x|} \right) \rho}\, \mathrm{d}y \, \mathrm{d}x \, \mathrm{d}t\,.
\end{split}
\end{equation}
By \eqref{eq-Theorem4.1Proof-Step1-EntropyFluxBound} the term on the right-hand side of \eqref{eq-Step1Proof-5} is nonnegative. Thus we can drop said term, and taking $\delta \to 0$ we get
\begin{equation}
\begin{split}
    &\int\limits_{|x| \leq m +\alpha(t_2-\sigma_1)}{\int\limits_{\Gamma}{ h(\Ue(x,\sigma_1,y),\Ubar) \, \rho}\, \mathrm{d}y }\, \mathrm{d}x - \int\limits_{|x| \leq m +\alpha(t_2-\sigma_2)}{\int\limits_{\Gamma}{h(\Ue(x,\sigma_2,y),\Ubar) \, \rho}\, \mathrm{d}y}\, \mathrm{d}x \\
    & \qquad - \frac{1}{\veps} \int\limits_{\sigma_1}^{\sigma_2} \int\limits_{|x| \leq m + \alpha(t_2-t)} \int\limits_{\Gamma}{\dv \eta(\Ue) r(\Ue) \rho}\, \mathrm{d}y \, \mathrm{d}x \, \mathrm{d}t \geq 0\,,
\end{split}
\end{equation}
for almost every $\sigma_1, \sigma_2 \in (t_1,t_2)$ with $\sigma_1 < \sigma_2$.
Using the relative entropy expansion for $h$, the fact that $\dUU \eta(U)$ is positive definite and that $\dv \eta(\ue,e(\ue)) = 0 = r(\ue,e(\ue))$, we have
\begin{equation}\label{eq-Step1Proof-6}
\begin{split}
    \int\limits_{|x| \leq m +\alpha(t_2-\sigma_2)}& \int\limits_{\Gamma} |\Ue(x,\sigma_2,y)-\Ubar|^2 \, \rho \, \mathrm{d}y\, \mathrm{d}x \\
    & + \frac{1}{\veps} \int\limits_{\sigma_1}^{\sigma_2} \int\limits_{|x| \leq m + \alpha(t_2-t)}\int\limits_{\Gamma}{ \Big( \dv \eta(\Ue) - \dv \eta(\ue,e(\ue)) \Big) \Big( r(\Ue) - r(\ue,e(\ue)) \Big) \rho}\, \mathrm{d}y \, \mathrm{d}x \, \mathrm{d}t \\
    & \qquad \qquad \leq C \int\limits_{|x|\leq  m +\alpha(t_2-\sigma_1)}{\int\limits_{\Gamma}{ |\Ue(x,\sigma_1,y)-\Ubar|^2 \, \rho}\, \mathrm{d}y}\, \mathrm{d}x \,,
\end{split}
\end{equation}
for almost every $\sigma_1, \sigma_2 \in (t_1,t_2)$ with $\sigma_1 < \sigma_2$.

We can drop the first term on the left-hand side of \eqref{eq-Step1Proof-6} since it is positive for every $\sigma_2$. We now use that $\dvv \eta(U) > c$, that $\dv r(\ue,e(\ue)) > 0$, and that $\dvv r(U) \equiv 0$ on $\overline{B_{\gamma}}$ along with Taylor's theorem to get the inequality
\begin{equation}\label{eq-Theorem4.1Proof-Step1-EstimateBeforeLSC2}
\begin{split}
    & \frac{1}{\veps} \int\limits_{\sigma_1}^{\sigma_2} \int\limits_{|x| \leq m + \alpha(t_2-t)}\int\limits_{\Gamma} { |\ve - e(\ue)|^2 \rho}\, \mathrm{d}y \, \mathrm{d}x \, \mathrm{d}t \leq C \int\limits_{|x|\leq m +\alpha(t_2-\sigma_1)} \int\limits_{\Gamma} |\Ue(x,\sigma_1,y)-\Ubar|^2 \, \rho \, \mathrm{d}y \, \mathrm{d}x \,,
\end{split}
\end{equation}
for almost every $\sigma_1, \sigma_2 \in (t_1,t_2)$ with $\sigma_1 < \sigma_2$. Now, entropy solutions $\Ue(x,t,y)$ satisfy
$$
\Ue \in C([0,T) \setminus \cF ;(L^2_{loc}(\bbR \times \Gamma))^2)\,, \qquad \cF \text{ at most countable}\,,
$$
by a straightforward adaptation of \cite[Theorem 4.5.1]{Dafermos}. Therefore,
\eqref{eq-Theorem4.1Proof-Step1-EstimateBeforeLSC2} holds for every $\sigma_1 \in [t_1,t_2]$ and for every $\sigma_2 \in (\sigma_1, t_2]$, and for almost every $t_1$, $t_2 \in (0,T)$.
Thus,
\begin{equation}
\begin{split}
    \frac{1}{\veps}\int\limits_{0}^{T} \int\limits_{|x| \leq m} \int\limits_{\Gamma}{ |\ve - e(\ue)|^2 \rho}\, \mathrm{d}y \, \mathrm{d}x \, \mathrm{d}\tau \leq C \int\limits_{|x|\leq m +\alpha T}{\int\limits_{\Gamma}{ |\Ue(x,t,y)-\Ubar|^2 \, \rho}\, \mathrm{d}y}\, \mathrm{d}x \,.
\end{split}
\end{equation}
Letting $T' > 0$ and integrating in $t$ from $0$ to $T'$, we obtain
\begin{equation}
\begin{split}
    \int\limits_{0}^{T'}{\frac{1}{T'}}\, \mathrm{d}t \, \frac{1}{\veps}\int\limits_{0}^{T} \int\limits_{|x| \leq m} \int\limits_{\Gamma}{ |\ve - e(\ue)|^2 \rho}\, \mathrm{d}y \, \mathrm{d}x \, \mathrm{d}\tau \leq \frac{C}{T'} \int\limits_{0}^{T'}{\int\limits_{|x|\leq m +\alpha T}{\int\limits_{\Gamma}{ |\Ue(x,t,y)-\Ubar|^2 \, \rho}\, \mathrm{d}y}\, \mathrm{d}x}\, \mathrm{d}t \,.
\end{split}
\end{equation}
Now use the triangle inequality;
\begin{equation}\label{eq-Step1Proof-15}
\begin{split}
    \frac{1}{\veps}\int\limits_{0}^{T} \int\limits_{|x| \leq m} \int\limits_{\Gamma}{ |\ve - e(\ue)|^2 \rho}\, \mathrm{d}y \, \mathrm{d}x \, \mathrm{d}\tau &\leq \frac{C}{T'} \intdmt{0}{T'}{\int\limits_{|x|\leq m +\alpha T}{\int\limits_{\Gamma}{ |\Ue(x,t,y)-\Une(x,y)|^2 \, \rho}\, \mathrm{d}y}\, \mathrm{d}x}\, \mathrm{d}t \\
    &\qquad + C \int\limits_{|x| \leq m+\alpha T}\int\limits_{\Gamma}{|\Une-\Ubar|^2 \rho}\, \mathrm{d}y \, \mathrm{d}x\,.
\end{split}
\end{equation}
Since the $\Ue$ are entropy solutions, the first term on the right hand side of \eqref{eq-Step1Proof-15} converges to $0$ as $T' \to 0$. Thus,
\begin{equation}
    \frac{1}{\veps} \int\limits_{0}^{T} \int\limits_{|x| \leq m} \int\limits_{\Gamma}{|\ve - e(\ue)|^2 \rho}\, \mathrm{d}y \, \mathrm{d}x \, \mathrm{d}t \leq C \int\limits_{|x| \leq m + \alpha T} \int\limits_{\Gamma}{|\Une - \Ubar|^2 \rho}\, \mathrm{d}y \, \mathrm{d}x\,.
\end{equation}
Letting $m \to \infty$ and then $T \to \infty$ we obtain \eqref{eq-Theorem4.1Proof-L2Estimate}.
\end{proof}

\section{Proof of Theorem \ref{thm-Appendix-UniquenessOfEntropySolution}}\label{apdx-Uniqueness}
\begin{proof}
We follow the argument of \cite{Kruzkov} as outlined in \cite{Evans}.
Choose a sequence of $C^2$ convex functions $\ell_n$ such that
\begin{equation}
\ell_n(\theta) \to |\theta| \text{ uniformly on compact subsets } \quad \ell'_n(\theta) \to \sgn(\theta) \text{ almost everywhere\,. }
\end{equation}
Then for any $k$, $\tilde{k}$ in $\bbR$, by the Lebesgue Dominated Convergence Theorem we have
\begin{equation}
\iiintdmt{0}{\infty}{\bbR}{\Gamma}{\left( |u-k|\dt \vphi + \sgn(u-k)(f(u)-f(k)) \dx \vphi \right) \rho}{y}{x}{t} \geq 0, 
\end{equation}
\begin{equation}
\iiintdmt{0}{\infty}{\bbR}{\Gamma}{\left( |\tilde{u}-\tilde{k}|\dt \tilde{\vphi} + \sgn(\tilde{u}-\tilde{k})(f(\tilde{u})-f(\tilde{k})) \dx \tilde{\vphi} \right) \rho}{y}{x}{t} \geq 0, 
\end{equation}
for every $\vphi$, $\tilde{\vphi}$ in $C^1_c(\bbR \times (0,\infty) \times \Gamma)$, $\vphi, \tilde{\vphi} \geq 0$.

Define the measure $\mu$ on $\bbR^N$ by
$$
\mathrm{d} \mu := \chi_{\Gamma}(y) \, \rho(y) \mathrm{d}y\,.
$$

Now let $\Phi \in C^1_c(\bbR \times \bbR \times (0,\infty) \times (0,\infty) \times \Gamma \times \Gamma)$, $\Phi \geq 0$, $\Phi = \Phi(x,\tilde{x},t,\tilde{t},y,\tilde{y})$. For each $(\tilde{x},\tilde{t},\tilde{y})$, let $k = \tilde{u}(\tilde{x},\tilde{t},\tilde{y})$ and let $\vphi(x,t,y) = \Phi(x,\tilde{x},t,\tilde{t},y,\tilde{y})$. Then, integrating with respect to the measure $\mathrm{d}\tilde{x} \, \mathrm{d} \tilde{t} \, \mathrm{d}\mu(\tilde{y})$,
\begin{equation}
\begin{split}
    &\intdmt{0}{\infty}{\intdmt{0}{\infty}{\iiintdm{\bbR}{\bbR}{\bbR^N}{\intdm{\bbR^N}{\left( |u(x,t,y)-\tilde{u}(\tilde{x},\tilde{t},\tilde{y})| \dt \Phi \right) \\
    &\qquad + \left( \sgn(u(x,t,y)-\tilde{u}(\tilde{x},\tilde{t},\tilde{y})) \Big( f(u(x,t,y))-f(\tilde{u}(\tilde{x},\tilde{t},\tilde{y})) \Big) \dx \Phi \right) }{\mu( \tilde{y})}}{\mu(y)}{\tilde{x}}{x}}{\tilde{t}}}{t} \geq 0\,.
\end{split}
\end{equation}
Similarly, we also have
\begin{equation}
\begin{split}
    &\intdmt{0}{\infty}{\intdmt{0}{\infty}{\iiintdm{\bbR}{\bbR}{\bbR^N}{\intdm{\bbR^N}{\left( |\tilde{u}(\tilde{x},\tilde{t},\tilde{y})-u(x,t,y)| \p_{\tilde{t}} \Phi \right) \\
    &\qquad + \left( \sgn(\tilde{u}(\tilde{x},\tilde{t},\tilde{y})-u(x,t,y))\Big( f(\tilde{u}(\tilde{x},\tilde{t},\tilde{y}))-f(u(x,t,y)) \Big) \p_{\tilde{x}} \Phi \right) }{\mu( \tilde{y})}}{\mu(y)}{\tilde{x}}{x}}{\tilde{t}}}{t} \geq 0\,.
\end{split}
\end{equation}
Adding the above two, we have
\begin{equation}\label{eq-EntropyUniquenessProof-AddingTwoInequalities}
\begin{split}
    &\intdmt{0}{\infty}{\intdmt{0}{\infty}{\iiintdm{\bbR}{\bbR}{\bbR^N}{\intdm{\bbR^N}{|u(x,t,y)-\tilde{u}(\tilde{x},\tilde{t},\tilde{y})| (\p_{\tilde{t}} \Phi + \dt \Phi) \\
    &\qquad + \sgn(u(x,t,y)-\tilde{u}(\tilde{x},\tilde{t},\tilde{y})) \big( f(u(x,t,y))-f(\tilde{u}(\tilde{x},\tilde{t},\tilde{y})) \big) (\p_{\tilde{x}} \Phi + \dx \Phi )}{\mu(\tilde{y})}}{\mu(y)}{\tilde{x}}{x}}{\tilde{t}}}{t} \geq 0\,.
\end{split}
\end{equation}
We make the following choice of $\Phi$: Let $\psi : \bbR \to \bbR$ be a standard mollifier, with $\psi_{\delta}(x) = \frac{1}{\delta} \psi \left( \frac{x}{\delta} \right)$. Define
$$
\Phi(x,\tilde{x},t,\tilde{t},y,\tilde{y}) = \psi_{\delta} \left( \frac{x-\tilde{x}}{2} \right) \psi_{\delta} \left( \frac{t-\tilde{t}}{2} \right) \psi_{\delta} \left( \frac{y-\tilde{y}}{2} \right)  w \left( \frac{x+\tilde{x}}{2} , \frac{t+\tilde{t}}{2} , \frac{y + \tilde{y}}{2} \right),
$$
where $w = w(z_1,z_2, z_3) \in C^1_c(\bbR \times (0,\infty) \times \Gamma)$, $w \geq 0$. Plugging this function into \eqref{eq-EntropyUniquenessProof-AddingTwoInequalities}, we have
\begin{equation}
\begin{split}
    \intdmtlim{0}{\infty}{\intdmtlim{0}{\infty}{\iiintdm{\bbR}{\bbR}{\bbR^N}{\intdm{\bbR^N}{&\Bigg\lbrace |u(x,t,y)-\tilde{u}(\tilde{x},\tilde{t},\tilde{y})| \p_{z_2} w \left( \frac{x+\tilde{x}}{2}, \frac{t+\tilde{t}}{2}, \frac{y+\tilde{y}}{2} \right)  \\
    &+ \sgn(u(x,t,y)-\tilde{u}(\tilde{x},\tilde{t},\tilde{y})) (f(u(x,t,y))-f(\tilde{u}(\tilde{x},\tilde{t},\tilde{y}))) \p_{z_1} w \left( \frac{x+\tilde{x}}{2}, \frac{t+\tilde{t}}{2}, \frac{y+\tilde{y}}{2} \right) \Bigg\rbrace \\
    & \psi_{\delta} \left( \frac{x-\tilde{x}}{2} \right) \psi_{\delta} \left( \frac{t-\tilde{t}}{2} \right) \psi_{\delta} \left( \frac{y-\tilde{y}}{2} \right) }{\mu(\tilde{y})}}{\mu(y)}{\tilde{x}}{x}}{\tilde{t}}}{t} \geq 0\,. \\
\end{split}
\end{equation}
By a change of variables
$$
x_1 = \frac{x+\tilde{x}}{2}, \quad x_2 = \frac{x-\tilde{x}}{2}, \quad t_1 = \frac{t+\tilde{t}}{2}, \quad t_2 = \frac{t-\tilde{t}}{2}, \quad y_1 = \frac{y+\tilde{y}}{2}, \quad y_2 =  \frac{y-\tilde{y}}{2}
$$
we have
\begin{equation}
    \iiintdmt{0}{\infty}{\bbR}{\bbR^N}{G(x_2,t_2,y_2) \psi_{\delta}(x_2) \psi_{\delta}(t_2) \psi_{\delta}(y_2) }{y_2}{x_2}{t_2} \geq 0\,,
\end{equation}
where
\begin{equation}
\begin{split}
    G(x_2,t_2,y_2) = \iiintdmtlim{0}{\infty}{\bbR}{\bbR^N}{\Bigg\lbrace &|u(x_1+x_2,t_1+t_2,y_1+y_2) - \tilde{u}(x_1-x_2,t_1-t_2,y_1-y_2)| \p_{z_2} w(x_1,t_1, y_1) \\
    & + \sgn(u(x_1+x_2,t_1+t_2,y_1+y_2)-\tilde{u}(x_1-x_2,t_1-t_2,y_1-y_2)) \\
    &\Big( f(u(x_1+x_2,t_1+t_2,y_1+y_2)) - f(\tilde{u}(x_1-x_2,t_1-t_2,y_1-y_2)) \Big) \p_{z_1} w(x_1,t_1,y_1) \Bigg\rbrace \\
    & \quad \times \chi_{\Gamma}(y_1+y_2) \, \chi_{\Gamma}(y_1-y_2) \, \rho(y_1+y_2) \rho(y_1-y_2)}{y_1}{x_1}{t_1}\,.
\end{split}    
\end{equation}

Now, if $x_2,t_2,y_2 \to 0$, then
\begin{align*}
u(x_1+x_2,t_1+t_2,y_1+y_2) &\to u(x_1,t_1,y_1)\,, \\
\tilde{u}(x_1-x_2,t_1-t_2,y_1-y_2) &\to \tilde{u}(x_1,t_1,y_1)\,, \\
\chi_{\Gamma}(y_1+y_2) \, \rho(y_1+y_2) &\to \chi_{\Gamma}(y_1) \rho(y_1)\,, \\
\chi_{\Gamma}(y_1-y_2) \, \rho(y_1-y_2) &\to \chi_{\Gamma}(y_1) \rho(y_1)\,.
\end{align*}
Since the mappings $(a,b) \mapsto |a-b|$, $(a,b) \mapsto \sgn (a-b) (f(a)-f(b))$ are Lipschitz, Lebesgue differentiation gives
\begin{equation}\label{eq-EntropyUniquenessProof-LebesgueDiffLimit}
\lim\limits_{\delta \to 0} \iiintdmt{0}{\infty}{\bbR}{\bbR^N}{G(x_2,t_2,y_2) \psi_{\delta}(x_2) \psi_{\delta}(t_2) \psi_{\delta}(y_2) }{y_2}{x_2}{t_2} = G(0,0,0) \geq 0\,.
\end{equation}
Writing $G(0,0,0)$ explicitly, \eqref{eq-EntropyUniquenessProof-LebesgueDiffLimit} becomes
\begin{equation}
\begin{split}
    \iiintdmt{0}{\infty}{\bbR}{\Gamma}{\bigg\lbrace &|u(x_1,t_1,y_1) - \tilde{u}(x_1,t_1,y_1)| \p_{z_2} w(x_1,t_1,y_1) \\
    & + \sgn(u(x_1,t_1,y_1)-\tilde{u}(x_1,t_1,y_1)) (f(u(x_1,t_1,y_1)) - f(\tilde{u}(x_1,t_1,y_1))) \p_{z_1} w(x_1,t_1,y_1) \bigg\rbrace \\
    & \rho^2(y_1)}{y_1}{x_1}{t_1} \geq 0\,.
\end{split}
\end{equation}
For convenience of notation set $x_1 \to x$, $t_1 \to t$, $y_1 \to y$:
\begin{equation}
   \iiintdmt{0}{\infty}{\bbR}{\Gamma}{\left( a(x,t,y) \p_{z_2}w(x,t,y) + b(x,t,y) \p_{z_1}w(x,t,y) \right) \rho^2(y)}{y}{x}{t} \geq 0,
\end{equation}
with
\begin{equation*}
\begin{cases}
a(x,t,y) &:= |u(x,t,y)-\tilde{u}(x,t,y)| \\
b(x,t,y) &:= \sgn (u(x,t,y)-\tilde{u}(x,t,y))(f(u(x,t,y))-f(\tilde{u}(x,t,y))).
\end{cases}
\end{equation*}
Now, define $\alpha := \Vnorm{f'}_{L^{\infty}([-M,M])}$, the Lipschitz constant of $f$. Fix $t_1$, $t_2 \in (0,\infty)$ with $t_1 < t_2$ (no relation to the $t_1$ and $t_2$ above) and let $\sigma_1$, $\sigma_2 \in (t_1,t_2)$ with $\sigma_1 < \sigma_2$. Let $m > 0$ and let $\Gamma_1 \Subset \Gamma$. For $\delta > 0$ small, choose as a test function $w(x,t,y) = \psi(t) \, \Xi(x,t) \, \chi_{\Gamma_1}(y)$, where
\begin{equation}
\psi(t) :=
\begin{cases}
    0 & 0 \leq \tau < \sigma_1 \\
    \frac{1}{\delta}(t-\sigma_1) & \sigma_1 \leq t < \sigma_1 + \delta \\
    1 & \sigma_1 + \delta \leq t < \sigma_2 \\
    \frac{1}{\delta} (\sigma_2-t) + 1 & \sigma_2 \leq t < \sigma_2 + \delta \\
    0 & \sigma_2 + \delta \leq t < \infty
\end{cases}
\end{equation}
and
\begin{equation}
\Xi(x,t) :=
\begin{cases}
    1 & |x|-m-\alpha(t_2-t) < 0 \\
    \frac{1}{\delta}(m+\alpha(t_2-t) - |x|) + 1 & 0 \leq |x|-m-\alpha(t_2-t) < \delta \\
    0 & |x|-m-\alpha(t_2-t) \leq \delta\,.
\end{cases}
\end{equation}
Then we obtain
\begin{equation}
\begin{split}
    \frac{1}{\delta} \int\limits_{\sigma_1}^{\sigma_1+\delta}\int\limits_{|x| \leq m + \alpha(t_2-t)}\int\limits_{\Gamma_1}& a(x,t,y) \rho^2(y)\, \mathrm{d}y \, \mathrm{d}x \, \mathrm{d}t - \frac{1}{\delta} \int\limits_{\sigma_2}^{\sigma_2+\delta}{\int\limits_{|x| \leq m + \alpha(t_2-t)}{\int\limits_{\Gamma_1}{a(x,t,y) \rho^2(y)}\, \mathrm{d}y}\, \mathrm{d}x}\, \mathrm{d}t \\
    &- \frac{1}{\delta}\int\limits_{\sigma_1+\delta}^{\sigma_2}{\int\limits_{0 \leq |x| - m - \alpha(t_2-t) \leq \delta }{\int\limits_{\Gamma_1}{\left( \alpha a(x,t,y) - b(x,t,y) \frac{x}{|x|} \right) \rho^2(y)}\, \mathrm{d}y}\, \mathrm{d}x}\, \mathrm{d}t  \geq O(\delta)\,.
\end{split}
\end{equation}
By definition of $a$ and $b$ we have $|b(x,t,y)| \leq \alpha a(x,t,y)$, so the integrand in the last left-hand term is nonnegative. Therefore,
\begin{equation}
        \frac{1}{\delta} \int\limits_{\sigma_1}^{\sigma_1+\delta}{\int\limits_{|x| \leq m + \alpha(t_2-t)}{\int\limits_{\Gamma_1}{a(x,t,y) \rho^2(y)}\, \mathrm{d}y}\, \mathrm{d}x}\, \mathrm{d}t - \frac{1}{\delta} \int\limits_{\sigma_2}^{\sigma_2+\delta}{\int\limits_{|x| \leq m + \alpha(t_2-t)}{\int\limits_{\Gamma_1}{a(x,t,y) \rho^2(y)}\, \mathrm{d}y}\, \mathrm{d}x}\, \mathrm{d}t \geq O(\delta)\,,
\end{equation}
and as $\delta \to 0$ Lebesgue differentiation yields
\begin{equation}\label{eq-EntropyUniquenessProofForSCL}
    \int\limits_{|x| \leq m + \alpha(t_2-\sigma_2)}\int\limits_{\Gamma_1}{|u(x,\sigma_2,y)-\tilde{u}(x,\sigma_2,y)| \rho^2}\, \mathrm{d}y \, \mathrm{d}x \leq \int\limits_{|x| \leq m + \alpha(t_2-\sigma_1)}\int\limits_{\Gamma_1}{|u(x,\sigma_1,y)-\tilde{u}(x,\sigma_1,y)| \rho^2}\, \mathrm{d}y \, \mathrm{d}x
\end{equation}
for almost every $\sigma_1$, $\sigma_2 \in (t_1,t_2)$ with $\sigma_1 < \sigma_2$. Now, entropy solutions $u(x,t,y)$ satisfy
\begin{equation}
    u \in C([0,T) \setminus \cF \,; (L^1_{loc}(\bbR \times \Gamma)))\,, \quad \cF \text{ at most countable, }
\end{equation}
by a straightforward adaptation of the proof of \cite[Theorem 4.5.1]{Dafermos}. Therefore, the inequality \eqref{eq-EntropyUniquenessProofForSCL} holds for almost every $t_1$, $t_2$, $\sigma_1$, $\sigma_2$ satisfying $0 < t_1 \leq \sigma_1 < \sigma_2 \leq t_2 < \infty$. Thus, by appropriate choice of $t_1$ and $t_2$ outside of $\cF$,
\begin{equation}
    \int\limits_{|x| \leq m} \int\limits_{\Gamma_1}{|u(x,t_2,y)-\tilde{u}(x,t_2,y)| \rho^2(y)}\, \mathrm{d}y \, \mathrm{d}x \leq \int\limits_{|x| \leq m + \alpha t_2} \int\limits_{\Gamma_1}{|u(x,t_1,y)-\tilde{u}(x,t_1,y)| \rho^2(y)}\, \mathrm{d}y \, \mathrm{d}x\,.
\end{equation}
holds for almost every $t_1$, $t_2$ satisfying $0 < t_1 < t_2 < \infty$.

Finally, let $T > 0$. Integrate in $t_1$ from $0$ to $T$ and divide by $T$. We obtain
\begin{equation}
\begin{split}
    \frac{1}{T} \int\limits_{0}^{T} \, \mathrm{d}t_1 \int\limits_{|x| \leq m} \int\limits_{\Gamma_1} & |u(x,t_2,y)- \tilde{u}(x,t_2,y)| \rho^2(y) \, \mathrm{d}y \, \mathrm{d}x \\
    &\leq \frac{1}{T} \int\limits_{0}^{T} \int\limits_{|x| \leq m+ \alpha t_2} \int\limits_{\Gamma_1}{|u(x,t_1,y)-\tilde{u}(x,t_1,y)| \rho^2(y)}\, \mathrm{d}y \, \mathrm{d}x \, \mathrm{d}t_1\,.
\end{split}
\end{equation}
Now add and subtract the quantity
\begin{equation}
\int\limits_{|x|\leq m + \alpha t_2} \int\limits_{\Gamma_1}{|u_0(x,y)-\tilde{u}_0(x,y)|\rho^2(y)}\, \mathrm{d}y \, \mathrm{d}x
\end{equation}
on the right-hand side to get
\begin{equation}\label{eq-ReducedSystem-UniquenessOfEntropySolutions-ProofI}
\begin{split}
    \int\limits_{|x| \leq m} \int\limits_{\Gamma_1}& |u(x,t_2,y)- \tilde{u}(x,t_2,y)| \rho^2(y) \, \mathrm{d}y \, \mathrm{d}x \\
    &\leq \int\limits_{|x| \leq m+ \alpha t_2}\int\limits_{\Gamma_1}{|u_0(x,y)-\tilde{u}_0(x,y)| \rho^2(y)}\, \mathrm{d}y \, \mathrm{d}x + \mathrm{I}\,,
\end{split}
\end{equation}
where
\begin{equation}
\mathrm{I} = \frac{1}{T} \int\limits_{0}^{T} \int\limits_{|x| \leq m+ \alpha t_2} \int\limits_{\Gamma_1} {|u(x,t_1,y)-\tilde{u}(x,t_1,y)| \rho^2(y)}\, \mathrm{d}y \, \mathrm{d}x \, \mathrm{d}t_1 - \int\limits_{|x|\leq m + \alpha t_2} \int\limits_{\Gamma_1}{|u_0(x,y)-\tilde{u}_0(x,y)|\rho^2(y)}\, \mathrm{d}y \, \mathrm{d}x\,.
\end{equation}
Noting that $\rho \in L^{\infty}(\Gamma)$ and using the reverse triangle inequality we have
\begin{equation}
\begin{split}
    |\mathrm{I}| &\leq \frac{1}{T}\iiintdmt{0}{T}{|x| < m + \alpha t_2}{\Gamma_1}{\Big| |u(x,t_1,y)-\tilde{u}(x,t_1,y)| - |u_0(x,y) - \tilde{u}_0(x,y)| \Big| \rho^2(y)}{y}{x}{t_1} \\
    &\leq \frac{C_0}{T}\iiintdmt{0}{T}{|x| < m + \alpha t_2}{\Gamma_1}{\Big| \big( u(x,t_1,y)-u_0(x,y) \big) - \big( \tilde{u}(x,t_1,y) - \tilde{u}_0(x,y) \big) \Big| \rho(y)}{y}{x}{t_1} \\
    &\leq \frac{C_0}{T}\iiintdmt{0}{T}{|x| < m +\alpha t_2}{\Gamma_1}{|u(x,t_1,y)-u_0(x,y)| \rho(y)}{y}{x}{t_1} \\
    & \qquad + \frac{C_0}{T}\iiintdmt{0}{T}{|x| < m +\alpha t_2}{\Gamma_1}{|\tilde{u}(x,t_1,y)-\tilde{u}_0(x,y)| \rho(y)}{y}{x}{t_1}\,,
\end{split}
\end{equation}
where $C_0 = \Vnorm{\rho}_{L^{\infty}(\Gamma)}$. Therefore $\lim\limits_{T \to 0^+} \mathrm{I} =  0$ by the definition of $u$ and $\tilde{u}$ as entropy solutions. Taking $T \to 0$ on both sides of \eqref{eq-ReducedSystem-UniquenessOfEntropySolutions-ProofI} we arrive at \eqref{eq-ReducedSystem-L1locStabilityEstimate}.
\end{proof}

\bibliography{Relaxationbib}
\bibliographystyle{siam}

\end{document}